\title[Partition quantum groups]{On the partition approach to Schur-Weyl duality and free quantum groups}
\author{Amaury Freslon}
\keywords{Quantum groups, Schur-Weyl duality}
\subjclass[2010]{05E10, 05A18}
\address{Saarland University, Fachbereich Mathematik, Postfach 151159, 66041 Saarbr\"ucken, Germany}
\email{freslon@math.uni-sb.de}
\date{}
\theoremstyle{plain}
\newtheorem{thm}{Theorem}[subsection]
\newtheorem{prop}[thm]{Proposition}
\newtheorem{cor}[thm]{Corollary}
\newtheorem{lem}[thm]{Lemma}
\newtheorem{conj}[thm]{Conjecture}
\theoremstyle{definition}
\newtheorem{de}[thm]{Definition}
\newtheorem{ex}[thm]{Example}
\theoremstyle{remark}
\newtheorem{rem}[thm]{Remark}
\DeclareMathOperator{\Aut}{Aut}
\DeclareMathOperator{\Id}{Id}
\DeclareMathOperator{\Irr}{Irr}
\DeclareMathOperator{\Mor}{Hom}
\DeclareMathOperator{\Proj}{Proj_{\CC}}
\DeclareMathOperator{\rl}{rl}
\DeclareMathOperator{\Span}{Span}
\DeclareMathOperator{\Sym}{Sym_{\CC}}
\newcommand{\A}{\mathcal{A}}
\newcommand{\B}{\mathcal{L}}
\newcommand{\C}{\mathbb{C}}
\newcommand{\CC}{\mathcal{C}}
\newcommand{\D}{\Delta}
\newcommand{\G}{\mathbb{G}}
\newcommand{\Gr}{\mathcal{G}}
\newcommand{\HH}{\mathcal{H}}
\newcommand{\N}{\mathbb{N}}
\newcommand{\OO}{\mathcal{O}}
\newcommand{\Pc}{P^{\A}}
\newcommand{\R}{\mathbb{R}}
\newcommand{\UU}{\mathcal{U}}
\newcommand{\Z}{\mathbb{Z}}
\newcommand{\ZZ}{\mathcal{Z}}
\newcommand{\co}{\overline}
\newcommand{\h}{\widehat}
\newcommand{\ii}{\imath}
\newcommand{\crosspart}{
\mathrel{\offinterlineskip
\hbox{$/$}\hskip -.95ex\hbox{$\backslash$}}}
\newcounter{PartitionDepth}
\newcounter{PartitionLength}
\begin{document}

\begin{abstract}
We give a general definition of classical and quantum groups whose representation theory is "determined by partitions" and study their structure. This encompasses many examples of classical groups for which Schur-Weyl duality is described with diagram algebras as well as generalizations of P. Deligne's interpolated categories of representations. Our setting is inspired by many previous works on \emph{easy quantum groups} and appears to be well-suited to the study of free fusion semirings. We classify free fusion semirings and prove that they can always be realized through our construction, thus solving several open questions. This suggests a general decomposition result for free quantum groups which in turn gives information on the compact groups whose Schur-Weyl duality is implemented by partitions. The paper also contains an appendix by A. Chirvasitu proving simplicity results for the reduced C*-algebras of some free quantum groups.
\end{abstract}

\maketitle

\section{Introduction}

Partitions of finite sets are \emph{a priori} very simple set-theoretic objects. However, their combinatorics appear to be quite rich and plays a role in many different areas of mathematics. For example, R. Brauer introduced in \cite{brauer1937algebras} algebras generated by partitions in pairs to study the invariants of tensor representations of the orthogonal and symplectic groups. His ideas were later developed by several authors to describe \emph{Schur-Weyl duality} for other classes of groups like complex reflection groups or wreath products (see Section \ref{sec:examples}). More precisely, several classes of algebras generated by partitions where introduced and it was proved that they are isomorphic to centralizer algebras for certain tensor representations of the involved groups. These ideas were reinterpreted and extended in a more categorical setting by F. Knop and M. Mori using the idea of \emph{interpolated categories of representations} introduced by P. Deligne for symmetric groups in \cite{deligne2007categorie}. In a different context, it was discovered by R. Speicher that passing from the combinatorics of all partitions to that of the subclass of \emph{noncrossing partitions} translates in a probabilistic setting into passing from classical (tensor) independence to free independence (see for instance the book \cite{nica2006lectures}). 

These probabilistic ideas motivated the introduction by T. Banica and R. Speicher in \cite{banica2009liberation} of a class of compact quantum groups called \emph{easy quantum groups}, whose representation theory is ruled by the combinatorics of (noncrossing) partitions. This gave a new point of view on previous results of T. Banica linking the representation theory of orthogonal quantum groups with Temperly-Lieb algebras and suggested strong links with the work of R. Brauer. The construction also gave some insight into the notion of "freeness" for quantum groups, which is crucial to study their geometric properties (see the Appendix). Moreover, the class of easy quantum groups contains some classical groups for which, by definition, Schur-Weyl duality can be described by partition algebras (for instance the orthogonal groups $O_{N}$ or the symmetric groups $S_{N}$).

These ideas may be extended in several ways. The present paper is motivated by the two following facts : first, few examples of groups whose Schur-Weyl duality is described by partitions could be included in this setting and secondly, few examples of "free" quantum groups could be constructed. Looking at the classical case (for example the work of P. Glockner and W. von Waldenfels \cite{glockner1989relations} on unitary groups) it seems natural to extend the setting by colouring the points of the partitions. It can then be hoped that the various approaches will be unified and that new phenomena will occur. This is precisely what we will endeavour in this work. Our aim is therefore threefold :
\begin{enumerate}
\item Give a general setting together with a comprehensive description of all the known results concerning representation theory of quantum groups "determined by partitions".
\item Try to encompass as much as possible works on classical Schur-Weyl duality in this setting, in order to unify these works, as well as categorical approaches in the spirit of P. Deligne.
\item See how far noncrossing partitions are linked with the notion of "freeness" of a quantum group.
\end{enumerate}

As will appear in Section \ref{sec:general}, point $(1)$ is fulfilled in a quite satisfying way. For point $(2)$ however, we will see in Section \ref{sec:examples} that  some general wreath products cannot be directly described by partition quantum groups but need some averaging of the partitions. This is close to some examples of "super-easy" quantum groups introduced by T. Banica and A. Skalski in \cite{banica2011two}. We will suggest in Subsection \ref{subsec:wreath} a way of unifying these examples and our framework. The main achievement of this paper concerns point $(3)$. In fact, we will prove that any free quantum group has the same representation theory as a partition quantum group. This will first be proved in Theorem \ref{thm:fullclassification} by constructing a suitable category of partitions. Then, using a classification of all free fusion semirings, we will be able to deduce in Theorem \ref{thm:freeproduct} a very simple form for the partition quantum group associated to a given free fusion semiring. We will also conjecture in Subsection \ref{sec:nonunimodular} a classification of \emph{all} free quantum groups.

Let us now outline the contents of the paper. Section \ref{sec:partitions} gathers necessary background on partitions. The constructions are quite standard but we give them for the sake of completeness. We define partition quantum groups in Section \ref{sec:general} through a general Tannaka-Krein duality argument and give general results on their representation theory. These results are simple adaptations of the uncoloured case, which may make this section look like a survey. We think however that it is necessary to include them in order to keep this work as self-contained as possible. We turn to examples in Section \ref{sec:examples}. On the one hand we recover all the known quantum examples and on the other hand we also explain how many classical examples (in particular wreath products of abelian compact groups) of combinatorial Schur-Weyl duality fit into our setting. We end with some comments on other cases, where the description requires some averaging procedure on partitions.

Section \ref{sec:fusion} is the most technical part of the paper. We first explain how previous results on fusion semirings of noncrossing quantum groups extend to the partition setting, in particular characterizing when it is free in Theorem \ref{thm:freefusion}. We then prove in Theorem \ref{thm:fullclassification} that any free fusion semiring can be realized from a partition quantum group, which shows that our setting is optimal in this sense. We then endeavour to classify these free fusion semirings, culminating in a classification of all compact quantum groups with free fusion semiring up to $R^{+}$-deformation in Theorem \ref{thm:freeproduct}.

Eventually, we investigate in Section \ref{sec:tensor} possible extensions of our work in two directions. The first extension involves \emph{monoidal equivalence}. Assuming a technical conjecture, we classify all free quantum groups in Corollary \ref{cor:completeclassification}. An interesting consequence of this is that through an abelianization procedure, we also get a classification of all the \emph{classical} groups whose Schur-Weyl duality can be described by "block-stable" partitions. The second extension is more algebraic and linked to the subalgebras of partition algebras appearing at the end of Section \ref{sec:examples}. Following some recent work of F. Lemeux and P. Tarrago, we define a notion of "decorated" partition which contains coloured partitions. A general theory for these seems more complicated to develop, but could fill the gap between our setting and works on general wreath products and quantum isometry groups.

In the Appendix, A. Chirvasitu proves that most free quantum group have simple reduced C*-algebra together with some uniqueness property for the Haar state. He also proves a similar result for free products of arbitrary compact quantum groups.

\subsection*{Acknowledgments}

The author was supported by the ERC advanced grant "Noncommutative distributions in free probability". He moreover wishes to thank A. Skalski for fruitful discussions on topics related to partition quantum groups.

\section{Preliminaries on partitions}\label{sec:partitions}

The fundamental idea of this work is to use the combinatorics of partitions to implement centralizer algebras of groups and quantum groups, which in turn completely characterize the (quantum) group through Tannaka-Krein duality. There is a standard way to do this which was developed in \cite{banica2009liberation}. In this section, we introduce the basic material from \cite{banica2009liberation} in order to generalize it in Section \ref{sec:general} to coloured partitions. The motivation for these definitions will appear in the definition of partition quantum groups in Theorem \ref{thm:partitionqg}. For a more detailed introduction, we refer the reader to \cite{banica2009liberation} or other works on easy quantum groups.

\subsection{Partitions and crossings}

A \emph{partition} is given by two integers $k$ and $l$ and a partition $p$ of the set $\{1, \dots, k+l\}$. It is very useful to represent such partitions as diagrams, in particular for computational purposes. A diagram consists in an upper row of $k$ points, a lower row of $l$ points and some strings connecting these points if and only if they belong to the same set of the partition. Let us consider for example the partitions $p_{1} = \{\{1, 8\}, \{2, 6\}, \{3, 4\}, \{5, 7\}\}$ and $p_{2} = \{\{1, 4, 5, 6\}, \{2, 3\}\}$. Their diagram representation is :

\begin{center}
\begin{tikzpicture}[scale=0.5]
\draw (0,-3) -- (0,3);
\draw (-1,-3) -- (-1,-2);
\draw (1,-3) -- (1,-2);
\draw (-1,-2) -- (1,-2);
\draw (-2,-3) -- (2,3);
\draw (2,-3) -- (-2,3);

\draw (-2.5,0) node[left]{$p_{1} = $};
\end{tikzpicture}
\begin{tikzpicture}[scale=0.5]
\draw (0,-1) -- (0,1);
\draw (-2,1) -- (2,1);
\draw (-2,1) -- (-2,3);
\draw (2,1) -- (2,3);
\draw (-1,2) -- (-1,3);
\draw (1,2) -- (1,3);
\draw (-1,2) -- (1,2);

\draw (-1,-1) -- (1,-1);
\draw (-1,-1) -- (-1,-3);
\draw (1,-1) -- (1,-3);

\draw (-2.5,0) node[left]{$p_{2} = $};
\end{tikzpicture}
\end{center}

A maximal set of points which are all connected in a partition is called a \emph{block}. We denote by $b(p)$ the number of blocks of a partition $p$ and by $t(p)$ the number of \emph{through-blocks} (also called the \emph{propagation number}), i.e. blocks containing both upper and lower points. Now that we have defined partitions, let us explain how they can be combined. We will denote by $P(k, l)$ the set of all partitions with $k$ points in the upper row and $l$ points in the lower row. The following operations will be called the \emph{category operations} :
\begin{itemize}
\item  If $p\in P(k, l)$ and $q\in P(k', l')$, then $p\otimes q\in P(k+k', l+l')$ is their \emph{horizontal concatenation}, i.e. the first $k$ of the $k+k'$ upper points are connected by $p$ to the first $l$ of the $l+l'$ lower points, whereas $q$ connects the remaining $k'$ upper points with the remaining $l'$ lower points.
\item If $p\in P(k, l)$ and $q\in P(l, m)$, then $qp\in P(k, m)$ is their \emph{vertical concatenation}, i.e. $k$ upper points are connected by $p$ to $l$ middle points and the lines are then continued by $q$ to $m$ lower points. This process may produce loops in the partition. More precisely, consider the set $L$ of elements in $\{1, \dots, l\}$ which are not connected to an upper point of $p$ nor to a lower point of $q$. The lower row of $p$ and the upper row of $q$ both induce partitions of the set $L$. The maximum (with respect to inclusion) of these two partitions is the \emph{loop partition} of $L$, its blocks are called \emph{loops} and their number is denoted by $\rl(q, p)$. To complete the operation, we remove all the loops.
\item If $p\in P(k, l)$, then $p^{*}\in P(l, k)$ is the partition obtained by reflecting $p$ with respect to an horizontal line between the two rows.
\item If $p\in P(k, l)$, then we can shift the very left upper point to the left of the lower row (or the converse) without changing the strings connecting the points in this process. This gives rise to a partition in $P(k-1, l+1)$ (or in $P(k+1, l-1)$), called a \emph{rotated version} of $p$. We can also rotate partitions on the right.
\end{itemize}

Of utmost importance in this work will be \emph{noncrossing} partitions. Informally, these are partitions such that the strings linking blocks can be drawn without crossing each other. Let us give a more formal definition. 

\begin{de}\label{de:noncrossing}
A partition $p$ is said to be \emph{crossing} if there exists four integers $k_{1} < k_{2} < k_{3} < k_{4}$ satisfying :
\begin{itemize}
\item $k_{1}$ and $k_{3}$ are in the same block.
\item $k_{2}$ and $k_{4}$ are in the same block.
\item $k_{1}$ and $k_{2}$ are not in the same block.
\end{itemize}
Otherwise, $p$ is said to be \emph{noncrossing}. Noncrossing partitions are also sometimes called \emph{planar} partitions.
\end{de}

\subsection{Categories of partitions}

As explained in \cite[Prop 3.12]{banica2009liberation}, the category operations are exactly what is needed to produce suitable C*-tensor categories. That is the reason for the introduction of the following terminology :

\begin{de}
A \emph{category of partitions} is the data of a set $\CC(k, l)$ of partitions with $k$ upper points and $l$ lower points for all integers $k$ and $l$, which is stable under the above category operations and contains the identity partition $\vert$.
\end{de}

In order to produce a (quantum) group out of a C*-tensor category using Tannaka-Krein duality, we need to make it concrete. This means that the morphism spaces shall be made of linear maps between Hilbert spaces. To do this, we need a coherent way of associating operators to partitions. This is given by the following definition \cite[Def 1.6 and 1.7]{banica2009liberation} :

\begin{de}
Let $N$ be an integer and let $(e_{1}, \dots, e_{N})$ be a basis of $\C^{N}$. For any partition $p\in P(k, l)$, we define a linear map
\begin{equation*}
T_{p}:(\C^{N})^{\otimes k} \mapsto (\C^{N})^{\otimes l}
\end{equation*}
by the following formula :
\begin{equation*}
T_{p}(e_{i_{1}} \otimes \dots \otimes e_{i_{k}}) = \sum_{j_{1}, \dots, j_{l} = 1}^{n} \delta_{p}(i, j)e_{j_{1}} \otimes \dots \otimes e_{j_{l}},
\end{equation*}
where $\delta_{p}(i, j) = 1$ if and only if all strings of the partition $p$ connect equal indices of the multi-index $i = (i_{1}, \dots, i_{k})$ in the upper row with equal indices of the multi-index $j = (j_{1}, \dots, j_{l})$ in the lower row. Otherwise, $\delta_{p}(i, j) = 0$.
\end{de}

The interplay between these maps and the category operations are given by the following rules proved in \cite[Prop 1.9]{banica2009liberation} :
\begin{itemize}
\item $T_{p}^{*} = T_{p^{*}}$.
\item $T_{p}\otimes T_{q} = T_{p\otimes q}$.
\item $T_{p}\circ T_{q} = N^{\rl(p, q)}T_{pq}$.
\end{itemize}

It is now clear that given a category of partitions $\CC$ and an integer $N$ (or a finite-dimensional Hilbert space $V$), there is an associated concrete C*-tensor category with duals (see \ref{de:tensorcategory}). We will come back to these categories in the next section after extending the setting to coloured partitions.

\section{General theory}\label{sec:general}

In this section, we will introduce and study \emph{partition quantum groups}. This first requires the introduction of \emph{colour sets} and coloured partitions. With this in hand, we will define partition quantum groups after recalling some basic notions on S.L. Woronowicz's theory of compact quantum groups. The strength of our setting is the complete description of the representation theory of these quantum groups given in the last subsection, which is a consequence of our joint work with M. Weber \cite{freslon2013representation}.

\subsection{Coloured partitions}

Intuitively, a coloured partition is a partition together with a colour attached to each point. Here by colour we simply mean an element of a fixed set. The set of colours, however, has to be endowed with an additional structure in order to yield a quantum group. The idea is that each colour corresponds to a representation of a compact quantum group and that partitions give morphisms between tensor products of the corresponding representations. In this setting, the rotation operation translates into \emph{Frobenius duality}. More precisely, if $U$, $V$ and $W$ are three representations of a compact quantum group, then there is an isomorphism
\begin{equation*}
\Mor(U, V\otimes W) \simeq \Mor(\co{V}\otimes U, W).
\end{equation*}
Since the contragredient representation $\co{V}$ of $V$ need not be equivalent to $V$, we see that the colour must be changed when rotating a point. This is the reason why we make the following definition :

\begin{de}
A \emph{colour set} is a set $\A$ endowed with an involution $x\mapsto \overline{x}$. An $\A$-coloured partition is a partition $p$ with the additional data of an element of $\A$ associated to each point of the partition.
\end{de}

Let $p$ be an $\A$-coloured partition. Reading from left to right, we can associate to the upper row of $p$ a word $w$ on $\A$ and to its lower row (again reading from left to right) a word $w'$ on $\A$. For a set of partitions $\CC$, we will denote by $\CC(w, w')$ the set of all partitions in $\CC$ such that the upper row is coloured by $w$ and the lower row is coloured by $w'$. The operations on partitions described in the previous section can be carried on to the coloured setting with the appropriate modifications. These category operations behave in the following way :
\begin{itemize}
\item If $p\in \CC(w, w')$ and $q\in \CC(z, z')$, then $p\otimes q\in \CC (w.z, w'.z')$. Here, $.$ denotes the concatenation of words.
\item If $p\in \CC(w, w')$ and $q\in \CC(w', w'')$, then $qp\in \CC(w, w'')$. Note that we can only perform this operation if the words associated to the lower row of $p$ and the upper row of $q$ match.
\item If $p\in \CC(w, w')$, then $p^{*}\in \CC(w', w)$.
\item If $w = w_{1}\dots w_{n}$, $w' = w'_{1}\dots w'_{k}$ and $p\in \CC(w, w')$, then rotating the leftmost point of the lower row of $p$ to the left of the upper row yields a partition $q\in \CC((\overline{w}'_{1}w_{1}\dots w_{n}, w'_{2}\dots w'_{k})$. In other words, rotating a point changes a colour into its conjugate. The same rotation operation can be done on the right of $p$.
\end{itemize}

Let us say that for an element $x\in \A$, the \emph{$x$-identity partition} is the partition $\vert$ coloured with $x$ on both ends. We are now ready for the definition of a category of coloured partitions.

\begin{de}
A \emph{category of $\A$-coloured partitions} $\CC$ is the data of a set of $\A$-coloured partitions $\CC(w, w')$ for all words $w$ and $w'$ on $\A$, which is stable under all the category operations and contains the $x$-identity partition for all $x\in \A$.
\end{de}

If $w$ is a word on $\A$, $\vert w\vert$ will denote its length and we will denote by $\CC(k, l)$ the set of all partitions with $k$ points on the upper row and $l$ points on the lower row. The collection of all $\A$-coloured partitions, which is clearly a category of partitions, will be denoted by $\Pc$. A coloured partition is said to be \emph{noncrossing} if the underlying uncoloured partition is noncrossing and the collection of all noncrossing partitions is denoted by $NC^{\A}$. Following the ideas of \cite{banica2009liberation}, we now have to associate linear maps to coloured partitions. This is done by simply forgetting the colours.

\begin{de}
Let $p$ be an $\A$-coloured partition and let $N$ be an integer. The map $T_{p}$ is defined to be the linear map associated to the uncoloured partition underlying $p$.
\end{de}

This means that colours only give us restrictions on the way we can compose the linear maps. If $\CC$ is a category of $\A$-coloured partitions and if $w$ and $w'$ are words on $\A$, the family of linear maps $T_{p}$ for $p\in \CC(w, w')$ need not be linearly independent. This is a source of difficulties for the study of partition groups. Dealing with the linear relations between the maps $T_{p}$ in fact amounts to the \emph{Second fundamental theorem of invariants}. In the quantum case, this linear independence problem can be ruled out for a large class of quantum groups because of the next proposition (see \cite[Lem 4.16]{freslon2013representation} for a proof).

\begin{prop}\label{prop:linearindependence}
Let $\CC$ be a category of \emph{noncrossing} $\A$-coloured partitions and let $N\geqslant 4$ be an integer. Then, for any two words $w$ and $w'$ on $\A$, the linear maps $(T_{p})_{p\in \CC(w, w')}$ are linearly independent.
\end{prop}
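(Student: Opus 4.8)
The plan is to prove linear independence of the maps $(T_p)_{p \in \CC(w,w')}$ by reducing the coloured problem to the known uncoloured case. The key observation is that $T_p$ depends only on the underlying uncoloured partition, so the colours merely partition the index set $\CC(w,w')$ into a subset of $P(\vert w\vert, \vert w'\vert)$; what we must show is that this restriction to noncrossing partitions (for $N \geqslant 4$) yields a linearly independent family. First I would recall or establish the uncoloured statement: for noncrossing partitions $p \in NC(k,l)$ and $N \geqslant 4$, the operators $(T_p)$ are linearly independent. Since every coloured partition in $\CC(w,w')$ underlies a distinct element of $NC(\vert w\vert, \vert w'\vert)$ (two coloured partitions with the same colouring words and the same strings are identical), the coloured family injects into the uncoloured noncrossing family, and linear independence of the latter immediately gives the former.

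The crux is therefore the \emph{uncoloured} linear independence of $(T_p)_{p \in NC(k,l)}$ for $N \geqslant 4$. The natural route is to compute the Gram matrix $G_{p,q} = \langle T_p, T_q\rangle$ and show it is nonsingular. Using the composition rule $T_p^* = T_{p^*}$ together with $T_q \circ T_{p^*} = N^{\rl(q,p^*)} T_{qp^*}$, one gets that the Hilbert--Schmidt inner product $\Tr(T_p T_q^*) = \Tr(T_p T_{q^*})$ equals $N^{b(p \vee q)}$, where $p \vee q$ is the join (the partition obtained by stacking and closing up all loops) and $b$ counts blocks. Thus the Gram matrix is $G = (N^{b(p \vee q)})_{p,q \in NC(k,l)}$. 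This is precisely the matrix whose determinant is controlled by the \emph{Möbius function} of the noncrossing partition lattice: one can write $G = M D M^{t}$ where $M$ is the (unipotent, hence invertible) incidence matrix of the poset and $D$ is diagonal with entries given by products of the form $\prod (N - i)$ arising from the Möbius/zeta decomposition.

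The main obstacle, and the reason the hypothesis $N \geqslant 4$ appears, is controlling the sign of these diagonal entries so that $\det G \neq 0$. The Möbius function of the noncrossing partition lattice is given by products of signed Catalan numbers, and the determinant of $G$ factors as a product over the lattice of terms that vanish exactly when $N$ equals certain small integers. One shows that the diagonal entries $D_{p,p}$ are of the form $\prod_{i} (N - c_i)$ with all $c_i \leqslant 3$, so that each factor is strictly positive once $N \geqslant 4$; hence $\det G > 0$ and $G$ is invertible. I would carry this out by induction on the number of blocks, peeling off a singleton or an innermost block and tracking how the inner-product matrix transforms. Since this entire uncoloured computation is exactly \cite[Lem 4.16]{freslon2013representation}, the honest content of the present proposition is the reduction in the first paragraph, and the remaining work is to cite the uncoloured result and observe that passing to a coloured sub-collection cannot destroy linear independence.
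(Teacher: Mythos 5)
Your proposal is correct and follows essentially the same route as the paper, which offers no argument of its own beyond citing the uncoloured statement \cite[Lem 4.16]{freslon2013representation}: since $T_{p}$ depends only on the underlying uncoloured partition, and two partitions in $\CC(w, w')$ with the same strings and the same (fixed) colouring words $w, w'$ coincide, the family $(T_{p})_{p\in \CC(w, w')}$ is a subfamily of $(T_{q})_{q\in NC(\vert w\vert, \vert w'\vert)}$, so the coloured statement is an immediate consequence of the uncoloured one --- exactly your first paragraph. One small caveat on your sketch of the uncoloured Gram-determinant argument: the roots $c_{i}$ are not all $\leqslant 3$; they are numbers of the form $4\cos^{2}(j\pi/m)$, which come arbitrarily close to $4$. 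Since they are all strictly smaller than $4$, the conclusion for integers $N\geqslant 4$ is unaffected, but the step ``show all $c_{i}\leqslant 3$'' would fail if carried out literally.
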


\subsection{Partition quantum groups}\label{subsec:quantumgroups}

As already announced, our framework will be that of \emph{compact quantum groups}, a theory which is built on \emph{C*-algebras}. We will not really make use of the analytic aspects of compact quantum groups so that the reader should not be worried about being unfamiliar with operator algebras. We will simply give a summary for completeness and in order to fix notations. We refer the reader to the book \cite{neshveyev2014compact} for details and proofs.

\begin{de}\label{DefCMQG}
A \emph{compact quantum group} is a pair $\G = (C(\G), \D)$ where $C(\G)$ is a unital C*-algebra and
\begin{equation*}
\D: C(\G)\rightarrow C(\G)\otimes C(\G)
\end{equation*}
is a unital $*$-homomorphism such that $(\D\otimes \ii)\circ\D = (\ii \otimes \D)\circ \D$ and the linear spans of $\D(C(\G))(1\otimes C(\G))$ and $\D(C(\G))(C(\G)\otimes 1)$ are dense in $C(\G)\otimes C(\G)$ (all the tensor products are spatial).
\end{de}

Here, $\ii$ denotes the identity map of the C*-algebra $C(\G)$ and $\D$ is called the \emph{coproduct}. The fundamental notion for our purpose is finite-dimensional representations.

\begin{de}
Let $\G$ be a compact quantum group. A \emph{representation} of $\G$ of dimension $n$ is a matrix
\begin{equation*}
(u_{ij})\in M_{n}(C(\G)) \simeq C(\G) \otimes M_{n}(\C)
\end{equation*}
such that $\D(u_{ij}) = \displaystyle\sum_{k} u_{ik}\otimes u_{kj}$ for every $1 \leqslant i, j \leqslant n$. The contragredient representation $\co{u}$ is defined by $\overline{u}_{ij} = u_{ij}^{*}$.
\end{de}

An \emph{intertwiner} between two representations $u$ and $v$ of dimension respectively $n$ and $m$ is a linear map $T: \C^{n} \rightarrow \C^{m}$ such that $(\ii \otimes T)u = v(\ii \otimes T)$. The set of intertwiners between $u$ and $v$ is denoted by $\Mor_{\G}(u, v)$, or simply by $\Mor(u, v)$ if there is no ambiguity. If there exists a unitary intertwiner between $u$ and $v$, then they are said to be \emph{unitarily equivalent}. A representation is said to be \emph{irreducible} if its only self-intertwiners are the scalar multiples of the identity. The \emph{tensor product} of two representations $u$ and $v$ is the representation
\begin{equation*}
u\otimes v = u_{12}v_{13}\in C(\G)\otimes M_{n}(\C)\otimes M_{m}(\C) \simeq C(\G)\otimes M_{nm}(\C),
\end{equation*}
where we used the \emph{leg-numbering} notations : for an operator $X$ acting on a twofold tensor product, $X_{ij}$ is the extension of $X$ acting on the $i$-th and $j$-th tensors of a multiple tensor product. Compact quantum groups have a tractable representation theory because of the following generalization of a classical result for compact groups.

\begin{thm}[Woronowicz]\label{thm:peterweyl}
Every unitary representation of a compact quantum group is unitarily equivalent to a direct sum of irreducible unitary representations. Moreover, any irreducible representation is finite-dimensional.
\end{thm}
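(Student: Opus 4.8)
The plan is to reduce everything to the existence of the \emph{Haar state}, which I would quote from \cite{neshveyev2014compact}: there is a state $h$ on $C(\G)$ that is bi-invariant, $(h\otimes\ii)\D(a)=h(a)1=(\ii\otimes h)\D(a)$ for all $a$, and which is faithful on the dense Hopf $*$-subalgebra generated by the matrix coefficients of finite-dimensional representations. This is the one genuinely deep ingredient; granting it, the theorem follows from an \emph{averaging} construction. Regarding a unitary representation $u$ on a Hilbert space $\HH$ as a unitary $u\in C(\G)\otimes B(\HH)$ with $(\D\otimes\ii)(u)=u_{13}u_{23}$, I would associate to every $x\in B(\HH)$ the operator
\begin{equation*}
\E(x)=(h\otimes\ii)\big(u(1\otimes x)u^{*}\big)\in B(\HH).
\end{equation*}
A direct computation using the representation identity for $u$ together with the invariance of $h$ shows that $\E(x)\in\End(u)$, so that $\E$ manufactures self-intertwiners out of arbitrary operators; this is the engine of the whole argument.

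The crux, which I would isolate as a lemma, is that \emph{every nonzero unitary representation contains a nonzero finite-dimensional invariant subspace}. To see this I would feed $\E$ a rank-one projection $x=|\zeta\rangle\langle\zeta|$. Then $\E(x)$ is positive, since $h$ is positive and $u(1\otimes x)u^{*}\geqslant 0$; standard trace-norm estimates (using that $h$ is a state) show that $\E(x)$ is trace-class, hence compact; and testing against $\zeta$ and invoking faithfulness of $h$ on the Hopf $*$-algebra of matrix coefficients gives $\langle\E(x)\zeta,\zeta\rangle>0$, so $\E(x)\neq 0$. A nonzero positive compact operator has a nonzero eigenvalue $\lambda$ with \emph{finite-dimensional} eigenspace, and the corresponding spectral projection, being a continuous function of the self-intertwiner $\E(x)$, again lies in $\End(u)$. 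Its range is a nonzero finite-dimensional invariant subspace. In particular an irreducible representation must coincide with such a subspace, which already proves the second assertion: every irreducible representation is finite-dimensional.

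It remains to decompose an arbitrary unitary representation. In finite dimensions no averaging is needed: since $u$ is unitary, $\End(u)$ is a unital $*$-subalgebra of $B(\HH)$, hence a finite-dimensional C*-algebra, therefore semisimple and isomorphic to a direct sum of matrix algebras. Its minimal projections are mutually orthogonal, sum to the identity, and each cuts out a subspace on which $u$ restricts to an irreducible representation (irreducibility being equivalent to $\End(u)=\C\,\Id$); this yields the decomposition in the finite-dimensional case. For a general $\HH$ I would run a Zorn's lemma argument: choose a maximal family of mutually orthogonal irreducible invariant subspaces. Their closed span is invariant with invariant orthogonal complement (again because $u$ is unitary), and if that complement were nonzero the lemma above would produce inside it a nonzero finite-dimensional invariant subspace, which decomposes into irreducibles by the finite-dimensional case, contradicting maximality. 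Hence the complement vanishes and $\HH$ is the orthogonal direct sum of irreducibles.

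The main obstacle is concentrated in the averaging lemma rather than in the bookkeeping: one must verify both that $\E(x)$ genuinely intertwines --- a computation marrying the coproduct identity $(\D\otimes\ii)(u)=u_{13}u_{23}$ to the bi-invariance of $h$ --- and that it is a \emph{nonzero compact} operator, where positivity and faithfulness of the Haar state enter decisively. Of course the deepest input, the existence of $h$ itself, I would not reprove but simply cite from \cite{neshveyev2014compact}.
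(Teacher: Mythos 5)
First, a point of reference: the paper does not prove this statement at all --- it is quoted as Woronowicz's theorem, with \cite{neshveyev2014compact} cited for details --- so your attempt can only be measured against the standard arguments. Your architecture (average a rank-one projection into a self-intertwiner, extract a finite-dimensional invariant subspace from a nonzero positive compact element of $\End(u)$, handle the finite-dimensional case via the C*-algebra $\End(u)$, and conclude by Zorn) is the classical Peter--Weyl strategy, and your verification that $\E(x)\in\End(u)$ from $(\D\otimes\ii)(u)=u_{13}u_{23}$ and invariance of $h$ is correct.

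The gap is concentrated in the sentence ``standard trace-norm estimates (using that $h$ is a state) show that $\E(x)$ is trace-class''. The computation behind this,
\begin{equation*}
\Tr(\E(x))=(h\otimes\Tr)\bigl(u(1\otimes x)u^{*}\bigr)=(h\otimes\Tr)\bigl((1\otimes x)u^{*}u\bigr)=\Tr(x),
\end{equation*}
requires cycling $u^{*}$ around under $h\otimes\Tr$, and $h\otimes\Tr$ is tracial on $C(\G)\otimes\B(\HH)$ only when $h$ itself is a trace, i.e.\ only for Kac-type (unimodular) quantum groups. Concretely, fixing an orthonormal basis $(e_{i})$ and writing $x=|\zeta\rangle\langle\zeta|$ and $u_{i\zeta}=(1\otimes e_{i}^{*})u(1\otimes\zeta)$, one gets $\Tr(\E(x))=\sum_{i}h(u_{i\zeta}u_{i\zeta}^{*})$, whereas unitarity of $u$ only controls the reversed products $\sum_{i}h(u_{i\zeta}^{*}u_{i\zeta})=\|\zeta\|^{2}$. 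For $\G=SU_{q}(2)$ and $u=\bigoplus_{n}v^{(n)}$ the direct sum of all irreducibles, the orthogonality relations bring in the modular matrices $F_{n}$ and yield $\Tr(\E(x))=\sum_{n}\dim(v^{(n)})\,\langle F_{n}^{-1}\zeta_{n},\zeta_{n}\rangle/\Tr(F_{n}^{-1})$, which diverges for suitable unit vectors $\zeta$: so $\E(x)$ is genuinely \emph{not} trace-class in general, and I see no a priori argument that it is even compact (a posteriori it is, but only because the representation decomposes --- which is what is being proved). A second, smaller issue is the non-vanishing of $\E(x)$: the coefficient $(1\otimes\zeta^{*})u(1\otimes\zeta)$ of an arbitrary infinite-dimensional unitary representation is not yet known to lie in the Hopf $*$-algebra $\Pol(\G)$ where $h$ is faithful --- that is part of what the theorem asserts --- and even granting it one must check the coefficient is nonzero. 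This one is repairable: some $u_{i\zeta}$ is nonzero since $\sum_{i}u_{i\zeta}^{*}u_{i\zeta}=\|\zeta\|^{2}$, and faithfulness of $h$ on $C_{\text{red}}(\G)$ gives $\langle\E(x)e_{i},e_{i}\rangle=h(u_{i\zeta}u_{i\zeta}^{*})>0$. The upshot: your proof is complete for unimodular quantum groups (which in fact covers every partition quantum group of this paper), but the general theorem needs the route taken in \cite{neshveyev2014compact} --- decompose the regular representation first, establish $\Pol(\G)$ and the orthogonality relations where the operators $F$ appear, and only then treat an arbitrary unitary representation through its isotypical components.
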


We now want to state precisely the generalization of Tannaka-Krein duality proved by S.L. Woronowicz in \cite{woronowicz1988tannaka} for compact matrix quantum groups. To this end, we first introduce some notations. For a representation $u$, we set $u^{\circ} = u$ and $u^{\bullet} = \co{u}$. If $w = w_{1}\dots w_{k}$ is a word on the set $\{\circ, \bullet\}$, we set $u^{\otimes w} = u^{w_{1}}\otimes \dots \otimes u^{w_{n}}$, which is a representation acting on a Hilbert space $V^{\otimes w}$. A representation $u$ of a compact quantum group $\G$ is said to be \emph{generating} if for any irreducible representation $v$, there is a word $w$ on $\{\circ, \bullet\}$ such that $v\subset u^{\otimes w}$. By Theorem \ref{thm:peterweyl}, this implies that any finite-dimensional representation of $\G$ is a subrepresentation of a direct sum of representations of the form $u^{\otimes w}$.

\begin{de}
A \emph{compact matrix quantum group} is a pair $(\G, u)$ where $\G$ is a compact quantum group and $u$ is a finite-dimensional generating representation of $\G$.
\end{de}

Let us now give the definition of a C*-tensor category with duals which will make the link with categories of partitions clear.

\begin{de}\label{de:tensorcategory}
Let $V$ be a finite-dimensional Hilbert space. A \emph{(concrete) C*-tensor category with duals} $\mathfrak{C}$ is a collection of spaces $\Mor(w, w')\subset\B(V^{\otimes w}, V^{\otimes w'})$ for all words $w$ and $w'$ on $\{\circ, \bullet\}$ such that
\begin{enumerate}
\item If $T\in \Mor(w, w')$ and $T'\in \Mor(w'', w''')$, then $T\otimes T'\in \Mor(w.w'', w'.w''')$.
\item If $T\in \Mor(w, w')$ and $T'\in \Mor(w', w'')$, then $T'\circ T\in \Mor(w, w'')$.
\item If $T\in \Mor(w, w')$, then $T^{*}\in \Mor(w', w)$.
\item $\Id : x\mapsto x\in \Mor(\circ, \circ)$.
\item $D : x\otimes y \mapsto \langle x, y\rangle \in \Mor(\bullet\circ, \emptyset)$, where by convention $V^{\emptyset} = \CC$.
\end{enumerate}
\end{de}

Our statement of Tannaka-Krein duality is given in the fashion of \cite[Thm 3.6]{banica2009liberation} :

\begin{thm}[Woronowicz]\label{thm:tannaka}
Let $\mathfrak{C}$ be a C*-tensor category with duals. Then, there exists a compact matrix quantum group $(\G, u)$ such that the underlying space of $u$ is isomorphic to $V$ and, for any words $w$ and $w'$ on $\{\circ, \bullet\}$,
\begin{equation*}
\Mor_{\G}(u^{\otimes w}, u^{\otimes w'}) = \Mor(w, w').
\end{equation*}
Moreover, $(\G, u)$ is unique up to isomorphism.
\end{thm}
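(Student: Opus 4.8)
\section*{Proof proposal}

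The plan is to reconstruct the quantum group directly from the category $\mathfrak{C}$, by writing down its algebra of matrix coefficients and then checking that this algebra has exactly $\mathfrak{C}$ as its intertwiner spaces. Write $n = \dim V$ and let $A_{0}$ be the universal unital $*$-algebra on $n^{2}$ generators $u_{ij}$; set $u = (u_{ij})$, so that $u^{\circ} = u$ and $u^{\bullet} = \co{u} = (u_{ij}^{*})$, and form the matrix $u^{\otimes w}$ for each word $w$ on $\{\circ, \bullet\}$. For every morphism $T \in \Mor(w, w')$ of $\mathfrak{C}$ I would impose the relation $(\ii \otimes T) u^{\otimes w} = u^{\otimes w'} (\ii \otimes T)$, and let $A$ be the quotient of $A_{0}$ by the resulting $*$-ideal. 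Defining $\D(u_{ij}) = \sum_{k} u_{ik} \otimes u_{kj}$, a counit by $u_{ij} \mapsto \delta_{ij}$ and an antipode by $u_{ij} \mapsto u_{ji}^{*}$, the first (routine) step is to check that these maps descend to the quotient, i.e. that the imposed relations are compatible with the coproduct; this follows from multiplicativity of $\D$ on tensor products together with the stability of $\mathfrak{C}$ under the category operations (1)--(3).

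The next step produces an honest $C^{*}$-algebra. Here the duality morphism $D \in \Mor(\bullet\circ, \emptyset)$ and its adjoint $D^{*}$ are essential: they encode a solution of the conjugate equations for $u$, which in every $*$-representation of $A$ forces the sums $\sum_{j} u_{ij}u_{ij}^{*}$ and $\sum_{j} u_{ij}^{*}u_{ij}$ to be controlled, hence the generators to be uniformly bounded. Consequently the universal $C^{*}$-seminorm on $A$ is finite; completing yields a unital $C^{*}$-algebra $C(\G)$ to which $\D$ extends by continuity, and one verifies the density conditions of Definition \ref{DefCMQG}, so that $(\G, u)$ is a compact matrix quantum group with $u$ generating. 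By construction one has the inclusion $\Mor(w, w') \subseteq \Mor_{\G}(u^{\otimes w}, u^{\otimes w'})$ for all $w, w'$.

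The heart of the theorem is the reverse inclusion, and this is the step I expect to be the main obstacle. The strategy is to use the Haar state $h$ on $C(\G)$, whose existence is part of Woronowicz's general theory and whose semisimplicity context is Theorem \ref{thm:peterweyl}, to write down the orthogonal projection $P$ from $\B(V^{\otimes w}, V^{\otimes w'})$ onto $\Mor_{\G}(u^{\otimes w}, u^{\otimes w'})$ by the averaging formula $P(T) = (h \otimes \id)\big(u^{\otimes w'}(1 \otimes T)(u^{\otimes w})^{*}\big)$. The delicate point is to show that this projection already takes values in $\mathfrak{C}(w, w')$: by computing $h$ on matrix coefficients from the categorical data, that is, through the solution of the conjugate equations furnished by $D$ and the resulting Woronowicz characters (which are themselves morphisms of $\mathfrak{C}$), one expresses $P(T)$ as a composition of $T$ with duality maps and their tensor products, all lying in $\mathfrak{C}$ by axioms (1)--(5). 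Then the chain $\mathfrak{C}(w, w') \subseteq \Mor_{\G}(u^{\otimes w}, u^{\otimes w'}) = \operatorname{range}(P) \subseteq \mathfrak{C}(w, w')$ forces the three spaces to coincide, which is the asserted equality.

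Finally, uniqueness follows from the standard fact that a compact matrix quantum group is determined up to isomorphism by the collection of its intertwiner spaces $\Mor_{\G}(u^{\otimes w}, u^{\otimes w'})$, since these govern both the relations among the generators $u_{ij}$ and the comultiplication. Thus if $(\G, u)$ and $(\G', u')$ both satisfy the conclusion with the same $V$, the assignment $u_{ij} \mapsto u'_{ij}$ extends to a $*$-isomorphism of the underlying Hopf $*$-algebras of matrix coefficients intertwining the coproducts, and hence to an isomorphism of the associated compact quantum groups.
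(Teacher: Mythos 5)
This theorem is not proved in the paper at all: it is Woronowicz's Tannaka--Krein duality, imported as a black box (cited in the form of \cite[Thm 3.6]{banica2009liberation} and \cite{woronowicz1988tannaka}), so there is no in-paper argument to compare yours against. Judged on its own terms, your outline of the existence part is the standard one -- universal $*$-algebra on the $u_{ij}$, relations $(\ii\otimes T)u^{\otimes w}=u^{\otimes w'}(\ii\otimes T)$, Hopf $*$-structure descending to the quotient, boundedness of the generators from the duality morphism $D$ (which gives $\sum_i u_{ik}^{*}u_{il}=\delta_{kl}$), C*-completion -- and the uniqueness paragraph is fine, with the usual caveat that uniqueness holds at the level of the Hopf $*$-algebra of coefficients rather than of a particular C*-completion.

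The genuine gap is in the step you yourself flag as the main obstacle, the reverse inclusion $\Mor_{\G}(u^{\otimes w},u^{\otimes w'})\subseteq \Mor(w,w')$, and the Haar-state averaging route you propose is circular as described. The projection $P(T)=(h\otimes\id)\bigl(u^{\otimes w'}(1\otimes T)(u^{\otimes w})^{*}\bigr)$ does map onto $\Mor_{\G}(u^{\otimes w},u^{\otimes w'})$, but evaluating $h$ on the coefficients of $u^{\otimes w'}\otimes\overline{u^{\otimes w}}$ amounts, by the orthogonality relations, to identifying the space of invariant vectors of that representation, i.e.\ $\Mor_{\G}(\emptyset,\,\cdot\,)$ -- which is precisely an instance of the equality being proved. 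Saying that the Woronowicz characters and the resulting formula for $h$ "are themselves morphisms of $\mathfrak{C}$" presupposes the conclusion; nothing in the construction so far tells you that $h$ restricted to these coefficients is computable from the categorical data alone. The standard non-circular arguments proceed differently: either Woronowicz's original construction of the dual algebra $\prod_{w}\End_{\mathfrak{C}}(w)$ acting on the coefficient coalgebra, or (in the streamlined modern form) the observation that $\Mor_{\G}(u^{\otimes w},u^{\otimes w})$ is the commutant of the image of the dual algebra of $\Pol(\G)$ in $\B(V^{\otimes w})$, which by construction equals the commutant of the commutant of the finite-dimensional C*-algebra $\mathfrak{C}(w,w)$, so the von Neumann bicommutant theorem gives $\Mor_{\G}(u^{\otimes w},u^{\otimes w})=\mathfrak{C}(w,w)$; the off-diagonal spaces $\Mor(w,w')$ are then recovered using the duality maps or a direct-sum trick. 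Without some such mechanism your chain of inclusions does not close.
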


Note that if $\sqcup$ denotes the rotated version of the partition $\vert$ then, with the notation above, $T_{\sqcup} = D$. Thus, we see that any category of partitions gives rise to a concrete C*-tensor category with duals as soon as we choose an integer $N$. This is the idea behind the definition of partition quantum groups. However, coloured partitions may have more than two colours so that we first have to adapt the notation $u^{\otimes w}$ to several colours. Let $\A$ be a colour set and assume that we have fixed a representation $u^{x}$ for every $x\in \A$. Then, if $w = w_{1}\dots w_{k}$ is a word on $\A$, we set
\begin{equation*}
u^{\otimes w} = u^{w_{1}}\otimes \dots \otimes u^{w_{n}}.
\end{equation*}
We are now ready for the definition of partition quantum groups, which relies on a simple application of Theorem \ref{thm:tannaka}.

\begin{thm}\label{thm:partitionqg}
Let $\A$ be a colour set, let $\CC$ be an $\A$-coloured category of partitions and let $N$ be an integer. Then, there exists a unique compact quantum group $\G$ together with representations $(u^{x})_{x\in \A}$ of dimension $N$ whose direct sum is generating and such that
\begin{equation*}
\Mor_{\G}(u^{\otimes w}, u^{\otimes w'}) = \Span\left\{T_{p}, p\in \CC(w, w')\right\}.
\end{equation*}
\end{thm}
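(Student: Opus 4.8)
The plan is to apply Woronowicz's Tannaka-Krein duality (Theorem \ref{thm:tannaka}), the only genuine point being that the statement recalled above is phrased for a single generating representation, i.e.\ for words on $\{\circ,\bullet\}$, whereas here one must produce one representation $u^{x}$ per colour $x\in\A$. I would therefore first observe that Theorem \ref{thm:tannaka} extends verbatim to a family of generating objects: the proof in \cite{woronowicz1988tannaka} (or the reconstruction in \cite{neshveyev2014compact}) only uses that one has a concrete rigid C*-tensor category whose objects are the words on a fixed index set, so replacing $\{\circ,\bullet\}$ by $\A$ (with objects $V^{\otimes w}$ for $w$ a word on $\A$) changes nothing. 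Concretely, one may also reduce to the stated form by taking $u=\bigoplus_{x\in\A}u^{x}$ as a single generator and recovering each $u^{x}$ as the subrepresentation cut out by the corresponding coordinate projection, which is itself an intertwiner.

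Granting this, the substance of the argument is to check that, fixing $V=\C^{N}$ and setting
\[
\Mor(w,w')=\Span\{T_{p}:p\in\CC(w,w')\}\subset \B(V^{\otimes w},V^{\otimes w'})
\]
for all words $w,w'$ on $\A$, one obtains a concrete C*-tensor category with duals in the sense of (the coloured analogue of) Definition \ref{de:tensorcategory}. Each axiom is a one-line consequence of the matching stability property of $\CC$ together with the three compatibility rules recorded above. Indeed, $T_{p}\otimes T_{q}=T_{p\otimes q}$ and stability of $\CC$ under horizontal concatenation give the tensor axiom; $T_{p}\circ T_{q}=N^{\rl(p,q)}T_{pq}$ together with stability under vertical concatenation give the composition axiom, the scalar $N^{\rl(p,q)}$ being harmless since we work with linear spans; $T_{p}^{*}=T_{p^{*}}$ and stability under reflection give the adjoint axiom; and the $x$-identity partitions provide the required identity morphisms. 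For duality, recall that the rotated identity $\sqcup$ satisfies $T_{\sqcup}=D$; rotating the $x$-identity partition (which lies in $\CC$ by stability under rotation) therefore produces, for every $x$, a morphism in $\Mor(\emptyset,\overline{x}\,x)$ together with its adjoint in $\Mor(\overline{x}\,x,\emptyset)$. These are exactly solutions of the conjugate equations exhibiting $u^{\overline{x}}$ as a conjugate of $u^{x}$, and closure of $\A$ under $x\mapsto\overline{x}$ guarantees these conjugates remain inside the family, so the category has duals.

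Applying the multi-object form of Tannaka-Krein then yields a compact quantum group $\G$ and finite-dimensional representations $(u^{x})_{x\in\A}$ on $V$, each of dimension $N$, with $\Mor_{\G}(u^{\otimes w},u^{\otimes w'})=\Mor(w,w')$ for all $w,w'$. The $u^{x}$ are generating by construction, since every irreducible occurs in some $u^{\otimes w}$, whence their direct sum is a generating representation; and uniqueness of $(\G,(u^x))$ up to isomorphism is inherited directly from the uniqueness clause of Theorem \ref{thm:tannaka}.

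I expect the only real obstacle to be the bookkeeping in the multi-colour generalisation of Tannaka-Krein: namely, checking that the reduction via $u=\bigoplus_{x}u^{x}$ is well-behaved when $\A$ is infinite, or equivalently that the abstract reconstruction applies to an index set of arbitrary cardinality. Everything concerning the partitions themselves is routine once the three relations $T_{p}^{*}=T_{p^{*}}$, $T_{p}\otimes T_{q}=T_{p\otimes q}$ and $T_{p}\circ T_{q}=N^{\rl(p,q)}T_{pq}$ are in hand, since each categorical axiom then reduces to the corresponding stability property of $\CC$.
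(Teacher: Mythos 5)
Your proposal is correct and follows essentially the same route as the paper: one checks that the spans of the $T_{p}$ form a concrete C*-tensor category with duals (using the three compatibility rules and the rotated $x$-identities for duality), applies Woronowicz's Tannaka--Krein theorem to the single generator $u=\bigoplus_{x}u^{x}$, and recovers each $u^{x}$ as the subrepresentation cut out by the projection $T_{x}$ associated to the $x$-identity, exactly as in the paper. The one point you flag but leave open --- an infinite colour set, where $\bigoplus_{x}u^{x}$ is no longer finite-dimensional --- is resolved in the paper by writing $\A$ as an increasing union of finite colour sets and passing to the inductive limit of the corresponding quantum groups.
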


\begin{proof}
Let us first assume that $\A$ is finite. The idea is to get a compact quantum group whose fundamental representation is the sum of all the representations $u^{x}$ for $x\in \A$. To do this, we only have to define the morphism spaces of this $u$ and apply Theorem \ref{thm:tannaka}. Let us first notice that there are obvious isomorphisms
\begin{equation*}
\left\{\begin{array}{ccc}
\Mor(a\oplus b, c) & = & \Mor(a, c)\oplus\Mor(b, c) \\
\Mor(a, b\oplus c) & = & \Mor(a, b) \oplus \Mor(a, c)
\end{array}\right.
\end{equation*}
Thus, if $s$ is a word on $\{\circ, \bullet\}$, $u^{\otimes s}$ should be the sum of $u^{\otimes w}$ for all words $w$ on $\A$ having the same length as $s$. Accordingly we set, for any words $s$ and $s'$ on $\{\circ, \bullet\}$,
\begin{equation*}
\Mor(s, s') = \bigoplus_{\vert w\vert = \vert s\vert, \vert w'\vert = \vert s'\vert} \Span\left\{T_{p}, p\in \CC(w, w')\right\}
\end{equation*}
where $w$ and $w'$ are words on $\A$. Because $\CC$ is a category of partitions, this defines a \emph{bona fide} C*-tensor category with duals. By virtue of Theorem \ref{thm:tannaka}, this gives rise to a quantum group $\G$ together with a fundamental representation $u$. Let $T_{x}\in \Mor_{\G}(u, u)$ be the map associated to the $x$-identity. Since this is a multiple of a projection, it comes from a subrepresentation $u^{x}$ of $u$. Now, if $w = w_{1} \dots w_{n}$ and $w' = w'_{1}\dots w'_{k}$ are words on $\A$, then the space of morphisms between $u^{\otimes w}$ and $u^{\otimes w'}$ is
\begin{equation*}
T_{w}.\Mor(u^{\otimes k}, u^{\otimes l}).T_{w'},
\end{equation*}
where $T_{w} = T_{w_{1}}\otimes \dots\otimes T_{w_{n}}$ and similarly for $T_{w'}$. The assertion on the morphism spaces of tensor powers of the representations $u^{x}$ is then straightforward from this description. Let us now consider an infinite colour set $\A$. It can be written as an increasing union of finite colour sets $\A_{i}$. Each $\A_{i}$ gives rise to a compact quantum group $\G_{i}$ and the inclusions of colour sets turn the collection $(\G_{i})_{i}$ into an inductive system. It is then clear that the inductive limit quantum group satisfies the conclusion of the theorem.
\end{proof}

Note that the fact that the $x$-identity is in $\CC$ and the rotation operation imply that the contragredient representation of $u^{x}$ is $u^{\overline{x}}$, as expected.

\begin{de}
The compact quantum group $\G$ is called the \emph{partition quantum group} associated to the category of $\A$-coloured partitions $\CC$. If moreover $\CC$ is noncrossing, then $\G$ is said to be a \emph{noncrossing partition quantum group}.
\end{de}

\begin{rem}
One could slightly generalize this definition by dropping the requirement that the basic representations $u^{x}$ all have the same dimension. This requires to take a family of integers $(N_{x})_{x\in \A}$ satisfying some compatibility conditions depending on the category of partitions.
\end{rem}

As we will see in Section \ref{sec:examples}, there are many examples of partition quantum groups, including all known examples of free quantum groups and some classical groups whose Schur-Weyl duality can be implemented diagrammatically. The case of classical groups can be in fact easily identified, using the following definition : if $x, y\in \A$, the $(x, y)$-crossing is the partition $\crosspart = \{\{1, 4\}, \{2, 3\}\}$ with colours $x, y$ on the upper row and $y, x$ on the lower row.

\begin{prop}
Let $\CC$ be a category of partitions and let $N$ be an integer. Then, the associated partition quantum group is a classical group if and only if the $(x, y)$-crossing belongs to $\CC$ for all $x, y$ in $\A$ (including the $(x, x)$-crossings).
\end{prop}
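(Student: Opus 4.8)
The plan is to translate the word ``classical'' into commutativity of $C(\G)$, reduce commutativity to the pairwise commutation of the generating matrix coefficients, and then recognize those commutation relations as the assertion that the tensor flip is an intertwiner --- which is exactly the operator attached to the crossing partition.

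First I would recall that the compact matrix quantum group $\G$ produced by Theorem \ref{thm:partitionqg} is a classical compact group if and only if $C(\G)$ is commutative, and that $C(\G)$ is generated as a C*-algebra by the matrix coefficients $u^{x}_{ij}$ for $x\in\A$. Since $\A$ is stable under $x\mapsto\overline{x}$ and the contragredient of $u^{x}$ is $u^{\overline{x}}$, so that $u^{\overline{x}}_{ij}=(u^{x}_{ji})^{*}$, this generating set is stable under taking adjoints. Hence $C(\G)$ is commutative if and only if $u^{x}_{ij}$ and $u^{y}_{kl}$ commute for all $x,y\in\A$ and all indices.

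The heart of the argument is a short computation. Let $\Sigma\colon\C^{N}\otimes\C^{N}\to\C^{N}\otimes\C^{N}$ be the tensor flip $e_{i}\otimes e_{j}\mapsto e_{j}\otimes e_{i}$. Writing the intertwining condition $\Sigma(u^{x}\otimes u^{y})=(u^{y}\otimes u^{x})\Sigma$ out entrywise, the left-hand side yields the coefficient $u^{x}_{ij}u^{y}_{kl}$ while the right-hand side yields $u^{y}_{kl}u^{x}_{ij}$; therefore $\Sigma\in\Mor_{\G}(u^{x}\otimes u^{y},u^{y}\otimes u^{x})$ if and only if $[u^{x}_{ij},u^{y}_{kl}]=0$ for all indices. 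On the other hand, evaluating $T_{p}$ for $p$ the $(x,y)$-crossing $\crosspart=\{\{1,4\},\{2,3\}\}$ gives $T_{\crosspart}(e_{i}\otimes e_{j})=e_{j}\otimes e_{i}$, i.e. $T_{\crosspart}=\Sigma$. Combining these two facts with the previous paragraph, $\G$ is classical if and only if $T_{\crosspart}=\Sigma\in\Mor_{\G}(u^{x}\otimes u^{y},u^{y}\otimes u^{x})$ for all $x,y\in\A$.

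This already settles the ``if'' direction: when every $(x,y)$-crossing lies in $\CC$, Theorem \ref{thm:partitionqg} makes each $T_{\crosspart}$ an intertwiner, the generators commute, and $\G$ is classical. The delicate point is the converse, where classicality only gives $\Sigma\in\Span\{T_{p}:p\in\CC(xy,yx)\}$, and since $p\mapsto T_{p}$ need not be injective this does not formally force $\crosspart\in\CC$. I would resolve this with a closure argument: let $\CC'$ be the category generated by $\CC$ together with all $(x,y)$-crossings. Since $\Mor_{\G}$ is a tensor category that by classicality already contains every $\Sigma=T_{\crosspart}$ as well as all $T_{p}$ with $p\in\CC$, and since $T$ is monoidal, $\Mor_{\G}$ contains $T_{p}$ for every $p\in\CC'$; as the reverse inclusion is automatic from $\CC\subseteq\CC'$, we get $\Mor_{\G}=\Mor_{\G'}$ and hence $\G=\G'$ by the uniqueness in Theorem \ref{thm:tannaka}. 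Thus the crossings may be taken inside the category defining $\G$. The main obstacle is therefore exactly this non-injectivity of $p\mapsto T_{p}$: one either phrases the statement in terms of the maximal category $\{p:T_{p}\in\Mor_{\G}\}$ attached to $\G$, or restricts to the values of $N$ where a linear-independence input in the spirit of Proposition \ref{prop:linearindependence} applies; the remaining content of the proof is just the two elementary identifications $T_{\crosspart}=\Sigma$ and ``$\Sigma$ intertwines $\iff$ generators commute.''
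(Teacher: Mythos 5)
Your argument for the ``if'' direction is exactly the paper's: the crossing makes the flip an intertwiner, hence the matrix coefficients of the $u^{x}$ pairwise commute, they generate a dense commutative subalgebra of $C(\G)$, and Woronowicz's theorem then identifies $\G$ with a classical compact group. Where you genuinely diverge is on the converse, and here your treatment is actually more complete than the paper's: the printed proof only establishes the ``if'' direction and is silent on ``only if''. You are right to flag that classicality only yields $T_{\crosspart}\in\Span\{T_{p},\ p\in\CC(xy,yx)\}$, and that this does not formally place $\crosspart$ in $\CC$ because $p\mapsto T_{p}$ need not be injective; note that Proposition \ref{prop:linearindependence} cannot be invoked here since the crossing is, by definition, not a noncrossing partition. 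Your closure argument --- replacing $\CC$ by the category $\CC'$ generated by $\CC$ and all crossings and using the uniqueness in Theorem \ref{thm:tannaka} to see that $\CC'$ defines the same quantum group --- is the right way to make the ``only if'' direction meaningful; alternatively, for $N\geqslant 4$ the maps $T_{p}$ for $p\in P(2,2)$ are linearly independent, which upgrades $T_{\crosspart}\in\Span\{T_{p},\ p\in\CC(xy,yx)\}$ to the literal conclusion $\crosspart\in\CC$. So: no gap, same core computation as the paper for the direction it proves, plus a careful (and needed) discussion of the direction it does not.
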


\begin{proof}
Let $p$ be the $(x, y)$-crossing. The fact that $T_{p}$ is an intertwiner means that all the matrix coefficients of $u^{x}$ and $u^{y}$ commute to one another. Let $B$ be the subalgebra of $C(\G)$ generated by the matrix coefficients of all the representations $u^{x}$. Since the sum of these representations is generating, $B$ is dense in $C(\G)$. Thus, the assumptions implies that the C*-algebra $C(\G)$ is commutative, which by \cite[Thm 1.5]{woronowicz1987compact} implies that $\G$ is a classical compact group.
\end{proof}

We will postpone the study of examples to the dedicated Section \ref{sec:examples} in order to give a comprehensive treatment. For the moment, let us show how one can derive the representation theory of a partition quantum group from its category of partitions.

\subsection{Representation theory}\label{sec:representation}

The next step of our program is to study the representation theory of partition quantum groups in terms of their defining category of partitions. Part of this has been done in a joint work with M. Weber \cite{freslon2013representation} for the case of one colour. However, the proofs are valid in the general setting for the following reason : in all statements, we start by fixing a colouring $w$ and work in $V^{\otimes w}$. Thus, the colouring does not enter the picture any more. This has been detailed in the case of two colours in \cite[Sec 6]{freslon2013representation} and in \cite{freslon2013fusion}. Since the generalization to more colours is straightforward, we simply give a survey.

\subsubsection{General description}

In view of the link with Schur-Weyl duality, studying the representations of a partition quantum group $\G$ amounts to studying the representations of the "centralizer" algebras $\Mor(u^{\otimes w}, u^{\otimes w})$. This in turn can be formulated as a classification problem for idempotents of the latter algebra. It is therefore not surprising that the key tool is the so-called \emph{projective partitions}. Let us give all the necessary definitions for this at once.

\begin{de}
A partition $p$ is said to be \emph{projective} if $pp = p = p^{*}$. Moreover,
\begin{itemize}
\item A projective partition $p$ is said to be \emph{dominated} by another projective partition $q$ if $qp = p$. Then, $pq = p$ and we write $p\preceq q$.
\item Two projective partitions $p$ and $q$ are said to be \emph{equivalent} in a category of partitions $\CC$ if there exists $r\in \CC$ such that $r^{*}r = p$ and $rr^{*} = q$. We then write $p\sim q$ or $p\sim_{\CC} q$ if we want to keep track of the category of partitions.
\end{itemize}
\end{de}

Note that if $p$ and $q$ are equivalent in $\CC$, then they in fact both belong to $\CC$. It is clear that the linear operator $T_{p}$ is a scalar multiple of a projection if and only if $p$ is projective, giving us a way of building idempotents in $\Mor(u^{\otimes w}, u^{\otimes w})$. Moreover, $p\preceq q$ if and only if $T_{p}$ is dominated by $T_{q}$ as (multiples of) projections. More generally, recalling that a \emph{partial isometry} is an operator $T$ such that both $T^{*}T$ and $TT^{*}$ are projections, we have by \cite[Prop 2.18]{freslon2013representation},

\begin{prop}
For any partition $r$, $r^{*}r$ is a projective partition. In particular, all the operators $T_{r}$ are scalar multiples of partial isometries.
\end{prop}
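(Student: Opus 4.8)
The plan is to prove the two claims in order; the essential point is a combinatorial identity showing $r^{*}r$ is idempotent, after which everything follows from the composition rules for the maps $T_{p}$.

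\emph{Self-adjointness.} Reflecting a vertical concatenation about the horizontal axis exchanges the two factors and reflects each, so $*$ is anti-multiplicative on partitions: $(qp)^{*}=p^{*}q^{*}$ whenever the product is defined. Since reflecting twice is the identity, $(r^{*})^{*}=r$, and hence $(r^{*}r)^{*}=r^{*}(r^{*})^{*}=r^{*}r$. Thus $r^{*}r$ is self-adjoint as a partition.

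\emph{Idempotency.} This is the crux. Writing $r\in P(k,l)$, both rows of $p:=r^{*}r\in P(k,k)$ are copies of the upper row of $r$; I denote the upper row $u_{1},\dots,u_{k}$, the lower row $w_{1},\dots,w_{k}$, and the internal (middle) row $m_{1},\dots,m_{l}$, the latter being copies of the lower row of $r$. In the stacked diagram defining $p$, connectivity is governed entirely by the blocks of $r$, with both $u_{i}$ and $w_{i}$ playing the role of the $i$-th upper point of $r$ and $m_{a}$ the role of the $a$-th lower point. Tracing the blocks, one finds that a through-block of $r$ with upper-index set $B$ yields a single through-block $\{u_{i},w_{i}:i\in B\}$ of $p$; a block of $r$ contained in its upper row yields two blocks $\{u_{i}:i\in B\}$ and $\{w_{i}:i\in B\}$; and a block of $r$ contained in its lower row contributes only middle points, i.e. a loop, which is deleted. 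To compute $p^{2}$ I would stack two copies of $p$ sharing a middle row $m'_{1},\dots,m'_{k}$ and apply this description to each copy: through-blocks reconnect across the shared row and reproduce the through-blocks of $p$ verbatim; the two one-sided blocks coming from an upper block of $r$ reconnect only through a block $\{m'_{i}:i\in B\}$ lying entirely in the shared row, which is a loop and is removed, leaving precisely $\{u_{i}:i\in B\}$ and $\{w_{i}:i\in B\}$. Hence the blocks of $p^{2}$ coincide with those of $p$, so $(r^{*}r)(r^{*}r)=r^{*}r$. Equivalently, one may first prove the ``partial isometry identity'' $rr^{*}r=r$ at the level of partitions and deduce $(r^{*}r)(r^{*}r)=r^{*}(rr^{*}r)=r^{*}r$ by associativity of vertical concatenation. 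Combined with self-adjointness, this shows $r^{*}r$ is projective. The main obstacle is exactly the loop bookkeeping: one must verify that the blocks created in the shared middle row upon squaring are precisely those discarded as loops, so that no new connection between the top and bottom rows survives.

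\emph{Partial isometries.} For any $r$ the composition rule gives $T_{r}^{*}T_{r}=T_{r^{*}}\circ T_{r}=N^{\rl(r^{*},r)}T_{r^{*}r}$. Since $r^{*}r$ is projective, $T_{r^{*}r}$ is a scalar multiple of an orthogonal projection, as recalled just before the statement, and since $N^{\rl(r^{*},r)}>0$ the operator $T_{r}^{*}T_{r}$ is a positive scalar multiple of a projection. Rescaling $T_{r}$ by the appropriate power of $N^{1/2}$ then produces an operator $V$ with $V^{*}V$ a projection, that is, a partial isometry; therefore $T_{r}$ is a scalar multiple of a partial isometry.
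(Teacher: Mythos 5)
Your proof is correct. The paper itself does not prove this proposition --- it simply cites \cite[Prop 2.18]{freslon2013representation} --- and your block-by-block analysis of the stacked diagram (through-blocks of $r$ glue across the middle row into through-blocks $\{u_{i},w_{i}:i\in B\}$, upper non-through-blocks double into an upper and a lower copy, lower non-through-blocks become loops and are deleted, and squaring reproduces exactly this block structure because the only new middle-row blocks are loops) is precisely the standard argument behind that reference. The deduction of the partial-isometry statement from the composition rule $T_{r^{*}}\circ T_{r}=N^{\rl(r^{*},r)}T_{r^{*}r}$ together with the fact, recalled just before the statement, that $T_{p}$ is a scalar multiple of a projection when $p$ is projective, is also correct.
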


\begin{de}
For a partition $r$, we set $T'_{r} = \|T_{r}\|^{-1/2}T_{r}$, which is a partial isometry. Note that $T'_{p}$ is a projection as soon as $p$ is a projective partition.
\end{de}

\begin{rem}
Our notations here differ from the ones introduced in \cite{freslon2013representation}, where $T'_{p}$ is denoted $T_{p}$ and $T_{p}$ is denoted $\mathring{T}_{p}$.
\end{rem}

To go further, let us introduce additional notations. For a word $w$ on $\A$, let $\Proj(w)$ denote the set of projective partitions in $\CC$ such that the upper (hence also lower) colouring of $p$ is $w$. For $p\in \Proj(w)$, we can define a projection $P_{p}\in \Mor(u^{\otimes w}, u^{\otimes w})$ and a representation $u_{p}\subset u^{\otimes w}\in C(\G)\otimes \B((\C^{N})^{\otimes \vert w\vert})$ by
\begin{equation*}
\left\{\begin{array}{ccc}
P_{p} & = & T'_{p} - \displaystyle\bigvee_{q\prec p} T'_{q} \\
u_{p} & = & (\ii\otimes P_{p})(u^{\otimes k})
\end{array}\right.
\end{equation*}
Here, the symbol $\bigvee$ denotes the supremum of projections. According to \cite[Thm 4.18 and Prop 4.22]{freslon2013representation}, the representations $u_{p}$ enjoy the following properties :
\begin{itemize}
\item If $p\sim q$ then $u_{p}\sim u_{q}$. If moreover we know that $u_{p}$ and $u_{q}$ are non-zero, then the converse holds.
\item $u^{\otimes w} = \displaystyle\sum_{p\in \Proj(w)} u_{p}$ in the sense that the only subrepresentation of $u^{\otimes w}$ containing all the $u_{p}$'s is $u^{\otimes w}$ itself.
\end{itemize}

This description has two drawbacks : it is hard to now whether $u_{p} \neq 0$ or not (this is closely linked to the \emph{Second fundamental theorem of invariants}) and the representation $u_{p}$ is not irreducible in general. The second point can be handled using representations of finite groups as explained in \cite[Prop 4.15]{freslon2013representation}. Since we will not need this construction hereafter, we it.
One can also give a combinatorial formula for tensor products of representations. For this, let us first give a structure result on projective partitions, which is a particular instance of \cite[Prop 2.9]{freslon2013representation}. The idea is to decompose a projective partition by cutting it in the middle.

\begin{prop}
Let $\A$ be a colour set and let us fix a colour $x_{0}\in \A$. Let $p\in \Proj(w)$ be a projective partition. Then, there is a unique partition $p_{u}\in P(w, x_{0}^{t(p)})$ such that
\begin{itemize}
\item $p = p_{u}^{*}p_{u}$.
\item The strings in $p_{u}$ between the upper and lower row do not cross each other.
\item All points of the lower row of $p_{u}$ are coloured with $x_{0}$.
\end{itemize}
\end{prop}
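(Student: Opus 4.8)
The plan is to construct $p_u$ explicitly by ``cutting $p$ in the middle'', and then to check the three properties and the uniqueness by hand. First I would analyse the block structure of the projective partition $p$. Writing $w = w_1\cdots w_n$, the upper row carries the colours $w$ and, since $p = p^{*}$, the lower row carries the same word $w$. The blocks of $p$ are of three kinds: the $t(p)$ \emph{through-blocks} $B_1,\dots,B_{t(p)}$, the blocks lying entirely in the upper row, and the blocks lying entirely in the lower row. Because $p = p^{*}$, reflection across the middle line permutes the set of blocks; the crucial structural input I would establish first is that for a \emph{noncrossing} projective partition this reflection \emph{fixes each through-block} (so each $B_i$ is symmetric, its lower part $L_i$ being the mirror image of its upper part $U_i$), and that the upper parts $U_1,\dots,U_{t(p)}$ form a noncrossing, moreover non-nested, family of blocks, whose spans are therefore pairwise disjoint and can be linearly ordered from left to right. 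Indeed, a reflection swapping two distinct through-blocks, or a nesting of two through-blocks, would each produce a crossing, contradicting planarity.

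With this in hand I define $p_u \in P(w, x_0^{t(p)})$ as follows: its upper row is that of $p$, its lower row consists of $t(p)$ points all coloured $x_0$, ordered so that the $i$-th point corresponds to the $i$-th through-block $B_i$ in the left-to-right order above; the blocks of $p_u$ are the upper non-through blocks of $p$ together with, for each $i$, the block $U_i \cup \{i\text{-th lower point}\}$. All lower points are coloured $x_0$ by construction, which is the third property. The second property, that the strings between the two rows do not cross, is exactly the assertion that the $U_i$ are attached to the lower points in the same left-to-right order; this is how the ordering was chosen, and it is here that the non-nesting of the $U_i$ is used, since a nested or reflection-swapped pair would force a crossing.

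To obtain the first property I would compute the vertical concatenation $p_u^{*}p_u$. The partition $p_u^{*}$ has upper row $x_0^{t(p)}$ and lower row $w$, and its block through the $i$-th upper point is the mirror image $L_i \cup \{i\text{-th upper point}\}$. Stacking $p_u$ on top of $p_u^{*}$ and gluing along the $t(p)$ middle points, the $i$-th middle point reconnects $U_i$ with $L_i$ and thus reconstitutes precisely the through-block $B_i = U_i \cup L_i$; the upper non-through blocks of $p_u$ and the lower non-through blocks of $p_u^{*}$ survive unchanged and yield the non-through blocks of $p$. Since every middle point is attached to both rows, no loops appear, so $\rl(p_u^{*},p_u) = 0$ and $p_u^{*}p_u = p$ as partitions.

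Finally, for uniqueness I would show that any $q \in P(w, x_0^{t(p)})$ with $q^{*}q = p$, noncrossing through-strings, and lower row coloured $x_0$ must coincide with $p_u$. The equality $t(q^{*}q) = t(p)$, together with the fact that $q$ has exactly $t(p)$ lower points, forces every lower point of $q$ to sit in its own through-block of $q$, and the upper part of the block through the $j$-th lower point must equal the upper part of one through-block of $p$; the noncrossing hypothesis then pins the bijection between the lower points of $q$ and the through-blocks of $p$ down to the order-preserving one, whence $q = p_u$. I expect the main obstacle to be precisely this rigidity step and the structural lemma it rests on, namely showing that the noncrossing condition forces each middle point to absorb exactly one entire through-block in the correct left-to-right order, ruling out any redundant, split, or permuted identification of the $t(p)$ lower points; by contrast the verification of $p = p_u^{*}p_u$ and the colour bookkeeping are routine.
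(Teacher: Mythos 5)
Your construction of $p_{u}$ by cutting $p$ along its middle line, ordering the through-blocks from left to right and attaching the $i$-th one to the $i$-th lower point is exactly the idea the paper has in mind (the paper does not prove the statement itself but quotes it as a particular instance of Proposition 2.9 of the cited joint work with M.~Weber, with the one-line hint that one should ``decompose a projective partition by cutting it in the middle''), and your verifications of $p=p_{u}^{*}p_{u}$ and of uniqueness are sound as far as they go. The genuine gap is that your key structural lemma is proved only for \emph{noncrossing} $p$, whereas the proposition is stated --- and used in the subsection on the general description of the representation theory --- for projective partitions in an arbitrary category $\CC$; the noncrossing case is only specialised to afterwards. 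For a crossing projective partition your intermediate claims fail or are unavailable: the upper parts $U_{1},\dots,U_{t(p)}$ of the through-blocks need not have pairwise disjoint spans (take $p\in P(3,3)$ with blocks $\{1,3,1',3'\}$ and $\{2,2'\}$, which is projective but has $U_{2}=\{2\}$ nested inside the span of $U_{1}=\{1,3\}$), and the argument that a reflection swapping two through-blocks, or a nesting, ``would produce a crossing'' proves nothing once $p$ is allowed to cross.

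Both facts you need can be established without planarity. First, the position-symmetry $L_{i}=U_{i}$ of each through-block comes not from noncrossingness but from idempotency: writing $B_{i}^{*}=B_{\sigma(i)}$ for the permutation of through-blocks induced by $p=p^{*}$, the composition $pp$ glues the top copy's $B_{i}$ to the bottom copy's $B_{\sigma(i)}$ along the middle points $L_{i}=U_{\sigma(i)}$ and produces a through-block with upper part $U_{i}$ and lower part $L_{\sigma(i)}=U_{i}$; since $pp=p$ and the block of $p$ with upper part $U_{i}$ is $B_{i}$, this forces $L_{i}=U_{i}$ (and incidentally $\sigma=\mathrm{id}$). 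Second, drop the disjoint-span claim and simply order the through-blocks by their leftmost upper point; the condition that ``the strings in $p_{u}$ between the upper and lower row do not cross each other'' concerns only the $t(p)$ connecting strings, i.e.\ it says that the assignment of through-blocks to lower points is order-preserving for this order --- which is how you chose it, and which is also what pins down $q$ in your uniqueness step. With these two repairs your argument goes through for arbitrary projective partitions; the rest (the loop count $\rl(p_{u}^{*},p_{u})=0$, the colour bookkeeping, and the counting argument $t(q^{*}q)\leqslant t(q)\leqslant t(p)$ in the uniqueness part) is fine as written.
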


Now, given two projective partitions $p$ and $q$, one may mix them by inserting in the middle of $p\otimes q$ a partition collapsing some middle strings. Using the previous proposition, this means that we want to consider the partition
\begin{equation*}
p\ast_{h} q = (p_{u}^{*}\otimes q_{u}^{*})h(p_{u}\otimes q_{u})
\end{equation*}
for some suitable $h$, called a \emph{mixing partitions} (see \cite[Def 2.25]{freslon2013representation} for a pictorial description).

\begin{de}
Let $k$ and $l$ be integers. A \emph{$(k, l)$-mixing partition} is a projective partition $h\in P(x_{0}^{k+l}, x_{0}^{k+l})$ such that
\begin{itemize}
\item All blocks of $h$ have size $2$ or $4$.
\item Blocks of size $2$ are either of the form $(a, a')$ or $(a, b)$ with $a\leqslant k$ and $b > k$, or likewise $(a',b')$ with $a'\leqslant k'$ and $b' > k'$.
\item Blocks of size $4$ are of the form $(a, a', b, b')$ with $a\leqslant k$ and $b > k$.
\item All points of $h$ are coloured with $x_{0}$.
\end{itemize}
\end{de}

Then, the fusion rules are given by \cite[Thm 4.27]{freslon2013representation}, which reads :

\begin{thm}
Let $p$ and $q$ be projective partitions in $\CC$. Then,
\begin{equation*}
u_{p}\otimes u_{q} = u_{p\otimes q} + \sum_{h} u_{p\ast_{h}q},
\end{equation*}
where the sum runs over all $(t(p), t(q))$-mixing partitions $h$ and by convention, $u_{p\ast_{h}q} = 0$ if $p\ast_{h}q \neq \CC$.
\end{thm}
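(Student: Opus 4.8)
The statement is an identity between subrepresentations of $u^{\otimes w}\otimes u^{\otimes w'}=u^{\otimes w.w'}$, where $w$ and $w'$ are the upper colourings of $p$ and $q$. The plan is to work entirely inside the endomorphism algebra $\Mor(u^{\otimes w.w'},u^{\otimes w.w'})$ and to express the projection $P_{p}\otimes P_{q}$, whose range is $u_{p}\otimes u_{q}$, as an explicit orthogonal sum of the projections attached to suitable projective partitions $r\in\Proj(w.w')$. Since the general decomposition $u^{\otimes w.w'}=\sum_{r\in\Proj(w.w')}u_{r}$ is already available, it suffices to identify which $u_{r}$ occur inside $u_{p}\otimes u_{q}$ and with what multiplicity; concretely, I would compute $\dim\Mor(u_{r},u_{p}\otimes u_{q})$ and match it against the number of mixing partitions producing $r$.

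First I would reduce to a purely combinatorial statement. Because $T'_{p}\otimes T'_{q}=T'_{p\otimes q}$ and $P_{p}\otimes P_{q}\leqslant T'_{p\otimes q}$, every $u_{r}$ occurring in $u_{p}\otimes u_{q}$ satisfies $r\preceq p\otimes q$, so I only need to understand the projective partitions dominated by $p\otimes q$ that are not already absorbed by the lower terms $s\prec p$ or $t\prec q$ subtracted off in the definitions of $P_{p}$ and $P_{q}$. This is where I would invoke the decompositions $p=p_{u}^{*}p_{u}$ and $q=q_{u}^{*}q_{u}$: writing $p\otimes q=(p_{u}\otimes q_{u})^{*}(p_{u}\otimes q_{u})$ exhibits the $t(p)+t(q)$ through-blocks of $p\otimes q$ as the lower points of $p_{u}\otimes q_{u}$, and any projective $r\preceq p\otimes q$ contributing a genuinely new summand must be of the form $(p_{u}\otimes q_{u})^{*}h(p_{u}\otimes q_{u})=p\ast_{h}q$ for a projective partition $h$ acting only on these interface through-blocks. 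The constraints $r=r^{*}$ and $rr=r$, together with the fact that a through-block of $p$ can meet a through-block of $q$ at most once, are exactly what force $h$ to have blocks of size $2$ or $4$ of precisely the shapes listed in the definition of a $(t(p),t(q))$-mixing partition, the "no-interaction" configuration yielding the leading term $u_{p\otimes q}$.

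Next I would establish orthogonality and completeness of the resulting family. Using Proposition \ref{prop:linearindependence} in the noncrossing case with $N\geqslant 4$ (and the analogous relation analysis in general), the assignment $p\mapsto T_{p}$ is injective enough that distinct mixing partitions produce orthogonal projections, so the ranges $u_{p\ast_{h}q}$ are mutually orthogonal and orthogonal to $u_{p\otimes q}$; the convention $u_{p\ast_{h}q}=0$ when $p\ast_{h}q\notin\CC$ simply discards the mixing partitions that leave the category. Completeness — that these projections sum to $P_{p}\otimes P_{q}$ exactly, with neither a missing summand nor overcounting — is the crux, and I would verify it by comparing $\Tr(P_{p}\otimes P_{q})$ with $\sum_{h}\Tr(P_{p\ast_{h}q})$, or equivalently by exhibiting the mixing partitions producing a fixed $r$ as a basis of $\Mor(u_{r},u_{p}\otimes u_{q})$.

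The main obstacle I expect is precisely this last bookkeeping step. One has to rule out spurious coincidences $p\ast_{h}q\sim p\ast_{h'}q$ for $h\neq h'$, to exclude hidden contributions from partitions $r\preceq p\otimes q$ that do not arise from any single mixing partition, and to control multiplicities through the non-injective labelling $p\mapsto u_{p}$, since several projective partitions may give equivalent or zero representations. The delicate point is handling the subtractions $\bigvee_{s\prec p}T'_{s}$ and $\bigvee_{t\prec q}T'_{t}$ cleanly, so that the interface through-blocks are correctly isolated from the internal structure of $p$ and $q$; this is where the combinatorial care in the definition of $p\ast_{h}q$ really pays off.
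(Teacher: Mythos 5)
First, a point of reference: the paper does not prove this theorem at all. It is quoted verbatim from the joint work with M.~Weber (\cite[Thm 4.27]{freslon2013representation}), and the surrounding subsection is explicitly declared to be a survey. So your attempt has to be measured against the proof in that reference rather than against anything in the present text.

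Your outline has the right skeleton --- work with $P_{p}\otimes P_{q}$ inside $\Mor(u^{\otimes w.w'},u^{\otimes w.w'})$, use the through-block decompositions $p=p_{u}^{*}p_{u}$ and $q=q_{u}^{*}q_{u}$, and describe the projective partitions dominated by $p\otimes q$ via mixing partitions --- but there are two genuine gaps. The first is that the combinatorial heart of the argument, namely that \emph{every} projective partition $r\preceq p\otimes q$ is of the form $p\ast_{h}q$ for some mixing partition $h$, and that the conditions $r=r^{*}=rr$ force exactly the block shapes listed in the definition, is asserted in a single sentence and never established; this is precisely the structural result on dominated projective partitions that carries all the weight in the cited proof, and without it nothing else goes through. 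The second, more serious, issue is that you are aiming at a strictly stronger statement than the theorem makes, and that stronger statement is false in general. The ``$+$'' here is the weak sum introduced just above in the same subsection (the only subrepresentation containing all the listed summands is the whole thing), not an orthogonal decomposition with multiplicities. In a crossing category the maps $(T_{p})_{p}$ need not be linearly independent (Proposition~\ref{prop:linearindependence} only covers noncrossing categories with $N\geqslant 4$), the representations $u_{p\ast_{h}q}$ for distinct $h$ need not be orthogonal or inequivalent, some may vanish even when $p\ast_{h}q\in\CC$, and the $u_{r}$ need not be irreducible; consequently the proposed comparison of $\Tr(P_{p}\otimes P_{q})$ with $\sum_{h}\Tr(P_{p\ast_{h}q})$, and the identification of mixing partitions with a basis of $\Mor(u_{r},u_{p}\otimes u_{q})$, both break down. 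The orthogonal refinement is exactly what the paper reserves for the noncrossing case in the following subsubsection. For the theorem as actually stated, what must be shown is only that any subrepresentation of $u_{p}\otimes u_{q}$ containing $u_{p\otimes q}$ and all the $u_{p\ast_{h}q}$ is all of $u_{p}\otimes u_{q}$; this follows from the domination claim combined with the weak decomposition of $u^{\otimes w.w'}$ over $\Proj(w.w')$, with no multiplicity bookkeeping at all --- so the step you single out as the main obstacle is aimed at the wrong target.
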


\subsubsection{The noncrossing case}\label{sec:noncrossing}

If the category of partitions $\CC$ is noncrossing, then the previous picture is greatly simplified and one gets a more tractable description of the representation theory. This gives a conceptual explanation for the relative "easiness" of the representation theory of noncrossing quantum groups compared to their classical analogues. More precisely, the representations $u_{p}$ enjoy the following additional properties :
\begin{itemize}
\item For every $p$, $u_{p}$ is non-zero and irreducible.
\item $u_{p}\sim u_{q}$ if and only if $p\sim q$.
\item $u^{\otimes w} = \displaystyle\bigoplus_{p\in \Proj(w)} u_{p}$ as a direct sum of representations.
\end{itemize}

The first two points are straightforward from the preceding section while the last one is a combination of \cite[Prop 4.22]{freslon2013representation} and \cite[Prop 3.7]{freslon2013fusion}. As for the fusion rules, they can be simplified in this context using the fact that there are few noncrossing mixing partitions. First, let $h_{\square}^{k}$ be the projective partition in $NC(x_{0}^{2k}, x_{0}^{2k})$ where the $i$-th point in each row is connected to the $(2k-i+1)$-th point in the same row (i.e. an increasing inclusion of $k$ blocks of size $2$) and all the points are coloured with $x_{0}$. If moreover we connect the points $1$, $k$, $1'$ and $k'$, we obtain another projective partition in $NC(x_{0}^{2k}, x_{0}^{2k})$ denoted by $h_{\boxvert}^{k}$. From this, we define binary operations on projective partitions (using $\vert$ to denote a suitably coloured version of the identity partition) :
\begin{equation*}
\left\{\begin{array}{ccc}
p\square^{k} q & = & (p_{u}^{*}\otimes q_{u}^{*})(\vert^{t(p)-k}\otimes h_{\square}^{k}\otimes \vert^{t(q)-k})(p_{u}\otimes q_{u}) \\
p\boxvert^{k} q & = & (p_{u}^{*}\otimes q_{u}^{*})(\vert^{t(p)-k}\otimes h_{\boxvert}^{k}\otimes \vert^{t(q)-k})(p_{u}\otimes q_{u})
\end{array}\right.
\end{equation*}
for $0 < k\leqslant \min(t(p), t(q))$. Using this, the fusion rules are given by \cite[Thm 6.8]{freslon2013representation} :
\begin{equation*}
u_{p}\otimes u_{q} = u_{p\otimes q}\oplus\bigoplus_{k=1}^{\min(t(p), t(q))} (u_{p\square^{k}q} \oplus u_{p\boxvert^{k}q}),
\end{equation*}
where by convention $u_{r} = 0$ if $r\notin \CC$.

\section{Examples}\label{sec:examples}

We will now give several examples of partition (quantum) groups. 

\subsection{Noncrossing quantum groups}

Partition quantum groups are mainly a generalization of \emph{easy quantum groups}, which were introduced by T. Banica and R. Speicher in \cite{banica2009liberation}, and can be recovered from our general definition :
\begin{itemize}
\item If $\A$ contains only one element, then $\G$ is an \emph{orthogonal easy quantum group} in the sense of \cite[Def 6.1]{banica2009liberation} and all such quantum groups arise in that way.
\item If $\A$ contains two colours which are inverse to one another, then $\G$ is a \emph{unitary easy quantum group} in the sense of \cite[Def 6.3]{freslon2013representation} and all such quantum groups arise in that way.
\end{itemize}

The free orthogonal, free unitary and free symmetric quantum groups introduced by A. van Daele and S. Wang in \cite{wang1995free}, \cite{wang1998quantum} and \cite{van1996universal} are particularly important examples of easy quantum groups. Orthogonal easy quantum groups are completely classified (see \cite{raum2013full}) and some partial results are known in the unitary case (in the forthcoming paper \cite{tarrago2013unitary}). We give here the list of free ones classified in \cite{freslon2013fusion} (see Section \ref{sec:fusion} for the definition of freeness) :
\begin{itemize}
\item The orthogonal ones $O_{N}^{+}$, $S_{N}^{+}$, $H_{N}^{+}$, $B_{N}^{+}$.
\item The free unitary quantum group $U_{N}^{+}$.
\item The \emph{free complexification} (see \cite{banica2007note} for a definition) $\widetilde{H}_{N}^{+}$ of $H_{N}^{+}$.
\item The quantum reflection groups $H_{N}^{s+} = \Z_{s}\wr_{\ast}S_{N}^{+}$. As we will see in the next subsection, this family has connections to classical complex reflection groups.
\end{itemize}

The last example is of peculiar importance since it is an example of a \emph{free wreath product} as defined by J. Bichon in \cite{bichon2004free}. More general free wreath products have been studied by F. Lemeux in \cite{lemeux2013fusion} and enter our setting. Let $\Gamma$ be a discrete group and let $\Gamma_{0}\subset \Gamma$ be a symmetric generating subset containing the neutral element. Let now $\A_{\Gamma_{0}}$ be $\Gamma_{0}$ seen as a colour set (the involution being given by the inverse in the group) and let $\CC_{\Gamma_{0}}$ be the category of $\A_{\Gamma_{0}}$-coloured partitions consisting in all noncrossing coloured partitions such that in each block, the product of the elements in the upper row (from left to right) is equal to the product of the elements in the lower row (from left to right). Then, \cite[Thm 2.20]{lemeux2013fusion} can be restated as

\begin{thm}[Lemeux]
Let $\Gamma$ be a discrete group, let $\Gamma_{0}\subset \Gamma$ be a symmetric generating subset containing the neutral element and let $N\geqslant 4$ be an integer. Then, the partition quantum group associated to $\CC_{\Gamma_{0}}$ and $N$ is the free wreath product $\widehat{\Gamma}\wr_{\ast}S_{N}^{+}$.
\end{thm}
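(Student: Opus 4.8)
The plan is to identify the two compact quantum groups by means of Woronowicz's Tannaka--Krein duality, Theorem~\ref{thm:tannaka}. By the uniqueness part of Theorem~\ref{thm:partitionqg}, the partition quantum group attached to $\CC_{\Gamma_{0}}$ is the \emph{unique} compact matrix quantum group carrying a generating family $(u^{g})_{g\in\Gamma_{0}}$ of $N$-dimensional representations with
\[
\Mor(u^{\otimes w},u^{\otimes w'})=\Span\{T_{p}:p\in\CC_{\Gamma_{0}}(w,w')\}
\]
for all words $w,w'$ on $\A_{\Gamma_{0}}$. It therefore suffices to exhibit such a family inside $\G=\widehat{\Gamma}\wr_{\ast}S_{N}^{+}$ and to check that its intertwiner spaces are exactly these spans.

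First I would recall Bichon's construction of the free wreath product: $C(\G)$ is generated by $N$ copies $g^{(i)}$ ($g\in\Gamma$, $1\leqslant i\leqslant N$) of $C^{*}(\Gamma)$ together with a magic unitary $v=(v_{ij})$ generating $C(S_{N}^{+})$, subject to the relations that $v_{ij}$ commutes with the $i$-th copy. For each $g\in\Gamma_{0}$ I set $u^{g}=(u^{g}_{ij})$ with $u^{g}_{ij}=g^{(i)}v_{ij}$. A direct computation with the coproduct, $\Delta(g^{(i)}v_{ij})=\sum_{k}(g^{(i)}v_{ik})\otimes(g^{(k)}v_{kj})$, shows that each $u^{g}$ is an $N$-dimensional unitary representation; moreover $\overline{u^{g}}=u^{g^{-1}}$, which matches the involution $\overline{g}=g^{-1}$ on the colour set $\A_{\Gamma_{0}}$, and $\bigoplus_{g\in\Gamma_{0}}u^{g}$ is generating because $\Gamma_{0}$ generates $\Gamma$ and $v$ generates $C(S_{N}^{+})$.

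The easy inclusion is $\Span\{T_{p}:p\in\CC_{\Gamma_{0}}(w,w')\}\subseteq\Mor(u^{\otimes w},u^{\otimes w'})$. Since the category operations are intertwined with composition, tensor product, adjunction and rotation of linear maps (via $T_{p}^{*}=T_{p^{*}}$, $T_{p}\otimes T_{q}=T_{p\otimes q}$ and $T_{p}\circ T_{q}=N^{\rl(p,q)}T_{pq}$), and these all preserve the property of being an intertwiner, it is enough to verify the inclusion on a set of partitions generating $\CC_{\Gamma_{0}}$ as a category: the coloured identities, the coloured caps, and the block-merging partitions subject to the block-product condition. Here lies the combinatorial core of the statement. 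Writing out $(\ii\otimes T_{p})u^{\otimes w}$ and $u^{\otimes w'}(\ii\otimes T_{p})$, the magic-unitary relations force all legs lying on a common block to share a single index, so that the associated group-like elements $g^{(i)}$ multiply \emph{along each block}; the two sides then coincide precisely when, in every block, the product of the upper colours equals the product of the lower colours. This is exactly the defining condition of $\CC_{\Gamma_{0}}$, which explains its appearance.

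It remains to prove the reverse inclusion $\Mor(u^{\otimes w},u^{\otimes w'})\subseteq\Span\{T_{p}:p\in\CC_{\Gamma_{0}}(w,w')\}$, and this is the main obstacle: it is the Second fundamental theorem of invariants for the free wreath product, namely that there are no intertwiners beyond those coming from admissible partitions. I would argue by a dimension count. Since $\CC_{\Gamma_{0}}$ is noncrossing and $N\geqslant 4$, Proposition~\ref{prop:linearindependence} gives that the family $(T_{p})_{p\in\CC_{\Gamma_{0}}(w,w')}$ is linearly independent, so the span on the right has dimension $\#\CC_{\Gamma_{0}}(w,w')$. One then computes $\dim\Mor(u^{\otimes w},u^{\otimes w'})$ for $\G$ independently---either from Bichon's and Lemeux's explicit description of the representation category of the free wreath product (built from the noncrossing machinery recalled in Subsection~\ref{sec:noncrossing}), or by integrating the relevant product of characters against the Haar state of $\G$ and reading off a Weingarten-type sum---and checks that it equals the same combinatorial count. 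As the easy inclusion already exhibits $\Span\{T_{p}\}$ as a subspace of $\Mor(u^{\otimes w},u^{\otimes w'})$ of matching finite dimension, the two coincide, and Theorem~\ref{thm:partitionqg} then yields the desired identification $\G\cong\widehat{\Gamma}\wr_{\ast}S_{N}^{+}$.
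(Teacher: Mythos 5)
The paper does not prove this statement itself --- it is quoted directly from \cite[Thm 2.20]{lemeux2013fusion} --- so your attempt must be measured against Lemeux's argument. Your overall architecture is sound: realizing the representations $u^{g}$ with coefficients $g^{(i)}v_{ij}$ inside $C(\widehat{\Gamma}\wr_{\ast}S_{N}^{+})$, checking unitarity, $\overline{u^{g}}=u^{g^{-1}}$ and generation (note that $e\in\Gamma_{0}$ is what gives you $v_{ij}=u^{e}_{ij}$, hence all of $C(S_{N}^{+})$), proving $\Span\{T_{p}\}\subseteq\Mor(u^{\otimes w},u^{\otimes w'})$ on one-block generators, and invoking the uniqueness in Theorem \ref{thm:partitionqg}. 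The computation you sketch for this easy inclusion (the magic-unitary relations force indices to agree along blocks, so the $g^{(i)}$ multiply along blocks and the intertwiner condition becomes equality of block products) is indeed the combinatorial heart of the statement.

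The genuine gap is the reverse inclusion. You reduce it to the equality $\dim\Mor_{\G}(u^{\otimes w},u^{\otimes w'})=\#\CC_{\Gamma_{0}}(w,w')$, but neither route you propose for computing the left-hand side is available. Quoting ``Lemeux's explicit description of the representation category of the free wreath product'' is circular, since that description \emph{is} the theorem being proved; and a Weingarten or character-integration computation presupposes control of the Haar state on products of coefficients, which again requires the intertwiner spaces one is trying to determine. The standard way to close this, and essentially what Lemeux does, avoids the dimension count: let $\mathbb{H}$ be the partition quantum group of $\CC_{\Gamma_{0}}$. The easy inclusion yields a surjection $C(\mathbb{H})\to C(\G)$ compatible with the generators; conversely, the relations encoded by the generating one-block partitions (in the spirit of Lemma \ref{lem:amalgamation}) show that $v_{ij}:=u^{e}_{ij}$ is a magic unitary in $C(\mathbb{H})$ and that $g\mapsto\sum_{j}u^{g}_{ij}$ defines, for each $i$, a copy of $C^{*}(\Gamma)$ commuting with the $i$-th row of $v$, so the universal property of the free wreath product gives a surjection $C(\G)\to C(\mathbb{H})$; the two maps are mutually inverse on generators. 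Without an argument of this type, or an honest independent computation of the dimensions, your proof of the hard inclusion is incomplete.
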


Another nice feature of arbitrary colour sets is that they allow to make free products of partition quantum groups in the sense of \cite{wang1995free}. Assume that we have two colour sets $\A$ and $\A'$ and two categories of partitions $\CC$ and $\CC'$ coloured respectively with $\A$ and $\A'$. Consider the colour set $\A'' = \A\sqcup \A'$ and the category of $\A''$-coloured partitions $\CC'' = \CC\ast \CC'$ generated in $NC^{\A''}$ by $\CC$ and $\CC'$. Then, the following result is straightforward from \cite[Prop 2.15]{lemeux2013fusion} :

\begin{prop}\label{prop:freeproduct}
For any integer $N\geqslant 2$, let $\G$ and $\G'$ be the partition quantum groups associated to $\CC$ and $\CC'$ respectively. Then, the partition quantum group associated to $\CC''$ is the free product $\G\ast \G'$. 
\end{prop}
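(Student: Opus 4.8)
The plan is to argue by Tannaka--Krein uniqueness (Theorem \ref{thm:tannaka}). Write $\G''$ for the partition quantum group attached to $\CC''$ and $N$, with basic representations indexed by $\A'' = \A\sqcup\A'$. Since the free product comes equipped with canonical morphisms $\G\ast\G' \to \G$ and $\G\ast\G' \to \G'$, the basic representations $u^{x}$ ($x\in\A$) of $\G$ and ${u'}^{y}$ ($y\in\A'$) of $\G'$ are all representations of $\G\ast\G'$, and their direct sum (being the direct sum of the two generating representations of the factors) is generating. Thus both $\G''$ and $\G\ast\G'$ are compact matrix quantum groups whose basic representations are indexed by $\A''$, and by Theorem \ref{thm:tannaka} it suffices to prove that for all words $w, w'$ on $\A''$,
\[
\Span\left\{T_{p}, p\in\CC''(w, w')\right\} = \Mor_{\G\ast\G'}(u^{\otimes w}, u^{\otimes w'}),
\]
where on the right $u^{\otimes w}$ denotes the tensor product formed from the mixed family $\bigl(u^{x}, {u'}^{y}\bigr)$ according to the letters of $w$.

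First I would prove the inclusion $\subseteq$. By definition $\CC'' = \CC\ast\CC'$ is generated inside $NC^{\A''}$ by $\CC$ and $\CC'$ under the category operations. A generating partition coming from $\CC$ has all its points coloured in $\A$, so the associated map is an intertwiner of $\G$ and hence, through the canonical morphism, of $\G\ast\G'$; the same holds for generators from $\CC'$. Since the category operations on partitions translate into tensor product, composition, adjoint and rotation of the corresponding maps via the rules $T_{p}\otimes T_{q}=T_{p\otimes q}$, $T_{p}\circ T_{q}=N^{\rl(p,q)}T_{pq}$ and $T_{p}^{*}=T_{p^{*}}$ recalled in Section \ref{sec:partitions}, and since all of these operations preserve the property of being an intertwiner of a fixed quantum group, every $T_{p}$ with $p\in\CC''$ lies in $\Mor_{\G\ast\G'}$. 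I would note in passing that, by a straightforward induction on the generation process, each partition of $\CC''$ has monochromatic blocks for the $\A/\A'$ dichotomy (the two colour sets being stable under the involution, rotation cannot change $\A$ into $\A'$, and composition only merges blocks of matching colour).

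The reverse inclusion is the substantive point and is exactly where I would invoke \cite[Prop 2.15]{lemeux2013fusion}. By Wang's construction \cite{wang1995free}, the representation category of a free product is the free product of the two representation categories: its intertwiner spaces are generated by those of the factors with \emph{no} intertwiners mixing the two, the irreducibles being indexed by alternating words in the nontrivial irreducibles of $\G$ and $\G'$. Proposition 2.15 of \cite{lemeux2013fusion} makes this concrete by identifying the tensor category so generated with the one spanned by the partitions of $\CC\ast\CC'$: as the partitions of $\CC''$ are noncrossing with monochromatic blocks, the $\A$-coloured and $\A'$-coloured parts are nested without interaction, so each projective partition of $\CC''$ decomposes into an alternating stacking of monochromatic projective partitions from $\CC$ and from $\CC'$. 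Via the description of the representations $u_{p}$ and the noncrossing fusion rules of Subsection \ref{sec:noncrossing}, this matches exactly the alternating-word indexing of the irreducibles of $\G\ast\G'$, whence $\Mor_{\G\ast\G'}$ contains no operators beyond $\Span\{T_{p}, p\in\CC''\}$. The main obstacle is precisely this no-mixing phenomenon: one must be certain that the free product introduces no additional intertwiners between the two families of representations beyond those generated combinatorially, and that the multiplicities of irreducibles match the count of projective partitions of $\CC''$ of the appropriate colouring. This is the hallmark of freeness packaged in \cite[Prop 2.15]{lemeux2013fusion}; granting it, the two intertwiner spaces coincide and Theorem \ref{thm:tannaka} identifies $\G''$ with $\G\ast\G'$.
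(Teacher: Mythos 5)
Your proposal is correct and takes essentially the same route as the paper: the paper offers no argument beyond declaring the statement ``straightforward from \cite[Prop 2.15]{lemeux2013fusion}'', and that reference is exactly where you too place the substantive no-mixing step for the reverse inclusion. The extra scaffolding you supply (the Tannaka--Krein reduction, the easy inclusion via the category operations, and the observation that blocks of $\CC''$ are monochromatic for the $\A/\A'$ dichotomy) is sound and simply makes explicit what the paper leaves implicit.
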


\subsection{Classical groups}\label{subsec:classical}

The first examples of partition groups are easy groups, of which the most important are :
\begin{itemize}
\item The orthogonal groups $O_{N}$ (this was first proved by R. Brauer in \cite{brauer1937algebras}), the bistochastic groups $B_{N}$, the hyperoctaedral groups $H_{N}$ (this is a special case of the work of K. Tanabe \cite{tanabe1997centralizer}) and the symmetric groups $S_{N}$ (this is a straightforward consequence of independent works of P. Martin \cite{martin1994temperley} and V.F.R. Jones \cite{jones1993potts}).
\item The unitary groups $U_{N}$ (this was first proved by P. Glockner and W. von Waldenfels in \cite{glockner1989relations}).
\item The complex reflection groups $H_{N}^{s} = G(s, 1, N)$ (this is again \cite{tanabe1997centralizer}).
\end{itemize}

Orthogonal easy groups were classified in \cite{banica2009fusion} while the classification in the unitary case will appear in \cite{tarrago2013unitary}. A simple way of building partition groups is to use \emph{abelianization}. More precisely, let $\G$ be any partition quantum group and consider the C*-algebra $C(\G)_{\text{ab}}$ which is the maximal abelian quotient of $C(\G)$. By \cite[Thm 1.5]{woronowicz1987compact}, this is a classical compact group. To see that it is a partition group, one simply notices that its morphism spaces are obtained from the morphism spaces of $\G$ by adding all the operators associated to the $(x, y)$-crossings for all $x, y\in \A$ (including the $(x, x)$-crossings). Here is the key fact concerning that construction :

\begin{prop}\label{prop:abelianization}
Let $G$ be a partition group. Then, there is a noncrossing partition quantum group $\G$ such that its abelianization is $G$.
\end{prop}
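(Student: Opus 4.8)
The plan is to realize $G$ as the abelianization of the noncrossing partition quantum group built from the noncrossing part of the category defining $G$. Fix a colour set $\A$, a category of $\A$-coloured partitions $\CC$ and an integer $N$ realizing $G$ as a partition quantum group; since $G$ is a classical group, the proposition characterizing classical partition groups shows that $\CC$ contains every $(x,y)$-crossing. Set $\DD = \CC \cap NC^{\A}$, the set of all noncrossing partitions lying in $\CC$. I claim $\DD$ is a category of noncrossing partitions. Indeed, it contains every $x$-identity, and it is stable under the category operations because both $\CC$ and $NC^{\A}$ are: the horizontal concatenation, reflection and rotation of noncrossing partitions are again noncrossing, while the vertical concatenation $qp$ of two elements of $\DD$ lies in $\CC$ (as $\CC$ is a category) and in $NC^{\A}$ (as $NC^{\A}$ is a category), hence in $\DD$. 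Let $\G$ be the noncrossing partition quantum group associated to $\DD$ and $N$.

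It then remains to identify the abelianization of $\G$ with $G$. As recalled before the statement, the morphism spaces of the abelianization of $\G$ are obtained from those of $\G$ by adjoining all the operators $T_{c}$ attached to $(x,y)$-crossings $c$; by the compatibility rules between the category operations and the maps $T_{p}$ (\cite[Prop 1.9]{banica2009liberation}), these morphism spaces are exactly the linear spans of the $T_{p}$ for $p$ ranging over the category of partitions $\langle \DD, \text{crossings}\rangle$ generated by $\DD$ together with all the $(x,y)$-crossings. Since $G$ is the partition quantum group of $\CC$ and $N$, Theorem \ref{thm:tannaka} reduces the whole statement to the purely combinatorial equality
\begin{equation*}
\langle \DD, \text{crossings}\rangle = \CC .
\end{equation*}
The inclusion $\subseteq$ is immediate, as $\DD \subseteq \CC$ and every crossing lies in $\CC$.

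The reverse inclusion is the heart of the matter, and is what I expect to be the main obstacle. It amounts to an \emph{untangling lemma}: any partition in a category containing all crossings can be recovered from a noncrossing partition of that category by composing with crossings. Concretely, given $p \in \CC$, I would show that $p$ can be turned into a noncrossing partition by independently permuting the points of its upper row and of its lower row. Each elementary transposition of two neighbouring points in a fixed row is realized by composing $p$ on the appropriate side with a partition of the form $\vert^{\otimes a}\otimes c\otimes\vert^{\otimes b}$, where $c$ is a suitably coloured $(x,y)$-crossing; all such partitions belong to $\CC$ (which contains every crossing), so every partition produced along the way remains in $\CC$. A bubble-sort argument, namely induction on the number of crossings in the diagram using the fact that a crossed diagram always admits a neighbouring transposition strictly lowering the crossing number, yields a noncrossing partition $\tilde{p}$, which therefore lies in $\CC \cap NC^{\A} = \DD$. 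Reading the transpositions backwards exhibits $p$ as a composition of $\tilde{p}\in\DD$ with crossings, so $p \in \langle \DD, \text{crossings}\rangle$. This establishes the equality of categories and, with it, the proposition. The delicate point to verify carefully is precisely this crossing-reduction step for general (not merely pair) partitions in the coloured setting; its uncoloured instances already underlie the classical facts $O_{N} = (O_{N}^{+})_{\mathrm{ab}}$ and $S_{N} = (S_{N}^{+})_{\mathrm{ab}}$.
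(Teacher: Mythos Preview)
Your proposal is correct and follows essentially the same route as the paper: set $\DD=\CC\cap NC^{\A}$, observe it is a category of noncrossing partitions, and then argue that $\langle\DD,\text{crossings}\rangle=\CC$. The paper dispatches the nontrivial inclusion in a single sentence (``it is clear that adding all the $(x,y)$-crossings to $\CC'$ enables to reconstruct any partition of $\CC$''), whereas you spell out the untangling via adjacent transpositions; your added detail is sound and your caveat about the crossing-reduction step is well placed, but no genuinely different idea is involved.
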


\begin{proof}
Let $\CC$ be the category of partitions of $G$ and set $\CC' = \CC\cap NC^{\A}$. Then, $\CC'$ is a category of noncrossing partitions. Moreover, it is clear that adding all the $(x, y)$-crossings to $\CC'$ enables to reconstruct any partition of $\CC$. Thus, if $N$ is an integer such that $G$ is associated to $\CC$ and $N$, then the abelianization of the quantum group $\G$ associated to $\CC'$ and $N$ is $G$.
\end{proof}

\begin{rem}
This correspondence is not one-to-one. For example, $B_{N}\times \Z_{2}$ can be obtained from both $B_{N}^{+}\times \Z_{2}$ and $B_{N}^{+}\ast \Z_{2}$.
\end{rem}

An easy consequence of this is the stability of the class of partition groups under direct products.

\begin{prop}
The direct product of a family of partition groups is again a partition group.
\end{prop}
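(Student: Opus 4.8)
The plan is to reduce the statement to the free product construction of Proposition \ref{prop:freeproduct} together with the abelianization procedure, exploiting the classical fact that abelianizing a free product yields a direct product. I would first treat two factors and then pass to an arbitrary family by an inductive limit.

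For two partition groups $G_1, G_2$, I would first invoke Proposition \ref{prop:abelianization} to choose noncrossing partition quantum groups $\G_1, \G_2$ over colour sets $\A_1, \A_2$, with categories $\CC_1, \CC_2$, such that $(\G_i)_{\text{ab}} = G_i$. Proposition \ref{prop:freeproduct} then shows that the free product $\G_1 \ast \G_2$ is itself a noncrossing partition quantum group, namely the one attached to the colour set $\A_1 \sqcup \A_2$ and the category $\CC_1 \ast \CC_2$ generated by $\CC_1$ and $\CC_2$ in $NC^{\A_1 \sqcup \A_2}$. By the discussion preceding Proposition \ref{prop:abelianization}, the abelianization of any partition quantum group is again a partition group; in particular $(\G_1 \ast \G_2)_{\text{ab}}$ is a partition group.

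The crucial step is to identify this abelianization with $G_1 \times G_2$. On the C*-algebra level $C(\G_1 \ast \G_2)$ is the free product $C(\G_1) \ast C(\G_2)$, and passing to the maximal abelian quotient forces every element of $C(\G_1)$ to commute with every element of $C(\G_2)$ while abelianizing each factor separately, so that $C(\G_1 \ast \G_2)_{\text{ab}} \cong C(\G_1)_{\text{ab}} \otimes C(\G_2)_{\text{ab}} = C(G_1)\otimes C(G_2) = C(G_1 \times G_2)$, compatibly with the coproducts. Hence $G_1 \times G_2$ is a partition group. I expect this identification --- that abelianization turns Wang's free product into the direct product --- to be the main point; everything else is formal.

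For an arbitrary, possibly infinite, family $(G_i)_{i \in I}$ I would set $\A = \bigsqcup_i \A_i$ and take the category $\CC$ generated by all the $\CC_i$ in $NC^{\A}$. For finite $I$ this is an iterated free product, and repeated application of the two-factor computation gives $\left(\ast_i \G_i\right)_{\text{ab}} = \prod_i G_i$; for infinite $I$ one realizes the associated partition quantum group as the inductive limit over finite subfamilies, exactly as in the proof of Theorem \ref{thm:partitionqg}, so that the same conclusion persists in the limit. The only technical wrinkle I anticipate is bookkeeping when the factors are associated to different integers $N_i$: this is resolved by allowing the basic representations to have differing dimensions, as permitted by the remark following Theorem \ref{thm:partitionqg}, since the free product construction imposes no common dimension. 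Thus $\prod_i G_i$ is a partition group.
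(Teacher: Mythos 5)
Your proof is correct and follows essentially the same route as the paper: realize the free product as a partition quantum group via Proposition \ref{prop:freeproduct}, observe that its abelianization is the direct product of the factors, and use the fact (from the discussion preceding Proposition \ref{prop:abelianization}) that abelianizing a partition quantum group yields a partition group. The only difference is cosmetic --- the paper takes the free product of the groups $G_i$ themselves rather than first lifting each to a noncrossing quantum group, so your detour through Proposition \ref{prop:abelianization} is unnecessary but harmless, and your extra care about infinite families and differing dimensions $N_i$ is a reasonable elaboration of points the paper leaves implicit.
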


\begin{proof}
According to Proposition \ref{prop:freeproduct}, the free product of the groups is a partition quantum group. Moreover, its abelianization, which is the direct product of the original groups, is also a partition group by  Proposition \ref{prop:abelianization}, hence the result.
\end{proof}

An important instance of the previous proposition is \cite[Prop 11.4]{banica2011free}, which asserts that the abelianization of the quantum reflection group $H_{N}^{s+}$ is the complex reflection group $H_{N}^{s}$. Let us restate this in terms of partitions. Let $\A = \{\circ, \bullet\}$ be the colour set where $\overline{\circ} = \bullet$ and let $\CC_{s, \text{ab}}$ be the category of all $\A$-coloured partitions $p$ such that in each block of $p$, the difference between the number of white and black points on each row is the same modulo $s$. Note that $\CC_{s, \text{ab}}$ can be obtained from the category of noncrossing partitions $\CC_{s}$ defined in \cite[Sec 4.5]{freslon2013fusion} by adding all the $(x, y)$-crossings for $x, y\in \{\circ, \bullet\}$.

\begin{prop}\label{prop:abelianizationwreath}
Let $N\geqslant 4$ be an integer. Then, the partition group associated to $\CC_{s, \text{ab}}$ and $N$ is the complex reflection group $H_{N}^{s}$.
\end{prop}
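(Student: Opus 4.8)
The plan is to realize $\CC_{s,\mathrm{ab}}$ as the enlargement of the noncrossing category $\CC_s$ by all crossings, to recognize the associated partition group as the abelianization of the quantum reflection group $H_N^{s+}$, and then to quote the known computation of that abelianization.

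First I would recall from \cite[Sec 4.5]{freslon2013fusion} that the noncrossing category $\CC_s$ together with the integer $N$ is the defining category of the quantum reflection group $H_N^{s+} = \Z_s\wr_\ast S_N^+$, the bound $N\geqslant 4$ guaranteeing, through Proposition \ref{prop:linearindependence}, that this combinatorial description is faithful (so that the maps $(T_p)_{p\in\CC_s(w,w')}$ are linearly independent and genuinely compute the morphism spaces of $H_N^{s+}$ rather than merely surjecting onto them).

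Next, as recorded in the note preceding the statement, $\CC_{s,\mathrm{ab}}$ is exactly the category generated inside $\Pc$ by $\CC_s$ and all the $(x,y)$-crossings $\crosspart$ for $x,y\in\{\circ,\bullet\}$. Two things then follow. On the one hand, because $\CC_{s,\mathrm{ab}}$ contains every crossing, the criterion for classicality proved above forces $C(\G)$ to be commutative, so the partition group attached to $\CC_{s,\mathrm{ab}}$ is a genuine classical compact group. On the other hand, arguing exactly as in the proof of Proposition \ref{prop:abelianization}, adjoining all $(x,y)$-crossings to the defining category of a partition quantum group produces the defining category of its abelianization: the maps $T_{\crosspart}$ are precisely the intertwiners encoding the commutation of the coefficients of $u^\circ$ and $u^\bullet$, and adding them amounts to passing to the maximal abelian quotient $C(\G)_{\mathrm{ab}}$. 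Consequently the partition group associated to $\CC_{s,\mathrm{ab}}$ and $N$ is the abelianization $(H_N^{s+})_{\mathrm{ab}}$.

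Finally I would invoke \cite[Prop 11.4]{banica2011free}, which identifies the abelianization of $H_N^{s+}$ with the complex reflection group $H_N^s = G(s,1,N)$; combined with the previous paragraph this yields the proposition. The only point that is not pure bookkeeping is the combinatorial identity $\CC_{s,\mathrm{ab}} = \langle\CC_s,\ \text{all crossings}\rangle$ asserted in the note, and this is the step I expect to be the main obstacle. One inclusion is immediate, since $\CC_s\subseteq\CC_{s,\mathrm{ab}}$, the crossings lie in $\CC_{s,\mathrm{ab}}$, and $\CC_{s,\mathrm{ab}}$ is readily checked to be stable under the category operations. The reverse inclusion is the delicate part: given an arbitrary partition whose blocks satisfy the "number of white minus number of black points constant modulo $s$" condition on each row, one must exhibit it as a composition of a noncrossing partition from $\CC_s$ with a product of crossings, i.e. planarize it while preserving the block-balance invariant. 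This is a standard but slightly technical manipulation, and it is where I would spend most of the effort; everything else reduces to assembling the cited results.
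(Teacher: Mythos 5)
Your argument is correct, but it follows a genuinely different route from the proof the paper actually supplies. The proposition carries no proof environment of its own; the paper's proof is deferred to Example \ref{ex:classicalwreath} and rests on M.~Mori's categorical description of wreath products \cite{mori2012representation}: the category whose morphisms are all $\mathfrak{A}$-decorated partitions is the wreath product of the C*-tensor category $\mathfrak{A}$ by $S_{N}$, and when $\mathfrak{A}=\Rep(G)$ with $G$ abelian every irreducible object is one-dimensional, so the decorations collapse to a colouring by $\h{G}$ subject exactly to the block-balance condition; for $G=\Z_{s}$ this category is $\CC_{s,\text{ab}}$ and the resulting group is $\Z_{s}\wr S_{N}=H_{N}^{s}$. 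You instead take the route sketched in the paragraph \emph{introducing} the proposition: identify $\CC_{s,\text{ab}}$ as $\CC_{s}$ enlarged by all crossings, recognize the associated group as the abelianization of $H_{N}^{s+}$ via the mechanism of Proposition \ref{prop:abelianization}, and quote \cite[Prop 11.4]{banica2011free}. This is a legitimate proof, and you correctly isolate its only substantive step, namely the identity $\CC_{s,\text{ab}}=\langle\CC_{s},\text{ all crossings}\rangle$ (the paper also asserts this without proof); to close it you would rotate an arbitrary balanced partition onto one row, planarize it by crossings, and observe that permuting points neither changes block membership nor colours, so the colour-balance invariant of each block is preserved and the planar partition lands in $\CC_{s}$. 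The trade-off between the two approaches: yours stays inside the partition/abelianization machinery of the paper but leans on the external computation of $(H_{N}^{s+})_{\text{ab}}$ and on the planarization lemma, while Mori's route bypasses both the quantum group $H_{N}^{s+}$ and any planarization, and delivers the statement for every abelian compact group at once. One minor caution: Proposition \ref{prop:linearindependence} applies only to noncrossing categories, so it justifies the faithfulness of the description of $H_{N}^{s+}$ by $\CC_{s}$, as you use it, but it says nothing about $\CC_{s,\text{ab}}$ itself, where the maps $T_{p}$ are certainly not linearly independent.
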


Consider the representation $v = u^{\circ}$ of $H_{N}^{s}$, which is simply the natural representation as unitary monomial matrices (i.e. having exactly one non-zero coefficient in each row and column). If we want to compute the intertwiners between $v^{\otimes k}$ and itself, we only need partitions with white points. The condition then becomes that the number of upper points in each block is equal to the number of lower points modulo $s$. We have therefore recovered K. Tanabe's result \cite[Lem 2.1]{tanabe1997centralizer}. Note that the associated centralizer algebras have been studied by M. Kosuda under the name of \emph{modular party algebras} in \cite{kosuda2008characterization}.

\begin{rem}
From this, it is tempting to look for a similar result for the more general family of unitary reflection groups $G(s, m, N)$. However, these are not partition groups, nor there exists a free analogue of them. This is a direct consequence of the classification of unitary easy groups in \cite{tarrago2013unitary}. This may be surprising in view of the work of K. Tanabe \cite{tanabe1997centralizer}. However, this works has two differences with our approach. First, the conditions defining the set $\Pi_{2k}(s, m, N)$ are not stable under horizontal concatenation and secondly it only involves the natural representation of $G(s, p, N)$ on $\C^{N}$ and not its contragredient representation.
\end{rem}

From this example of complex reflection groups, it is natural to go to more general wreath products.

\begin{prop}
Let $\Gamma$ be a finitely generated discrete group and let $N\geqslant 4$ be an integer. Then, the abelianization of $\h{\Gamma}\wr_{\ast}S_{N}^{+}$ is the usual wreath product $\h{\Gamma}_{\text{ab}}\wr S_{N}$, where $\h{\Gamma}_{\text{ab}}$ is the abelian compact group dual to the abelianization of $\Gamma$.
\end{prop}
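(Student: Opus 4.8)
The plan is to stay inside the partition framework and reduce the whole statement to an identification of two categories of partitions. Since $\Gamma$ is finitely generated, first I would fix a finite symmetric generating subset $\Gamma_{0}\subset \Gamma$ containing the unit. By Lemeux's theorem above, the free wreath product $\h{\Gamma}\wr_{\ast}S_{N}^{+}$ is the partition quantum group associated to $(\CC_{\Gamma_{0}}, N)$. By the abelianization procedure described just before Proposition \ref{prop:abelianization}, together with the characterization of classical partition groups (a partition group is classical if and only if its category contains all the $(x,y)$-crossings), its abelianization is the classical partition group associated to the category $\CC_{\Gamma_{0}}^{\text{ab}}$ generated inside $\Pc$ by $\CC_{\Gamma_{0}}$ and all the $(x,y)$-crossings for $x,y\in \Gamma_{0}$. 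The whole problem is thus to identify this classical partition group with $\h{\Gamma}_{\text{ab}}\wr S_{N}$.

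Next I would describe $\CC_{\Gamma_{0}}^{\text{ab}}$ explicitly. The defining condition of $\CC_{\Gamma_{0}}$ is that in each block the left-to-right product of the upper labels equals the left-to-right product of the lower labels in $\Gamma$. Composing with crossings realizes arbitrary permutations of the points inside a single row, so this ordered condition collapses to one in the abelianization: $p\in \CC_{\Gamma_{0}}^{\text{ab}}$ if and only if in each block the product of the upper labels equals the product of the lower labels in $\Gamma_{\text{ab}}$, with no planarity restriction. In particular $\CC_{\Gamma_{0}}^{\text{ab}}$ depends only on the image of $\Gamma_{0}$ in $\Gamma_{\text{ab}}$; moreover the "twisted identity" connecting an upper $x$-point to a lower $y$-point lies in $\CC_{\Gamma_{0}}^{\text{ab}}$ as soon as $x$ and $y$ have the same image in $\Gamma_{\text{ab}}$, so the basic representations $u^{x}$ and $u^{y}$ become equivalent after abelianization. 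This lets me relabel the colours by $\Gamma_{\text{ab}}$ and assume $\Gamma$ abelian.

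The last and main step is to recognize this category as the one implementing Schur-Weyl duality for the classical wreath product $K\wr S_{N}$, where $K = \h{\Gamma}_{\text{ab}}$ is the compact abelian group dual to $\Gamma_{\text{ab}}$. Writing an element of $K\wr S_{N} = K^{N}\rtimes S_{N}$ as $((k_{1},\dots,k_{N}),\sigma)$ and letting $\chi_{x}$ denote the character of $K$ attached to $x\in \Gamma_{\text{ab}}$, the basic representation $u^{x}$ is the monomial representation with coefficients $u^{x}_{ij}((k),\sigma) = \delta_{i,\sigma(j)}\chi_{x}(k_{i})$. A direct check shows that $T_{p}$ intertwines the corresponding tensor powers exactly when $p$ satisfies the abelian block condition, giving $\Span\{T_{p} : p\in \CC_{\Gamma_{0}}^{\text{ab}}(w,w')\}\subseteq \Mor_{K\wr S_{N}}(u^{\otimes w}, u^{\otimes w'})$. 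Once the reverse inclusion is in hand, Theorem \ref{thm:tannaka} identifies the abelianization with $K\wr S_{N} = \h{\Gamma}_{\text{ab}}\wr S_{N}$ and finishes the proof; the special case $\Gamma = \Z_{s}$ recovers Proposition \ref{prop:abelianizationwreath}.

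The main obstacle is precisely this reverse inclusion, i.e. the second fundamental theorem of invariants for the wreath product: one must show that the $T_{p}$ span the \emph{whole} commutant. Since $\CC_{\Gamma_{0}}^{\text{ab}}$ is not noncrossing, Proposition \ref{prop:linearindependence} is unavailable and the maps $T_{p}$ need not be linearly independent, so instead of an independence argument I would carry out a dimension count, computing $\dim \Mor_{K\wr S_{N}}(u^{\otimes w}, u^{\otimes w'})$ from the character theory of $K\wr S_{N}$ by decomposing tensor powers of the monomial representation, and matching it, for $N\geqslant 4$, with the number of abelian-block partitions taken modulo the linear relations among the $T_{p}$. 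This is a standard but somewhat delicate computation generalizing K. Tanabe's result for complex reflection groups to arbitrary finitely generated abelian $\Gamma_{\text{ab}}$ and to the presence of the contragredient colours.
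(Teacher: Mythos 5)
Your overall strategy is legitimate, but it diverges from the paper's proof and leaves the hard step unproven. The paper disposes of this proposition in one line by invoking M.~Mori's theorem on wreath products of tensor categories (explained in Example~\ref{ex:classicalwreath}): the category whose morphisms are all $\mathfrak{A}$-decorated partitions \emph{is} the representation category of the wreath product by $S_{N}$, and for $\mathfrak{A} = \Rep(\h{\Gamma}_{\text{ab}})$ this already contains the full Schur--Weyl statement, i.e.\ both inclusions of intertwiner spaces. You instead set up a direct Tannakian comparison, and the easy inclusion $\Span\{T_{p}\}\subseteq \Mor_{K\wr S_{N}}$ only yields that $K\wr S_{N}$ is a subgroup of the abelianization. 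The equality then rests entirely on the reverse inclusion, which you correctly identify as the second fundamental theorem of invariants for $K\wr S_{N}$ --- and then do not prove. The proposed dimension count is not a routine verification: for a fixed $N\geqslant 4$ and words $w$ of length exceeding $N$ the maps $T_{p}$ satisfy nontrivial linear relations, and controlling exactly which relations occur is equivalent to the theorem you are trying to establish. So as written the argument is circular at its crucial point; you must either carry out that computation (essentially redoing Tanabe/Bloss/Mori for arbitrary finitely generated abelian duals, including the contragredient colours) or cite one of these results, as the paper does.

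Two further remarks. First, your intermediate claim that adding all crossings to $\CC_{\Gamma_{0}}$ produces exactly the partitions satisfying the \emph{unordered} block condition in $\Gamma_{\text{ab}}$ is stated without proof; the inclusion of the generated category into the abelian-block category needs a stability check under vertical concatenation, and the reverse inclusion needs an explicit factorization of an abelian-block partition into crossings and ordered-condition one-block partitions (this works, but uses that every relation in $\Gamma_{\text{ab}}$ is witnessed by a reordering of a word that is trivial in $\Gamma$ after inserting cancelling pairs from $\Gamma_{0}$). Second, if you want to avoid the second fundamental theorem altogether, there is a more elementary route to close your gap: compute the Gelfand spectrum of the abelian quotient of $C(\h{\Gamma}\wr_{\ast}S_{N}^{+})$ directly from the generators and relations of the free wreath product. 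A character restricts to a one-dimensional $*$-representation of each of the $N$ copies of $C^{*}(\Gamma)$, giving an element of $\h{\Gamma}_{\text{ab}}^{\,N}$, and to a character of $C(S_{N}^{+})$, giving a permutation; checking the induced group law gives the opposite inclusion $G\subseteq \h{\Gamma}_{\text{ab}}\wr S_{N}$, which together with your easy intertwiner inclusion yields equality without any spanning argument.
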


\begin{proof}
This is in fact a particular case of a much more general statement from \cite{mori2012representation}, which will be explained in Example \ref{ex:classicalwreath}.
\end{proof}

\begin{rem}\label{rem:abelianproblem}
Here appears an important fact : our construction can only recover wreath products by \emph{abelian} compact groups. Another method allows us to obtain also wreath products by arbitrary \emph{finite} groups (see below). To go beyond these two cases, one needs a more general framework, which will be outlined in Subsection \ref{subsec:wreath}.
\end{rem}

Centralizers of wreath products by arbitrary finite groups where described by M. Bloss in \cite{bloss2003g}. However, his algebras of coloured diagrams cannot be realized as generalized partition algebras in our sense. To explain the construction, we will define new operators from the linear maps $T_{p}$, using an averaging technique. This is another way of writing the constructions of \cite{parvathi2004g}, giving both direct products and wreath products. Let $G$ be a finite group and set $\A_{G} = G$, \emph{but with $\overline{g} = g$}. If $p\in P^{\A_{G}}$ is an $\A_{G}$-coloured partition and if $g\in G$, we define $g.p$ to be the partition obtained by multiplying all the colours of $p$ by $g$ on the left.

We can now define new operators which are invariant under the action of $G$ by averaging. More precisely, we set, for $p\in P^{\A}$,
\begin{equation*}
L_{p} = \sum_{g\in G}T_{g.p}.
\end{equation*}
The reader can easily check that this definition corresponds to the operators $L$ defined in \cite{parvathi2004g}. Moreover, even though the operators $L_{p}$ do not span a generalized partition algebra, they still form a C*-tensor category with duals.

\begin{lem}\label{lem:directproduct}
Set $\Mor_{G\times}(k, l) = \Span\{L_{p}, p\in P^{\A_{G}}(k, l)\}$. Then, this yields a C*-tensor category with duals.
\end{lem}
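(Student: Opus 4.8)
The goal is to verify that the spaces $\Mor_{G\times}(k,l) = \Span\{L_p : p \in P^{\A_G}(k,l)\}$ satisfy the five axioms of Definition \ref{de:tensorcategory}. The overall strategy is to reduce each axiom to the corresponding (already known) property of the maps $T_p$, and then to check that the averaging $L_p = \sum_{g\in G} T_{g.p}$ is compatible with the category operations. The key structural observation to exploit is that left-multiplication of colours by $g$ is an operation on coloured partitions that interacts well with $\otimes$, composition, and the adjoint: concretely, $g.(p\otimes q) = (g.p)\otimes(g.q)$, $(g.p)^* = g.(p^*)$, and vertical concatenation of $g.p$ with $g.q$ is $g.(qp)$ whenever the composition is defined. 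These are purely combinatorial identities about recolouring, and they are the engine driving every axiom.

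First I would treat the tensor product (axiom 1) and the adjoint (axiom 3), which are the easiest. For the adjoint, since $T_{p^*} = T_p^*$ and $(g.p)^* = g.(p^*)$, we get $L_p^* = \sum_g T_{g.p}^* = \sum_g T_{(g.p)^*} = \sum_g T_{g.(p^*)} = L_{p^*}$, so $L_p^* \in \Mor_{G\times}(l,k)$ as required. For the tensor product, I would compute
\begin{equation*}
L_p \otimes L_q = \sum_{g,h\in G} T_{g.p}\otimes T_{h.q} = \sum_{g,h} T_{(g.p)\otimes(h.q)}.
\end{equation*}
The subtlety is that this double sum is over \emph{independent} recolourings $g,h$, whereas $L_{p\otimes q} = \sum_k T_{k.(p\otimes q)} = \sum_k T_{(k.p)\otimes(k.q)}$ ranges over the \emph{diagonal} recolouring. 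So $L_p\otimes L_q$ is not literally $L_{p\otimes q}$; rather, reindexing by $g$ and $h' = g^{-1}h$ gives $L_p \otimes L_q = \sum_{h'\in G} L_{p\otimes(h'.q)}$, a sum of elements of $\Mor_{G\times}(k+k',l+l')$, hence still in the span. This reindexing step is where I expect the main conceptual (though not technical) obstacle to lie: one must argue that closure under the operations only requires membership in the \emph{span} of the $L_p$, not that each operation return a single generator.

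Next I would handle composition (axiom 2), which is analogous but involves the loop factor. Using $T_{g.p}\circ T_{h.q} = N^{\rl(g.p,\,h.q)}T_{(g.p)(h.q)}$ from the relations in Section \ref{sec:partitions}, and noting that recolouring does not change the underlying uncoloured partition and hence $\rl(g.p,h.q)$ depends only on the uncoloured data (so equals $\rl(p,q)$), the composite $L_p\circ L_q = \sum_{g,h} N^{\rl(p,q)} T_{(g.p)(h.q)}$ again splits, after the same $h' = g^{-1}h$ reindexing, into $\sum_{h'} N^{\rl(p,q)} L_{p(h'.q)}$, an element of the span. Here one must be slightly careful that the composition $(g.p)(h.q)$ is only defined when the lower colouring of $g.p$ matches the upper colouring of $h.q$; since recolouring by $g$ and $h$ shifts these colourings, the matching condition selects which terms survive, but this is exactly encoded in the $\A_G$-coloured composition already. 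Finally, axioms 4 and 5 (the identity $\Id$ and the duality map $D$) I would obtain by checking that the relevant partitions — the identity partition and the rotated pair $\sqcup$ — produce, after averaging over $G$, the required maps; since $\overline g = g$ in $\A_G$, the colourings behave as in the monochrome case and these reduce to the standard facts $T_{\vert} = \Id$ and $T_{\sqcup} = D$ up to the harmless overall factor $|G|$ from summing over $g$. The one point deserving care throughout is that the overall normalizations and the size-$|G|$ factors land consistently, but since we work with spans these scalars are immaterial.
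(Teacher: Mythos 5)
Your proposal is correct and follows essentially the same route as the paper: the same reindexing $h\mapsto g^{-1}h$ of the double sum to write $L_p\otimes L_q=\sum_{k}L_{p\otimes(k.q)}$, the same observation that the colour-matching condition selects the (at most one) surviving composite $L_{p(k.q)}$, and the same reduction of the adjoint and the duality to $\overline{g}=g$. The only cosmetic difference is that the paper explicitly notes there is at most one $k\in G$ making $p$ and $k.q$ composable, whereas you keep the full sum with vanishing terms; the content is identical.
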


\begin{proof}
We have to check that the morphism spaces are stable under all the operations. First the tensor product :
\begin{equation*}
\begin{array}{ccccc}
L_{p}\otimes L_{q} & = & \displaystyle\sum_{g, h\in G}T_{(g.p)\otimes (h.q)} & = & \displaystyle\sum_{g, h\in G}T_{g.(p\otimes (g^{-1}h.q))} \\
& = & \displaystyle\sum_{g\in G}\left(\displaystyle\sum_{h\in G} T_{g.(p\otimes (g^{-1}h.q))}\right) & = & \displaystyle\sum_{g\in G}\left(\displaystyle\sum_{k\in G} T_{g.(p\otimes (k.q))}\right) \\
& = & \displaystyle\sum_{k\in G}\left(\displaystyle\sum_{g\in G} T_{g.(p\otimes (k.q))}\right) & = &\displaystyle\sum_{k\in G} L_{p\otimes (k.q)}
\end{array}
\end{equation*}
Then the composition : let us assume that for any $k\in G$, $p$ and $k.q$ are not composable. Then, $L_{p}\circ L_{q} = 0$ by definition. Otherwise, there is a unique $k$ such that $p$ and $k.q$ are composable, and
\begin{equation*}
L_{p}\circ L_{q} = \sum_{g, h\in G}T_{g.p}\circ T_{h.q} = \sum_{g\in G} T_{g.(p(k.q))} = L_{p(k.q)}.
\end{equation*}
For the adjoint, it is clear that $L_{p}^{*} = L_{p^{*}}$. Moreover, the fact that the inverse of the colour $g$ is $g$ makes it clear that the rotation of partitions induces a duality on the category.
\end{proof}

By Theorem \ref{thm:tannaka}, this construction gives rise to a compact group and \cite{parvathi2004g} precisely asserts that this group is the direct product $G\times S_{N}$. To go to wreath product, we need to perform a second averaging. If $b(p)$ denotes the number of blocks of a partition $p$, we can order these blocks from left to right according to the position of their leftmost upper point. Then, any $b(p)$-tuple of elements of $g$ acts on $p$, the $k$-th element of the tuple acting on the $k$-th block only. We accordingly set
\begin{equation*}
M_{p} = \sum_{(g_{1}, \dots, g_{b(p)})\in G^{b(p)}}T_{(g_{1}, \dots, g_{b(p)}).p}. 
\end{equation*}
Again, these maps behave nicely.

\begin{prop}
Set $\Mor_{G\wr}(k, l) = \Span\{M_{p}, p\in P^{\A_{G}}(k, l)\}$. Then, this yields a C*-tensor category with duals.
\end{prop}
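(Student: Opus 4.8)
The plan is to mimic the proof of Lemma \ref{lem:directproduct}, checking stability of the spaces $\Mor_{G\wr}(k,l)=\Span\{M_p\}$ under the three category operations plus duality. The key structural fact needed throughout is that the blockwise action is \emph{equivariant} with respect to the category operations in a suitable combinatorial sense. First I would pin down precisely how a tuple $(g_1,\dots,g_{b(p)})$ re-indexes under each operation. For the \emph{horizontal concatenation}, the blocks of $p\otimes q$ are exactly the (disjoint) blocks of $p$ followed by those of $q$, and the left-to-right ordering by leftmost upper point simply concatenates the two orderings (blocks of $p$ come first, as their upper points all sit to the left of those of $q$). Hence a tuple in $G^{b(p\otimes q)}=G^{b(p)}\times G^{b(q)}$ factors as a pair, giving $T_{(\mathbf{g},\mathbf{h}).(p\otimes q)}=T_{(\mathbf{g}.p)\otimes(\mathbf{h}.q)}=T_{\mathbf{g}.p}\otimes T_{\mathbf{h}.q}$, and summing over the product yields $M_p\otimes M_q=M_{p\otimes q}$ directly, with no index shift. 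This is cleaner than the direct-product case because the per-block freedom already decouples $p$ and $q$.

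Next I would treat \emph{composition}, which is the delicate point. Here blocks can merge: a block of $p$ in the lower row and a block of $q$ in the upper row that get connected through the middle row become a single block of $qp$, so $b(qp)$ is generally smaller than $b(p)+b(q)$, and distinct tuples $(\mathbf{g},\mathbf{h})$ can produce the same decorated composite. The strategy is to compute $M_p\circ M_q=\sum_{\mathbf{g},\mathbf{h}}T_{\mathbf{g}.p}\circ T_{\mathbf{h}.q}$ and observe, using the composition rule $T_r\circ T_s=N^{\rl(r,s)}T_{rs}$, that $(\mathbf{g}.p)$ and $(\mathbf{h}.q)$ are composable with matching middle colours only when the decorations agree on the merged strand. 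When they are composable, the loops and the resulting decorated partition depend only on how the combined tuple acts on the blocks of $qp$; one then counts the fibre of the map $G^{b(p)}\times G^{b(q)}\to G^{b(qp)}$ (together with the constraints forcing matching colours in the middle) and checks that each decorated composite $\mathbf{k}.(qp)$ is hit a constant number of times, producing a scalar multiple of $M_{qp}$ (the scalar absorbing the uniform loop contribution $N^{\rl}$ and the fibre size). Since $\Mor_{G\wr}$ is a span, such scalar multiples stay inside it. The \emph{adjoint} and \emph{rotation} are routine: $M_p^{*}=M_{p^{*}}$ because reflection commutes with the blockwise $G$-action and $T_{r}^{*}=T_{r^{*}}$, while the self-inverse convention $\overline{g}=g$ makes colours invariant under conjugation so that rotation sends $M_p$ to another $M_{p'}$, furnishing the duality maps $\Id$ and $D$.

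The main obstacle will be the composition step, specifically the bookkeeping of how the two independent per-block tuples collapse onto the blocks of the composite and which configurations survive the colour-matching constraint imposed by composability. I expect one must argue that the fibres of the collapse map all have the same cardinality $|G|^{b(p)+b(q)-b(qp)}$ (or a uniform correction thereof), so that the sum reorganizes cleanly as a single scalar times $M_{qp}$; verifying this uniformity, and that the loop count $\rl$ is constant across the fibre, is where the real content lies. Once that is established, the remaining verifications are as straightforward as in Lemma \ref{lem:directproduct}, and Theorem \ref{thm:tannaka} then produces the associated compact group, which one expects to be the wreath product $G\wr S_N$.
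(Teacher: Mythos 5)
Your plan is correct and coincides with the paper's (entirely omitted) argument, which simply declares the computations "similar to" those of Lemma \ref{lem:directproduct}; your fleshed-out composition step — uniform fibres of the collapse map $G^{b(p)}\times G^{b(q)}\to G^{b(qp)}$ together with a constant loop factor $N^{\rl}$ — is exactly the content the paper leaves to the reader, and it does work because the set of composable pairs is a product of $G$-torsors (one per connected component of the "meeting graph" in the middle row), so all fibres have the same size and the loop count is colour-independent. One small correction: when $p$ and $q$ are not themselves composable but some blockwise translates are, the resulting composites form a single orbit $\{\mathbf{k}.r\}$ for a colouring $r$ of the underlying partition of $qp$ that need not be a blockwise translate of $qp$ itself, so the output is a scalar multiple of $M_{r}$ rather than of $M_{qp}$ — but $M_{r}$ still lies in $\Mor_{G\wr}$, so the conclusion is unaffected.
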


\begin{proof}
The computations are similar to the ones of for Lemma \ref{lem:directproduct}, we therefore omit them.
\end{proof}

This time, the compact group associated to this category is, by \cite[Thm 4.1.4]{parvathi2004g} and \cite[Thm 6.6]{bloss2003g}, the wreath product $G\wr S_{N}$ we were looking for. Note that since
\begin{eqnarray*}
\frac{1}{\vert G\vert}\sum_{(g_{1}, \dots, g_{b(p)})\in G^{b(p)}}L_{(g_{1}, \dots, g_{b(p)}).p} & = & \frac{1}{\vert G\vert}\sum_{g\in G}\sum_{(g_{1}, \dots, g_{b(p)})\in G^{b(p)}}L_{(g_{1}g, \dots, g_{b(p)}g).p} \\
& = & \frac{1}{\vert G\vert}\sum_{g\in G}M_{p} = M_{p},
\end{eqnarray*}
we have an inclusion of algebras $\Mor_{G\wr}\subset \Mor_{G\times}\subset \Mor_{P^{\A_{G}}}$ which translates into a reversed inclusion of groups $S_{N}\subset G\times S_{N}\subset G\wr S_{N}$.

\subsection{Generalizations}

The constructions of the previous subsection make sense in a broader setting. Let $\CC$ be a category of \emph{uncoloured} partitions and let $G$ be a finite group. Then, one can build a C*-tensor category with duals $\mathfrak{C}(\CC, G, \times)$ by setting :
\begin{equation*}
\Mor_{\mathfrak{C}(\CC, G, \times)}(k, l) = \Span\{L_{p}, p\in \CC\}.
\end{equation*}
That this is a C*-tensor category with duals is proved in the same way as Lemma \ref{lem:directproduct}. The associated compact quantum group can easily be identified, thus extending the results of \cite{parvathi2004g}.

\begin{prop}
Let $N$ be an integer and let $\G$ be the partition quantum group associated to $\CC$. Then, the compact quantum group associated to $\mathfrak{C}(\CC, G, \times)$ is the direct product $G\times \G$.
\end{prop}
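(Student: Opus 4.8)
The goal is to identify the compact quantum group whose concrete C*-tensor category is $\mathfrak{C}(\CC, G, \times)$, namely the span of the averaged operators $L_p = \sum_{g \in G} T_{g.p}$ for $p \in \CC$. I want to show this quantum group is the direct product $G \times \G$, where $\G$ is the partition quantum group attached to the uncoloured category $\CC$ with parameter $N$. My plan is to produce the correct tensor category for $G \times \G$ independently and then match morphism spaces, invoking the uniqueness clause of Theorem \ref{thm:tannaka}.

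First I would recall the representation-theoretic structure of a direct product: the irreducible representations of $G \times \G$ are exactly the (outer) tensor products $\pi \boxtimes \rho$ where $\pi \in \Irr(G)$ and $\rho \in \Irr(\G)$, and the Hom-spaces factor as $\Mor_{G\times\G}(\pi\boxtimes\rho, \pi'\boxtimes\rho') = \Mor_G(\pi,\pi') \otimes \Mor_\G(\rho,\rho')$. The relevant generating representation here is the one whose carrier space is $\C[G] \otimes \C^N$, where $G$ acts on the first leg by the (left) regular representation $\lambda$ and $\G$ acts on the second leg by its fundamental representation $v$. Concretely, the colour set $\A_G = G$ records a basis $(e^g_i)$ of $\C[G]\otimes\C^N$ indexed by a group element $g$ and an index $i \in \{1,\dots,N\}$, and the averaging $\sum_{g\in G}$ in the definition of $L_p$ is precisely the projection onto $G$-invariants, i.e.\ it implements the intertwiner condition with respect to the regular representation of $G$.

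The key computation is to show that, for words $s, s'$ on $\{\circ,\bullet\}$ of lengths $k$ and $l$,
\begin{equation*}
\Span\{L_p : p \in \CC(k,l)\} = \Mor_{G\times\G}\bigl((\lambda\boxtimes v)^{\otimes k}, (\lambda\boxtimes v)^{\otimes l}\bigr) = \End_G(\lambda^{\otimes k},\lambda^{\otimes l}) \otimes \Mor_\G(v^{\otimes k}, v^{\otimes l}).
\end{equation*}
By Theorem \ref{thm:partitionqg} the second factor on the right is $\Span\{T_p : p \in \CC(k,l)\}$, so what must be checked is that averaging the colours over $G$ turns an intertwiner of $\G$ into an intertwiner of $G \times \G$, and that one captures exactly the $G$-intertwiner space $\End_G(\lambda^{\otimes k},\lambda^{\otimes l})$ in the process. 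The containment $L_p \in \Mor_{G\times\G}$ follows from $G$-invariance of the averaged operator (this is the content of the averaging, computed in the style of the commented-out Lemma \ref{lem:invariantintertwiners} and Lemma \ref{lem:directproduct}). For the reverse inclusion I would use that $\End_G(\lambda^{\otimes k})$ is spanned by the operators implementing the diagonal $G$-action together with index permutations of the regular representation, and argue a dimension/spanning count: the collection $\{L_p\}$ is large enough to fill the tensor-product Hom-space because the uncoloured $T_p$ already span the $\G$-side and the group averaging supplies exactly one independent $G$-invariant direction per orbit.

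The main obstacle I anticipate is the spanning (surjectivity) half of this equality rather than the containment half. Containment is a direct invariance computation; but proving that the $L_p$ span the \emph{entire} factorised Hom-space requires understanding the image of the averaging map and ruling out that distinct $\CC$-partitions collapse to the same $L_p$ in a way that loses dimensions. I would handle this by reducing to the regular-representation side: the endomorphisms of $\lambda^{\otimes k}$ as a $G$-representation are well understood (the commutant of the diagonal $G$-action on $\C[G]^{\otimes k}$), and matching these against the colour data of $\A_G$-coloured partitions shows the $L_p$ exhaust the available $G$-equivariant directions. Since the paper says the proof mirrors Lemma \ref{lem:directproduct} and cites \cite{parvathi2004g} for the special case $\CC = P$, I expect the cleanest route is to invoke that special case and then note that restricting to a subcategory $\CC \subset P$ restricts both sides compatibly, so the identification $G \times \G$ passes to arbitrary $\CC$ by functoriality; the uniqueness in Theorem \ref{thm:tannaka} then finishes the argument.
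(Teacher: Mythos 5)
Your overall strategy coincides with the paper's: realize $G\times\G$ on $\ell^{2}(G)\otimes V$ via the generating representation $\lambda\otimes u$, identify the averaging $\sum_{g}T_{g.p}$ with the passage to $G$-invariant intertwiners, and conclude by the uniqueness clause of Theorem \ref{thm:tannaka}. Your factorization $\Mor_{G\times\G}=\Mor_{G}\otimes\Mor_{\G}$ is the same statement as the paper's description of these spaces as the intersection of the intertwiner spaces of $u'=\oplus_{g}u^{g}$ and of the permutation representation $\rho$ of $G$ on the summands (the two averaging projections act on different legs and commute, so the intersection of their images is the tensor product). The one place you go astray is the proposed ``cleanest route'' for the spanning half, namely invoking the case $\CC=P$ from \cite{parvathi2004g} and restricting to a subcategory ``by functoriality'': this does not work, because knowing that the $L_{p}$ for $p\in P$ span $\Mor_{G\times S_{N}}$ says nothing about whether the $L_{p}$ for $p$ in a subcategory $\CC$ span $\Mor_{G\times\G}$ --- the compatibility of the restriction, i.e.\ that $\Span\{L_{p}:p\in\CC\}$ equals the intersection of $\Span\{L_{p}:p\in P\}$ with the smaller morphism space, is precisely what has to be proved. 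Fortunately the direct route, which you also sketch but overcomplicate with a dimension count, is immediate and requires no linear independence of the $T_{p}$'s: the map $E(T)=\vert G\vert^{-1}\sum_{g}\rho(g)^{\otimes l}\,T\,(\rho(g)^{-1})^{\otimes k}$ is the projection onto $\Mor(\rho^{\otimes k},\rho^{\otimes l})$; since $\rho$ and $u'$ commute, $E$ preserves $\Mor(u'^{\otimes k},u'^{\otimes l})$, which by Theorem \ref{thm:partitionqg} is spanned by the $T_{p}$; a projection carries a spanning set of an invariant subspace onto a spanning set of its image, and $E(T_{p})=\vert G\vert^{-1}L_{p}$, so the $L_{p}$ span the intersection regardless of any collapsing among the $T_{p}$. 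With that substitution your argument matches the paper's proof.
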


\begin{proof}
Consider the quantum group $\G' = G\times \G$, let $u$ be the fundamental representation of $\G$ given by the partition quantum group construction and consider the representation $v = \lambda\otimes u$ of $\G'$, where $\lambda$ is the regular representation of $G$. Let $V$ be the carrier space of $u$, let $(e_{i})_{1\leqslant i\leqslant N}$ be a basis of $V$. For each $g\in G$, let us denote by $V^{g}$ a copy of $V$ with basis $(e_{i}^{g})_{1\leqslant i\leqslant N}$ and by $u^{g}$ the natural copy of $u$ acting on this space. Then, $u' = \oplus u^{g}$ is the image of $\Id \otimes u$ under the isomorphism
\begin{equation*}
\ell^{2}(G)\otimes V \longrightarrow \bigoplus_{g\in G}V^{g}
\end{equation*}
given by $e_{i}\otimes\delta_{g}\mapsto e_{i}^{g}$. Moreover, the image of $\lambda\otimes \Id$ is the representation $\rho$ permuting the spaces $V^{g}$. This yields a bijection between the intertwiners of tensor powers of $\lambda\otimes u$ and the corresponding intertwiners of $u'\rho = \rho u'$. Because $u'$ and $\rho$ commute, the latter intertwiner spaces are the intersection of the intertwiner spaces of $u'$ and $\rho$. By Lemma \ref{lem:directproduct}, these spaces are linearly spanned by the maps $L_{p}$. Since $v$ is a generating representation for $G\times \G$, the latter is the quantum group associated to $\mathfrak{C}(\CC, G, \times)$
\end{proof}

Similarly, we can define another C*-tensor category with duals $\mathfrak{C}(\CC, G, \wr)$ using the maps $M_{p}$. However, the resulting quantum group does not seem to be easily described using $G$ and the quantum group associated to $\CC$. Of course, if $\CC$ is the set of all partitions, then we get the wreath product $G\wr S_{N}$ by the result of M. Bloss \cite{bloss2003g}. But if $\CC$ is for instance the category of all pair partitions (corresponding to $O_{N}$), then there is no natural candidate for a "permutational wreath product" by the orthogonal group. On the quantum side the first natural example to consider is $\mathfrak{C}(NC, \Z_{2}, \wr)$. By \cite[Thm 4.4]{banica2011two}, this is the category of representations of the quantum group $S^{+}(N, 0)$, which is proven in \cite[Thm 2.9]{banica2011two} to be isomorphic to $H_{N}^{+} = \Z_{2}\wr_{\ast}S_{N}^{+}$. It is of course very tempting to conjecture from this that $\mathfrak{C}(NC, G, \wr)$ is the representation category of $G\wr_{\ast} S_{N}^{+}$ for any finite group $G$, although we have no further evidence for this. Quantum isometry groups of free products give more examples of this situation. To explain this, let us note that the construction of $\mathfrak{C}(\CC, G, \wr)$ still makes sense if the partitions in $\CC$ are already coloured by $\A_{G}$. In the special case $G = \Z_{2}$, we can identify this colouring with the black/white colouring used in \cite{banica2011two} and \cite{banica2012quantum}. Using the description of the two-coloured categories of partitions of the quantum reflection groups given in \cite{freslon2013fusion}, \cite[Thm 3.5]{banica2012quantum} translates in the following way :

\begin{thm}[Banica-Skalski]
Assume that $5 \leqslant s < \infty$ and let $\CC_{s}$ denote the category of partitions of $H_{N}^{s+}$. Then $\mathfrak{C}(\CC_{s}, \Z_{2}, \wr)$ is the category of representations of the quantum isometry group of the dual of $\Z_{s}^{\ast N}$.
\end{thm}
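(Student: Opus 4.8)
The statement is explicitly a reformulation of \cite[Thm 3.5]{banica2012quantum}, so the plan is to build a precise dictionary between the two combinatorial descriptions and then invoke Tannaka--Krein. Since the construction $\mathfrak{C}(\CC_{s}, \Z_{2}, \wr)$ is already known (by the proposition preceding this statement) to be a genuine C*-tensor category with duals, it is automatically the representation category of \emph{some} compact quantum group; the whole content is to recognize this quantum group as the quantum isometry group of $\widehat{\Z_{s}^{\ast N}}$, equivalently to recognize Banica and Skalski's intertwiner spaces as exactly the spaces $\Span\{M_{p} : p\in \CC_{s}\}$.

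First I would recall from \cite[Sec 4.5]{freslon2013fusion} the explicit description of $\CC_{s}$ as the category of noncrossing $\{\circ, \bullet\}$-coloured partitions whose blocks satisfy the sum-to-zero condition modulo $s$ (with $\circ$ and $\bullet$ counted with opposite signs). The conceptual key is to identify the extra $\Z_{2}$ of the wreath construction with the inversion symmetry of $\widehat{\Z_{s}^{\ast N}}$: since $\circ$ encodes a generator and $\bullet$ its inverse, the action of $g\in \Z_{2} = \A_{\Z_{2}}$ flipping all the colours of a block corresponds exactly to inverting the generators of one free factor, which is a genuine isometry of the word-length Laplacian. I would make this the guiding principle of the dictionary, reading $M_{p} = \sum_{(g_{1}, \dots, g_{b(p)})\in \Z_{2}^{b(p)}} T_{(g_{1}, \dots, g_{b(p)}).p}$ as the symmetrization of $T_{p}$ over all independent block-wise inversions.

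With this in hand the argument splits into two inclusions. On the one hand I would check that each $M_{p}$ with $p\in \CC_{s}$ is an intertwiner of the quantum isometry group: this follows once one observes that $T_{p}$ already intertwines the tensor powers of the fundamental representation of $H_{N}^{s+}$, and that the block-wise averaging builds in precisely the invariance under inversion that characterizes Banica--Skalski's spaces. On the other hand I would verify that the spaces $\Span\{M_{p} : p\in \CC_{s}\}$ exhaust those intertwiners in each bidegree, matching their spanning set partition by partition under the black/white identification. Finally, since the two quantum groups share a generating representation $v$ and their morphism spaces $\Mor(v^{\otimes k}, v^{\otimes l})$ then agree in every bidegree, Theorem \ref{thm:tannaka} forces them to be isomorphic, which is the assertion.

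The main obstacle is the exhaustion half of the matching: one has to track carefully how the modulo $s$ block condition defining $\CC_{s}$ interacts with the $\Z_{2}$-averaging and confirm that the resulting spanning set coincides exactly with the one of \cite[Thm 3.5]{banica2012quantum}, so that no intertwiner is lost or gained and the dimension counts agree. This is presumably where the hypothesis $5\leqslant s<\infty$ (inherited directly from \cite{banica2012quantum}) becomes essential: for small $s$ the cyclic group $\Z_{s}$ carries additional isometries and its Cayley graph degenerates, so the clean correspondence between blocks and independent inversion symmetries would fail.
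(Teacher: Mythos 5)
Your proposal matches the paper's treatment: the paper gives no proof of this statement at all, presenting it purely as a restatement of \cite[Thm 3.5]{banica2012quantum} under the identification of the $\Z_{2}$-colouring with Banica--Skalski's black/white colouring, which is exactly the dictionary you describe (and, like you, it defers the actual computation of the intertwiner spaces to the cited work). The one point the paper does add, in the remark immediately following the theorem, and which your appeal to the preceding proposition glosses over, is that in \cite{banica2012quantum} the involution on the colour set exchanges the two colours, contrary to the convention $\overline{g}=g$ under which $\mathfrak{C}(\CC, G, \wr)$ was shown to be a C*-tensor category with duals; this discrepancy is harmless only because the acting group is $\Z_{2}$.
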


\begin{rem}
In \cite{banica2012quantum}, the inverse in the colour set is given by exchanging the colours, contrary to our previous assumption. However, the averaging process $p\mapsto M_{p}$ still yields a C*-tensor category with duals because the group acting is $\Z_{2}$. As soon as $G$ contains an element of order more than $2$, the existence of a duality becomes unclear.
\end{rem}

The link between these quantum isometry groups and the quantum reflection groups is still unclear as far as we know. Understanding it would certainly be an important step in the study of these "generalized wreath products". We will come back to this problem in a more general setting in Section \ref{subsec:wreath}.

\section{Free fusion semirings}\label{sec:fusion}

We will now concentrate on quantum groups whose fusion semiring satisfies some freeness assumption. Let us first fix some terminology and notations. Let $\G$ be a compact quantum group and let $\Irr(\G)$ be the set of equivalence classes of irreducible representations of $\G$. The \emph{fusion semiring} $R^{+}(\G)$ of $\G$ is the set $\N[\Irr(\G)]$ endowed with the operations induced by the direct sum and tensor product of representations. As explained in Subsection \ref{sec:noncrossing}, the representation theory of a compact quantum group can be completely described when it comes from a category of noncrossing partitions. A systematic approach to the computation of fusion rings for quantum groups associated to noncrossing partitions with two colours was given in \cite{freslon2013fusion}, together with some classification results. We will now see how this generalizes to an arbitrary colour set. We first have to clarify the notion of freeness.

\subsection{Fusion sets and freeness}

Free fusion semirings were introduced in \cite{banica2009fusion} as an attempt to unify several examples of "free" quantum groups, as well as to isolate the crucial features in the proof of some operator algebraic properties of these quantum groups (as illustrated in the Appendix). In order to make things more clear in the sequel, we first give a proper definition of the basic data needed for the construction of a free fusion ring.

\begin{de}
A \emph{fusion set} is a set $S$ together with a conjugation map $x\mapsto \overline{x}$ which is involutive and a fusion operation
\begin{equation*}
\ast : S\times S \rightarrow S\cup\{\emptyset\}.
\end{equation*}
\end{de}

\begin{ex}\label{ex:groupoid}
A group is a simple example of a fusion set, the conjugation being the inverse and the fusion operation being the group law. A more general example is given by groupoids : let $\Gr$ be a groupoid and let $S_{\Gr}$ be the set of all morphisms of $\Gr$. For $x, y\in S_{\Gr}$, set $\co{x} = x^{-1}$ and $x\ast y = x\circ y$ if the composition makes sense and $\emptyset$ otherwise. Then, $S_{\Gr}$ is a fusion set. As we will see in Corollary \ref{cor:groupoiddecomposition}, this example is almost generic when considering fusion sets arising from compact quantum groups.
\end{ex}

Let $S$ be a fusion set and consider the free monoid $F(S)$ on $S$, i.e. the set of all words on $S$. The operations on $S$ extend to the abelian semigroup $\N[F(S)]$ in the following way : if $w_{1}\dots w_{n}, w'_{1}\dots w'_{k} \in F(S)$, then
\begin{itemize}
\item $\overline{w_{1}\dots w_{n}} = \overline{w}_{n}\dots \overline{w}_{1}$
\item $(w_{1}\dots w_{n})\ast (w'_{1}\dots w'_{k}) = w_{1}\dots (w_{n}\ast w'_{1})\dots w'_{k}$
\end{itemize}
the latter being set equal to $0$ whenever one of the two words is empty, or if $w_{n}\ast w_{1}' = \emptyset$. We can now turn $\N[F(S)]$ into a \emph{semiring} $(R^{+}(S), +, \otimes)$ by setting
\begin{equation*}
w\otimes w' = \sum_{w = az, w' = \overline{z}b} ab + a\ast b.
\end{equation*}

\begin{de}
A semiring $R^{+}$ is said to be \emph{free} if there exists a fusion set $S$ such that $R^{+}$ is isomorphic to $R^{+}(S)$. A compact quantum group $\G$ is said to be free if $R^{+}(\G)$ is free.
\end{de}

\begin{rem}
The term "free" has been used with various meanings in the literature, sometimes meaning "associated to a noncrossing category of partitions". We will rather call the latter "noncrossing quantum groups" and keep the word "free " for quantum groups having free fusion semiring, regardless of the partition setting.
\end{rem}

\begin{rem}
Later on, we will classify all fusion sets coming from compact quantum groups, so that it will appear that there are infinitely many non-isomorphic free quantum groups. However, our definition may not be very practical when dealing with the converse problem : given a semiring $R^{+}$, is there a criterion ensuring its freeness ? We will leave this question aside in this work, as well as the following closely related one : if $S_{1}$ and $S_{2}$ are fusion sets such that $R^{+}(S_{1}) \simeq R^{+}(S_{2})$, do we have $S_{1}\simeq S_{2}$ ? 
\end{rem}

The link with noncrossing partition quantum groups relies on the definition of the fusion set associated to a category of partitions $\CC$. Let $S(\CC)$ be the set of equivalence classes of projective partitions with only one block. This means that we are looking at partitions $p$ such that all the points are connected. If $p$ is such a partition, then we define $\co{p}$ to be the partition obtained by rotating all the points of $p$ upside down. We can define two operations on $S(\CC)$ :
\begin{itemize}
\item $\co{[p]} = [\co{p}]$
\item $[p]\ast[q] = \left\{\begin{array}{cc}
[p\boxvert q] & \text{if } p\boxvert q\in \CC \\
\emptyset & \text{otherwise}
\end{array}\right.$
\end{itemize}
That these operations are well-defined was proved in \cite[Lem 4.13 and Lem 4.14]{freslon2013fusion}. We thus have defined a fusion set, hence a fusion semiring.

\begin{de}
Let $\CC$ be a category of partitions. Its fusion semiring $R^{+}(\CC)$ is the fusion semiring of the fusion set $S(\CC)$.
\end{de}

Recall from Subsection \ref{sec:noncrossing} that if $\CC$ is noncrossing, then the irreducible representations of the associated quantum group are indexed by projective partitions. Intuitively, a projective noncrossing partition can be thought of as a word on one-block projective partitions (here noncrossingness is crucial), so that $R^{+}(\G)$ should be closely linked to $R^{+}(\CC)$. Formally, there is a natural map
\begin{equation*}
\Phi : R^{+}(\CC) \longrightarrow R^{+}(\G)
\end{equation*}
sending a word $[p_{1}]\dots [p_{n}]$ to $[u_{p_{1}\otimes \dots \otimes p_{n}}]$ and extended by linearity and we want a criterion ensuring its bijectivity and compatibility with the tensor product. It is not difficult to see (for instance in \cite[Rem 4.4]{raum2012isomorphisms}) that $R^{+}(\G)$ cannot be free as soon as $\G$ has a non trivial one-dimensional representation. The converse was proved in \cite[Thm 4.18]{freslon2013fusion} for two colours, but the proof carries on verbatim to an arbitrary colour set.

\begin{thm}\label{thm:freefusion}
Let $\CC$ be a category of noncrossing coloured partitions, let $N\geqslant 4$ be an integer and let $\G$ be the associated partition quantum group. Then, the following are equivalent :
\begin{enumerate}
\item The map $\Phi$ is an isomorphism.
\item The fusion semiring $R^{+}(\G)$ is free.
\item $\G$ has no non trivial one-dimensional representation.
\item The category of partitions $\CC$ is \emph{block-stable}, i.e. for any $p\in \CC$ and any block $b$ of $p$, we have $b\in \CC$.
\end{enumerate}
\end{thm}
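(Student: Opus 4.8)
The cleanest structure is a cycle of implications. I would prove $(1)\Rightarrow(2)\Rightarrow(3)\Rightarrow(4)\Rightarrow(1)$, so that each step only uses the immediately preceding one. The implication $(1)\Rightarrow(2)$ is essentially a tautology: if $\Phi\colon R^{+}(\CC)\to R^{+}(\G)$ is an isomorphism of semirings, then $R^{+}(\G)$ is by definition isomorphic to $R^{+}(S(\CC))$, which is a free semiring in the sense of the definition preceding the theorem. For $(2)\Rightarrow(3)$, I would argue contrapositively, invoking the remark cited from \cite{raum2012isomorphisms}: a free fusion semiring $R^{+}(S)$ has the property that its only invertible element is the empty word (the unit), because in $R^{+}(S)$ a product $w\otimes w'$ always contains the concatenation term $ww'$, whose length is additive, so no nontrivial element can have a tensor inverse; hence a free quantum group admits no nontrivial one-dimensional (equivalently, invertible) representation.

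**The combinatorial heart.** The two substantive implications are $(3)\Rightarrow(4)$ and $(4)\Rightarrow(1)$. For $(3)\Rightarrow(4)$, again by contraposition, suppose $\CC$ is not block-stable: there is $p\in\CC$ and a block $b$ of $p$ with $b\notin\CC$. The idea is that a single block, regarded as a projective partition, gives rise through the machinery of Subsection \ref{sec:noncrossing} to a representation $u_{b}$ of $\G$, and I would want to extract from the failure of block-stability a nontrivial one-dimensional representation. The natural candidate is a one-block partition with exactly one through-string in each row (a "propagating" single block), whose associated representation $u_{b}$ has dimension one; block-stability is precisely what guarantees such blocks stay inside $\CC$, and its failure should produce a nontrivial group-like element. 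Here I would lean on the fusion-set description: the elements of $S(\CC)$ are equivalence classes of one-block projectives, and the dimension-one representations correspond to those $[p]\in S(\CC)$ that are invertible under $\ast$; non-block-stability is what lets a through-block survive and furnish such an invertible element.

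**Closing the loop.** The implication $(4)\Rightarrow(1)$ is where I expect the real work, and it is exactly the content imported from \cite[Thm 4.18]{freslon2013fusion}. The map $\Phi$ sends $[p_{1}]\cdots[p_{n}]$ to $[u_{p_{1}\otimes\cdots\otimes p_{n}}]$. To show it is an isomorphism I would (a) use that in the noncrossing case $u^{\otimes w}=\bigoplus_{p\in\Proj(w)}u_{p}$ with each $u_{p}$ nonzero and irreducible, and that $u_{p}\sim u_{q}$ iff $p\sim q$, so that $\Irr(\G)$ is indexed by equivalence classes of projective partitions; (b) observe that block-stability lets every projective noncrossing partition be cut into a horizontal concatenation of its blocks, each of which then lies in $\CC$, giving a well-defined inverse map $R^{+}(\G)\to R^{+}(\CC)$ and hence bijectivity of $\Phi$; and (c) check compatibility with $\otimes$ by matching the free-fusion formula $w\otimes w'=\sum_{w=az,\,w'=\co{z}b}ab+a\ast b$ against the noncrossing fusion rule $u_{p}\otimes u_{q}=u_{p\otimes q}\oplus\bigoplus_{k}(u_{p\square^{k}q}\oplus u_{p\boxvert^{k}q})$, recognizing the concatenation term $ab$ as $u_{p\otimes q}$ and the $\ast$-term as the $\boxvert$-contraction, while the $\square$-contractions account for the "paired-and-cancelled" terms $ab$ with $a\ast b$ summands. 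The main obstacle is step (c): verifying that block-stability makes these two recursions coincide term by term, in particular that every $p\square^{k}q$ and $p\boxvert^{k}q$ produced by the fusion rule has the block-decomposition matching a word in $R^{+}(S(\CC))$, and that none of them vanish or coincide unexpectedly. Since the proof of \cite[Thm 4.18]{freslon2013fusion} is stated to carry over verbatim once a colouring $w$ is fixed, I would emphasize that the colour set never interferes with this bookkeeping, and refer to that argument for the detailed term-matching.
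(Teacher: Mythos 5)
Your overall architecture (the cycle $(1)\Rightarrow(2)\Rightarrow(3)\Rightarrow(4)\Rightarrow(1)$) is consistent with the paper, which in fact gives no self-contained proof: it handles $(2)\Rightarrow(3)$ by the non-invertibility remark and delegates everything else to \cite[Thm 4.18]{freslon2013fusion}, asserting that the argument is insensitive to the colour set. Your treatments of $(1)\Rightarrow(2)$, of $(2)\Rightarrow(3)$ via the fact that $w\otimes w'$ always contains the concatenation $ww'$ so that no nonempty word is invertible, and your citation-deferring outline of $(4)\Rightarrow(1)$ (including the dictionary $u_{p\otimes q}\leftrightarrow ab$ with $z=\emptyset$, $u_{p\square^{k}q}\leftrightarrow ab$ with $\vert z\vert=k$, and $u_{p\boxvert^{k}q}\leftrightarrow a\ast b$) are all in order and match the intended argument.

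The genuine gap is in $(3)\Rightarrow(4)$, where you misidentify the source of one-dimensional representations. A one-block projective partition is a through-block, so $t(p)=1$, and for $N\geqslant 4$ its representation $u_{p}$ is never one-dimensional: if $t(p)\geqslant 1$ then $u_{p}\otimes u_{\overline{p}}$ contains both the trivial representation and the nonzero irreducible $u_{p\otimes\overline{p}}$, and the latter cannot itself be trivial (otherwise the trivial representation would occur with multiplicity two in $u_{p}\otimes\overline{u_{p}}$), whence $\dim u_{p}\geqslant 2$. In particular the elements of $S(\CC)$, being classes of one-block projective partitions, never produce one-dimensional representations, and "invertibility under $\ast$" is not the relevant criterion. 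The one-dimensional representations are exactly the $u_{p}$ for projective $p$ with $t(p)=0$: such a $p$ factors as $p=p_{u}^{*}p_{u}$ with $p_{u}$ a partition onto the empty row, so $T_{p}=T_{p_{u}}^{*}T_{p_{u}}$ has rank one, and $u_{p}$ is nontrivial precisely when $p\not\sim_{\CC}\emptyset$. The substance of $\neg(4)\Rightarrow\neg(3)$ is therefore to manufacture, from a partition $p\in\CC$ with a block $b\notin\CC$, a $t=0$ projective partition lying in $\CC$ but not equivalent to the empty partition inside $\CC$ (though it is so equivalent in $NC^{\A}$); this capping argument is the actual content of \cite[Thm 4.18]{freslon2013fusion}, and your sketch neither supplies it nor points at the right object.
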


\begin{rem}
In \cite{freslon2013fusion} we classified the possible groups of one-dimensional representations of any noncrossing quantum group on two mutually inverse colours. It would of course be interesting to make a similar study in this more general context. However, the situation will be more complicated since several combinatorial tricks used in \cite{freslon2013fusion} (e.g. reducing to the case of partitions with only one colour) are not available any more. Moreover, it is likely that any discrete group may appear as the group of one-dimensional representations of a noncrossing partition quantum group, even though the "free part" gives restrictions. 
\end{rem}

\begin{rem}\label{rem:haagerup}
The argument of \cite[Thm 6.11]{freslon2013fusion} applies without change to this broader setting, so that if $\G$ is a free partition quantum group with $S$ finite, then $\G$ has the Haagerup property (see \cite{daws2014haagerup} for the definition).
\end{rem}

\subsection{Partitions and freeness}\label{subsec:partitions}

The aim of this section is to give a comprehensive description of the free fusion semirings arising from the partition quantum group construction. In fact, we will prove in Theorem \ref{thm:fullclassification} that \emph{any free fusion semiring arising from a quantum group is the fusion semiring of a noncrossing partition quantum group}. The strategy is to associate to any free fusion semiring a category of noncrossing partitions. There is an \emph{a priori} natural way of doing this : let $S$ be a fusion set and consider the colour set $\A_{S} = S$ where the inverse is given by the involution of $S$. Let us assume that $S$ is \emph{associative}, so that we can define a function $f : F(S) \rightarrow S$ by
\begin{equation*}
f(w) = w_{1}\ast\dots \ast w_{n}
\end{equation*}
for any word $w = w_{1}\dots w_{n}$. If $p\in P^{\A_{S}}(w, w')$ is the one-block partition with upper colouring $w$ and lower colouring $w'$, we set $f_{u}(p) = f(w)$ and $f_{l}(p) = f(w')$. The idea is to consider the following partitions : 

\begin{de}
Let $S$ be an \emph{associative} fusion set.
\begin{itemize}
\item If $p$ is a one-block partition in $NC^{\A_{S}}(w, w')$ with $w, w'\neq \emptyset$, we say that $p$ is \emph{$f$-invariant} if $f_{u}(p) = f_{l}(p)$.
\item If $p$ is a one-block partition in $NC^{\A_{S}}(w, \emptyset)$, we say that $p$ is \emph{$f$-invariant} if $f_{u}(p) = \co{x}\ast x$ for some $x\in S$.
\end{itemize}
\end{de}

We would like to consider the category of partitions whose blocks are all $f$-invariant. However, problems arise when considering elements $x\in S$ such that $\co{x}\ast x = \emptyset$ (for example the fundamental representation of $O_{N}^{+}$) since the rotation of the $x$-identity is not $f$-invariant in that case. We will therefore split $S$ into two parts to define the category of partitions. Let $\HH(S)$ be the set of elements of $S$ such that $\co{x}\ast x\neq \emptyset$.

\begin{de}
For any two words $w$ and $w'$ on $\HH(S)$, we define $\CC_{\HH(S)}(w, w')$ to be the set of all partitions in $NC^{\A_{\HH(S)}}(w, w')$ such that all blocks are $f$-invariant. Let $\CC_{S}'$ be the category of partitions generated by the $x$-identity for all $x\notin\HH(S)$. Then, we define $\CC_{S}$ to be the free product category of partition $\CC'_{S}\ast\CC_{\HH(S)}$.
\end{de}

In the remainder of this section, we will prove that $\CC_{S}$ is a category of partitions yielding the same fusion semiring as $S$. This is of course false for an arbitrary fusion set since $\CC_{S}$ need not even be a category of partitions in general. We therefore first have to find extra properties satisfied by fusion sets $S$ arising from compact quantum groups. Then, we will prove that these properties are enough to get an isomorphism of fusion set between $S(\CC_{S})$ and $S$.

\begin{de}\label{de:associative}
Let $S$ be a fusion set. It is said to be
\begin{itemize}
\item \emph{Associative} if $\ast$ is associative as a law on $S\cup \{\emptyset\}$ (with the convention that $x\ast\emptyset = \emptyset\ast x = \emptyset$ for any $x\in S\cup\{\emptyset\}$).
\item \emph{Frobenius} if for any $x, y, z\in S$, $x = y\ast z \Leftrightarrow \co{x}\ast y = \co{z}$.
\item \emph{Antisymmetric} if for any $x, y\in S$, $\co{x\ast y} = \co{y}\ast\co{x}$.
\end{itemize}
\end{de}

Of course, these properties need not be satisfied by arbitrary fusion sets. However, our interest lies in those coming from compact quantum groups, to which we give a name for convenience.

\begin{de}
A fusion set $S$ is said to be \emph{admissible} if there exists a compact quantum group $\G$ such that $R^{+}(\G) \simeq R^{+}(S)$.
\end{de}

The link with the properties above is provided by the following proposition :

\begin{prop}
Let $S$ be an admissible fusion set. Then, $S$ is associative, Frobenius and antisymmetric.
\end{prop}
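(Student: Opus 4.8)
The plan is to unwind the definition of admissibility and translate each of the three algebraic properties (associativity, Frobenius, antisymmetry) into a statement about the tensor product, conjugation, and irreducible content of representations of the quantum group $\G$ realizing $R^{+}(S)$. Since $S$ is admissible, there is a compact quantum group $\G$ and a semiring isomorphism $R^{+}(\G)\simeq R^{+}(S)$. Under this isomorphism, the elements of $S$ correspond to certain irreducible representations (the ``basic bricks'' out of which all irreducibles are built by the word structure of $F(S)$), the conjugation $\co{x}$ corresponds to taking the contragredient $\co{u_x}$, and the fusion operation $x\ast y$ records the ``highest order'' term appearing in the decomposition of $u_x\otimes u_y$, i.e. the piece of $u_x\otimes u_y$ that is \emph{not} obtained by contracting a letter of $x$ against a letter of $y$. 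The key point is that these three properties are simply the shadow, at the level of the fusion set, of structural identities that hold for representations of \emph{any} compact quantum group.

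First I would treat associativity. The fusion operation $x\ast y$ is, by construction of $R^{+}(S)$, the unique ``leading'' summand of the product $x\otimes y$ in the free fusion semiring; on the quantum group side it corresponds to the top-degree irreducible in $u_x\otimes u_y$. Associativity of $\ast$ should then follow from associativity of the tensor product of representations: computing $(u_x\otimes u_y)\otimes u_z$ and $u_x\otimes(u_y\otimes u_z)$ gives the same representation, and isolating the unique summand of maximal degree (the one where no contraction occurs between consecutive letters) yields $(x\ast y)\ast z = x\ast(y\ast z)$, with both sides equal to $\emptyset$ precisely when a contraction is forced. Next, for the Frobenius property, I would invoke Frobenius reciprocity / Frobenius duality for representations of $\G$, namely the isomorphism $\Mor(U, V\otimes W)\simeq \Mor(\co{V}\otimes U, W)$ recalled in the discussion of coloured partitions. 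The equivalence $x = y\ast z \Leftrightarrow \co x\ast y = \co z$ says exactly that $u_x$ is the leading term of $u_y\otimes u_z$ if and only if $\co{u_z}$ is the leading term of $\co{u_x}\otimes u_y$; translating the ``leading term'' relation into the nonvanishing of the relevant $\Mor$-space and applying Frobenius reciprocity (together with $\co{\co{x}} = x$) should give the biconditional. Finally, antisymmetry $\co{x\ast y} = \co y\ast\co x$ should come from the standard identity $\co{U\otimes V}\simeq \co V\otimes \co U$ for the contragredient of a tensor product of representations, again after isolating leading terms and using involutivity of conjugation.

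The main obstacle, I expect, is making rigorous the notion of ``leading term'' of a tensor product and proving that it is genuinely well-defined and behaves functorially — that is, transporting the purely combinatorial fusion operation $\ast$ on $S$ through the semiring isomorphism $R^{+}(\G)\simeq R^{+}(S)$ to an operation described intrinsically in terms of tensor products of irreducible representations of $\G$. The subtlety is that the isomorphism $R^{+}(\G)\simeq R^{+}(S)$ is an isomorphism of semirings, which a priori only respects $+$ and $\otimes$, not the finer word-structure of $F(S)$ nor the operation $\ast$ itself; one must argue that the $\N$-grading by word length (or by ``number of basic bricks'') and the singling out of the $\ast$-summand are recoverable from the semiring structure alone, so that $\ast$ really does correspond to a tensor-product operation on $\G$ that one can then manipulate using associativity, Frobenius reciprocity, and $\co{U\otimes V}\simeq\co V\otimes\co U$. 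Once this dictionary between $\ast$ and the tensor product of representations is set up carefully, each of the three verifications reduces to a short structural argument about representations rather than a combinatorial computation in the abstract semiring.
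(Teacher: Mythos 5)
Your toolkit is the right one and is essentially the paper's: all three properties are extracted from structural identities of the tensor product of representations of the quantum group $\G$ with $R^{+}(\G)\simeq R^{+}(S)$ --- associativity of $\otimes$ for the associativity of $\ast$, the multiplicity of the trivial representation (equivalently Frobenius reciprocity) for the Frobenius property, and $\co{U\otimes V}\simeq \co{V}\otimes\co{U}$ for antisymmetry. However, the step you single out as the main obstacle --- characterizing $x\ast y$ intrinsically on the $\G$-side as a ``leading term'' of $u^{x}\otimes u^{y}$, and recovering the word-length grading from the semiring structure of $R^{+}(\G)$ alone --- is both unnecessary and, as formulated, not workable: ``the summand where no contraction occurs'' is not a well-defined notion in $\N[\Irr(\G)]$ without already knowing the answer, so defining $\ast$ that way is circular. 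The transport should go in the opposite direction. The product on $R^{+}(S)$ is given \emph{by definition} in terms of $\ast$ and concatenation; in particular for single letters $x\otimes y = xy + x\ast y + \delta_{x=\co{y}}\,\emptyset$. Any semiring isomorphism $R^{+}(S)\simeq R^{+}(\G)$ carries the additively indecomposable elements (the words of $F(S)$) bijectively onto $\Irr(\G)$, so this formula becomes the exact decomposition $u^{x}\otimes u^{y} = u^{xy}+u^{x\ast y}+\delta_{x=\co{y}}\varepsilon$ into irreducibles. Nothing needs to be recovered or singled out.

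With that in hand, the paper's argument is a short computation you should carry out explicitly: expand $u^{x}\otimes(u^{y}\otimes u^{z})$ and $(u^{x}\otimes u^{y})\otimes u^{z}$ using the formula above, use the definitional identities $x\ast(yz)=(x\ast y)z$ and $(xy)\ast z = x(y\ast z)$, and cancel the common terms (legitimate since $\N[\Irr(\G)]$ is a free abelian monoid). What survives is
\begin{equation*}
u^{x\ast(y\ast z)}+\delta_{x=\co{y\ast z}}\,\varepsilon \;=\; u^{(x\ast y)\ast z}+\delta_{x\ast y=\co{z}}\,\varepsilon .
\end{equation*}
Comparing the non-trivial irreducible parts gives associativity (including the degenerate cases where one side is $\emptyset$), and comparing the coefficients of $\varepsilon$ gives $x=\co{y\ast z}\Leftrightarrow x\ast y=\co{z}$, hence the Frobenius property after replacing $x$ by $\co{x}$; no separate $\Mor$-space argument is needed. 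Antisymmetry then follows exactly as you say from $\co{u^{x}\otimes u^{y}}\simeq\co{u^{y}}\otimes\co{u^{x}}$ together with $\co{xy}=\co{y}\,\co{x}$. So your plan succeeds once you replace the ``intrinsic leading term'' step by the forward transport of the defining fusion formula.
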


\begin{proof}
Let $\G$ be a compact quantum group such that $R^{+}(\G)\simeq R^{+}(S)$. For any $x$ in $S$, choose an irreducible unitary representation $u^{x}$ of $\G$ such that $[u^{x}]$ is the image of $x$ under the above isomorphism. Consider three elements $x, y, z\in S$ and compute their triple tensor product in two ways ($\varepsilon$ denoting the trivial representation) :
\begin{eqnarray*}
u^{x}\otimes (u^{y}\otimes u^{z}) & = & u^{x}\otimes (u^{yz}+u^{y\ast z} + \delta_{y=\co{z}}\varepsilon) \\
 & = & u^{xyz}+u^{x\ast(yz)}+\delta_{x=\co{y}}u^{z}+u^{x(y\ast z)}+u^{x\ast (y\ast z)}+\delta_{x = \co{y\ast z}}\varepsilon +\delta_{y=\co{z}}u^{x}
\end{eqnarray*}
and
\begin{eqnarray*}
(u^{x}\otimes u^{y})\otimes u^{z} & = & (u^{xy}+u^{x\ast y}+\delta_{x=\co{y}}\varepsilon)\otimes u^{z} \\
 & = & u^{xyz}+u^{(xy)\ast z}+\delta_{y=\co{z}}u^{x}+u^{x(y\ast z)}+u^{(x\ast y)\ast z}+\delta_{x\ast y = \co{z}}\varepsilon +\delta_{x=\co{y}}u^{z}
\end{eqnarray*}
Recall that by convention, $u^{w\ast w'}$ is zero if $w\ast w' = \emptyset$. Note that by definition of the fusion operation on $R^{+}(S)$, $x\ast(yz) = (x\ast y)z$ and $x(y\ast z) = (xy)\ast z$. These simplifications together with the associativity of the tensor product yield
\begin{equation}\label{eq:associativity}
u^{x\ast (y\ast z)} + \delta_{x = \co{y\ast z}}\varepsilon = u^{(x\ast y)\ast z}+\delta_{x\ast y = \co{z}}\varepsilon.
\end{equation}
Assume now that $y\ast z \neq\emptyset$ and that $x\ast(y\ast z)\neq\emptyset$. Then, the left-hand side contains a non-trivial representation and therefore $x\ast y\neq\emptyset$ and $(x\ast y)\ast z = x\ast (y\ast z)$. Reciprocally, if one of these elements is equal to $\emptyset$, then both sides cannot contain a non trivial representation, so that the other side must be equal to $\emptyset$ too. This proves the associativity of $S$. Because of this associativity, both sides of Equation \eqref{eq:associativity} can contain at most one copy of the trivial representation. Thus, $x = \co{y\ast z}$ if and only if $x\ast y = \co{z}$. Replacing $x$ by $\co{x}$ gives the Frobenius property. Eventually, $S$ is antisymmetric because for any two representations $u$ and $v$ of a compact quantum group, the contragredient representation of $u\otimes v$ is unitarily equivalent to $\co{v}\otimes \co{u}$. Applying this to $u^{x}$ and $u^{y}$, together with the fact that $\co{xy} = \co{y}\:\co{x}$, gives the desired equality.
\end{proof}

The next step is to check that for an admissible $S$, $\CC_{\HH(S)}$ is indeed a category of partitions. The subtle part is the stability under vertical concatenation, for which we will use a separate lemma for the sake of clarity. We first need a useful computation.

\begin{lem}\label{lem:inverse}
Let $S$ be an admissible fusion set and consider two elements $y, z\in S$ such that $y\ast z\neq \emptyset$. Then, $\co{y}\ast y\ast z = z$.
\end{lem}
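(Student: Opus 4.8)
The plan is to prove this as a purely formal consequence of the three structural properties of admissible fusion sets just established, namely associativity, the Frobenius property and antisymmetry; no partition combinatorics should be required. The key reformulation is that, in the groupoid picture of Example \ref{ex:groupoid}, $\co{y}\ast y$ is the identity morphism at the source of $y$, and $y\ast z\neq\emptyset$ precisely says that $z$ ends where $y$ starts, so that $\co{y}\ast y$ acts as a left identity on $z$. I would make this rigorous using only the axioms.

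First I would name the relevant element: since $y\ast z\neq\emptyset$, set $x = y\ast z\in S$, so the hypothesis reads $x = y\ast z$. The next step is to apply the Frobenius property to this identity, which immediately gives $\co{x}\ast y = \co{z}$. The right-hand side is $\co{z}\in S$, so in particular $\co{x}\ast y\neq\emptyset$ and every product occurring from now on is a genuine element of $S$ rather than $\emptyset$, which is what allows the axioms to be invoked. I would then conjugate both sides of $\co{x}\ast y = \co{z}$: the right-hand side becomes $z$ by involutivity, while antisymmetry rewrites the left-hand side as $\co{\co{x}\ast y} = \co{y}\ast\co{\co{x}} = \co{y}\ast x$. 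This yields $\co{y}\ast x = z$. Finally, substituting back $x = y\ast z$ and using associativity of $\ast$ gives $\co{y}\ast y\ast z = \co{y}\ast(y\ast z) = \co{y}\ast x = z$, which is the desired identity.

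The computation is short, so rather than a real obstacle the only point requiring care is bookkeeping: one must check at each stage that the intermediate products remain in $S$ (and are not $\emptyset$), since the Frobenius and antisymmetry properties are stated for elements of $S$. This is automatic here because each intermediate term is shown to equal a manifest element of $S$, such as $\co{z}$. The one genuinely conceptual step is recognizing that the Frobenius property is exactly the device that converts the hypothesis $y\ast z\neq\emptyset$ into the usable relation $\co{x}\ast y=\co{z}$; once this is in hand, antisymmetry and involutivity merely re-order the factors and associativity finishes the argument.
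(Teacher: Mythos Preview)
Your proof is correct and follows essentially the same route as the paper: set $x=y\ast z$, apply the Frobenius property to obtain $\co{x}\ast y=\co{z}$, then use antisymmetry and the involution to rearrange into $\co{y}\ast y\ast z=z$. The only cosmetic difference is the order of operations---the paper first expands $\co{x}=\co{z}\ast\co{y}$ to get $\co{z}\ast\co{y}\ast y=\co{z}$ and then conjugates, whereas you conjugate $\co{x}\ast y=\co{z}$ first and substitute afterwards---but this is the same argument.
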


\begin{proof}
Recall that the Frobenius property reads $x = y\ast z \Leftrightarrow \co{x}\ast y = \co{z}$. Replacing $x$ by $y\ast z$ in the second equality and using antisymmetry yields $\co{z}\ast\co{y}\ast y = \co{z}$. Applying the involution and using antisymmetry again, we get the result.
\end{proof}

Understanding the structure of blocks in the vertical concatenation is a difficult problem in general. We will take advantage of the noncrossingness to decompose this operation using the following notion of pseudo-one-block partition.

\begin{de}
A partition $p$ is said to be \emph{pseudo-one-block} if all its blocks but one are identity partitions.
\end{de}

\begin{lem}\label{lem:pseudooneblock}
Let $q$ be a noncrossing coloured partition. Then, $q$ is a vertical concatenation of pseudo-one-block partitions.
\end{lem}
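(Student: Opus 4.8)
The plan is to induct on the number of blocks of $q$ that are \emph{not} identity partitions; call these the non-trivial blocks. If $q$ has at most one non-trivial block, then every other block is an identity partition and $q$ is, by definition, already pseudo-one-block, which settles the base case. For the inductive step I would isolate a single non-trivial block and peel it off using one or two pseudo-one-block layers, producing a noncrossing partition with strictly fewer non-trivial blocks, to which the induction hypothesis then applies.

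The combinatorial heart of the argument is the choice of which block to peel. Since $q$ is noncrossing, its blocks form a laminar (nested) family, so among the finitely many non-trivial blocks there is an \emph{innermost} one, say $B$, with no other block nested inside its arcs. The key point is that the upper points of such a $B$ are consecutive in the upper row and its lower points are consecutive in the lower row. Indeed, any point of another block lying strictly between two upper points of $B$ would force that block to be nested inside $B$ (otherwise its arcs would cross those of $B$, contradicting noncrossingness); but a block trapped between two upper points of $B$ consists only of upper points and hence is a non-identity block, contradicting the innermostness of $B$. The same reasoning applies to the lower row. This consecutiveness is precisely what makes the peeling maps noncrossing.

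With $B$ isolated I would finish as follows. Let $r$ be the pseudo-one-block partition whose unique non-identity block merges the (consecutive) upper points of $B$ to a single point of the middle row, all remaining upper points being joined by identity strands to their own middle points; writing $q = q_{1}r$ then collapses the upper part of $B$ to one point. Symmetrically, a pseudo-one-block partition $r'$ acting on the lower row, with $q_{1} = r'q_{2}$, collapses the lower part of $B$. In $q_{2}$ the block $B$ has become a single through-strand, hence an identity partition, while every other block is left unchanged; thus $q_{2}$ is noncrossing and has one fewer non-trivial block than $q$. (If $B$ is an all-upper or an all-lower block, or has a single point on one side, only one of the two layers is required, the special block being a cap or a cup.) By the induction hypothesis $q_{2}$ is a vertical concatenation of pseudo-one-block partitions, and therefore so is $q = r'q_{2}r$.

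The step I expect to be the main obstacle is the consecutiveness claim together with the verification that $r$ and $r'$ are genuinely noncrossing and that the composition $r'q_{2}r$ reproduces $q$ without creating any spurious loops; this is where the laminar structure of noncrossing partitions and the careful placement of the newly introduced middle point must be used, and it is exactly the point at which the noncrossing hypothesis is essential to the argument.
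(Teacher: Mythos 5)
Your proof is correct, and the points you flag as delicate do check out: an innermost non-identity block $B$ of a noncrossing two-row partition has consecutive upper points and consecutive lower points, because any block meeting the gap between two successive upper (resp.\ lower) points of $B$ must, by noncrossingness, be entirely contained in that gap, hence consist only of upper (resp.\ lower) points, hence be a non-identity block nested inside $B$; and your two collapsing layers compose back to $q$ without loops since every middle point is attached to an upper point. The organisation is, however, different from the paper's. The paper does not induct on the number of non-identity blocks: it first strips all \emph{non-through}-blocks of the upper row as caps composed on top (each such block being an interval by the same innermost argument), then the non-through-blocks of the lower row as cups composed below, and finally observes that the remaining partition has only through-blocks, which by noncrossingness are parallel, so that it equals the horizontal tensor product $q_{1}\otimes\dots\otimes q_{n}$ of its blocks and hence the staggered vertical concatenation $(q_{1}\otimes\vert\otimes\dots)(\vert\otimes q_{2}\otimes\dots)\cdots$ of pseudo-one-block partitions. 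In particular the paper never collapses a through-block to an identity strand, and every non-identity block appearing in its factors is literally a block of $q$; your collapse instead creates a new block (the upper half of $B$ joined to a freshly introduced middle point) whose colour you must choose. For the lemma as stated this is harmless, but in the application (Lemma \ref{lem:verticalconcatenation}) the pseudo-one-block factors are implicitly taken inside $\CC_{\HH(S)}$, i.e.\ with $f$-invariant blocks: this is automatic for the paper's decomposition, whereas in yours it requires colouring the new middle point by $f$ of the collapsed word --- legitimate, since $B$ is itself $f$-invariant, but worth a sentence. Your uniform induction is arguably cleaner; the paper's version is the one that plugs directly into the next lemma.
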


\begin{proof}
Let $b$ be a non-through-block in the upper row of $q$ which is an interval in the sense that any point between two points of $b$ is again in $b$. Then, tensoring $b$ by suitable identity partitions we get a partition $b'$ such that $q = q'b'$, where $q'$ is the partition obtained by removing $b$ in $q$. Iterating this process, we can remove all the upper non-through-blocks of $q$ and get a decomposition $q = pb'_{1}\dots b'_{k}$, where $p$ has no non-through-block in the upper row. The same decomposition can be done in the lower row, yielding another decomposition
\begin{equation*}
q = (b''_{1}\dots b''_{l})r(b'_{1}\dots b'_{k}).
\end{equation*}
Here, all the partitions are pseudo-one-block except for $r$, which is the partition obtained from $q$ by keeping only the through-blocks. Let us denote by $(q_{i})_{1\leqslant i\leqslant n}$ these through-blocks ordered from left to right. Because they do not cross, we have $q = q_{1}\otimes \dots\otimes q_{n}$ and it is clear that this partition is a vertical concatenation of pseudo-one-bock partitions, hence the result.
\end{proof}

We are now ready to prove the stability of $\CC_{\HH(S)}$ under vertical concatenation.

\begin{lem}\label{lem:verticalconcatenation}
Let $p, q\in \CC_{\HH(S)}$. Then, $qp\in \CC_{\HH(S)}$.
\end{lem}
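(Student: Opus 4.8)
The plan is to reduce the arbitrary vertical concatenation to iterated composition with pseudo-one-block partitions and to control, at each step, the colouring of the single block that gets modified. Two remarks clear away everything but the $f$-invariance condition. Since noncrossing partitions are stable under the category operations, $qp$ is automatically noncrossing; and since vertical concatenation only joins points without recolouring them, every point of $qp$ keeps a colour in $\HH(S)$. Hence the only thing left to prove is that each block of $qp$ is $f$-invariant.

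First I would decompose $p$ and $q$ using Lemma \ref{lem:pseudooneblock}, writing each as a vertical concatenation of pseudo-one-block partitions. The unique non-trivial block of such a factor is an honest block of $p$ or of $q$ --- either a non-through-block removed in the proof of that lemma, or one of the parallel through-blocks --- and is therefore $f$-invariant; its remaining blocks are $\HH(S)$-coloured identities, which are $f$-invariant as well. Thus every factor already lies in $\CC_{\HH(S)}$. Because vertical concatenation is associative, it then suffices to show that composing an arbitrary $r\in\CC_{\HH(S)}$ with a single pseudo-one-block partition $c\in\CC_{\HH(S)}$ placed below it again lands in $\CC_{\HH(S)}$, and to conclude by induction on the number of factors.

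Write $b$ for the non-trivial block of $c$. If $b$ lies entirely in the lower row of $c$, it never meets $r$: the identity strings of $c$ simply prolong the blocks of $r$ without touching their colourings, so these remain $f$-invariant, and $b$ is appended unchanged. The substantial case is when $b$ reaches the middle row. Then its middle points form a consecutive interval, and noncrossingness forces every block of $r$ meeting this interval to do so in a nested fashion. Composition then amalgamates all of these, together with the lower points of $b$, into a single new block $B$, any segment closed off on both ends becoming a loop that is deleted.

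The hard part will be to verify that this merged block $B$ is again $f$-invariant, i.e. that the fusion of its upper colouring equals that of its lower colouring. I would compute both by running along $B$ through the nested absorbed blocks: associativity lets me reassociate the resulting product of colours freely, while antisymmetry, in the form $\co{x\ast y}=\co{y}\ast\co{x}$, accounts for the segments of $r$ traversed in reversed orientation and hence contributing conjugated fusions. The key point is that each block of $r$ which is entered and left again contributes a factor of the form $\co{y}\ast y$ --- this is exactly its $f$-invariance read off after rotation --- which can be suppressed using the cancellation $\co{y}\ast y\ast z=z$ of Lemma \ref{lem:inverse}. The nesting dictated by noncrossingness is precisely what makes these cancellations telescope, so that after removing all of them the upper and lower fusions of $B$ coincide. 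This block-merging analysis, and in particular the bookkeeping of loops through Lemma \ref{lem:inverse}, is the main obstacle; the noncrossing hypothesis is indispensable here, since without the nested pattern the fusions of the merged block could not be reduced to one another.
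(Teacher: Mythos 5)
Your argument is essentially the paper's: reduce via Lemma \ref{lem:pseudooneblock} to composing with a single pseudo-one-block partition (noting, as the paper does implicitly, that the factors themselves lie in $\CC_{\HH(S)}$), then check $f$-invariance of the merged block by inserting the $\co{x}\ast x$ contributions of the absorbed non-through-blocks and cancelling them with Lemma \ref{lem:inverse}. The only point you elide is the case where the merged block ends up with no points on one row (the non-trivial block of the pseudo-one-block factor cancels the entire middle colouring), where $f$-invariance means $f_{u}(B)=\co{x}\ast x$ rather than equality of upper and lower fusions --- but your telescoping yields exactly that, since the cancelling block's own $f$-invariance forces its colouring to fuse to $\co{x}\ast x$, so this is a gap in statement rather than in substance.
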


\begin{proof}
Because of Lemma \ref{lem:pseudooneblock}, we can assume that $q$ is a pseudo-one-block partition. Let us consider a block $b$ in $qp$ and prove that $b\in \CC_{\HH(S)}$. There are three possibilities :
\begin{itemize}
\item $b$ is a through-block
\item $b$ is a non-through-block which was not in $p$
\item $b$ is a non-through-block of $p$
\end{itemize}
Note that in the third case, $b\in \CC_{\HH(S)}$ by definition. Let us assume that $b$ is a through-block and let $c_{1}, \dots, c_{k}$ be the through-blocks in $p$ containing upper points of $b$. Then, $f_{u}(b) = f_{u}(c_{1})\ast\dots \ast f_{u}(c_{k})$. Because the $c_{i}$'s are blocks of $p$, they belong to $\CC_{\HH(S)}$ and we have
\begin{equation*}
f_{u}(b) = f_{l}(c_{1})\ast\dots \ast f_{l}(c_{k}).
\end{equation*}

Let $d$ be the unique non-identity block in $q$. Then, $b$ is obtained in the following way :
\begin{enumerate}
\item Connect all the points in the lower row of $p$ between the extreme left point of $c_{1}$ and the extreme right point of $c_{k}$ to obtain a block $b'$.
\item If $d\in \CC(w, w')$ is a through-block, substitute $w$ to $w'$ in the lower row.
\item If $d\in \CC(w, \emptyset)$, cancel $w$ in the lower row.
\item If $d\in \CC(\emptyset, w')$, do not change anything.
\end{enumerate}
Step $(1)$ has the effect of inserting the colourings of some non-through-block partitions between the lower colourings of the blocks $c_{i}$. However, these are $f$-invariant so that by Lemma \ref{lem:inverse}, $f_{l}(b') = f_{u}(b)$. Since moreover $d\in \CC_{\HH(S)}$, $f(w) = f(w')$ and by associativity, step $(2)$ does not change the value of the map $f_{l}$ so that in the end, $f_{l}(b) = f_{l}(b') = f_{u}(b)$. The same holds for step $(3)$ since in that case, $f(w) = \co{x}\ast x$ and removing it does not change the value of $f$ by Lemma \ref{lem:inverse}.

If $b$ is a non-through-block which is not in $p$, $d$ must be a non-through partition which cancels the whole lower row of $b'$. The fact that such a partition matching the colouring exists means that $f_{l}(b') = \co{x}\ast x$ for some $x\in S$. Then, $f_{u}(b) = f_{l}(b') = \co{x}\ast x$ and $b$ is $f$-invariant.
\end{proof}

\begin{lem}\label{lem:category}
Let $S$ be an admissible fusion set. Then, $\CC_{\HH(S)}$ is a block-stable category of partitions. As a consequence, $\CC_{S}$ is also a block-stable category of partitions.
\end{lem}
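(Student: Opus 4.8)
The plan is to check that $\CC_{\HH(S)}$ is stable under each of the four category operations and contains every $x$-identity; block-stability will then follow almost for free. The $x$-identity for $x\in\HH(S)$ is a single through-block with $f_u=x=f_l$, hence $f$-invariant. For the horizontal concatenation, the blocks of $p\otimes q$ are exactly the blocks of $p$ together with those of $q$, each $f$-invariant by hypothesis, and noncrossingness is preserved, so $p\otimes q\in\CC_{\HH(S)}$. For the adjoint, reflecting a partition exchanges $f_u$ and $f_l$ blockwise, so it preserves $f$-invariance of through-blocks, and sends a non-through-block in $NC^{\A_{\HH(S)}}(w,\emptyset)$ to one in $NC^{\A_{\HH(S)}}(\emptyset,w)$; here I read the $f$-invariance condition symmetrically (namely $f_l=\co{x}\ast x$ for the latter). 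Stability under vertical concatenation is precisely Lemma \ref{lem:verticalconcatenation}.

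The genuinely delicate operation is rotation, since rotating a point replaces its colour by the conjugate, and I expect this to be the main obstacle. First I would record that $\HH(S)$ is stable under conjugation, so that rotation keeps us inside $\A_{\HH(S)}$: if $\co{x}\ast x\neq\emptyset$ then Lemma \ref{lem:inverse} (applied to the pair $\co{x},x$) gives $x\ast\co{x}\ast x=x$, which is impossible unless $x\ast\co{x}\neq\emptyset$, i.e. $\co{x}\in\HH(S)$. Now take an $f$-invariant one-block partition $p$ with upper word $w=w_1\cdots w_n$ and lower word $w'=w'_1\cdots w'_k$, and rotate the leftmost lower point up, producing the one-block partition $q$ with upper word $\co{w'_1}w_1\cdots w_n$ and lower word $w'_2\cdots w'_k$. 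When $k\geqslant 2$, $f$-invariance and associativity give $f_u(q)=\co{w'_1}\ast f(w)=\co{w'_1}\ast f(w')=\co{w'_1}\ast w'_1\ast f_l(q)$. The crucial point is that $f$-invariance means $f_u(p)=f_l(p)=f(w')$ is a genuine element of $S$, so by associativity the intermediate fusion $w'_1\ast f_l(q)=f(w')$ is nonempty; Lemma \ref{lem:inverse} then collapses $\co{w'_1}\ast w'_1\ast f_l(q)$ to $f_l(q)$, whence $f_u(q)=f_l(q)$ and $q$ is again $f$-invariant. In the boundary case $k=1$ the result lies in $NC^{\A_{\HH(S)}}(\cdot,\emptyset)$ with $f_u(q)=\co{w'_1}\ast w'_1$, which is exactly the non-through-block $f$-invariance condition and is nonzero because $w'_1\in\HH(S)$. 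Right rotation and downward rotation are handled identically, using the antisymmetry of $S$.

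Block-stability of $\CC_{\HH(S)}$ is then immediate: any block $b$ of a partition in $\CC_{\HH(S)}$, regarded as a one-block partition, is automatically noncrossing (a single block cannot exhibit a crossing) and is $f$-invariant by the very definition of $\CC_{\HH(S)}$, so $b\in\CC_{\HH(S)}$. Finally, for $\CC_S=\CC'_S\ast\CC_{\HH(S)}$, the free product of two categories of partitions is again a category of partitions by construction (see the paragraph preceding Proposition \ref{prop:freeproduct}). Block-stability is inherited from the two factors: in such a free product every block of a partition is monochromatic in one of the two colour sets and belongs to the corresponding factor, so it suffices that both factors be block-stable. This was just established for $\CC_{\HH(S)}$, while $\CC'_S$ is block-stable because it is generated by the $x$-identities with $x\notin\HH(S)$, whose blocks are the identities and their rotations, all of which already lie in $\CC'_S$. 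Hence $\CC_S$ is a block-stable category of partitions.
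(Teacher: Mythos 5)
Your proof is correct and follows essentially the same route as the paper's: stability is checked operation by operation, with horizontal concatenation trivial, vertical concatenation delegated to Lemma \ref{lem:verticalconcatenation}, adjoint and rotation verified block-wise from the algebraic properties of $S$, and block-stability of $\CC_{S}$ deduced from that of the two free factors. You simply spell out the rotation step (via Lemma \ref{lem:inverse}, including conjugation-stability of $\HH(S)$ and the boundary case $k=1$) that the paper compresses into ``a direct consequence of the Frobenius property,'' which is a welcome amplification rather than a deviation.
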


\begin{proof}
First note that the associativity of $S$ is needed even simply to define $\CC_{\HH(S)}$. We then have to check the stability of $\CC_{\HH(S)}$ under the category operations.
\begin{itemize}
\item \emph{Horizontal concatenation} : this is clear since any block of $p\otimes q$ is either a block of $p$ or a block of $q$.
\item \emph{Vertical concatenation} : this was proved in Lemma \ref{lem:verticalconcatenation}.
\item \emph{Adjoint} : it is enough to check it block-wise, where it is a direct consequence of the antisymmetry of $S$.
\item \emph{Rotation} : again we can check it block-wise, where it is a direct consequence of the Frobenius property of $S$.
\end{itemize}
Thus, $\CC_{\HH(S)}$ is a category of partitions and is by definition block-stable. Since a free product of block-stable categories of partitions is again block-stable, the proof is complete.
\end{proof}

The isomorphism between $S(\CC_{S})$ and $S$ will be implemented by the restriction of the map $f_{u}$ to one-block projective partitions (where it coincides with $f_{l}$). This requires $f_{u}$ to be compatible with the equivalence relation on projective partitions.

\begin{lem}\label{lem:fullclassification}
Let $S$ be an admissible fusion set and let $p, q\in \CC_{S}$ be one-block projective partitions. Then, $p\sim q$ if and only if $f_{u}(p) = f_{u}(q)$.
\end{lem}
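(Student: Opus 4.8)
The plan is to prove both implications by reducing everything to the combinatorics of a single block. I first observe that a one-block projective partition is completely determined by its upper colouring: since all of its $2|w|$ points lie in one block, there is exactly one such partition $p\in\Proj(w)$, and for it $f_{u}(p)=f(w)$. The statement therefore amounts to comparing $f(w)$ and $f(z)$, where $p$ and $q$ are the one-block partitions on colourings $w$ and $z$ respectively.

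For the converse implication I would construct the intertwining partition explicitly. Let $r\in P(w,z)$ be the one-block partition whose single block contains all upper points (coloured $w$) and all lower points (coloured $z$). A direct computation shows $r^{*}r=p$ and $rr^{*}=q$: when composing, every middle point is joined both upwards and downwards, so no loops appear and the two outer rows collapse into a single block. It then remains to check that $r\in\CC_{S}$. A one-block partition is automatically noncrossing, so the only issue is $f$-invariance, which holds precisely because $f(w)=f(z)$; hence $r$ lies in $\CC_{\HH(S)}$ when the colours belong to $\HH(S)$. Colours outside $\HH(S)$ must be dealt with separately: using Lemma \ref{lem:inverse} together with antisymmetry, one shows that $f(w)\in\HH(S)$ whenever $w$ is a nonempty word on $\HH(S)$ with $f(w)\neq\emptyset$, so an element of $\HH(S)$ can never be identified by $f$ with an element outside it. This confines the argument to a single free factor of $\CC_{S}=\CC'_{S}\ast\CC_{\HH(S)}$ and legitimises the construction of $r$.

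For the direct implication I would argue structurally. Suppose $r\in\CC_{S}$ satisfies $r^{*}r=p$ and $rr^{*}=q$. Any non-through block of $r$ in its upper row would survive untouched in the composite $r^{*}r$ as a separate block, contradicting that $r^{*}r=p$ is a single block; applying the same reasoning to $rr^{*}$ rules out non-through blocks in the lower row. Moreover, two distinct through-blocks of $r$ would, after composition with $r^{*}$, remain disjoint (they meet disjoint sets of middle points), so $r^{*}r$ would have more than one block. Hence $r$ has no non-through blocks and exactly one through-block, i.e. $r$ is itself the one-block partition from $w$ to $z$. Being a block of $\CC_{S}$, it is $f$-invariant, which is exactly the equality $f(w)=f(z)$, that is $f_{u}(p)=f_{u}(q)$.

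I expect the main obstacle to be the direct implication, specifically the combinatorial bookkeeping showing that the relations $r^{*}r=p$ and $rr^{*}=q$, together with noncrossingness, force $r$ to collapse to a single block. Once this reduction is in place, the $f$-invariance built into $\CC_{\HH(S)}$ supplies the conclusion immediately; the only other delicate point is the dichotomy between colours in and outside $\HH(S)$, which is resolved by Lemma \ref{lem:inverse}.
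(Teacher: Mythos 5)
Your proof is correct and rests on the same core idea as the paper's: the equivalence, when it holds, is witnessed by the one-block partition $r$ joining $w$ to $z$ (which is exactly the paper's $r^{p}_{q}=q_{u}^{*}p_{u}$), and a one-block partition lies in $\CC_{S}$ precisely when it is $f$-invariant, i.e.\ when $f(w)=f(z)$. The only real divergence is in the direct implication: the paper simply invokes the general criterion from the through-block machinery of \cite{freslon2013representation} that $p\sim q$ if and only if $q_{u}^{*}p_{u}\in\CC$, whereas you reprove the relevant half of it from scratch, showing that any witness $r$ with $r^{*}r=p$ and $rr^{*}=q$ can have no non-through blocks and only one through-block (your argument here is sound, since the lower row of $r$ and the upper row of $r^{*}$ induce the same partition of the middle points, so distinct blocks of $r$ cannot merge under composition). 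This makes your forward direction self-contained at the cost of some combinatorial bookkeeping; the paper's version is shorter but leans on a quoted result. Your handling of the colours outside $\HH(S)$ is slightly more roundabout than the paper's one-line observation that one-block projective partitions in $\CC'_{S}$ are just identity partitions, but it reaches the same conclusion.
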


\begin{proof}
First remark that one-block projective partitions in $\CC'_{S}$ are simply identity partitions, so that they are always $f$-invariant. Consider now the partition $r = r^{p}_{q} = q_{u}^{*}p_{u}$ and recall that $p$ is equivalent to $q$ if and only if $r^{p}_{q}\in \CC_{S}$. Note that $r$ has only one block because $p$ and $q$ do. Thus, since $f_{u}(r^{p}_{q}) = f_{u}(p)$ and $f_{l}(r^{p}_{q}) = f_{u}(q)$, $r^{p}_{q}\in \CC_{S}$ if and only $f_{u}(p) = f_{u}(q)$.
\end{proof}

\begin{thm}\label{thm:fullclassification}
Let $\G$ be a free compact quantum group. Then, there exists a partition quantum group $\G'$ such that $R^{+}(\G)\simeq R^{+}(\G')$.
\end{thm}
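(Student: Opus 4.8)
The plan is to feed the hypothesis through the machinery assembled in this subsection, so that the proof becomes essentially an assembly of the preceding lemmas. Since $\G$ is free, there is by definition a fusion set $S$ with $R^{+}(\G)\simeq R^{+}(S)$; in particular $S$ is admissible, being witnessed by $\G$ itself. The preceding proposition then guarantees that $S$ is associative, Frobenius and antisymmetric, which is exactly what is required for the construction of $\CC_{S}$ to be meaningful. By Lemma \ref{lem:category}, $\CC_{S}$ is a block-stable category of noncrossing partitions, so for any integer $N\geqslant 4$ I would let $\G'$ be the associated noncrossing partition quantum group.

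The next step is to identify $R^{+}(\G')$. Because $\CC_{S}$ is block-stable, Theorem \ref{thm:freefusion} applies and tells us that the natural map $\Phi\colon R^{+}(\CC_{S})\to R^{+}(\G')$ is an isomorphism. Since by definition $R^{+}(\CC_{S})=R^{+}(S(\CC_{S}))$, it suffices to prove that the fusion sets $S(\CC_{S})$ and $S$ are isomorphic; this will yield $R^{+}(\G')\simeq R^{+}(S)\simeq R^{+}(\G)$ and finish the proof.

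To compare the two fusion sets I would use the map $\Theta\colon S(\CC_{S})\to S$ sending the class of a one-block projective partition $p$ to $f_{u}(p)$. Lemma \ref{lem:fullclassification} is precisely the statement that $\Theta$ is well defined and injective, as $p\sim q$ if and only if $f_{u}(p)=f_{u}(q)$. Surjectivity is immediate: for $x\in \HH(S)$ the $x$-identity lies in $\CC_{\HH(S)}$ and for $x\notin \HH(S)$ it lies in the free factor $\CC'_{S}$, and in both cases it is a one-block projective partition with $f_{u}=x$. It then remains to check that $\Theta$ intertwines the two pieces of structure. Compatibility with conjugation reduces block-wise to $f_{u}(\co{p})=\co{f_{u}(p)}$, which follows from the antisymmetry of $S$ together with the fact that rotating a point conjugates its colour. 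Compatibility with the fusion operation amounts to showing that $f_{u}(p\boxvert q)=f_{u}(p)\ast f_{u}(q)$ and, crucially, that $p\boxvert q$ belongs to $\CC_{S}$ exactly when the right-hand side is non-empty.

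The main obstacle is precisely this last point: matching the partial definedness of $\ast$ on $S$ with membership of the merged partition $p\boxvert q$ in $\CC_{S}$. Here one computes that $\boxvert$ fuses the single blocks of $p$ and $q$ into one block whose upper (and lower) colouring is the concatenation of the two, so that $f_{u}(p\boxvert q)=f_{u}(p)\ast f_{u}(q)$ by associativity; the resulting block is $f$-invariant, hence lies in $\CC_{S}$, exactly when this fusion is non-empty, which is the definition of $f_{u}(p)\ast f_{u}(q)\neq\emptyset$ in $S$. Care is needed with the elements outside $\HH(S)$, where $\co{x}\ast x=\emptyset$ forces the relevant partitions into the free factor $\CC'_{S}$, and Lemma \ref{lem:inverse} is the computational tool keeping this bookkeeping consistent. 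Once these verifications are in place, $\Theta$ is an isomorphism of fusion sets and the theorem follows.
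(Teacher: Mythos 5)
Your proposal is correct and follows essentially the same route as the paper: construct $\CC_{S}$ from the admissible fusion set $S$, use Lemma \ref{lem:fullclassification} to identify $S(\CC_{S})$ with $S$ via $f_{u}$, check compatibility with conjugation and fusion (handling the elements outside $\HH(S)$ via Lemma \ref{lem:inverse}), and conclude by Lemma \ref{lem:category} and Theorem \ref{thm:freefusion}. You merely spell out in more detail the verification that $f_{u}$ intertwines the fusion operations, which the paper leaves more implicit.
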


\begin{proof}
Let $S$ be a fusion set such that $R^{+}(\G) = R^{+}(S)$ and let $\CC_{S}$ be the associated category of noncrossing partitions. By Lemma \ref{lem:fullclassification}, the map $f_{u}$ restricts to a set-theoretic isomorphism between $S(\CC_{S})$ and $S$. It is clear that $f$ respects the involution (because $S$ is antisymmetric). For the fusion operation, note that if $x\in S$ is such that $x\ast y\neq\emptyset$, then by Lemma \ref{lem:inverse}, $\co{x}\ast x\neq\emptyset$ and $x\in \HH(S)$. In other words, there is no possible fusion involving an element which is not in $\HH(S)$. Since $f$ respects the fusion operation on $S(\CC'_{S})$ and $S(\CC_{\HH(S)})$, it therefore respects the fusion operation on $S$. Thus, it is an isomorphism of fusion sets and $R^{+}(\G) \simeq R^{+}(\CC_{S})$. By Lemma \ref{lem:category}, $\CC_{S}$ is a block-stable so that we can conclude by Theorem \ref{thm:freefusion} that $R^{+}(\G) = R^{+}(\G')$, where $\G'$ is a partition quantum group associated to $\CC_{S}$ and any integer $N\geqslant 4$.
\end{proof}

In the language of \cite{banica1999fusion}, any free compact quantum group is an \emph{$R^{+}$-deformation} of a free partition quantum group. A natural question is therefore : is any free compact quantum group \emph{monoidally equivalent} to a free partition quantum group ? We will come back to this problem in Subsection \ref{sec:nonunimodular}. We end this section by giving a set of generators of the category of partitions $\CC_{S}$. For two words $w, w'$ on $S$, let $\pi(w, w')\in NC^{\A_{S}}$ be the one-block partition with upper colouring $w$ and lower colouring $w'$.

\begin{lem}\label{lem:categorygenerators}
Let $S$ be a fusion set and let $\CC'$ be the category of partitions generated by all the partitions $\pi(xy, z)$ for $x, y, z\in S$ satisfying $x\ast y = z$. Then, $\CC' = \CC_{S}$.
\end{lem}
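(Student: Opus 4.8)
The plan is to prove the two inclusions $\CC'\subseteq \CC_{S}$ and $\CC_{S}\subseteq \CC'$ separately. The first is immediate: each generator $\pi(xy,z)$ with $x\ast y = z\neq\emptyset$ is a one-block partition with $f_{u} = x\ast y = z = f_{l}$, hence $f$-invariant; moreover $x\ast y\neq\emptyset$ forces $x,y,z\in\HH(S)$ (applying Lemma \ref{lem:inverse} and its consequence that $\HH(S)$ is stable under conjugation), so $\pi(xy,z)\in\CC_{\HH(S)}\subseteq\CC_{S}$. Since $\CC_{S}$ is a category of partitions (Lemma \ref{lem:category}) containing all $x$-identities, it contains $\CC'$. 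For the reverse inclusion, note that $\CC'$ contains every $x$-identity, hence contains $\CC'_{S}$; by the universal property of the free product it therefore suffices to prove $\CC_{\HH(S)}\subseteq\CC'$.

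To prove $\CC_{\HH(S)}\subseteq\CC'$, I would first show that $\CC'$ contains every one-block $f$-invariant partition. For a through-block $\pi(w,w')$ with $w = w_{1}\cdots w_{n}$, $w' = w'_{1}\cdots w'_{m}$ and $f(w) = f(w') =: z$, build a fusion tree $F_{w} = \pi(w,f(w))$ by iterated composition of the generators $\pi(z_{k}w_{k+1},z_{k+1})$, where $z_{k} = w_{1}\ast\cdots\ast w_{k}$; associativity of $S$ together with the absorbing role of $\emptyset$ guarantees that every partial fusion $z_{k}$ is a nonempty element of $\HH(S)$, so each step is a legitimate generator. Then $\pi(w,w') = F_{w'}^{*}\circ F_{w}$, the composition producing a single block through the common middle point $z$ and no loops. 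For a cap $\pi(w,\emptyset)$ with $f(w) = \co{x}\ast x =: y$, I would first manufacture the singleton cap $\pi(y,\emptyset)$ as $\pi(\co{x}\,x,\emptyset)\circ\pi(\co{x}\,x,y)^{*}$ — here $\pi(\co{x}\,x,\emptyset)$ is a rotation of the $x$-identity and $\pi(\co{x}\,x,y)$ is a generator — and then set $\pi(w,\emptyset) = \pi(y,\emptyset)\circ F_{w}$. Cups are obtained by taking adjoints.

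With all one-block $f$-invariant partitions in hand, I would invoke Lemma \ref{lem:pseudooneblock}: any partition of $\CC_{\HH(S)}$ is a vertical concatenation of pseudo-one-block partitions, each of which is built by tensoring identity partitions with a single one-block partition — a cap, a cup, or (for the central factor) a tensor product of through-blocks. Each block occurring here is a block of the original partition, hence $f$-invariant, so by the previous step each factor lies in $\CC'$; since $\CC'$ is closed under the category operations, the whole partition lies in $\CC'$. This yields $\CC_{\HH(S)}\subseteq\CC'$ and completes the argument.

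The main obstacle I anticipate is the explicit generation of one-block partitions — in particular verifying that the fusion-tree construction stays inside the set of legitimate generators (that all intermediate fusions $z_{k}$ are nonempty and lie in $\HH(S)$) and that the cap construction genuinely produces a singleton with no stray loops. These are precisely the points where the algebraic hypotheses on $S$ (associativity, the Frobenius property, antisymmetry, and their consequence Lemma \ref{lem:inverse}) enter. Once one-block partitions are under control, the passage to arbitrary partitions through Lemma \ref{lem:pseudooneblock} is purely formal.
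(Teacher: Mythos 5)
Your proof is correct and follows essentially the same route as the paper: the inductive ``fusion tree'' construction of $\pi(w,f(w))$ via $\pi(w_{1}\dots w_{i+1}, z_{i+1}) = \pi(z_{i}w_{i+1}, z_{i+1})(\pi(w_{1}\dots w_{i}, z_{i})\otimes\vert)$ is exactly the paper's key step, and your reassembly of a general partition from its $f$-invariant blocks is the paper's appeal to block-stability, made explicit through Lemma \ref{lem:pseudooneblock}. The only difference is presentational: you spell out the construction of the non-through (cap/singleton) blocks, which the paper disposes of with ``using rotations''.
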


\begin{proof}
Note that $\CC'\subset \CC_{S}$ by definition. Moreover, $\CC'_{S}\subset \CC'$ so that we only have to deal with $\CC_{\HH(S)}$. Assume now that $\pi(w, w')\in \CC_{\HH(S)}$ with $w = w_{1}\dots w_{n}$ and define a sequence $z_{i}$ of elements of $S$ by $z_{1} = w_{1}$ and $z_{i+1} = z_{i}\ast w_{i+1}$. Let us prove by induction that $\pi(w_{1}\dots w_{i}, z_{i})\in \CC'$. For $n=1$, this is clear. For $n > 1$, this comes from the equality
\begin{equation*}
\pi(w_{1}\dots w_{i+1}, z_{i+1}) = \pi(z_{i}w_{i+1}, z_{i+1})(\pi(w_{1}\dots w_{i}, z_{i})\otimes \vert).
\end{equation*}
Since $z_{n} = f(w)$ by construction, we have proved that $\pi(w, f(w))\in \CC'$. The same proof yields $\pi(w', f(w'))\in \CC'$, so that $\pi(w, w')\in \CC'$. Using rotations, we see that any one-block partition of $\CC_{S}$ is in $\CC'$. Because $\CC_{S}$ is block-stable, any partition in it can be built from one-bock partitions using the category operations. Hence, $\CC_{S}\subset \CC'$, concluding the proof.
\end{proof}

\subsection{Classification of fusion sets}\label{sec:classification}

We now want to investigate the general structure of admissible fusion sets and to classify them. The outcome of this study will be a general free product decomposition for free partition quantum groups, given in Theorem \ref{thm:freeproduct}. When the colour set $\A$ contains two colours which are inverse to one another, such a classification is known by \cite[Thm 4.23]{freslon2013fusion}. Since it will serve as a building block for the general case, let us restate this result in the context of fusion sets.

\begin{thm}\label{thm:classificationonegenerator}
Let $\CC$ be a block-stable category of noncrossing partitions with two colours which are conjugate to each other. Then, $S(\CC)$ is isomorphic to one of the following :
\begin{itemize}
\item $\OO = \{x\}$ with $x\ast x = \emptyset$.
\item $\UU = \{x, \co{x}\}$ with $x\ast x = \co{x}\ast x = x\ast\co{x} = \co{x}\ast\co{x} = \emptyset$.
\item $\Z_{s}$ for $1\leqslant s\leqslant \infty$, where the conjugation and fusion operations are given by the group inverse and group operation.
\item $\ZZ = \{x, \co{x}, x\ast\co{x}, \co{x}\ast x\}$ with the fusion operation given by \cite[Def 4.22]{freslon2013fusion} (where it is denoted by $\mathcal{S}$). Note that this is the fusion set associated to a groupoid with two elements having trivial automorphism groups and two mutually inverse morphisms between these elements.
\end{itemize}
\end{thm}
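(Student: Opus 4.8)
The plan is to pin down the whole fusion set $S(\CC)$ from the single generator $x = [\vert_\circ]$ together with an integer grading, and then to run a short decision tree on the behaviour of $x$. First I would introduce the colour balance: for a one-block projective partition $p$ whose upper row spells the word $w$ on $\{\circ, \bullet\}$, set $\delta(p) = \#_\circ(w) - \#_\bullet(w)$. Since $p = p^*$ forces the lower row of $p$ to repeat the colouring of its upper row, and since rotating a point replaces its colour by the conjugate one, the rotation and composition rules built into the category operations give $\delta(\co p) = -\delta(p)$ and, whenever $p \boxvert q \in \CC$, $\delta(p \boxvert q) = \delta(p) + \delta(q)$. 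Hence $\delta$ descends to a map on $S(\CC)$ that is additive for the fusion $\ast$ and anti-invariant for conjugation, with $\delta(x) = 1$ and $\delta(\co x) = -1$.

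Next I would analyse the equivalence relation. For one-block projective partitions $p, q \in \CC$, equivalence means $q_u^* p_u \in \CC$, a one-block partition joining the upper word of $p$ to that of $q$. Exactly as in the computation behind Lemma \ref{lem:fullclassification}, block-stability and noncrossingness should let me decide membership of this connecting partition from colour data alone. The conclusion I am aiming for is that a single period $s \in \{1, 2, \dots\} \cup \{\infty\}$ — the least positive value of $\delta(p) - \delta(q)$ taken over equivalent pairs $p \sim q$, and $\infty$ if $\delta$ never identifies partitions of distinct degree — controls the equivalence relation, in the sense that two one-block partitions of equal degree modulo $s$ are equivalent. The only correction to this rule is that at degree $0$ two inequivalent classes may coexist, distinguished by the colour of their leftmost point; this extra bookkeeping is needed solely in the groupoid case below.

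With the grading understood I would finish by a decision tree on $x$. If $x \ast x \neq \emptyset$, then Lemma \ref{lem:inverse} yields $\co x \ast x \neq \emptyset$ and supplies an identity $\co x \ast x$ for $x$, so the sub-fusion-set generated by $x$ is a group; being singly generated it is cyclic with $\co x = x^{-1}$, and reading off the period gives $\Z_s$ (with $s = 1$ when $x \ast x = x$ and $s = \infty$ when $\delta$ is injective). If $x \ast x = \emptyset$ but $\co x \ast x \neq \emptyset$, then necessarily $x \neq \co x$, the only nonempty fusions are $x \ast \co x$ and $\co x \ast x$, and these are the two inequivalent degree-$0$ classes; the resulting table is exactly that of the two-object groupoid fusion set $\ZZ$. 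Finally, if no fusion is possible at all, then $S(\CC) = \{x\}$ when $x = \co x$, giving $\OO$, and $S(\CC) = \{x, \co x\}$ when $x \neq \co x$, giving $\UU$.

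The hard part will be the completeness statement of the second paragraph: proving that equal degree modulo $s$ (with matching leftmost colour in the degenerate degree-$0$ case) really forces equivalence, since this demands exhibiting the connecting one-block partition inside $\CC$. This is precisely where block-stability and the noncrossing hypothesis are indispensable — without them two one-block partitions of the same degree need not be equivalent, the period $s$ would fail to be well-defined, and the four-way classification would collapse. Once completeness is in place, checking that each branch of the tree reproduces the fusion operation listed in the statement is routine.
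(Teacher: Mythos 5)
The paper does not actually prove Theorem \ref{thm:classificationonegenerator}: it is imported verbatim from \cite[Thm 4.23]{freslon2013fusion}, so there is no in-paper argument to compare yours against. Judged on its own terms, your decision tree in the third paragraph is sound: once one knows that $S(\CC)$ is admissible (which follows from Theorem \ref{thm:freefusion} for $N\geqslant 4$, so that Lemma \ref{lem:inverse} and the criterion $a\ast b\neq\emptyset \Leftrightarrow \co{a}\ast a = b\ast\co{b}$ are available), the three branches on the behaviour of $x\ast x$ and $\co{x}\ast x$ do produce exactly $\Z_{s}$, $\ZZ$, and $\OO$ or $\UU$ respectively.

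The genuine gap is that the whole tree classifies only the sub-fusion-set \emph{generated} by $x=[\vert_{\circ}]$ and $\co{x}$, and you never establish that this is all of $S(\CC)$. Concretely, you need: for every one-block projective partition $p\in\CC$ with upper word $w=w_{1}\dots w_{n}$, one has $[p]=w_{1}\ast\dots\ast w_{n}$ in $S(\CC)$. This is not formal --- it requires showing that $\CC$ also contains the one-block projective partitions on all prefixes $w_{1}\dots w_{k}$ (so that the iterated fusions are defined and nonempty), which one gets by rotating $p$ onto a single row and capping adjacent conjugate pairs with the duality morphism $\sqcup$; this is precisely where noncrossingness and block-stability do their work, and it is the content you defer as ``the hard part''. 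Without it, nothing rules out a class in $S(\CC)$ invisible to the generator, and the case analysis on $x$ alone does not close. A secondary, internal inconsistency: you first assert that $\delta$ descends to a well-defined integer-valued map on $S(\CC)$, and then define $s$ as the least positive value of $\delta(p)-\delta(q)$ over equivalent pairs; these are incompatible, and the first assertion is simply false in general (in the $\Z_{s}$ case with $s<\infty$ the one-block partition on $\circ^{s}$ is equivalent to the one on $\bullet\circ$, of degrees $s$ and $0$). Only the reduction of $\delta$ modulo the period survives, so the first paragraph should be weakened accordingly.
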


In Theorem \ref{thm:classification}, we shall see that any admissible fusion set can roughly be built from these ones (with $\Z_{s}$ replaced by arbitrary discrete groups). In order to get such a result, we first have to decompose admissible fusion sets in blocks using the following notion of \emph{disjoint union}. From now on, all fusion sets will be assumed to be admissible.

\begin{de}
Let $S, S'$ be two fusion sets. Their \emph{disjoint union} is the set $S'' = S\sqcup S'$ together with its natural conjugation and the fusion operation induced by the fusion operations on $S$ and $S'$ with the following additional rule : $x\ast y = \emptyset$ whenever $x\in S$ and $y\in S$.
\end{de}

We will proceed stepwise to decompose $S$ as a disjoint union of simpler fusion sets. A first decomposition of this kind was already introduced in Subsection \ref{subsec:partitions} to define the category of partitions $\CC_{S}$. We refine it by defining the following three subsets :
\begin{itemize}
\item $\HH(S) = \{x\in S, \co{x}\ast x\neq\emptyset\}$
\item $\OO(S) = \{x\notin \HH(S), x = \co{x}\}$
\item $\UU(S) = \{x\notin \HH(S), x \neq \co{x}\}$
\end{itemize}

\begin{lem}
The restrictions of the conjugation and the fusion operation turn the three sets above into fusion sets. Moreover, $S$ is their disjoint union.
\end{lem}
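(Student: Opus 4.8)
The plan is to observe first that the three sets partition $S$ as a set: every $x\in S$ either lies in $\HH(S)$ or not, and if $x\notin\HH(S)$ then exactly one of $x=\co{x}$ or $x\neq\co{x}$ holds, placing $x$ in $\OO(S)$ or $\UU(S)$. Thus all the content is to check that each piece is closed under conjugation and fusion and that any fusion mixing two different pieces is $\emptyset$. The engine driving everything is Lemma \ref{lem:inverse}: if $x\ast y\neq\emptyset$ then $\co{x}\ast x\ast y=y$, and since the right-hand side is nonempty this forces $\co{x}\ast x\neq\emptyset$, i.e.\ $x\in\HH(S)$. In other words, \emph{an element that is not in $\HH(S)$ annihilates every product in which it appears as the left factor.}

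First I would prove that $\HH(S)$ is stable under conjugation. If $x\in\HH(S)$, I apply Lemma \ref{lem:inverse} to the pair $(\co{x},x)$ (legitimate since $\co{x}\ast x\neq\emptyset$) to get $x\ast\co{x}\ast x=x$; associativity then forces $x\ast\co{x}\neq\emptyset$, which is exactly $\co{x}\in\HH(S)$, and the converse is symmetric. Combining this with antisymmetry gives the right-handed version of the annihilation above: if $z\notin\HH(S)$ and $y\ast z\neq\emptyset$, then $\co{z}\ast\co{y}=\co{y\ast z}\neq\emptyset$ would put $\co{z}\in\HH(S)$ and hence $z\in\HH(S)$, a contradiction. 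So \emph{any element outside $\HH(S)$ annihilates on both sides.} Stability of $\OO(S)$ and $\UU(S)$ under conjugation is then immediate, using that $\co{\phantom{x}}$ preserves the property $x=\co{x}$ (resp.\ $x\neq\co{x}$) and preserves membership in the complement of $\HH(S)$.

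Next I would check closure under fusion. For $\OO(S)$ and $\UU(S)$ this is free: their elements lie outside $\HH(S)$, so by the two-sided annihilation the restricted fusion is identically $\emptyset$, and an everywhere-empty fusion trivially lands in the set. For $\HH(S)$, given $x,y\in\HH(S)$ with $x\ast y\neq\emptyset$, I would verify $x\ast y\in\HH(S)$ by computing $\co{x\ast y}\ast(x\ast y)=\co{y}\ast\co{x}\ast x\ast y$ (antisymmetry) and using Lemma \ref{lem:inverse} in the form $\co{x}\ast x\ast y=y$ to reduce this to $\co{y}\ast y$, which is nonempty because $y\in\HH(S)$; associativity guarantees the intermediate products do not collapse to $\emptyset$. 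Finally the cross-fusions vanish: if $x$ and $y$ sit in two different pieces then, since two elements of $\HH(S)$ lie in the same piece, at least one of them is outside $\HH(S)$, and that one annihilates the product. This exhibits $S$ as the disjoint union of the three fusion sets, iterating the two-fold disjoint union (which is visibly associative) across the three factors.

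The steps that are genuinely delicate, and where I expect the main obstacle, are the two-sided annihilation and the fusion closure of $\HH(S)$: both require threading Lemma \ref{lem:inverse} together with antisymmetry and the associativity of $\ast$ on $S\cup\{\emptyset\}$, and one must be careful at each step that an intermediate fusion does not unexpectedly become $\emptyset$ (which associativity, read as ``$\emptyset$ propagates'', is exactly what rules out). Everything else --- the set-theoretic partition, conjugation-stability of $\OO(S)$ and $\UU(S)$, and the vanishing of cross-fusions --- is routine once the annihilation principle is in hand.
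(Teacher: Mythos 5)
Your proof is correct and follows essentially the same route as the paper's: both rest on Lemma \ref{lem:inverse} to show that any element outside $\HH(S)$ annihilates every fusion, and both verify closure of $\HH(S)$ via the computation $\co{x\ast y}\ast(x\ast y)=\co{y}\ast y$. You are somewhat more careful than the paper in spelling out the conjugation-stability of $\HH(S)$ and the right-handed annihilation (which the paper only records via the parenthetical ``and $x\ast\co{x}\neq\emptyset$ too''), but this is a matter of detail, not of method.
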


\begin{proof}
Let us recall that by Lemma \ref{lem:inverse}, if $x, y\in S$ are such that $x\ast y \neq \emptyset$, then $\co{x}\ast x\neq \emptyset$ (and $x\ast \co{x}\neq \emptyset$ too). Thus, for any $x\in \OO(S)$ or $x\in \UU(S)$ and $y\in S$, $x\ast y = \emptyset$. This proves that these two sets are fusion subsets and that they are disjoint from each other and from $\HH(S)$. We therefore only have to prove that $\HH(S)$ is stable under the  fusion operation. This is a consequence of the fact that if $x\ast y = z$, then $\co{z}\ast z = \co{y}\ast\co{x}\ast x\ast y = \co{y}\ast y \neq\emptyset$ (using Lemma \ref{lem:inverse}).
\end{proof}

\begin{prop}\label{prop:firstdecomposition}
Let $S$ be an admissible fusion set. Then, $S$ is a the disjoint union of $\HH(S)$, copies of $\OO$ and copies of $\UU$.
\end{prop}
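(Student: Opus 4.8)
The plan is to build directly on the decomposition just established in the preceding lemma, which gives $S = \HH(S)\sqcup\OO(S)\sqcup\UU(S)$ as fusion sets. Since $\HH(S)$ already appears as one summand, the proposition reduces to showing that $\OO(S)$ is a disjoint union of copies of $\OO$ and that $\UU(S)$ is a disjoint union of copies of $\UU$. First I would record the crucial structural fact that on these two pieces the fusion operation is completely trivial: by Lemma \ref{lem:inverse} (as already exploited in the proof of the preceding lemma), whenever $x\ast y\neq\emptyset$ one has $\co{x}\ast x\neq\emptyset$, hence $x\in\HH(S)$. Consequently, for every $x\in\OO(S)\cup\UU(S)$ and every $y\in S$ we have $x\ast y=\emptyset$, so the fusion sets $\OO(S)$ and $\UU(S)$ carry only their conjugation maps as nontrivial structure.

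Once the fusion is known to vanish identically, the only data left to analyze is the involution $x\mapsto\co{x}$ restricted to each piece. On $\OO(S)$ the involution is by definition the identity, so each singleton $\{x\}$ with $x=\co{x}$ and $x\ast x=\emptyset$ is precisely isomorphic to $\OO$, and since all cross-fusions also vanish, $\OO(S)$ is the disjoint union of these singletons. For $\UU(S)$ the plan is to check first that the set is stable under conjugation: if $x\notin\HH(S)$ then $\co{x}\ast x=\emptyset$, and the fact recalled above (applied with $x,\co x$) forces $x\ast\co{x}=\emptyset$ as well, i.e. $\co{\co x}\ast\co x=\emptyset$, so $\co{x}\notin\HH(S)$; together with $\co{\co x}=x\neq\co x$ this shows $\co{x}\in\UU(S)$.

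It then remains to observe that the conjugation restricted to $\UU(S)$ is a fixed-point-free involution (by the defining condition $x\neq\co{x}$), so it partitions $\UU(S)$ into two-element orbits $\{x,\co{x}\}$. Each such orbit, equipped with the identically vanishing fusion, is isomorphic to $\UU=\{x,\co{x}\}$, and because fusions between elements of distinct orbits are again $\emptyset$, the set $\UU(S)$ is the disjoint union of these copies of $\UU$. Assembling the three contributions yields
\begin{equation*}
S=\HH(S)\sqcup\left(\bigsqcup \OO\right)\sqcup\left(\bigsqcup \UU\right),
\end{equation*}
as desired. I do not expect a genuine obstacle here: the substance of the argument is the triviality of fusion on $\OO(S)\cup\UU(S)$, which is an immediate consequence of Lemma \ref{lem:inverse}, and the remaining work is the elementary bookkeeping of decomposing a set-with-involution into fixed points and two-element orbits. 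The only point requiring a small verification is the stability of $\UU(S)$ under conjugation, which I have indicated above.
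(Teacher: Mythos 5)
Your proposal is correct and follows essentially the same route as the paper: both rest on the preceding lemma's decomposition $S=\HH(S)\sqcup\OO(S)\sqcup\UU(S)$ together with the vanishing of fusion outside $\HH(S)$ (Lemma \ref{lem:inverse}), and then split $\OO(S)$ into singletons and $\UU(S)$ into conjugation orbits $\{x,\co{x}\}$. The only cosmetic difference is that the paper identifies these pieces with $\OO$ and $\UU$ by citing the two-colour classification (Theorem \ref{thm:classificationonegenerator}), whereas you verify the identification directly from the definitions, which is equally valid.
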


\begin{proof}
Writing $\OO(S)$ as a disjoint union of singletons gives a decomposition into disjoint fusion sets. By the classification of the two-colour case, these are all isomorphic to $\OO$. Similarly, writing $\UU(S)$ as a disjoint union of pairs $\{x, \co{x}\}$ gives a disjoint union of fusion sets isomorphic to $\UU$. The result therefore follows from the decomposition $S = \OO(S)\sqcup \UU(S)\sqcup \HH(S)$.
\end{proof}

The problem now reduces to describing $\HH(S)$. Again, we define two subsets :
\begin{itemize}
\item $\Gamma(S) = \{x\in S, x\ast \co{x} = \co{x}\ast x \}$.
\item $\ZZ(S) = \{x\in S, x\ast \co{x} \neq \co{x}\ast x \}$.
\end{itemize}

\begin{lem}
The subset $\Gamma(S)$ is stable under the conjugation and the fusion operation. Moreover, there is a family of discrete groups $(\Gamma_{i})_{i}$ such that $\Gamma(S) = \sqcup_{i}\Gamma_{i}$ as fusion sets.
\end{lem}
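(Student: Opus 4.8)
The plan is to exploit the groupoid picture of Example~\ref{ex:groupoid}: an element $x\in\HH(S)$ should be thought of as a morphism with ``source'' $e_{x}=\co{x}\ast x$ and ``target'' $f_{x}=x\ast\co{x}$, and $\Gamma(S)$ should turn out to be exactly the set of automorphisms, i.e. the morphisms whose source and target coincide. Recall that $\Gamma(S)\subseteq\HH(S)$, so for $x\in\Gamma(S)$ both $e_{x}$ and $f_{x}$ are non-empty. I would first record the elementary properties of these idempotents. Using Lemma~\ref{lem:inverse} (applied with the second argument of the lemma equal to $\co{x}$, which is legitimate since $x\ast\co{x}\neq\emptyset$ for $x\in\HH(S)$) one gets $\co{x}\ast x\ast\co{x}=\co{x}$ and, after conjugating via antisymmetry, $x\ast\co{x}\ast x=x$; this shows that $e_{x}$ and $f_{x}$ are self-conjugate idempotents and that $f_{x}\ast x=x=x\ast e_{x}$. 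Stability of $\Gamma(S)$ under conjugation is then immediate, because the defining condition $x\ast\co{x}=\co{x}\ast x$ is symmetric in $x$ and $\co{x}$.

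The heart of the argument is to analyse the fusion of two elements $x,y\in\Gamma(S)$ with $x\ast y\neq\emptyset$; write $u_{x}:=e_{x}=f_{x}$ and $u_{y}:=e_{y}=f_{y}$. My claim is that in this case $u_{x}=u_{y}$ and $x\ast y\in\Gamma(S)$ with the same idempotent. To force $u_{x}=u_{y}$ I would apply Lemma~\ref{lem:inverse} twice. Applied to the product $x\ast y$ it gives $e_{x}\ast y=y$, whence multiplying on the right by $\co{y}$ yields $u_{x}\ast u_{y}=u_{y}$. Applied instead to $\co{x\ast y}=\co{y}\ast\co{x}\neq\emptyset$ it gives $f_{y}\ast\co{x}=\co{x}$, which after conjugation reads $x\ast u_{y}=x$; multiplying on the left by $\co{x}$ then yields $u_{x}\ast u_{y}=u_{x}$. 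Comparing the two identities forces $u_{x}=u_{y}=:u$. Setting $z=x\ast y$ and using antisymmetry, $z\ast\co{z}=x\ast(y\ast\co{y})\ast\co{x}=x\ast u\ast\co{x}=x\ast\co{x}=u$, and symmetrically $\co{z}\ast z=\co{y}\ast u\ast y=\co{y}\ast y=u$, so that $z\in\Gamma(S)$ with $u_{z}=u$. This proves stability under fusion.

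Finally I would assemble the decomposition. Partition $\Gamma(S)$ into the fibres $\Gamma_{u}=\{x\in\Gamma(S):u_{x}=u\}$, where $u$ ranges over the idempotents occurring as some $u_{x}$. By the forcing step above, $x\ast y=\emptyset$ whenever $u_{x}\neq u_{y}$, and conjugation preserves each fibre (since $u_{\co{x}}=u_{x}$), so $\Gamma(S)=\bigsqcup_{u}\Gamma_{u}$ is a disjoint union of fusion subsets. It remains to check that each $\Gamma_{u}$ is a group, and this is the step I expect to be the main obstacle: one must promote the \emph{partial} operation $\ast$ to a genuine \emph{total} group law. The unit is $u$ itself (it is self-conjugate and idempotent and satisfies $u\ast x=x=x\ast u$ for $x\in\Gamma_{u}$), and the inverse of $x$ is $\co{x}$ because $x\ast\co{x}=f_{x}=u=e_{x}=\co{x}\ast x$. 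Totality is the delicate point: for $x,y\in\Gamma_{u}$ one has $x\ast y\ast\co{y}=x\ast(y\ast\co{y})=x\ast u=x\ast e_{x}=x\neq\emptyset$, so $x\ast y$ cannot be $\emptyset$. Together with the associativity inherited from $S$ this shows that $(\Gamma_{u},\ast,\,x\mapsto\co{x})$ is a discrete group, completing the decomposition $\Gamma(S)=\bigsqcup_{u}\Gamma_{u}$ as fusion sets.
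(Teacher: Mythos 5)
Your proof is correct and follows essentially the same route as the paper: both arguments reduce everything to the observation that $x\ast y\neq\emptyset$ forces the idempotents $\co{x}\ast x=x\ast\co{x}$ of $x$ and $y$ to coincide (the paper phrases the resulting partition via the equivalence relation $x\sim y\Leftrightarrow x\ast\co{y}\neq\emptyset$, which is the same as your fibration by $u_{x}$), and then check the group axioms on each class. Your write-up is somewhat more explicit than the paper's about totality of $\ast$ on each class and about the unit lying in the class, but these are details the paper's proof also implicitly uses.
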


\begin{proof}
The stability under conjugation is clear. For the fusion operation, note that if $x\ast y = z$, then
\begin{equation*}
\co{x}\ast x = (\co{z\ast\co{y}})\ast (z\ast \co{y}) = y\ast \co{y}.
\end{equation*}
Thus, if $x, y\in \Gamma(S)$ and $x\ast y = z$, then
\begin{equation*}
z\ast \co{z} = x\ast \co{x} = \co{x}\ast x = y\ast \co{y} = \co{y}\ast y = \co{z}\ast z
\end{equation*}
and $\Gamma(S)$ is stable. Now, let us define a binary relation on $\Gamma(S)$ by $x\sim y$ if $x\ast \co{y}\neq \emptyset$. The key fact is that, by Lemma \ref{lem:inverse}, $x\sim y$ if and only if $\co{x}\ast x = \co{y}\ast y$. From this, it is straightforward to see that $\sim$ is an equivalence relation on $\Gamma(S)$. Let $(\Gamma_{i})_{i}$ be the equivalence classes for $\sim$. Then, $\ast$ turns $\Gamma_{i}$ into a monoid. Moreover, $\co{x}\ast x = y\ast \co{y} = \co{y}\ast y$ for any $x, y\in \Gamma_{i}$ so that $\co{x}\ast x$ is a neutral element and $\co{x}$ is an inverse for $x$. Summing up, each $\Gamma_{i}$ is a group, concluding the proof.
\end{proof}

The difficulty is to deal with $\ZZ(S)$. Noting that for any $x\in \ZZ(S)$, $\co{x}\ast x\in \Gamma(S)$, we see that $\ZZ(S)$ is never a fusion subset and that we have to keep the interplay between $\ZZ(S)$ and $\Gamma(S)$ in the picture. We can however isolate the part in $\Gamma(S)$ which does not interact with $\ZZ(S)$ by setting :
\begin{itemize}
\item $\Gamma'(S) = \{x\in \Gamma(S), \forall y\in \ZZ(S), x\ast y = \emptyset\}$.
\item $\Lambda(S) = \Gamma(S)\backslash \Gamma'(S)$ (the complement of $\Gamma'(S)$).
\item $\HH'(S) = \Lambda(S)\cup \ZZ(S)$.
\end{itemize} 

\begin{prop}\label{prop:gammaprime}
The sets $\Gamma'(S)$ and $\HH'(S)$ are disjoint fusion subsets of $\HH(S)$.
\end{prop}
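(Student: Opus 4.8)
The plan is to treat the two assertions --- disjointness and the fusion-subset property --- separately, and to isolate a single \emph{composability criterion} that will drive the whole argument. Disjointness is immediate: $\Gamma'(S)\subseteq\Gamma(S)$ while $\HH'(S) = \Lambda(S)\cup\ZZ(S)$ with $\Lambda(S) = \Gamma(S)\setminus\Gamma'(S)$ and $\ZZ(S)\cap\Gamma(S) = \emptyset$, so $\Gamma'(S)\cap\HH'(S) = \emptyset$ by construction. The substance is to show that each set is stable under conjugation and fusion, and --- since stability of $\HH'(S)$ forces it --- that all ``cross'' fusions between $\Gamma'(S)$ and $\HH'(S)$ vanish, so that $\HH(S)$ is genuinely the disjoint union of the two.

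The key tool I would establish first is the equivalence, valid for $a,b\in\HH(S)$,
\[
a\ast b\neq\emptyset \iff \co{a}\ast a = b\ast\co{b},
\]
which encodes the groupoid picture (read $\co{a}\ast a$ as the ``source'' and $b\ast\co{b}$ as the ``target'' idempotent). The forward direction is essentially the computation already used for $\Gamma(S)$: from $a\ast b = z$, antisymmetry and the Frobenius property of Definition~\ref{de:associative} give $z\ast\co{b} = a$, and then Lemma~\ref{lem:inverse} collapses $\co{a}\ast a = (b\ast\co{z})\ast(z\ast\co{b})$ to $b\ast\co{b}$. The reverse direction uses that $\co{a}\ast a$ is a right unit for $a$ (again Lemma~\ref{lem:inverse}): substituting $\co{a}\ast a = b\ast\co{b}$ into $a = a\ast(\co{a}\ast a) = (a\ast b)\ast\co{b}$ shows $a\ast b$ cannot be $\emptyset$.

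With this in hand the easy half is quick. Conjugation-stability of $\ZZ(S)$ and of $\Lambda(S) = \Gamma(S)\setminus\Gamma'(S)$ is formal, so $\HH'(S)$ is conjugation-stable. For $\Gamma'(S)$ I would note the consequence of the criterion that, for $x\in\Gamma(S)$, $x\ast y\neq\emptyset\iff\co{x}\ast y\neq\emptyset$ (both amount to $\co{x}\ast x = y\ast\co{y}$, using $x\ast\co{x} = \co{x}\ast x$); hence if $x$ kills every $y\in\ZZ(S)$ so does $\co{x}$, and $\Gamma'(S)$ is conjugation-stable. Fusion-stability of $\Gamma'(S)$ is then pure associativity: if $(x\ast x')\ast y\neq\emptyset$ for some $y\in\ZZ(S)$ then $x\ast(x'\ast y)\neq\emptyset$, forcing $x'\ast y\neq\emptyset$ and contradicting $x'\in\Gamma'(S)$.

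The hard part is the cross-vanishing, and within it the elements of $\Lambda(S)$ --- these are group elements of $\Gamma(S)$, yet they must not fuse with $\Gamma'(S)$. For $a\in\ZZ(S)$ the vanishing $a\ast x = \emptyset = x\ast a$ follows from the definition of $\Gamma'(S)$ together with antisymmetry (pass to $\co{x}\ast\co{a}$). For $a\in\Lambda(S)$ I would argue by contradiction: $a$ interacts with some $y\in\ZZ(S)$, so $\co{a}\ast a = y\ast\co{y}$; if also $x\ast a\neq\emptyset$ with $x\in\Gamma'(S)$, the criterion gives $\co{x}\ast x = a\ast\co{a} = \co{a}\ast a = y\ast\co{y}$, whence $x\ast y\neq\emptyset$ --- impossible. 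The product $a\ast x$ is handled by conjugating. Finally, cross-vanishing closes the fusion-stability of $\HH'(S)$: if $a,b\in\HH'(S)$ and $a\ast b\in\HH(S)$ landed in $\Gamma'(S)$, then Lemma~\ref{lem:inverse} would give $b = \co{a}\ast(a\ast b)$, a forbidden $\HH'(S)$--$\Gamma'(S)$ fusion, so $a\ast b\in\HH'(S)$. I expect the $\Lambda(S)$ case of cross-vanishing to be the genuine obstacle, precisely because it requires tracking how the source idempotent of a $\Gamma'(S)$-element is forced to coincide with that of a $\ZZ(S)$-element.
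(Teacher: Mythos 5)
Your proof is correct and rests on the same mechanism as the paper's: the composability criterion $a\ast b\neq\emptyset\iff\co{a}\ast a=b\ast\co{b}$, which in the paper appears (via Lemma \ref{lem:inverse}) as the characterization of the blocks $\Gamma_{i}$ of $\Gamma(S)$, so that $\Gamma'(S)$ and $\Lambda(S)$ are unions of whole blocks and hence disjoint as fusion subsets. You are considerably more explicit than the paper's two-line argument, which leaves the two-sided vanishing of cross-fusions with $\ZZ(S)$ and the fusion-stability of $\HH'(S)$ itself implicit; your verifications of these points are all valid.
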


\begin{proof}
By associativity, $(x\ast x')\ast y = \emptyset$ if $x'\ast y = \emptyset$, so that $\Gamma'(S)$ is a fusion subset. Recall that $\Gamma(S) = \sqcup_{i\in I}\Gamma_{i}$ for some discrete groups $\Gamma_{i}$ and that two given elements $x$ and $y$ belong to the same $\Gamma_{i}$ if and only if $\co{x}\ast x = \co{y}\ast y$. This implies that there is a subset $J\subset I$ such that $\Gamma'(S) = \sqcup_{i\notin J}\Gamma_{i}$ and $\Lambda(S) = \sqcup_{i\in J}\Gamma_{i}$. Thus, $\Gamma'(S)$ and $\HH'(S)$ are disjoint.
\end{proof}

From now on, we will assume that $S = \HH'(S)$ and further describe the structure of this fusion set in three steps. We will describe some fusion sets by generators and relations, so that we first have to make this notion clear.

\begin{lem}\label{lem:universalproperty}
Let $\A$ be a colour set and let $B$ be a set of pairs $(w, w')$, where $w$ and $w'$ are words on $\A$. Then, there exists a fusion set $S$ with the following properties :
\begin{enumerate}
\item $S$ is generated by $\A$.
\item For any $(w, w')\in B$, $f(w) = f(w')$ in $S$.
\item For any fusion set $T$ generated by $\A$ and such that for any $(w, w')\in B$, $f(w) = f(w')$ in $T$, there exists a surjective morphism of fusion sets
\begin{equation*}
\Phi : S \longrightarrow T
\end{equation*}
which is the identity on $\A$.
\end{enumerate}
Moreover, such a fusion set $S$ is unique up to isomorphism and called the \emph{fusion set generated by $\A$ and the relations $f(w) = f(w')$} for $(w, w')\in B$.
\end{lem}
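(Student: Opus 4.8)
The plan is to recognize this as an instance of the standard construction of algebras by generators and relations, once the partial fusion operation is encoded by an absorbing element. The key observation is that giving an \emph{associative} fusion set $S$ is the same as giving the set $\widehat{S} = S\cup\{\emptyset\}$ together with a \emph{total} associative binary operation $\ast$ for which $\emptyset$ is absorbing, and an involution $x\mapsto\overline{x}$ fixing $\emptyset$; that is, $\widehat{S}$ is a semigroup with zero equipped with an involutive unary operation. (Associativity is exactly what makes $f$ defined, and all the fusion sets in the statement are associative, since the relations $f(w)=f(w')$ only make sense when $f$ exists.) As the defining axioms --- associativity of $\ast$, the absorption law $x\ast\emptyset=\emptyset=\emptyset\ast x$, and $\overline{\overline{x}}=x$ with $\overline{\emptyset}=\emptyset$ --- are all identities, these objects form a variety of universal algebras, so free objects, congruences and quotients by congruences exist and behave as usual.

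First I would form the free algebra $F$ in this variety on the underlying set of $\A$, and impose on $F$ the congruence $\theta$ generated by two families of relations: for each $x\in\A$, the image of the generator $x$ under the algebra's involution is identified with the generator corresponding to the colour $\overline{x}\in\A$ (so that the involution of the quotient restricts to that of $\A$); and for each $(w,w')\in B$, the element $w_{1}\ast\dots\ast w_{n}$ is identified with $w'_{1}\ast\dots\ast w'_{m}$. Set $\widehat{S}=F/\theta$, let $\emptyset_{S}$ be the class of $\emptyset$, and put $S=\widehat{S}\setminus\{\emptyset_{S}\}$, with conjugation induced by $\overline{\cdot}$ and fusion $[u]\ast[v]=[u\ast v]$ whenever $u\ast v\not\equiv_{\theta}\emptyset$ and $\emptyset$ otherwise; this is well defined because $\theta$ is a congruence. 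Then $S$ is an associative fusion set, the map $\iota:\A\to S$, $x\mapsto[x]$, respects the involution by the first family of relations and its image generates $S$, since generators generate a free algebra and generation passes to quotients; this yields $(1)$. For $(2)$ one computes $f(\iota(w))=[w_{1}]\ast\dots\ast[w_{n}]=[w_{1}\ast\dots\ast w_{n}]=[w'_{1}\ast\dots\ast w'_{m}]=f(\iota(w'))$ using the second family of relations.

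For the universal property $(3)$, let $T$ be any fusion set generated by $\A$ via a map $j:\A\to T$ respecting the involution and satisfying $f(j(w))=f(j(w'))$ for all $(w,w')\in B$. I would augment $T$ to $\widehat{T}=T\cup\{\emptyset\}$, an algebra in the variety, and identify morphisms of fusion sets with homomorphisms of the augmented structures. By freeness, $j$ extends uniquely to a homomorphism $\psi:F\to\widehat{T}$, which annihilates $\theta$: the first relations hold because $\psi(\overline{x})=\overline{j(x)}=j(\overline{x})$, and the second because $T$ satisfies the relations of $B$. Hence $\psi$ factors as a homomorphism $\Phi:\widehat{S}\to\widehat{T}$ with $\Phi(\iota(x))=j(x)$; as $j(\A)$ generates $T$, the map $\Phi$ is onto $\widehat{T}$, and its restriction is the desired surjective morphism $S\to T$ that is the identity on $\A$. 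Finally, uniqueness up to isomorphism is the usual consequence of the universal property: given two fusion sets $S_{1},S_{2}$ satisfying $(1)$--$(3)$, property $(3)$ produces morphisms $S_{1}\to S_{2}$ and $S_{2}\to S_{1}$ that are the identity on $\A$, and their composites are endomorphisms fixing a generating set, hence the identity, so the two maps are mutually inverse isomorphisms.

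I would expect the only genuinely delicate point to be the bookkeeping around the absorbing element $\emptyset$: one must verify that deleting the class $\emptyset_{S}$ indeed returns a fusion set in the sense of the paper, that ``morphism of fusion sets'' corresponds precisely to ``homomorphism of the augmented semigroups with zero'' (so that the universal property of the variety transfers verbatim), and that the relations in $B$ never inadvertently collapse a generator into $\emptyset_{S}$. Once the reformulation as a variety of semigroups-with-zero-and-involution is in place, everything else is the standard presentation machinery.
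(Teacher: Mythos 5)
Your construction is genuinely different from the paper's --- the paper's entire proof is to let $\CC$ be the category of partitions generated by the $x$-identities and the one-block partitions $\pi(w,w')$ for $(w,w')\in B$, and to set $S=S(\CC)$ --- and the difference is not cosmetic: the free object in your variety of involutive semigroups with zero is in general strictly larger than $S(\CC)$ and is not the object the rest of Section \ref{sec:classification} needs. The reason is that the fusion sets relevant here are the admissible ones (equivalently, those of the form $S(\CC)$ for a block-stable noncrossing category), and these satisfy identities well beyond associativity: the Frobenius and antisymmetry laws and, crucially, the cancellation law $\overline{y}\ast y\ast z=z$ whenever $y\ast z\neq\emptyset$ (Lemma \ref{lem:inverse}). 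None of these follow from your defining relations. Concretely, take the presentation used in Proposition \ref{prop:presentationfusionset} with $\Lambda$ trivial: generators $e,a,\overline{a}$ and relations $e\ast e=e$, $a\ast\overline{a}=e$. In your free object the elements $e\ast a$ and $a$ are distinct (no relation identifies them), whereas in any admissible fusion set $e\ast a=a\ast\overline{a}\ast a=a$; hence the surjection $S'\to S_{x}$ you would produce is not injective, and Proposition \ref{prop:presentationfusionset} --- the main application of this lemma --- fails for your $S'$. By building $S$ inside the world of categories of partitions, the paper gets all of these extra identities for free from the structure theory culminating in Theorem \ref{thm:freefusion}; your presentation would have to add them by hand, at which point you are no longer in a variety given by finitely many obvious identities and the standard congruence machinery needs to be re-justified.

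The second problem is the one you dismiss as bookkeeping around $\emptyset$, but it is fatal in your setting: the restriction to $S$ of the surjective homomorphism $\widehat{S}\to\widehat{T}$ need not take values in $T$, because non-zero elements of $S$ can be sent to $\emptyset_{T}$. This is not a marginal possibility: already for $\A=\{a,\overline{a}\}$ and $B=\emptyset$, your $S$ is the free involutive semigroup on $a$ (all fusions non-empty), and for $T=\UU$ (which is generated by $\A$ and satisfies the empty set of relations) every word of length at least two must map to $\emptyset_{T}$. So property $(3)$, which asks for a surjective morphism $S\to T$, i.e.\ an actual map into $T$ that is the identity on $\A$, is not established by restricting $\Phi$. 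Note also that the uniqueness clause of the lemma means your $S$ and the paper's $S(\CC)$ cannot both satisfy $(1)$--$(3)$; the way the lemma is invoked in Propositions \ref{prop:presentationfusionset} and \ref{prop:presentationfusionsetbis} shows that the intended object is $S(\CC)$, so the universal property must be read as ranging over the admissible (partition-theoretic) fusion sets rather than over your larger variety.
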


\begin{proof}
The uniqueness is clear. To prove the existence, let $\CC$ be the category of partitions generated by the $x$-identity for every $x\in \A$ and the partitions $\pi(w, w')$ for every $(w, w')\in B$. Then, $S = S(\CC)$ satisfies all the required properties.
\end{proof}

\subsubsection{First step}

Recall that for any $x\in \ZZ(S)$, $x\ast\co{x}\in \Lambda(S)$. Let us denote by $\Lambda_{x}$ the group in $\Lambda(S)$ containing this element. Here are alternate characterizations :

\begin{lem}
Let $t\in \Lambda(S)$. Then, the following are equivalent :
\begin{itemize}
\item $t\in \Lambda_{x}$
\item $t\ast \co{t} = x\ast \co{x}$
\item $\co{t}\ast t = x\ast \co{x}$
\item $t\ast x\neq\emptyset$
\end{itemize}
Moreover, the map $f_{x} : t\mapsto \co{x}\ast t\ast x$ induces an isomorphism between $\Lambda_{x}$ and $\Lambda_{\co{x}}$.
\end{lem}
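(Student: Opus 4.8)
The plan is to reduce everything to a single \emph{non-emptiness criterion}, which I would establish first and then apply repeatedly: for any $a,b\in\HH(S)$,
\[
a\ast b\neq\emptyset \iff \co{a}\ast a = b\ast\co{b}.
\]
Since $a,b\in\HH(S)$, both $\co a\ast a$ and $b\ast\co b$ are nonempty; by antisymmetry they are self-conjugate, and by Lemma \ref{lem:inverse} (which gives $a\ast\co a\ast a=a$) they are idempotent, so each is the neutral element of one of the groups $\Gamma_i$. If $a\ast b\neq\emptyset$, Lemma \ref{lem:inverse} yields $\co a\ast a\ast b=b$, and right-multiplying by $\co b$ gives $(\co a\ast a)\ast(b\ast\co b)=b\ast\co b$; two self-conjugate idempotents $e,e'$ with $e\ast e'=e'\neq\emptyset$ satisfy $e\ast\co{e'}=e\ast e'\neq\emptyset$, hence are $\sim$-equivalent, so $e=e'$ by the characterization of $\sim$ used in the description of $\Gamma(S)$. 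Conversely, if $\co a\ast a=b\ast\co b=:e$, then $a=a\ast\co a\ast a=a\ast e=(a\ast b)\ast\co b$, forcing $a\ast b\neq\emptyset$.

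With this criterion the four equivalences are almost immediate. First, $x\ast\co x$ is a self-conjugate idempotent lying in $\Lambda(S)$ rather than $\Gamma'(S)$, since $(x\ast\co x)\ast x=x\neq\emptyset$; hence it is the neutral element of the group $\Lambda_x$. For $t\in\Lambda(S)\subset\Gamma(S)$ the element $t\ast\co t=\co t\ast t$ is exactly the neutral element of the group of $\Lambda(S)$ containing $t$, so $t\in\Lambda_x$ is equivalent to $t\ast\co t=x\ast\co x$, giving (1)$\Leftrightarrow$(2); and (2)$\Leftrightarrow$(3) because $t\ast\co t=\co t\ast t$. Finally the criterion with $a=t$, $b=x$ reads $t\ast x\neq\emptyset\iff\co t\ast t=x\ast\co x$, which is precisely (4)$\Leftrightarrow$(3).

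For the isomorphism I would verify the four standard properties of $f_x(t)=\co x\ast t\ast x$ on $\Lambda_x$. Well-definedness: for $t\in\Lambda_x$ the criterion gives $\co x\ast t\neq\emptyset$ (as $x\ast\co x=t\ast\co t$) and then $(\co x\ast t)\ast x\neq\emptyset$; a direct computation using $t\ast\co t=\co t\ast t=x\ast\co x$ shows $f_x(t)\ast\co{f_x(t)}=\co x\ast x$, the neutral element of $\Lambda_{\co x}$, so $f_x(t)\in\Lambda_{\co x}$. Multiplicativity follows from
\[
f_x(t)\ast f_x(t')=\co x\ast t\ast(x\ast\co x)\ast t'\ast x=\co x\ast t\ast t'\ast x=f_x(t\ast t'),
\]
using that $x\ast\co x$ acts as the identity of $\Lambda_x$; and $f_x(\co t)=\co x\ast\co t\ast x=\co{f_x(t)}$ by antisymmetry. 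Bijectivity comes from $f_{\co x}$ being a two-sided inverse, since $f_{\co x}(f_x(t))=(x\ast\co x)\ast t\ast(x\ast\co x)=t$.

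The main obstacle is the criterion itself, and within it the \emph{uniqueness of neutral elements}: once composability has been turned into an equality of self-conjugate idempotents, the whole argument hinges on the implication $e\ast e'=e'\Rightarrow e=e'$, which is exactly where the groupoid-like $\sim$-structure of $\Gamma(S)$ enters. Everything else is routine manipulation with associativity, antisymmetry and Lemma \ref{lem:inverse}.
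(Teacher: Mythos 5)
Your proof is correct and takes essentially the same route as the paper's: both reduce all four equivalences to the criterion $a\ast b\neq\emptyset \Leftrightarrow \co{a}\ast a = b\ast\co{b}$ (which the paper simply cites, having established it as the "key fact" in the proof that $\Gamma(S)$ decomposes into groups, and which you re-derive cleanly via self-conjugate idempotents), and both handle the isomorphism by checking that $f_{x}$ lands in $\Lambda_{\co{x}}$, is multiplicative, and has $f_{\co{x}}$ as inverse. The only difference is that you supply the details the paper leaves as "straightforward computation".
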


\begin{proof}
An element $t\in \Lambda(S)$ is in $\Lambda_{x}$ if and only if $x\ast \co{x}\sim t$. By definition, this means that $(x\ast\co{x})\ast \co{t}\neq\emptyset$, which is equivalent to $t\ast \co{t} = \co{t}\ast t = (\co{x\ast\co{x}})\ast(x\ast\co{x}) = x\ast \co{x}$. This gives the first three equivalences. The last one follows directly from the fact that $a\ast b\neq\emptyset$ if and only if $\co{a}\ast a = b\ast\co{b}$. If $t\ast \co{t} = x\ast \co{x}$, then $f_{x}(t)\ast \co{f_{x}(t)} = \co{x}\ast x$, so that $f_{x}$ maps $\Lambda_{x}$ to $\Lambda_{\co{x}}$. It is a bijection because $f_{\co{x}}$ is an inverse and a group homomorphism by a straightforward computation using Lemma \ref{lem:inverse}.
\end{proof}

We will now use a second binary relation $\approx$ on $\ZZ(S)$ defined by : $x\approx y$ if $x\ast\co{y}\in \Lambda(S)$.

\begin{lem}
The binary relation $\approx$ is an equivalence relation on $\mathcal{Z}(S)$.
\end{lem}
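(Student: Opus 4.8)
The plan is to verify the three defining properties of an equivalence relation separately, the only delicate one being transitivity. Throughout I would use freely that $\Lambda(S)$, being a disjoint union of the groups $\Gamma_i$, is stable under both conjugation and the fusion operation (the fusion of elements lying in distinct $\Gamma_i$ being $\emptyset$, as their identities $\co{x}\ast x$ differ), together with the two structural facts established earlier: antisymmetry, which gives $\co{x\ast \co{y}} = y\ast \co{x}$, and the emptiness criterion $a\ast b\neq\emptyset \Leftrightarrow \co{a}\ast a = b\ast\co{b}$ recorded in the proof of the previous lemma.

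Reflexivity is immediate: for $x\in \ZZ(S)$ one has $x\ast\co{x}\in\Lambda(S)$ by the very definition of $\Lambda_x$, so $x\approx x$. For symmetry, suppose $x\approx y$, that is $x\ast\co{y}\in\Lambda(S)$. Applying antisymmetry gives $y\ast\co{x} = \co{x\ast\co{y}}$, which lies in $\Lambda(S)$ since $\Lambda(S)$ is closed under conjugation; hence $y\approx x$.

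The heart of the matter is transitivity, so assume $x\ast\co{y}\in\Lambda(S)$ and $y\ast\co{z}\in\Lambda(S)$, the goal being $x\ast\co{z}\in\Lambda(S)$. First I would rewrite $x\ast\co{z}$ as a single fusion product of the two hypotheses: since $y\ast\co{z}\neq\emptyset$, Lemma \ref{lem:inverse} gives $\co{y}\ast y\ast\co{z} = \co{z}$, and associativity then yields $(x\ast\co{y})\ast(y\ast\co{z}) = x\ast(\co{y}\ast y\ast\co{z}) = x\ast\co{z}$. Thus $x\ast\co{z}$ is the fusion of two elements of $\Lambda(S)$, and because $\Lambda(S)$ is a fusion subset the result automatically lies in $\Lambda(S)\cup\{\emptyset\}$; it therefore only remains to exclude the value $\emptyset$.

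This non-vanishing check is where I expect the main (if modest) difficulty to lie. The emptiness criterion applied to the two hypotheses gives $\co{x}\ast x = \co{y}\ast y$ and $\co{y}\ast y = \co{z}\ast z$, whence $\co{x}\ast x = \co{z}\ast z$. To see that $(x\ast\co{y})\ast(y\ast\co{z})\neq\emptyset$ I would invoke the emptiness criterion once more, comparing $\co{(x\ast\co{y})}\ast(x\ast\co{y})$ with $(y\ast\co{z})\ast\co{(y\ast\co{z})}$; using antisymmetry and Lemma \ref{lem:inverse} both expressions simplify to $y\ast\co{y}$, their coincidence being forced precisely by the equality $\co{x}\ast x = \co{z}\ast z$. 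Hence the product is non-empty, so $x\ast\co{z}\in\Lambda(S)$ and $x\approx z$, which completes the verification.
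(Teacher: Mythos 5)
Your proof is correct and follows the same route as the paper's: reflexivity from $x\ast\co{x}\in\Lambda_x$, symmetry from antisymmetry of the fusion set, and transitivity from the identity $x\ast\co{z}=(x\ast\co{y})\ast(y\ast\co{z})$; the paper merely leaves the non-vanishing of this product implicit, whereas you verify it via the criterion $a\ast b\neq\emptyset\Leftrightarrow\co{a}\ast a=b\ast\co{b}$ (your closing remark is slightly off, since both sides reduce to $y\ast\co{y}$ outright without needing $\co{x}\ast x=\co{z}\ast z$, but this does not affect correctness).
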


\begin{proof}
Because $x\ast\co{x}\in \Lambda_{x}$ and $y\ast\co{x} = \co{x\ast \co{y}}$, $\approx$ is reflexive and symmetric. To prove that it is transitive, note that if $x\approx y$ and $y\approx z$, then $x\ast\co{y}\in \Lambda_{x}$ and $y\ast\co{z}\in\Lambda_{y}$, so that
\begin{equation*}
x\ast\co{z} = (x\ast\co{y})\ast(y\ast\co{z})\in \Lambda_{x}.
\end{equation*}
\end{proof}

Let $\mathcal{R}(S)$ be a set of representatives of the equivalence classes for $\approx$ with the property that if $x\in \mathcal{R}(S)$ then $\co{x}\in \mathcal{R}(S)$ (this makes sense because $x\in \ZZ(S)$ can never be equivalent to $\co{x}$). We will denote the equivalence class of $x$ by $[x]$. There is in fact a strong link between $[x]$ and the group $\Lambda_{x}$.

\begin{lem}\label{lem:freeaction}
The fusion operation induces an action of $\Lambda_{x}$ on $[x]$ which is free and transitive. In particular, the map
\begin{equation*}
\begin{array}{ccc}
\Lambda_{x} & \rightarrow & [x] \\
t & \mapsto & t\ast x
\end{array}
\end{equation*}
is a bijection.
\end{lem}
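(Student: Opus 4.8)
The plan is to realise $[x]$ as a $\Lambda_x$-torsor: I would show that left fusion by $\Lambda_x$ defines a simply transitive action on $[x]$, and then read off the bijection as the orbit map at the basepoint $x$ (which lies in $[x]$ since $x\ast\co{x}\in\Lambda(S)$). The decisive preliminary step is a \emph{fibre-rigidity} statement: every $y\in[x]$ satisfies $\co{y}\ast y=\co{x}\ast x$ \emph{and} $y\ast\co{y}=x\ast\co{x}$, i.e. $\Lambda_y=\Lambda_x$ with common identity $e=x\ast\co{x}$. The first equality is immediate, since $y\approx x$ forces $y\ast\co{x}\neq\emptyset$ and hence $\co{y}\ast y=\co{x}\ast x$ by the equivalence $a\ast b\neq\emptyset\Leftrightarrow\co{a}\ast a=b\ast\co{b}$. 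For the second I would exploit that $y\ast\co{x}\in\Lambda(S)\subset\Gamma(S)$, so it commutes with its conjugate $x\ast\co{y}$; expanding $(y\ast\co{x})\ast(x\ast\co{y})$ and $(x\ast\co{y})\ast(y\ast\co{x})$, substituting $\co{x}\ast x=\co{y}\ast y$ in the middle and collapsing with Lemma \ref{lem:inverse} (used as $\co{a}\ast a\ast b=b$), the first product simplifies to $y\ast\co{y}$ and the second to $x\ast\co{x}$, whence $y\ast\co{y}=x\ast\co{x}$.

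With this rigidity in place I would verify that $(t,y)\mapsto t\ast y$ is a well-defined action of $\Lambda_x$ on $[x]$. For $t\in\Lambda_x$ and $y\in[x]$ the fusion $t\ast y$ is nonempty because $\co{t}\ast t=e=y\ast\co{y}$; it lands in $\ZZ(S)$ because a short computation (again using $y\ast\co{y}=e$ and Lemma \ref{lem:inverse}) gives $(t\ast y)\ast\co{(t\ast y)}=x\ast\co{x}$ and $\co{(t\ast y)}\ast(t\ast y)=\co{x}\ast x$, which are distinct; and it lands in $[x]$ because $(t\ast y)\ast\co{x}=t\ast(y\ast\co{x})\in\Lambda_x\subset\Lambda(S)$, using $y\ast\co{x}\in\Lambda_x$. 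The action axioms are then formal: $e\ast y=(y\ast\co{y})\ast y=y$, and associativity of the action is associativity of $\ast$.

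It remains to check freeness and transitivity. For freeness, $t\ast y=y$ gives $t=t\ast e=t\ast(y\ast\co{y})=(t\ast y)\ast\co{y}=y\ast\co{y}=e$. For transitivity, given $y,y'\in[x]$ I would propose $t=y'\ast\co{y}$: it is nonempty since $\co{y'}\ast y'=\co{x}\ast x=\co{y}\ast y$, it lies in $\Lambda_x$ because a direct computation yields $t\ast\co{t}=\co{t}\ast t=x\ast\co{x}$, and it satisfies $t\ast y=y'\ast(\co{y}\ast y)=y'\ast(\co{y'}\ast y')=y'$. Consequently the orbit map $t\mapsto t\ast x$ is surjective by transitivity and injective by freeness, giving the claimed bijection $\Lambda_x\to[x]$.

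The main obstacle is precisely the fibre-rigidity of the first paragraph, namely the equality $y\ast\co{y}=x\ast\co{x}$ for all $y\in[x]$: this is what guarantees that $[x]$ does not spread across several of the groups $\Lambda_i$, so that it becomes a genuine single-group torsor. It is the only place where one must invoke the $\Gamma$-commutativity of $y\ast\co{x}$ rather than the bare fusion-set axioms; once it is secured, the remainder is the standard bookkeeping for a group acting simply transitively.
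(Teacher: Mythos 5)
Your proof is correct and follows essentially the same route as the paper's (much terser) argument: freeness via $t=t\ast y\ast\co{y}=y\ast\co{y}=e$, and transitivity/surjectivity via the element $y'\ast\co{y}\in\Lambda_{x}$. Your ``fibre-rigidity'' step $y\ast\co{y}=x\ast\co{x}$ for all $y\in[x]$, obtained by comparing $(y\ast\co{x})\ast(x\ast\co{y})$ with $(x\ast\co{y})\ast(y\ast\co{x})$ inside $\Gamma(S)$, is precisely the detail the paper compresses into the phrase ``$y\ast\co{y}$ is the neutral element for $\Lambda_{x}$ by Lemma \ref{lem:inverse}'', so you have simply made the published proof fully explicit.
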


\begin{proof}
If $t\ast y = y$, then $t = y\ast\co{y}$. Since $y\ast\co{y}$ is the neutral element for $\Lambda_{x}$ by Lemma \ref{lem:inverse}, we conclude that $t$ is neutral, i.e. the action is free. Moreover, if $y\in [x]$, then $t_{y} = y\ast \co{x} = \co{x\ast\co{y}}\in \Lambda_{x}$ and $y = t_{y}\ast x$, so that the action is transitive.
\end{proof}

We can now have a first glimpse of the general structure of $\HH'(S)$ by describing its basic "blocks". For $x\in \mathcal{R}(S)$, let us set $\ZZ_{x} = [x]\cup [\co{x}]$. Let us also denote by $e_{\Lambda}$ the neutral element of a group $\Lambda$.

\begin{prop}\label{prop:presentationfusionset}
The set $S_{x} = \ZZ_{x}\cup(\Lambda_{x}\sqcup \Lambda_{\co{x}})$ is a fusion set. Moreover, let $\Lambda$ be a group isomorphic to $\Lambda_{x}$ and let $S'$ be the fusion set generated by $\Lambda$ and an element $a\notin\Lambda$ with the relation $a\ast \co{a} = e_{\Lambda}$. Then, there is an isomorphism of fusion sets $S' \rightarrow S_{x}$ sending $a$ to $x$ and $\Lambda$ to $\Lambda_{x}$.
\end{prop}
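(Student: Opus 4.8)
The plan is to recognise $S_x$ as the fusion set attached (in the sense of Example \ref{ex:groupoid}) to a connected groupoid with two objects whose vertex group is $\Lambda_x$, and then to match it with $S'$ via the universal property of Lemma \ref{lem:universalproperty}. \emph{First}, to see that $S_x$ is a fusion set I would assign to each element $a\in S_x$ its source $\co a\ast a$ and its target $a\ast\co a$. Using Lemma \ref{lem:inverse} and associativity one checks that every $t\in\Lambda_x$ has source and target $e_{\Lambda_x}=x\ast\co x$, every $s\in\Lambda_{\co x}$ has source and target $e_{\Lambda_{\co x}}=\co x\ast x$, every $y=t\ast x\in[x]$ has source $e_{\Lambda_{\co x}}$ and target $e_{\Lambda_x}$, and symmetrically every element of $[\co x]$ has source $e_{\Lambda_x}$ and target $e_{\Lambda_{\co x}}$. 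Since the fundamental rule reads $a\ast b\neq\emptyset$ if and only if $\co a\ast a=b\ast\co b$, that is, if and only if the source of $a$ equals the target of $b$, the four pieces $\Lambda_x$, $\Lambda_{\co x}$, $[x]$, $[\co x]$ compose exactly like the loops and connecting morphisms of a groupoid on the two objects $e_{\Lambda_x},e_{\Lambda_{\co x}}$. Checking that a composable product again lands in one of these four pieces is then a routine computation with associativity, and this establishes that $S_x$ is a fusion set.

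\emph{Second}, I would produce the isomorphism. The identities $[x]=\Lambda_x\ast x$ and $[\co x]=\Lambda_{\co x}\ast\co x$ from Lemma \ref{lem:freeaction}, together with $\Lambda_{\co x}=f_x(\Lambda_x)=\co x\ast\Lambda_x\ast x$, show that $S_x$ is generated by $\Lambda_x\cup\{x\}$, and the relation $x\ast\co x=e_{\Lambda_x}$ holds by the very definition of $\Lambda_x$. Applying the universal property of $S'$ from Lemma \ref{lem:universalproperty}, under the fixed isomorphism $\Lambda\simeq\Lambda_x$ and with $a\mapsto x$, therefore gives a surjective morphism of fusion sets $\Phi:S'\to S_x$ carrying $a$ to $x$ and $\Lambda$ onto $\Lambda_x$.

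The remaining and genuinely delicate point is the injectivity of $\Phi$. The strategy is to put every element of $S'$ into a normal form: writing $\Lambda':=\co a\ast\Lambda\ast a$ for the new group, with neutral $\co a\ast a$, the relation $a\ast\co a=e_\Lambda$ and its consequence $\co a\ast a=e_{\Lambda'}$ (via Lemma \ref{lem:inverse}) allow one to collapse any reduced word in the generators to exactly one of the four types $\Lambda$, $\Lambda'$, $\Lambda\ast a$, $\Lambda'\ast\co a$. These four families are sent by $\Phi$ into the four \emph{disjoint} pieces $\Lambda_x$, $\Lambda_{\co x}$, $[x]$, $[\co x]$ of $S_x$ respectively; and on each family $\Phi$ is a bijection, being the isomorphism $\Lambda\simeq\Lambda_x$ on $\Lambda$, the bijection $f_x$ on $\Lambda'$, and, thanks to the free and transitive action of Lemma \ref{lem:freeaction} making $t\mapsto t\ast x$ a bijection $\Lambda_x\to[x]$, a bijection on the two remaining families. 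Hence $\Phi$ separates all the normal forms and is injective, so it is a bijective morphism of fusion sets, i.e. the desired isomorphism. I expect the main obstacle to be precisely this normal-form analysis of $S'$: one must verify that $a\ast\co a=e_\Lambda$, combined with associativity and antisymmetry, really does reduce every word, the disjointness of the images under $\Phi$ then serving as the clean certificate that no two distinct normal forms are identified.
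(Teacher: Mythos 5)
Your proposal follows essentially the same route as the paper: the first assertion is the paper's case-by-case check of stability under fusion (you merely repackage it in groupoid language via sources $\co{a}\ast a$ and targets $a\ast\co{a}$, using the same criterion $a\ast b\neq\emptyset\Leftrightarrow\co{a}\ast a=b\ast\co{b}$), and the surjection $\Phi$ is obtained from Lemma \ref{lem:universalproperty} exactly as in the paper. Your normal-form statement for $S'$ is equivalent to the paper's reduction of injectivity to the single equality $\Lambda_{a}=\Lambda$ inside $S'$: granting it, the four families $\Lambda$, $\co{a}\ast\Lambda\ast a$, $\Lambda\ast a$, $\co{a}\ast\Lambda$ are mapped bijectively onto the four disjoint pieces of $S_{x}$ (via $\varphi$, $f_{x}\circ\varphi$ and the free transitive action of Lemma \ref{lem:freeaction}), which is exactly how the paper concludes.

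The one point you assert but do not prove --- and which you correctly flag as the main obstacle --- is precisely the nontrivial content of the paper's proof, so you should not leave it as an expectation. The paper handles it by a short induction on word length: if $w=w_{1}\dots w_{n}$ is a word on $\Lambda\cup\{a,\co{a}\}$ with $f(w)\in\Lambda_{a}$, then $f(w)\ast\co{f(w)}=w_{1}\ast\co{w}_{1}=a\ast\co{a}$ forces $w_{1}\in\Lambda$ or $w_{1}=a$; in the first case $f(w_{2}\dots w_{n})\in\Lambda_{a}$ and one concludes by induction, and in the second case necessarily $w_{2}=\co{a}$ (since $a\ast t\neq\emptyset$ for $t\in\Lambda$ would put $a$ in $\Lambda$), so the prefix $a\ast\co{a}=e_{\Lambda}$ collapses and induction applies again. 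This gives $\Lambda_{a}\subset\Lambda$, hence $\Lambda_{a}=\Lambda$, which is your normal-form claim. With that induction written out, your argument is complete and coincides with the paper's.
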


\begin{proof}
For the first assertion, the only thing to prove is the stability under the fusion operation. We check it case by case :
\begin{itemize}
\item If $a$ and $b$ belong to the same set of the union, then $a\ast b$ also belongs to this set as soon as it is not $\emptyset$.
\item If $a\in \Lambda_{x}$ and $b\in \Lambda_{\co{x}}\cup[\co{x}]$ then $a\ast b = \emptyset$. Similarly with $x$ replaced by $\co{x}$.
\item If $a\in [x]$ and $b\in [\co{x}]$, then $a\ast b\in \Lambda_{x}$.
\end{itemize}
If $\varphi : \Lambda \rightarrow \Lambda_{x}$ is an isomorphism, there is by Lemma \ref{lem:universalproperty} a surjection of fusion sets $\Phi : S'\rightarrow S_{x}$ which restricts to $\varphi$ on $\Lambda$ and sends $a$ to $x$. Since $\ZZ(S')$ is generated by $a$, it is equal to $[a]\cup[\co{a}] = (\Lambda_{a}\ast a) \cup(\Lambda_{\co{a}}\ast\co{a})$ by Lemma \ref{lem:freeaction}. Therefore, it is enough to prove that $\Lambda_{a} = \Lambda$ to conclude that $\Phi$ is an isomorphism. Let $w$ be a word on $\Lambda\cup\{a, \co{a}\}$ such that $f(w)\in \Lambda_{a}$. We will prove by induction on the length of $w$ that $f(w)\in \Lambda$. If $w$ is of length one or two, this is clear. If $w = w_{1}\dots w_{n}$ with $n\geqslant 2$, we have $f(w)\ast\co{f(w)} = w_{1}\ast\co{w}_{1} = a\ast \co{a}$, so that either $w_{1}\in \Lambda$ or $w_{1} = a$. In the first case, we must also have $f(w_{2}\dots w_{n})\in \Lambda_{a}$. Thus, by induction, $f(w) = w_{1}\ast f(w_{2}\dots w_{n})\in \Lambda$. In the second case, we must have $w_{2} = \co{a}$ ($a\ast t\neq\emptyset$ for some $t\in \Lambda$ would imply $a\in \Lambda$), so that $f(w) = f(w_{3}\dots w_{n})\in \Lambda$ again by induction. Thus, $\Lambda_{a} = \Lambda$ and $\Phi$ is an isomorphism.
\end{proof}

\subsubsection{Second step}

Obviously, $S = \cup_{x\in \mathcal{R}(S)}S_{x}$. However, these fusion sets are far from being disjoint in general. Our second step is to understand their interplay by taking in account the relations $x\ast y \neq\emptyset$ for $x, y\in \mathcal{R}(S)$, which are equivalent to $\Lambda_{y} = \Lambda_{\co{x}}$. To do this, recall that $\Lambda(S) = \sqcup_{j\in J}\Lambda_{j}$ and set, for $j\in J$,
\begin{equation*}
\mathcal{R}_{j} = \{x\in \mathcal{R}(S), \Lambda_{x} = \Lambda_{j}\}.
\end{equation*}

\begin{lem}\label{lem:decompositionsecondstep}
Let $S_{j}$ be the fusion subset of $S$ generated by $\mathcal{R}_{j}$. Then,
\begin{equation*}
S_{j} = \left(\bigcup_{x\in \mathcal{R}_{j}} S_{x}\right) \cup \left(\bigcup_{x, y\in \mathcal{R}_{j}} S_{\co{x}\ast y}\right).
\end{equation*}
\end{lem}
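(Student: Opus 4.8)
The plan is to organise everything around the \emph{source} and \emph{target} data of an element, read off inside $\Lambda(S)$. For $a\in\HH(S)$ the neutral-type elements $a\ast\co{a}$ and $\co{a}\ast a$ both lie in $\Lambda(S)=\sqcup_{i\in J}\Lambda_{i}$; let $\tau(a)$ and $\sigma(a)$ be the indices with $a\ast\co{a}\in\Lambda_{\tau(a)}$ and $\co{a}\ast a\in\Lambda_{\sigma(a)}$, so that $\tau(a)=\sigma(a)$ exactly when $a\in\Lambda(S)$. Using the composability criterion $a\ast b\neq\emptyset\Leftrightarrow\co{a}\ast a=b\ast\co{b}$ together with antisymmetry and Lemma \ref{lem:inverse}, one checks the book-keeping rules: $a\ast b\neq\emptyset$ iff $\sigma(a)=\tau(b)$, and then $\tau(a\ast b)=\tau(a)$ and $\sigma(a\ast b)=\sigma(b)$, while $\tau(\co{a})=\sigma(a)$ and $\sigma(\co{a})=\tau(a)$. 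Finally, recall that for $a,b\in\ZZ(S)$ one has $a\approx b$ precisely when $(\sigma(a),\tau(a))=(\sigma(b),\tau(b))$, and that $[x]=\Lambda_{x}\ast x$ by Lemma \ref{lem:freeaction}. Writing $J_{j}=\{j\}\cup\{\sigma(x):x\in\mathcal{R}_{j}\}$ (note $\tau(x)=j$ for every $x\in\mathcal{R}_{j}$), the whole statement will follow from the identification of the right-hand side $T_{j}$ as $T_{j}=\{a\in S:\sigma(a),\tau(a)\in J_{j}\}$.

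First I would prove this identification, which is the heart of the lemma. The inclusion $T_{j}\subseteq\{a:\sigma(a),\tau(a)\in J_{j}\}$ is obtained by reading the two invariants off each listed piece: the four parts $[x]$, $[\co{x}]$, $\Lambda_{x}=\Lambda_{j}$, $\Lambda_{\co{x}}$ of $S_{x}$ all have endpoints in $\{j,\sigma(x)\}$, and since $\tau(\co{x}\ast y)=\sigma(x)$ and $\sigma(\co{x}\ast y)=\sigma(y)$, the piece $S_{\co{x}\ast y}$ has endpoints in $\{\sigma(x),\sigma(y)\}$. For the reverse inclusion, take $a\in S$ with $\sigma(a),\tau(a)\in J_{j}$. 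If $a\in\Lambda(S)$ then $\sigma(a)=\tau(a)$ is either $j$, giving $a\in\Lambda_{j}\subseteq S_{x}$ for any $x\in\mathcal{R}_{j}$, or some $\sigma(x)$, giving $a\in\Lambda_{\co{x}}\subseteq S_{x}$. If $a\in\ZZ(S)$, a short case analysis on whether $j$ is one of its endpoints places $a$ in the $\approx$-class of $x$, of $\co{x}$, or of $\co{x}\ast y$ for suitable $x,y\in\mathcal{R}_{j}$, hence in $[x]\subseteq S_{x}$, in $[\co{x}]\subseteq S_{x}$, or in $[\co{x}\ast y]\subseteq S_{\co{x}\ast y}$. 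The main obstacle is precisely this case analysis: one must check it is exhaustive and that in the ``mixed'' case (neither endpoint equal to $j$) the representative $\co{x}\ast y$ really has the prescribed invariants $(\sigma(y),\sigma(x))$, which is exactly where the transformation rules for $\tau,\sigma$ under $\ast$ and conjugation are used.

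Once the identification is established, $T_{j}$ is manifestly stable under conjugation and under $\ast$: the rules $\tau(\co{a})=\sigma(a)$, $\tau(a\ast b)=\tau(a)$ and $\sigma(a\ast b)=\sigma(b)$ keep both endpoints inside $J_{j}$, so $T_{j}$ is a fusion subset of $S$. Since it contains $\mathcal{R}_{j}$ and the vertex group $\Lambda_{j}$, we get $S_{j}\subseteq T_{j}$. For the opposite inclusion it suffices to check that each listed piece already lies in $S_{j}$: by Proposition \ref{prop:presentationfusionset}, $S_{x}$ is generated by $\Lambda_{x}=\Lambda_{j}$ and $x$, while $S_{\co{x}\ast y}$ is generated by $\Lambda_{\co{x}}$ and $\co{x}\ast y$; as $\Lambda_{\co{x}}=\co{x}\ast\Lambda_{j}\ast x$ and $\co{x}\ast y\in S_{j}$, all of these generators already belong to $S_{j}$. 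This yields $T_{j}\subseteq S_{j}$ and hence the claimed equality. (Here the vertex group $\Lambda_{j}=\Lambda_{x}$ is to be read as part of the generating data of $S_{j}$, in accordance with Proposition \ref{prop:presentationfusionset}, since the one-block representatives in $\mathcal{R}_{j}$ only ever fuse back to the neutral elements of the vertex groups.)
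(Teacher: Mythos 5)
Your proof is correct, and it reaches the conclusion by the same overall skeleton as the paper — show that the right-hand side $T_{j}$ is a fusion subset containing $\mathcal{R}_{j}$ (hence contains $S_{j}$ by minimality) and that each listed piece is generated by elements of $S_{j}$ (hence $T_{j}\subseteq S_{j}$) — but the key step is handled by a genuinely different device. The paper proves stability of $T_{j}$ under $\ast$ by a direct case analysis of products of elements taken from the listed pieces (e.g.\ showing $(\co{x}\ast y)\ast\co{y}'\neq\emptyset$ forces $y=y'$ via $\approx$), whereas you first characterize $T_{j}$ intrinsically as $\{a\in S:\sigma(a),\tau(a)\in J_{j}\}$ using the source/target invariants $\sigma,\tau$, after which stability is automatic from the transformation rules $\tau(a\ast b)=\tau(a)$, $\sigma(a\ast b)=\sigma(b)$, $\tau(\co{a})=\sigma(a)$. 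This buys a cleaner and arguably more illuminating argument: it anticipates exactly the groupoid picture the paper only introduces afterwards (Corollary \ref{cor:groupoiddecomposition}), at the cost of having to verify the endpoint book-keeping and the exhaustiveness of the case analysis identifying $T_{j}$, which you do correctly (in particular $\sigma(x)\neq j$ for $x\in\mathcal{R}_{j}$ since $x\in\ZZ(S)$, so the three cases $(j,\sigma(x))$, $(\sigma(x),j)$, $(\sigma(x),\sigma(y))$ are the only ones). Two further points in your favour: you explicitly flag that $\Lambda_{j}$ must be read as part of the generating data of $S_{j}$ — the paper's "it is clear that $T\subset S_{j}$" silently relies on this, since fusion products of elements of $\mathcal{R}_{j}$ and their conjugates only ever reach the neutral elements of the vertex groups — and your use of $\Lambda_{\co{x}}=\co{x}\ast\Lambda_{j}\ast x$ to place $\Lambda_{\co{x}}$ inside $S_{j}$ makes the containment $T_{j}\subseteq S_{j}$ more explicit than the paper's one-line appeal to $S_{x}$ being "generated by $x$".
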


\begin{proof}
Let us denote by $T$ the right-hand side of the equality. Since $S_{x}$ is the fusion subset generated by $x$, it is clear that $T\subset S_{j}$. Thus, the only thing we have to prove is that $T$ is a fusion set. The stability under the conjugation operation is clear. For the fusion operation, consider the following facts :
\begin{itemize}
\item If $x, y\in \mathcal{R}_{j}$, then $x\ast y = \emptyset$.
\item If $x, y\in \mathcal{R}_{j}$, then $\co{x}\ast y\in S_{\co{x}\ast y}$.
\item If $x, y, y'\in \mathcal{R}_{j}$, then $(\co{x}\ast y)\ast \co{y}'\neq \emptyset$ implies $\Lambda_{\co{y}} = \Lambda_{\co{y}'}$. But since $\Lambda_{y} = \Lambda_{y'}$ by definition, we get $y \approx y'$. Because we are considering representatives of the equivalence classes, this implies $y = y'$. Thus, the product lies in $S_{x}$. A similar argument works for $(\co{x}\ast y)\ast (\co{x}'\ast y')$ and $x\ast(\co{x}'\ast y)$.
\end{itemize}
By the description of $S_{x}$ given in Proposition \ref{prop:presentationfusionset}, this proves that $T$ is stable under the fusion operation. Hence, it is a fusion set and $T = S_{j}$.
\end{proof}

As for the first step, we want to give an abstract presentation of the fusion set $S_{j}$.

\begin{prop}\label{prop:presentationfusionsetbis}
Let $\Lambda$ be a group isomorphic to $\Lambda_{j}$ and let $M$ be a set in bijection with $\mathcal{R}_{j}$. Let $S'$ be the fusion set generated by $\Lambda$ and the elements of $M$ with the relations $b\ast\co{b} = e_{\Lambda}$ for every $b\in M$. Then, $S' = S_{j}$.
\end{prop}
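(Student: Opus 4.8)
The plan is to construct a surjective morphism $\Phi \colon S' \to S_{j}$ from the universal property of Lemma \ref{lem:universalproperty}, and then to prove it is injective by exhibiting a normal form for the elements of $S'$. For the first step, fix the given isomorphism $\Lambda \to \Lambda_{j}$ and the given bijection $M \to \mathcal{R}_{j}$, $b \mapsto x_{b}$. Each $x \in \mathcal{R}_{j}$ lies in $\ZZ(S)$ with $\Lambda_{x} = \Lambda_{j}$, and by the first-step description of $\Lambda_{x}$ the element $x \ast \co{x}$ is precisely the neutral element $e_{\Lambda_{j}}$; hence the defining relations $b \ast \co{b} = e_{\Lambda}$ of $S'$ hold in $S_{j}$ under these identifications. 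Since $S_{j}$ is generated by $\Lambda_{j}$ together with $\mathcal{R}_{j}$, Lemma \ref{lem:universalproperty} produces a surjective morphism $\Phi$ restricting to the chosen isomorphism on $\Lambda$ and to the chosen bijection on $M$.

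For injectivity, I would first describe $S'$ explicitly. Each $b \in M$ is a ``morphism'' with $b \ast \co{b} = e_{\Lambda}$ and $\co{b} \ast b$ the neutral element of a new group $\Lambda_{\co{b}}$, isomorphic to $\Lambda$ via $t \mapsto \co{b} \ast t \ast b$ (the analogue of $f_{x}$ from the first step). The goal is to show that every element of $S'$ can be written uniquely in exactly one of the forms
\begin{equation*}
t, \qquad t \ast b, \qquad \co{b} \ast t \ast b, \qquad \co{b} \ast t \ast b' \quad (b \neq b'),
\end{equation*}
with $t \in \Lambda$ and $b, b' \in M$. Existence of such a form is proved, exactly as in the last paragraph of the proof of Proposition \ref{prop:presentationfusionset}, by induction on the length of a word $w$ over $\Lambda \sqcup M \sqcup \co{M}$: consecutive letters of $\Lambda$ are merged by the group law, any factor $b \ast \co{b}$ collapses to $e_{\Lambda}$ by the relation, and using Lemma \ref{lem:inverse} one checks that a factor $b \ast t \ast \co{b}$ reduces into $\Lambda$, while a reduced word can never contain a factor $b \ast \co{b'}$ with $b \neq b'$. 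This reduction is where the bulk of the work lies.

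Once the normal form is available, injectivity of $\Phi$ is immediate. The four families are sent by $\Phi$ into the pairwise distinct pieces of the decomposition of $S_{j}$ given by Lemma \ref{lem:decompositionsecondstep} and Proposition \ref{prop:presentationfusionset}: the form $t$ into the common group $\Lambda_{j}$, the form $t \ast b$ into the torsor $[x_{b}]$, the form $\co{b} \ast t \ast b$ into the vertex group $\Lambda_{\co{x_{b}}}$ inside $S_{x_{b}}$, and the form $\co{b} \ast t \ast b'$ into $S_{\co{x_{b}} \ast x_{b'}}$. Within each piece $\Phi$ acts as the relevant group isomorphism or as the torsor bijection $t \mapsto t \ast x$ of Lemma \ref{lem:freeaction}, hence injectively. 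Therefore distinct normal forms have distinct images, $\Phi$ is injective, and being surjective it is an isomorphism, giving $S' = S_{j}$.

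The main obstacle is the normal-form reduction: one must verify that the only relations holding in $S'$ are those forced by $b \ast \co{b} = e_{\Lambda}$ and the multiplication of $\Lambda$, and in particular that $b \ast \co{b'} = \emptyset$ for distinct $b, b' \in M$, so that the several copies $S_{x_{b}}$ do not collapse into one another beyond sharing the common vertex group $\Lambda$. This is precisely the analogue, for a family of generators in place of a single one, of the computation $\Lambda_{a} = \Lambda$ carried out inside Proposition \ref{prop:presentationfusionset}, and it will rely throughout on associativity, the Frobenius property, antisymmetry, and Lemma \ref{lem:inverse}.
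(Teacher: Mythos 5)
Your overall strategy --- a surjection $\Phi : S' \to S_{j}$ obtained from Lemma \ref{lem:universalproperty}, followed by injectivity via the structure of $S_{j}$ --- is the same as the paper's, and the surjectivity half is handled identically. The differences are in how injectivity is obtained, and there your version has two weak points. First, your list of normal forms is incomplete: the elements of $[\co{x_{b}}]$, which read $\co{b}\ast t$ with $t\in\Lambda$ (for instance $\co{b}$ itself), are of none of the four forms $t$, $t\ast b$, $\co{b}\ast t\ast b$, $\co{b}\ast t\ast b'$, so the claimed exhaustive case analysis already fails on words of length one. This is repairable by adding the fifth family, which $\Phi$ sends into the piece $[\co{x_{b}}]$ of the decomposition of Lemma \ref{lem:decompositionsecondstep}, but as written the induction cannot close. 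Second, and more seriously, you explicitly defer the normal-form reduction (``this is where the bulk of the work lies''), so the proposal does not actually contain the only non-formal step of the argument.

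The paper avoids the global reduction altogether. It observes that $\Phi$ restricted to the sub-fusion-set $S'_{b}$ of $S'$ generated by $\Lambda$ and a single $b$ (respectively by $\co{b}_{1}\ast b_{2}$) is already an isomorphism onto $S_{x_{b}}$ (respectively onto $S_{\co{x_{b_{1}}}\ast x_{b_{2}}}$) by Proposition \ref{prop:presentationfusionset}, where the one-generator reduction was carried out once and for all. Injectivity of $\Phi$ then only requires ruling out identifications across different generators, and this takes two lines: if $\varphi(\lambda_{1})\ast x_{b_{1}}=\varphi(\lambda_{2})\ast x_{b_{2}}$, then multiplying on the left by $\co{\varphi(\lambda_{1})}$ and using Lemma \ref{lem:inverse} gives $x_{b_{1}}=\varphi(\lambda_{1}^{-1}\lambda_{2})\ast x_{b_{2}}$, hence $x_{b_{1}}\approx x_{b_{2}}$, hence $b_{1}=b_{2}$ and then $\lambda_{1}=\lambda_{2}$. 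If you want to keep the normal-form route, the reduction you need is exactly the multi-generator analogue of the computation $\Lambda_{a}=\Lambda$ inside Proposition \ref{prop:presentationfusionset}, as you note; but you should either carry it out (with the fifth family included) or, more economically, reuse that proposition as the paper does and only verify the cross-identification step.
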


\begin{proof}
Let $\varphi : \Lambda \rightarrow \Lambda_{j}$ be an isomorphism and let $g : M\rightarrow \mathcal{R}_{j}$ be a bijection. By construction, there is a surjective morphism of fusion sets $\Phi : S' \rightarrow S_{j}$ which restricts to $\varphi$ on $\Lambda$ and sends $b\in M$ to $g(b)$. Let $S'_{b}$ be the fusion subset of $S'$ generated by $b$ and $\Lambda$. By Proposition \ref{prop:presentationfusionset}, $\Phi$ restricts to an isomorphism between $S'_{b}$ and $S_{g(b)}$. The same holds for $S'_{\co{b}_{1}\ast b_{2}}$. Now, let $a_{1}, a_{2}\in S'$ be such that $\Phi(a_{1}) = \Phi(a_{2})$. There exists $\lambda_{1}, \lambda_{2}\in \Lambda$ and $b_{1}, b_{2}\in M$ such that $a_{i} = \varphi(\lambda_{i})\ast g(b_{i})$ for $i=1, 2$. The equality $\Phi(a_{1}) = \Phi(a_{2})$ then yields
\begin{equation*}
g(b_{1}) = \overline{\varphi(\lambda_{1})}\ast\varphi(\lambda_{1})\ast g(b_{1}) = \overline{\varphi(\lambda_{1})}\ast\varphi(\lambda_{2})\ast g(b_{2}) = \varphi(\lambda_{1}^{-1}\lambda_{2})\ast g(b_{2}).
\end{equation*}
This means that $g(b_{1}) \approx g(b_{2})$, which implies $g(b_{1}) = g(b_{2})$. By injectivity of $g$, we have $b_{1} = b_{2}$. Thus, $\varphi(\lambda_{1}^{-1}\lambda_{2}) = e_{\Lambda_{j}}$ and by injectivity of $\varphi$, $\lambda_{1} = \lambda_{2}$, concluding the proof.
\end{proof}

Using this picture, we can give an alternative characterization of $\HH'(S)$ using \emph{groupoids} as in Example \ref{ex:groupoid}. In fact, consider the small category $\Gr(S)$ with $J$ as set of objects and with morphism spaces :
\begin{equation*}
\Mor_{\Gr(S)}(i, j) = \{x\in \HH'(S), \Lambda_{\co{x}} = \Lambda_{j}, \Lambda_{x} = \Lambda_{i}\}.
\end{equation*}
The fusion operation induces an associative composition for which $\co{x}$ is the inverse of $x$. Thus, $\Gr(S)$ is a groupoid. Note that $\Mor(i, i)$ is precisely the group $\Lambda_{i}$.

\begin{rem}
For $x\in \Mor_{\Gr(S)}(i, j)$, the source of $x$ should act on the right, hence the reversal of notations in the definition if we want the source to be $\Lambda_{i}$.
\end{rem}

We can summarize our results so far as follows :

\begin{cor}\label{cor:groupoiddecomposition}
Any admissible fusion set is a disjoint union of copies of $\OO$, copies of $\UU$, groups and $S_{\Gr}$ for some groupoid $\Gr$.
\end{cor}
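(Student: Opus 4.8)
The plan is to assemble the corollary from the successive splittings established above, keeping track of fusion-disjointness at each stage. First I would invoke Proposition \ref{prop:firstdecomposition}, which writes $S$ as the disjoint union of $\HH(S)$ together with copies of $\OO$ (one for each element of $\OO(S)$) and copies of $\UU$ (one for each conjugate pair in $\UU(S)$). This isolates the two simplest families of summands, so it only remains to decompose $\HH(S)$.

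Next I would apply Proposition \ref{prop:gammaprime}, which exhibits $\Gamma'(S)$ and $\HH'(S)$ as disjoint fusion subsets whose union is $\HH(S)$. The piece $\Gamma'(S)$ is handled at once: by the proof of that proposition it equals $\sqcup_{i\notin J}\Gamma_i$, a disjoint union of the discrete groups produced by the decomposition $\Gamma(S)=\sqcup_i\Gamma_i$. These groups do not fuse with one another, since by the non-vanishing criterion for $\ast$ two elements can fuse only when the idempotents $\co{x}\ast x$ agree, i.e. only when they lie in the same $\Gamma_i$; hence the $\Gamma_i$ with $i\notin J$ contribute exactly the "groups" summands in the statement.

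The remaining, and only substantive, point is the identification $\HH'(S)=S_{\Gr(S)}$. Here I would use the groupoid $\Gr(S)$ constructed just above, whose object set is $J$: every $x\in\HH'(S)$ lies in precisely one morphism space $\Mor_{\Gr(S)}(i,j)$, determined by the groups $\Lambda_x$ and $\Lambda_{\co{x}}$ containing $x\ast\co{x}$ and $\co{x}\ast x$. Under the resulting bijection the conjugation $x\mapsto\co{x}$ matches the inversion of $\Gr(S)$, and $x\ast y$ is nonempty exactly when $x$ and $y$ are composable as morphisms, in which case it equals their composite. This is precisely the content of the assertion that $\Gr(S)$ is a groupoid with $\ast$ as composition, so $\HH'(S)$ is isomorphic to the fusion set $S_{\Gr(S)}$ of Example \ref{ex:groupoid}. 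I expect the main delicacy to be not conceptual but bookkeeping: correctly matching the \emph{direction} of groupoid composition with the fusion operation, taking into account the source/target reversal noted in the remark preceding the corollary. Collecting the four pieces, $S$ is the disjoint union of copies of $\OO$, copies of $\UU$, the groups $\Gamma_i$ with $i\notin J$, and $S_{\Gr(S)}$, which is the claim.
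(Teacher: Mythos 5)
Your proposal is correct and follows exactly the route the paper intends: the corollary is stated without proof as a summary of Proposition \ref{prop:firstdecomposition}, Proposition \ref{prop:gammaprime} (with $\Gamma'(S)=\sqcup_{i\notin J}\Gamma_i$ supplying the group summands), and the groupoid $\Gr(S)$ constructed just before, whose morphism set with the fusion operation is precisely $\HH'(S)$. The source/target bookkeeping you flag is exactly what the paper's preceding remark addresses, so nothing is missing.
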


\subsubsection{Third step}

Again, we have by construction $S = \cup_{j\in J} S_{j}$ and we should investigate the intersections of these sets. However, it will appear that they are either identical or disjoint, so that the decomposition will be complete. We start with a natural notion of connectedness.

\begin{de}
An admissible fusion set $S$ is said to be \emph{connected} if for any $i, j\in J$, there is an element $y\in \ZZ(S)$ such that $\Lambda_{y} = \Lambda_{i}$, $\Lambda_{\co{y}} = \Lambda_{j}$.
\end{de}

\begin{lem}\label{lem:connectedcomponentsdisjoint}
The fusion set $\HH'(S)$ is a disjoint union of connected fusion sets.
\end{lem}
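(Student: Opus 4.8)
The plan is to realize the connected components as the equivalence classes of objects of the groupoid $\Gr(S)$ and to check that the associated subsets are mutually non-interacting fusion subsets. Concretely, I would define a relation on the index set $J$ by declaring $i$ and $j$ related whenever there is $y\in\ZZ(S)$ with $\Lambda_y=\Lambda_i$ and $\Lambda_{\co y}=\Lambda_j$, and let $\approx$ be the equivalence relation it generates. Symmetry comes from replacing $y$ by $\co y$, reflexivity from the fact that each $\Lambda_i\subset\Lambda(S)$ meets some element of $\ZZ(S)$, and the substantive point is transitivity: if $y$ links $i$ to $j$ and $y'$ links $j$ to $k$, then by the characterization $a\ast b\neq\emptyset \Leftrightarrow \co a\ast a = b\ast\co b$ we get $\co y\ast y = y'\ast\co{y'}$, hence $y\ast y'\neq\emptyset$, and the same computation used to prove $\HH(S)$ stable gives $\Lambda_{y\ast y'}=\Lambda_i$ and $\Lambda_{\co{y\ast y'}}=\Lambda_k$. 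Since $i\neq k$ forces $y\ast y'\in\ZZ(S)$, this is a direct link from $i$ to $k$; this is exactly composition of morphisms in $\Gr(S)$.

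For each class $c$ of $\approx$, indexed by a subset $J_c\subset J$, I would set $S_c$ to be the set of $a\in\HH'(S)$ whose associated groups $\Lambda_a$ and $\Lambda_{\co a}$ are both indexed by $J_c$. Every element of $\HH'(S)$ lies in exactly one $S_c$: a group element of $\Lambda_j$ determines the single index $j$, while an element $a\in\ZZ(S)$ is a morphism of $\Gr(S)$ from $\Lambda_a$ to $\Lambda_{\co a}$ and so links their indices, which therefore lie in the same class. Thus the $S_c$ partition $\HH'(S)$ as a set.

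The bulk of the work is to see that each $S_c$ is a fusion subset and that fusion across distinct classes vanishes, which is precisely the disjoint-union statement. Closure of $S_c$ under conjugation is immediate, and if $a,b\in S_c$ with $a\ast b\neq\emptyset$ then, using antisymmetry and Lemma \ref{lem:inverse} as before, $(a\ast b)\ast\co{(a\ast b)}=a\ast\co a$ and $\co{(a\ast b)}\ast(a\ast b)=\co b\ast b$, so $\Lambda_{a\ast b}=\Lambda_a$ and $\Lambda_{\co{a\ast b}}=\Lambda_{\co b}$ stay indexed by $J_c$; hence $a\ast b\in S_c$. For the vanishing of cross-fusion I would again invoke the characterization: if $a\in S_c$, $b\in S_{c'}$ and $a\ast b\neq\emptyset$, then $\co a\ast a=b\ast\co b$, i.e. $\Lambda_{\co a}=\Lambda_b$; but $\Lambda_{\co a}$ is indexed in $J_c$ and $\Lambda_b$ in $J_{c'}$, so $c=c'$. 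Therefore $a\ast b=\emptyset$ whenever $c\neq c'$, and $\HH'(S)=\bigsqcup_c S_c$ is a disjoint union of fusion sets.

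Finally, each $S_c$ is connected: for distinct $i,j\in J_c$ the transitivity argument, i.e. composing the morphisms along a path joining $i$ to $j$, yields a single $y\in\ZZ(S)$ with $\Lambda_y=\Lambda_i$ and $\Lambda_{\co y}=\Lambda_j$ (the composite lands in $\ZZ(S)$ exactly because it joins two distinct objects). The step I expect to demand the most care is guaranteeing that these iterated composites never degenerate — that they stay nonempty and, for distinct endpoints, remain in $\ZZ(S)$ rather than collapsing into a group. This is controlled entirely by Lemma \ref{lem:inverse} together with the Frobenius and antisymmetry properties of $S$, which are what make $\Gr(S)$ a bona fide groupoid and so tie the decomposition back to Corollary \ref{cor:groupoiddecomposition}.
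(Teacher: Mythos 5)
Your proof is correct and follows essentially the same route as the paper: define the linking relation on the index set $J$ via elements of $\ZZ(S)$, partition $J$ into classes, and show the corresponding pieces are mutually non-interacting fusion subsets using the characterization $a\ast b\neq\emptyset \Leftrightarrow \co{a}\ast a = b\ast\co{b}$. The only difference is that you spell out the transitivity/composition step (and the non-degeneration of composites), which the paper dismisses as "clearly an equivalence relation".
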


\begin{proof}
Let us define a binary relation $\backsim$ on the set $J$ by : $i\backsim j$ if there exists $y\in \ZZ(S)$ such that $\Lambda_{y} = i$ and $\Lambda_{\co{y}} = j$. This is clearly an equivalence relation. Let $K$ be a set of representatives for $\backsim$ and set, for $k\in K$,
\begin{equation*}
T_{k} = \{x\in S, \exists j\backsim k, \Lambda_{x} = \Lambda_{j}\}.
\end{equation*}
Then $\HH'(S) = \cup_{k}T_{k}$. Moreover, each $T_{k}$ is a connected fusion set by construction. Let now $x\in T_{k}$ and $x'\in T_{k'}$ be such that $x\ast x'\neq \emptyset$. Then,
\begin{equation*}
\Lambda_{k'} = \Lambda_{x'} = \Lambda_{\co{x}} = \Lambda_{j}.
\end{equation*}
By definition of $\backsim$, $j\backsim k$ so that in the end, $k = k'$. Hence, $\HH'(S)$ is the disjoint union of the fusion sets $T_{k}$.
\end{proof}

To conclude, we now only have to see that the fusion set $T_{k}$ is in fact equal to $S_{k}$.

\begin{lem}\label{lem:connectedpresentation}
The fusion sets $S_{j}$ are connected. Moreover, if $i, j\in J$ satisfy $i\backsim j$, then $S_{i} = S_{j}$. Hence, $S_{i} = T_{i}$.
\end{lem}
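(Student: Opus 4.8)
The plan is to prove the three assertions in turn, relying throughout on the explicit description of $S_{j}$ furnished by Lemma \ref{lem:decompositionsecondstep} and Proposition \ref{prop:presentationfusionset}, together with the free and transitive action of Lemma \ref{lem:freeaction}. For a generator $a\in\mathcal{R}_{j}$ write $t_{a}=\Lambda_{\co{a}}$ for its ``target'' group, so that $\Lambda_{a}=\Lambda_{j}$. Lemma \ref{lem:decompositionsecondstep} and Proposition \ref{prop:presentationfusionset} show that the only groups occurring in $S_{j}$ are $\Lambda_{j}$ and the $t_{a}$ for $a\in\mathcal{R}_{j}$; moreover a short computation with Lemma \ref{lem:inverse} gives that $\co{a}\ast b$ has source $t_{a}$ and target $t_{b}$ whenever $a,b\in\mathcal{R}_{j}$ (and lies in $S_{\co{a}\ast b}\subseteq S_{j}$).

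For connectedness I would simply exhibit the joining elements. Each $a\in\mathcal{R}_{j}$ belongs to $\ZZ(S)\cap S_{j}$ and connects $\Lambda_{j}$ to $t_{a}$, while $\co{a}\ast b\in S_{j}$ connects $t_{a}$ to $t_{b}$. Since every group appearing in $S_{j}$ is of the form $\Lambda_{j}$ or $t_{a}$, this shows at once that $S_{j}$ is connected.

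The heart of the matter is the equality $S_{i}=S_{j}$ for $i\backsim j$; by symmetry it suffices to prove $\mathcal{R}_{i}\subseteq S_{j}$, since $S_{i}$ is the fusion set generated by $\mathcal{R}_{i}$. The key observation is that $\backsim$, being transitive and induced from the groupoid $\Gr(S)$, is a ``one-step'' relation: whenever $i'\backsim j$ with $i'\neq j$ there is a single element of $\ZZ(S)$ joining them, hence, passing to its $\approx$-class representative, an element $a\in\mathcal{R}_{j}$ with $t_{a}=\Lambda_{i'}$. Now take $x\in\mathcal{R}_{i}$, say with $\Lambda_{x}=\Lambda_{i}$ and $\Lambda_{\co{x}}=\Lambda_{i''}$. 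Then $x$ witnesses $i\backsim i''$, and since $i\backsim j$ we get $i,i''\backsim j$, so I may choose $a,b\in\mathcal{R}_{j}$ with $t_{a}=\Lambda_{i}$ and $t_{b}=\Lambda_{i''}$. Both $\co{a}\ast b$ and $x$ then lie in $\Mor_{\Gr(S)}(i,i'')$; a direct check shows this set is a single $\approx$-class, hence a single free $\Lambda_{i}$-orbit by Lemma \ref{lem:freeaction}. Writing $x=s\ast(\co{a}\ast b)$ with $s\in\Lambda_{i}=t_{a}\subseteq S_{j}$ and using $\co{a}\ast b\in S_{j}$, I conclude $x\in S_{j}$ (the degenerate cases $i=j$ or $i''=j$ being immediate). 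Thus $\mathcal{R}_{i}\subseteq S_{j}$ and $S_{i}=S_{j}$.

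Finally, $T_{i}$ is itself a fusion set: it is visibly stable under conjugation, and if $x\ast y\neq\emptyset$ then $\Lambda_{x\ast y}=\Lambda_{x}$ by Lemma \ref{lem:inverse}, so the product stays in $T_{i}$. As $\mathcal{R}_{i}\subseteq\{x:\Lambda_{x}=\Lambda_{i}\}\subseteq T_{i}$, this gives $S_{i}\subseteq T_{i}$. Conversely, any $x\in T_{i}$ satisfies $\Lambda_{x}=\Lambda_{j}$ for some $j\backsim i$, and the elements of source $\Lambda_{j}$ are exactly $\Lambda_{j}$ together with the orbits $\Lambda_{j}\ast a$ for $a\in\mathcal{R}_{j}$, all of which lie in $S_{j}=S_{i}$ by the previous paragraph; hence $T_{i}\subseteq S_{i}$ and $S_{i}=T_{i}$. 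The main obstacle is precisely this middle step: one must verify that the groupoid (one-step) structure of $\backsim$ allows every object equivalent to $i$ to be reached by a single generator, and that $\Mor_{\Gr(S)}(i,i'')$ is a single $\Lambda_{i}$-orbit, which is exactly where Lemmas \ref{lem:freeaction} and \ref{lem:inverse} do the work.
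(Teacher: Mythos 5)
Your proof is correct, but it reaches the middle assertion by a genuinely different route than the paper. The paper's argument is a two-line fusion computation: given $i\backsim j$, pick a witness $y\in\ZZ(S)$ with $\Lambda_{y}=\Lambda_{i}$ and $\Lambda_{\co{y}}=\Lambda_{j}$; for $x\in\mathcal{R}_{j}$ one has $\Lambda_{y\ast x}=\Lambda_{i}$, so $y\ast x$ and $y$ both lie in $S_{i}$, and then $\co{x}=\co{y\ast x}\ast y\in S_{i}$, giving $S_{j}\subseteq S_{i}$ and concluding by symmetry of $\backsim$. You instead go through the groupoid picture: you produce $a,b\in\mathcal{R}_{j}$ whose targets are $\Lambda_{i}$ and $\Lambda_{i''}$, observe that $\Mor_{\Gr(S)}(i,i'')$ is a single $\approx$-class and hence a single free $\Lambda_{i}$-orbit (Lemma \ref{lem:freeaction}), and write $x=s\ast(\co{a}\ast b)$ with $s\in\Lambda_{i}\subseteq S_{j}$. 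Both arguments are valid; the paper's is shorter and needs only Lemma \ref{lem:inverse}, while yours makes the groupoid structure of Corollary \ref{cor:groupoiddecomposition} do the work and, as a bonus, spells out the degenerate cases ($i=j$, $i''=j$) and the final identification $S_{i}=T_{i}$, which the paper leaves implicit ("the proof is complete"). Your explicit verification of connectedness (exhibiting $a$ and $\co{a}\ast b$ as joining elements between every pair of groups occurring in $S_{j}$) is also more detailed than the paper's one-line justification. The only soft spot is the phrase "a direct check shows this set is a single $\approx$-class"; that check (for $z,z'$ with the same source and target, $z\ast\co{z'}\neq\emptyset$ because the relevant neutral elements agree, and both $(z\ast\co{z'})\ast\co{z\ast\co{z'}}$ and $\co{z\ast\co{z'}}\ast(z\ast\co{z'})$ equal $e_{\Lambda_{i}}$, so $z\ast\co{z'}\in\Lambda(S)$) is worth writing out, but it does go through.
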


\begin{proof}
The fusion set $S_{j}$ is connected because it is generate by elements satisfying $\Lambda_{x} = \Lambda_{j}$. Assume now that $i\backsim j$. Let $x\in \mathcal{R}_{j}$ and let $y\in \ZZ(S)$ be such that $\Lambda_{y} = \Lambda_{i}$ and $\Lambda_{\co{y}} = \Lambda_{j}$. Then, $\Lambda_{y\ast x} = \Lambda_{i}$ so that $y\ast x \in S_{i}$. This implies that $x = (\co{y\ast x})\ast y \in S_{i}$, thus $S_{j} \subset S_{i}$. By symmetry of $\backsim$, $S_{i} = S_{j}$ and the proof is complete.
\end{proof}

We can now give a presentation of any admissible fusion set. For clarity, let us first give a definition.

\begin{de}
A \emph{fusion triple} $\Theta = (n_{\OO}, n_{\UU}, (\Gamma_{i}, n_{i})_{i\in I})$ is given by :
\begin{itemize}
\item An integer $n_{\OO}$
\item An integer $n_{\UU}$
\item A collection of groups $(\Gamma_{i})_{i\in I}$
\item For every $i\in I$, an integer $n_{i}$
\end{itemize}
\end{de}

Let $\Theta$ be a fusion triple and let $J$ be the set of elements $j\in I$ such that $n_{j} > 0$. For $j\in J$, let $S_{j}$ be the fusion set generated by $\Gamma_{j}$ and $n_{j}$ elements $b_{1}^{j}, \dots, b_{n_{j}}^{j}$ with the relations $b_{k}^{j}\ast\co{b}_{k}^{j} = e_{\Gamma_{j}}$. The fusion set $S(\Theta)$ associated to the fusion triple $\Theta$ is the disjoint union of $n_{\OO}$ copies of $\OO$, $n_{\UU}$ copies of $\UU$, the fusion set $\sqcup_{i\notin J} \Gamma_{i}$ and the fusion sets $S_{j}$.

\begin{thm}\label{thm:classification}
There is a one-to-one correspondence between isomorphism classes of admissible fusion sets and isomorphism classes (in an obvious sense) of fusion triples.
\end{thm}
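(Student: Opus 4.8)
The plan is to show that the assignment $\Theta\mapsto S(\Theta)$ descends to a bijection between isomorphism classes of fusion triples and isomorphism classes of admissible fusion sets. The structural work has essentially been carried out in the lemmas of this subsection, so the proof amounts to assembling those decompositions into a canonical normal form and then checking that this normal form both determines and is determined by the combinatorial data of a fusion triple. Concretely, I would verify three things: that every admissible $S$ is isomorphic to some $S(\Theta)$ (surjectivity), that each $S(\Theta)$ is genuinely admissible (so the map lands in the right class), and that the triple can be read back off from $S(\Theta)$ (injectivity).

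For surjectivity I would start from Proposition \ref{prop:firstdecomposition}, peeling off the $\OO$- and $\UU$-pieces and reducing to $\HH(S)$; Proposition \ref{prop:gammaprime} splits $\HH(S)$ as the disjoint union of $\Gamma'(S)$ and $\HH'(S)$. The part $\Gamma'(S)$ is a disjoint union of groups $\sqcup_{i\notin J}\Gamma_i$ not interacting with $\ZZ(S)$ (these contribute the factors with $n_i=0$), while Lemmas \ref{lem:connectedcomponentsdisjoint} and \ref{lem:connectedpresentation} decompose $\HH'(S)$ into connected fusion sets, each of which is, by Proposition \ref{prop:presentationfusionsetbis}, the presented fusion set $S_j$ built from a group $\Gamma_j$ and $n_j>0$ generators. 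Reading off $n_{\OO}$, $n_{\UU}$, the groups and the integers from this decomposition yields a triple $\Theta$ with $S\simeq S(\Theta)$; this is essentially a restatement of Corollary \ref{cor:groupoiddecomposition}.

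For the reverse direction, admissibility of $S(\Theta)$ follows by realising each elementary block by a compact quantum group — $\OO$ and $\UU$ by the free orthogonal and free unitary quantum groups, a group $\Gamma_i$ by its dual $\widehat{\Gamma_i}$, and each $S_j$ by the partition quantum group attached to $\CC_{S_j}$ (the hypotheses of Subsection \ref{subsec:partitions} being exactly that $S_j$ is associative, Frobenius and antisymmetric, which one checks directly from the presentation of Lemma \ref{lem:universalproperty}) — and then noting that disjoint unions correspond to free products of the associated quantum groups via Proposition \ref{prop:freeproduct}. To recover $\Theta$, I would argue that every ingredient of the decomposition is intrinsic to the isomorphism type of $S$: the subsets $\OO(S)$, $\UU(S)$, $\HH(S)$, $\Gamma(S)$ and $\ZZ(S)$ are defined purely through the maps $x\mapsto\co{x}$, $x\mapsto x\ast\co{x}$ and $x\mapsto\co{x}\ast x$, hence are preserved by any fusion-set isomorphism. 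Consequently $n_{\OO}=|\OO(S)|$ and $n_{\UU}=\tfrac12|\UU(S)|$ are invariants, the connected components of $\Gamma'(S)$ furnish the groups with $n_i=0$, and each connected component of $\HH'(S)$ contributes its common vertex group $\Gamma_j$ together with the integer $n_j$. Since any isomorphism $S(\Theta)\simeq S(\Theta')$ must carry each canonical piece to a piece of the same type, it induces an isomorphism of triples, giving injectivity.

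The step I expect to be the main obstacle is the intrinsic identification of the integers $n_j$, i.e. showing that $n_j$ does not depend on the auxiliary choices (the representative set $\mathcal{R}(S)$ and the isomorphisms $\varphi$) made in Propositions \ref{prop:presentationfusionset} and \ref{prop:presentationfusionsetbis}. The clean way to handle this is to observe that a connected component of $\HH'(S)$ is the fusion set of a connected groupoid, which is classified up to isomorphism by its vertex group together with its number of objects; by Lemma \ref{lem:freeaction} the elements of $\ZZ(S)$ issuing from a fixed object fall into $\approx$-classes that are free $\Lambda_j$-orbits in bijection with the remaining objects, so that $n_j$ equals the number of objects minus one and is therefore an isomorphism invariant. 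One must also confirm that pieces of different types can never be identified — for instance a pure group $\Gamma_i$, all of whose elements satisfy $x\ast\co{x}=\co{x}\ast x$, cannot be isomorphic to a component meeting $\ZZ(S)$ — which is again immediate from the intrinsic characterisations above.
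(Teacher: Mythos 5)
Your proposal follows essentially the same route as the paper: build $\Theta(S)$ from the decompositions of Propositions \ref{prop:firstdecomposition} and \ref{prop:gammaprime} together with Lemmas \ref{lem:connectedcomponentsdisjoint} and \ref{lem:connectedpresentation}, then check $S(\Theta(S))\simeq S$ via Proposition \ref{prop:presentationfusionsetbis} and $\Theta(S(\Theta))\simeq\Theta$ by construction; your additional remarks on the intrinsic nature of $\OO(S)$, $\UU(S)$, $\Gamma(S)$, $\ZZ(S)$ and on $n_{j}$ being the number of objects of a connected groupoid minus one make explicit what the paper leaves implicit. The one slip is in your admissibility check: the fusion set $\Gamma_{i}$ (with $\ast$ the group law) is \emph{not} realized by the dual $\h{\Gamma}_{i}$, since $R^{+}(\Gamma_{i})$ is built on the free monoid $F(\Gamma_{i})$ whereas $R^{+}(\h{\Gamma}_{i})=\N[\Gamma_{i}]$, which is never free for nontrivial $\Gamma_{i}$ because of its one-dimensional representations; the correct witness is the free wreath product $\h{\Gamma}_{i}\wr_{\ast}S_{N}^{+}$, as in Proposition \ref{prop:firstfreeproddecomposition}. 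With that substitution (and $Z_{N}^{+}(\Lambda_{j},n_{j})$ from Lemma \ref{lem:zn} for the pieces $S_{j}$) your argument is complete and, if anything, more careful than the paper's on the point of why every fusion triple actually yields an \emph{admissible} fusion set.
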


\begin{proof}
Let $S$ be a fusion set. Let $n_{\OO}(S)$ and $n_{\UU}(S)$ be the number of copies of $\OO$ and $\UU$ appearing in Proposition \ref{prop:firstdecomposition} and set $\Gamma(S) = \sqcup_{i}\Gamma_{i}$ as in Proposition \ref{prop:gammaprime}. Combining Lemma \ref{lem:connectedcomponentsdisjoint} and Lemma \ref{lem:connectedpresentation}, we see that $\HH'(S) = \sqcup_{k\in K} S_{k}$. To each $S_{k}$, associate the group $\Lambda_{k}$ and the integer $n_{k} = \vert \mathcal{R}_{k}\vert - 1$. This forms a fusion triple $\Theta(S)$. The result then follows from the fact that $S(\Theta(S))$ is isomorphic to $S$ by Proposition \ref{prop:presentationfusionsetbis} and Lemma \ref{lem:connectedpresentation} and the fact that $\Theta(S(\Theta))$ is by construction isomorphic to $\Theta$.
\end{proof}

\subsection{Free product decomposition}

Going back to compact quantum groups, we would like to give an interpretation of Theorem \ref{thm:classification}. This will lead to a general decomposition result for free partition quantum groups into free products of elementary ones. Let us first see how the fact that fusion sets are disjoint translates at the level of quantum groups.

\begin{lem}\label{lem:freeproduct}
Let $N\geqslant 4$ be an integer, let $S$ and $S'$ be fusion sets and let $\G$ and $\G'$ be the associated partition quantum groups. Then, the partition quantum group $\G''$ associated to the disjoint union $S'' = S\sqcup S'$ is isomorphic to the free product $\G\ast \G'$.
\end{lem}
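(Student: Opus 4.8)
The plan is to identify the category of partitions $\CC_{S''}$ attached to the disjoint union with the free product category $\CC_S \ast \CC_{S'}$, and then to conclude by Proposition \ref{prop:freeproduct}. First I would record that the colour set of the disjoint union is $\A_{S''} = \A_S \sqcup \A_{S'}$, and that the three structural properties of Definition \ref{de:associative} pass to disjoint unions. Indeed, since any fusion between an element of $S$ and an element of $S'$ equals $\emptyset$, every mixed expression is $\emptyset$ on both sides of the associativity, Frobenius and antisymmetry identities, so these identities for $S''$ reduce to the corresponding identities for $S$ and for $S'$ separately. Because $S''$ is thus associative, Frobenius and antisymmetric, the arguments used to prove Lemmas \ref{lem:inverse}, \ref{lem:verticalconcatenation} and \ref{lem:category} apply verbatim and show that $\CC_{S''}$ is a well-defined block-stable category of partitions.

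Next I would describe $\CC_{S''}$ by generators via Lemma \ref{lem:categorygenerators}: it is generated by the one-block partitions $\pi(xy, z)$ with $x, y, z \in S''$ and $x \ast y = z$. The heart of the argument is the observation that $x \ast y \neq \emptyset$ forces $x$ and $y$, and hence also $z$, to lie in the same part of the disjoint union; consequently every generator $\pi(xy, z)$ is coloured entirely in $\A_S$ or entirely in $\A_{S'}$. Since the category operations never introduce new colours — rotation only replaces a colour by its conjugate, which stays in the same part, while the other operations preserve colourings — the subcategory generated by the $\A_S$-coloured generators is precisely $\CC_S$, and likewise the $\A_{S'}$-coloured generators produce exactly $\CC_{S'}$; here I apply Lemma \ref{lem:categorygenerators} to $S$ and to $S'$ separately.

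It then follows that $\CC_{S''}$ is the category generated inside $NC^{\A_S \sqcup \A_{S'}}$ by $\CC_S$ and $\CC_{S'}$, which is by definition the free product category $\CC_S \ast \CC_{S'}$. Both inclusions are immediate from this generator description: the inclusions $\CC_S, \CC_{S'} \subseteq \CC_{S''}$ give $\CC_S \ast \CC_{S'} \subseteq \CC_{S''}$, while the fact that every generator of $\CC_{S''}$ lies in $\CC_S \cup \CC_{S'}$ gives the reverse inclusion. Since $N \geqslant 4 \geqslant 2$, Proposition \ref{prop:freeproduct} applied to $\CC_S$ and $\CC_{S'}$ shows that the partition quantum group associated to $\CC_S \ast \CC_{S'} = \CC_{S''}$ is the free product $\G \ast \G'$, which is exactly $\G''$.

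I expect the main obstacle to be the generator analysis of the second paragraph: one must make rigorous that generating from $\A_S$-coloured partitions inside the larger colour set $\A_{S''}$ produces exactly $\CC_S$ and nothing more, and must carefully check that the vanishing of all cross-fusions truly excludes every mixed block. Once this monochromaticity is secured, the remainder is bookkeeping resting on results already established, in particular Lemma \ref{lem:categorygenerators} and Proposition \ref{prop:freeproduct}.
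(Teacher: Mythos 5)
Your proof is correct and follows exactly the paper's route: the paper simply asserts that ``by definition of the disjoint union'' one has $\CC_{S''}=\CC_{S}\ast\CC_{S'}$ and then invokes Proposition \ref{prop:freeproduct}. You supply the justification for that identification (via the generator description of Lemma \ref{lem:categorygenerators} and the observation that all cross-fusions vanish, so every generating one-block partition is monochromatic), which is exactly the content the paper leaves implicit.
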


\begin{proof}
By definition of the disjoint union we have, at the level of categories of partitions, a free product decomposition $\CC_{S''} = \CC_{S}\ast \CC_{S'}$. We can therefore conclude by Proposition \ref{prop:freeproduct}.
\end{proof}

As an immediate consequence, we get our first decomposition.

\begin{prop}\label{prop:firstfreeproddecomposition}
Let $S$ be an admissible fusion set and let $\G'$ be the partition quantum group associated to $\HH'(S)$. Then, the partition quantum group associated to $S$ is the free product of $\G'$, copies of $O_{N}^{+}$, copies of $U_{N}^{+}$ and free wreath products of discrete groups by $S_{N}^{+}$.
\end{prop}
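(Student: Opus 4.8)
The plan is to assemble the successive disjoint union decompositions of $S$ established above and then to translate each disjoint union into a free product at the level of quantum groups using Lemma \ref{lem:freeproduct}; the only real content is the identification of the elementary building blocks.

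First I would collect the decompositions. By Proposition \ref{prop:firstdecomposition}, $S = \OO(S)\sqcup\UU(S)\sqcup\HH(S)$, where $\OO(S)$ and $\UU(S)$ are disjoint unions of copies of $\OO$ and $\UU$. By Proposition \ref{prop:gammaprime}, $\HH(S) = \Gamma'(S)\sqcup\HH'(S)$ as a disjoint union of fusion subsets, and its proof exhibits $\Gamma'(S) = \bigsqcup_{i\notin J}\Gamma_{i}$ as a disjoint union of discrete groups. Hence
\begin{equation*}
S = \OO(S)\sqcup\UU(S)\sqcup\left(\bigsqcup_{i\notin J}\Gamma_{i}\right)\sqcup\HH'(S).
\end{equation*}

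Next I would apply Lemma \ref{lem:freeproduct}, which turns the disjoint union of two fusion sets into the free product of the associated partition quantum groups. Iterating this (and passing to an inductive limit when $S$ is infinite, exactly as in the proof of Theorem \ref{thm:partitionqg}) shows that the partition quantum group attached to $S$ is the free product of the partition quantum groups attached to the summands above, the summand $\HH'(S)$ contributing precisely $\G'$.

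It remains to identify the other summands, which is where the actual work lies. For a copy of $\OO = \{x\}$ with $x = \co{x}$ and $x\ast x = \emptyset$ one has $\HH(\OO) = \emptyset$, so $\CC_{\OO} = \CC'_{\OO}$ is the category generated by the $x$-identity; its rotations together with the category operations produce exactly the noncrossing pair partitions, whose partition quantum group is $O_{N}^{+}$. Similarly $\CC_{\UU}$ is the category of noncrossing pairings joining mutually conjugate colours, giving $U_{N}^{+}$. For a discrete group $\Gamma_{i}$ regarded as a fusion set, every $x$ satisfies $\co{x}\ast x = e\neq\emptyset$, so $\HH(\Gamma_{i}) = \Gamma_{i}$ and $\CC_{\Gamma_{i}} = \CC_{\HH(\Gamma_{i})}$ consists of all noncrossing $\Gamma_{i}$-coloured partitions whose blocks are $f$-invariant; since $f(w) = w_{1}\ast\dots\ast w_{n}$ is the product in $\Gamma_{i}$, $f$-invariance of a block is exactly the condition that the product of the upper colours equal the product of the lower colours. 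This is precisely Lemeux's category with $\Gamma_{0} = \Gamma_{i}$, so for $N\geqslant 4$ the associated partition quantum group is the free wreath product $\h{\Gamma_{i}}\wr_{\ast}S_{N}^{+}$. Substituting these identifications into the free product decomposition gives the claim. The main obstacle is this last step, namely checking that the $f$-invariance condition on blocks matches Lemeux's block condition so that Lemeux's theorem applies verbatim, and ensuring the trivial factor $\CC'_{\Gamma_{i}}$ does not alter the category.
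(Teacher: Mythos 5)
Your proof is correct and follows the same route as the paper: decompose $S$ via Proposition \ref{prop:firstdecomposition} and Proposition \ref{prop:gammaprime}, convert disjoint unions to free products with Lemma \ref{lem:freeproduct}, and identify the elementary blocks $\OO$, $\UU$ and the groups $\Gamma_{i}$ with $O_{N}^{+}$, $U_{N}^{+}$ and $\h{\Gamma}_{i}\wr_{\ast}S_{N}^{+}$. You simply spell out the last identification step (matching $f$-invariance with Lemeux's block condition), which the paper leaves as ``clear on the construction.''
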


\begin{proof}
The free product decomposition comes from Proposition \ref{prop:firstdecomposition}, Proposition \ref{prop:gammaprime} and Lemma \ref{lem:freeproduct}. It is then clear on the construction that the partition quantum group associated to $\OO$, $\UU$ and a group $\Gamma$ respectively  are $O_{N}^{+}$, $U_{N}^{+}$ and the free wreath product $\Gamma\wr_{\ast}S_{N}^{+}$, hence the result.
\end{proof}

Combining Proposition \ref{prop:presentationfusionsetbis} and Lemma \ref{lem:freeproduct}, we see that the partition quantum group associated to $\HH'(S)$ is the free product of the partition quantum groups associated to $S_{k}$ for $k\in K$. These are not free products of unitary easy quantum groups, but can be easily described using amalgamation over $S_{N}^{+}$. To see this, let us first recall from \cite[Prop 3.2]{lemeux2013fusion} that the coefficients of $u^{e_{\Lambda}}$ generate a C*-subalgebra of $C(\h{\Lambda}\wr_{\ast}S_{N}^{+})$ which is isomorphic to $C(S_{N}^{+})$. Moreover, the restriction of the coproduct yields the quantum group structure of $S_{N}^{+}$. This will be identified with a copy of $S_{N}^{+}$ inside $\widetilde{H}_{N}^{+}$ which we now describe.

\begin{lem}
Let $\ZZ = \{x, \co{x}, x\ast \co{x}, \co{x}\ast x\}$ be the fusion set of the free quantum group $\widetilde{H}_{N}^{+}$. Let $A$ be the C*-subalgebra of $C(\widetilde{H}_{N}^{+})$ generated by the coefficients of the representation $u^{x\ast \co{x}}$ and let $B$ be the one generated by the coefficients of $u^{\co{x}\ast x}$. Then, $A$ and $B$ are isomorphic to $C(S_{N}^{+})$. Moreover, the restriction of the coproduct to these algebras yields the coproduct of the corresponding quantum groups.
\end{lem}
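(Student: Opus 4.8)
The plan is to work with the concrete realization of $\widetilde{H}_{N}^{+}$ as the free complexification of $H_{N}^{+} = \Z_{2}\wr_{\ast}S_{N}^{+}$ and to reduce both assertions to the fact, recalled just above from \cite[Prop 3.2]{lemeux2013fusion}, that the coefficients of the neutral-element representation $u^{e}$ of $H_{N}^{+}$ generate a copy $B_{0}$ of $C(S_{N}^{+})$ on which the coproduct restricts to that of $S_{N}^{+}$. Recall that $C(\widetilde{H}_{N}^{+})$ sits inside $C(H_{N}^{+})\ast C(\mathbb{T})$ and is generated by the coefficients of $\widetilde{v} = zv$, where $v$ is the (orthogonal, self-adjoint) fundamental representation of $H_{N}^{+}$ and $z$ is the unitary generator of $C(\mathbb{T}) = C^{*}(\Z)$; thus $\co{v} = v$ and $\co{\widetilde{v}}_{ij} = v_{ij}z^{*}$. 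In the fusion dictionary $u^{x} = \widetilde{v}$ and $u^{\co{x}} = \co{\widetilde{v}}$, while $u^{x\ast\co{x}}$ and $u^{\co{x}\ast x}$ are the $(N-1)$-dimensional irreducibles sitting inside $\widetilde{v}\otimes\co{\widetilde{v}}$ and $\co{\widetilde{v}}\otimes\widetilde{v}$ respectively.

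I would treat $B$ first. A direct computation gives $(\co{\widetilde{v}}\otimes\widetilde{v})_{(i,k),(j,l)} = v_{ij}z^{*}zv_{kl} = v_{ij}v_{kl} = (v\otimes v)_{(i,k),(j,l)}$, so that the corepresentation $\co{\widetilde{v}}\otimes\widetilde{v}$ of $\widetilde{H}_{N}^{+}$ is literally the same matrix $m = v\otimes v$ as the corepresentation $v\otimes v$ of $H_{N}^{+}$, with entries lying in $C(H_{N}^{+})\cap C(\widetilde{H}_{N}^{+})$. Since the relation ``$T$ is a self-intertwiner'', namely $(\id\otimes T)m = m(\id\otimes T)$, involves only the matrix $m$ and not the coproduct, the self-intertwiner algebra of this corepresentation is the same whether computed in $H_{N}^{+}$ or in $\widetilde{H}_{N}^{+}$. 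Its minimal projections therefore coincide and can be matched one by one: the trivial part is the (coproduct-free) projection onto fixed vectors of $m$, and the remaining two pieces are distinguished by the dimensions $N-1$ and $N(N-1)$ of their ranges. This identifies the projection carrying $u^{\co{x}\ast x}$ with the projection $P_{e}$ carrying $u^{e}$, whence $u^{\co{x}\ast x}$ and $u^{e}$ have exactly the same coefficients and $B = B_{0}\cong C(S_{N}^{+})$. Finally each such coefficient is a combination of the elements $v_{ij}v_{kl} = \widetilde{v}_{ij}^{*}\widetilde{v}_{kl}\in C(\widetilde{H}_{N}^{+})$, on which the coproducts of $\widetilde{H}_{N}^{+}$ and of $H_{N}^{+}$ both agree with the one of $C(H_{N}^{+})\ast C(\mathbb{T})$; hence the restricted coproduct on $B$ is that of $S_{N}^{+}$.

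The case of $A$ is the same up to a global conjugation by $z$. Here $(\widetilde{v}\otimes\co{\widetilde{v}})_{(i,k),(j,l)} = zv_{ij}v_{kl}z^{*}$, so $\widetilde{v}\otimes\co{\widetilde{v}} = (z\otimes 1)\,m\,(z^{*}\otimes 1)$ with $m = v\otimes v$. As $z\otimes 1$ commutes with every $\id\otimes T$, the self-intertwiner relation for $\widetilde{v}\otimes\co{\widetilde{v}}$ reduces again to that for $m$, so the projection onto $u^{x\ast\co{x}}$ is once more $P_{e}$. Consequently each coefficient of $u^{x\ast\co{x}}$ equals $zbz^{*}$ for the corresponding coefficient $b$ of $u^{e}$, so that $A = zB_{0}z^{*}$. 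Conjugation by the unitary $z$ is a $*$-isomorphism, giving $A\cong B_{0}\cong C(S_{N}^{+})$; and applying $\D(z) = z\otimes z$ one checks that the map $zbz^{*}\mapsto b$ turns the representation identity $\D(u^{x\ast\co{x}}_{\mu\nu}) = \sum_{\rho}u^{x\ast\co{x}}_{\mu\rho}\otimes u^{x\ast\co{x}}_{\rho\nu}$ into the corresponding identity for $u^{e}$, so the restricted coproduct on $A$ is that of $S_{N}^{+}$ as well.

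The delicate point, and the only one that is not pure bookkeeping, is the identification of the projection carrying $u^{\co{x}\ast x}$ (resp.\ $u^{x\ast\co{x}}$) with the $H_{N}^{+}$-projection $P_{e}$ onto $u^{e}$. The mechanism making this work is that the intertwiner relation is a condition on the corepresentation matrix alone, so that passing to the free complexification leaves the self-intertwiner algebra of $v\otimes v$ unchanged up to the harmless conjugation by $z\otimes 1$; the isotypic decompositions then coincide and are matched piece by piece by dimension. An alternative, more categorical route would be to observe that $\widetilde{H}_{N}^{+}$ is the partition quantum group attached to $\CC_{\ZZ}$, compute $\Mor(u^{\otimes w}, u^{\otimes w'})$ for $w, w'$ words in the single colour $x\ast\co{x}$ via Theorem \ref{thm:partitionqg}, and check that these span the maps $T_{p}$ for \emph{all} noncrossing $p$ (every block coloured by the idempotent self-conjugate element $x\ast\co{x}$ being automatically $f$-invariant), thereby recovering the defining category of $S_{N}^{+}$ and concluding by Theorem \ref{thm:tannaka}.
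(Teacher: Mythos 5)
Your proof is correct, but it takes a genuinely different route from the paper's. The paper argues \emph{downward}: it uses the canonical surjection $\pi : C(\widetilde{H}_{N}^{+})\to C(S_{N}^{+})$, notes that $\pi$ sends $u^{x\ast\co{x}}$ and $u^{\co{x}\ast x}$ onto the copy of the fundamental representation of $S_{N}^{+}$ inside $v\otimes v$, so that $\pi(A)=\pi(B)=C(S_{N}^{+})$, and then gets injectivity of $\pi\vert_{A}$ and $\pi\vert_{B}$ from the fusion rules (the sub-semiring generated by the idempotent element $x\ast\co{x}$ is exactly the fusion semiring of $S_{N}^{+}$) together with Peter--Weyl orthogonality. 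You instead go \emph{upward} into the free-complexification model $C(\widetilde{H}_{N}^{+})\subset C(H_{N}^{+})\ast C(\mathbb{T})$, observe that $\co{\widetilde{v}}\otimes\widetilde{v}$ and $v\otimes v$ are literally the same corepresentation matrix (and $\widetilde{v}\otimes\co{\widetilde{v}}$ its conjugate by $z\otimes 1$), so that their endomorphism algebras and minimal projections coincide, and then reduce everything to Lemeux's result for $H_{N}^{+}$, handling $A$ by the unitary conjugation $b\mapsto zbz^{*}$. Both arguments are sound; yours buys the sharper statements $B=B_{0}$ (an equality of subalgebras of the free product, not merely an isomorphism) and $A=zB_{0}z^{*}$, at the price of invoking the concrete free-complexification realization. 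The only step you should make explicit is why the rank-$(N-1)$ minimal projection of $\End(v\otimes v)$ carries $u^{\co{x}\ast x}$ rather than $u^{\co{x}x}$: the dimension count presupposes $\dim u^{\co{x}\ast x}=N-1$. This is standard, and in fact follows with no computation from the paper's own framework, since both $u^{\co{x}\ast x}$ and the nontrivial part of $u^{e}$ are by definition carried by the projection $P_{p}=T'_{p}-\bigvee_{q\prec p}T'_{q}$ attached to the one-block projective partition on $2+2$ points, and $T_{p}$ forgets colours, so the two projections are the same operator --- this is exactly what your closing ``categorical'' remark points to, and it closes that loop cleanly.
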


\begin{proof}
Let $\pi : C(\widetilde{H}_{N}^{+}) \rightarrow C(S_{N}^{+})$ be the canonical surjection. It sends $u^{x}\otimes u^{\co{x}}$ to $v\otimes v$, where $v$ is the fundamental representation of $S_{N}^{+}$, and it is easy to see that the subrepresentations $u^{x\ast \co{x}}$ and $u^{\co{x}\ast x}$ are sent to the unique copy of $v$ inside $v\otimes v$. From this and the fusion rules, it is straightforward that $\pi$ restricts to isomorphisms on both $A$ and $B$. Since $\pi$ commutes with the coproducts, $(A, \D)$ and $(B, \D)$ are both isomorphic to $S_{N}^{+}$.
\end{proof}

\begin{rem}
One can even prove that the C*-subalgebras $A$ and $B$ are free in $C(\widetilde{H}_{N}^{+})$ with respect to the Haar state. In fact, they generate a copy of $C(S_{N}^{+}\ast S_{N}^{+})$.
\end{rem}

For a group $\Lambda$ and an integer $n$, let us consider a family $(\G_{i})_{1\leqslant i\leqslant n}$ of copies of $\widetilde{H}_{N}^{+}$ and define
\begin{equation*}
Z_{N}^{+}(\Lambda, n) = (\ast_{S_{N}^{+}} \G_{i}) \underset{S_{N}^{+}}{\ast} \left(\h{\Lambda}\wr_{\ast}S_{N}^{+}\right)
\end{equation*}
Note that $Z_{N}^{+}(\Lambda, 0) = \h{\Lambda}\wr_{\ast}S_{N}^{+}$ and that $Z_{N}^{+}(\{e\}, 1) = \widetilde{H}_{N}^{+}$. Our decomposition will rely on an elementary computation.

\begin{lem}\label{lem:amalgamation}
Let $N\geqslant 4$ be an integer, let $\CC$ be a category of partitions, let $\G$ be the associated partition quantum group and let $x, y, z\in \A$. If $\pi(xy, z)\in \CC$, then $u^{x}_{ik}u^{y}_{jl} = \delta_{i, j}\delta_{k, l} u^{z}_{ik}$ for all $1\leqslant i, j, k, l\leqslant N$.
\end{lem}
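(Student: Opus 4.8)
The plan is to read the relation straight off the intertwining property that $\pi(xy,z)$ encodes. By Theorem \ref{thm:partitionqg}, the hypothesis $\pi(xy,z)\in\CC$ means exactly that $T:=T_{\pi(xy,z)}$ lies in $\Mor(u^x\otimes u^y,u^z)$, so that $(\ii\otimes T)(u^x\otimes u^y)=u^z(\ii\otimes T)$. First I would make $T$ explicit. Since $\pi(xy,z)$ is the single block joining the two upper points to the unique lower point, the defining formula for $T_{p}$ gives $T(e_i\otimes e_j)=\delta_{ij}e_i$; as a map it sends $e_i\otimes e_i\mapsto e_i$ and kills $e_i\otimes e_j$ for $i\neq j$.

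Then I would expand both sides of the intertwining identity in matrix units and compare coefficients. Writing $u^x\otimes u^y=\sum u^x_{ij}u^y_{kl}\otimes e_{ij}\otimes e_{kl}$ and using that $T$ collapses the two upper legs onto their common index, the left-hand side becomes $\sum_{i,j,l}u^x_{ij}u^y_{il}\otimes e_{i,(jl)}$ while the right-hand side becomes $\sum_{a,b}u^z_{ab}\otimes e_{a,(bb)}$. Matching the coefficient of each elementary operator $e_{i,(jl)}$ yields
$$ u^x_{ik}u^y_{il}=\delta_{kl}\,u^z_{ik}, $$
which is exactly the asserted identity in the case $i=j$. Running the same computation for the adjoint intertwiner $T^{*}=T_{\pi(z,xy)}\in\Mor(u^z,u^x\otimes u^y)$ (using $T_p^*=T_{p^*}$ and $\pi(xy,z)^*=\pi(z,xy)\in\CC$) produces the symmetric relation $u^x_{ik}u^y_{jk}=\delta_{ij}\,u^z_{ik}$, i.e. the case $k=l$.

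It remains to treat the genuinely off-diagonal products $u^x_{ik}u^y_{jl}$ with $i\neq j$ and $k\neq l$, and I expect this to be the only real difficulty: each of the two intertwiner computations pins down a single Kronecker delta, and neither constrains this last case directly. Here I would bring in the unitarity of the basic representations — the orthonormality of the rows and columns of $u^x,u^y,u^z$, coming from the $x$-identity partitions and the duality map lying in $\CC$ — to propagate the two partial relations into the vanishing of the remaining products. Once all four index regimes are checked, the stated identity $u^x_{ik}u^y_{jl}=\delta_{ij}\delta_{kl}\,u^z_{ik}$ follows. The bookkeeping in the first two paragraphs is routine; the delicate point, where the argument must be made precise, is squeezing out both Kronecker symbols \emph{simultaneously} rather than one at a time.
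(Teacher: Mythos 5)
Your first two computations are correct and essentially reproduce the paper's own argument: evaluating the intertwining relation for $T_{\pi(xy,z)}$ on basis vectors gives $u^{x}_{ik}u^{y}_{jk}=\delta_{ij}u^{z}_{ik}$, and the adjoint relation (for $T_{\pi(xy,z)}^{*}=T_{\pi(z,xy)}$) gives $u^{x}_{ik}u^{y}_{il}=\delta_{kl}u^{z}_{ik}$. Up to a choice of convention for which leg the representation acts on, these are exactly the two identities displayed in the paper's proof.

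The step you defer to the end — the products $u^{x}_{ik}u^{y}_{jl}$ with $i\neq j$ \emph{and} $k\neq l$ — is a genuine gap, and your proposed repair via unitarity cannot close it, because in that regime the asserted identity is false. Take $\A=\{x\}$ with $\overline{x}=x$, let $\CC=NC$ be all noncrossing one-coloured partitions and $N\geqslant 4$, so that $\G=S_{N}^{+}$ and $u^{x}=v$ is the magic unitary; then $\pi(xx,x)\in\CC$, yet $v_{11}v_{22}\neq 0$ in $C(S_{N}^{+})$ — it is already nonzero in the abelian quotient $C(S_{N})$, where it is the indicator function of the permutations fixing $1$ and $2$. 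Unitarity of the basic representations yields precisely the row and column orthogonality relations you already have and nothing beyond them. For comparison, the paper's proof disposes of this case by invoking $\pi(xy,xy)=\pi(xy,z)^{*}\pi(xy,z)\in\CC$, but the intertwining relation for $T_{\pi(xy,xy)}$ again only forces $u^{x}_{ik}u^{y}_{jl}=0$ when exactly one of the equalities $i=j$, $k=l$ holds, so the paper is defective at the very same point. The provable (and, for the application in Lemma \ref{lem:zn}, sufficient) content of the statement is the pair of one-delta relations you derived, which identify the subrepresentation of $u^{x}\otimes u^{y}$ carried by $T_{\pi(xy,xy)}$ with $u^{z}$; the correct move is to weaken the conclusion to those relations rather than to try to extract both Kronecker symbols simultaneously.
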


\begin{proof}
Set $T = T_{\pi(xy, z)}$. We have
\begin{equation*}
u^{z}\circ(\Id\otimes T)(e^{x}_{i}\otimes e^{y}_{j}) = \delta_{i, j}\sum_{k} u^{z}_{ik}\otimes e^{z}_{k}
\end{equation*}
and
\begin{equation*}
(\Id\otimes T)\circ (u^{x}\otimes u^{y})(e^{x}_{i}\otimes e^{y}_{j}) = \sum_{k}u^{x}_{ik}u^{y}_{jk}\otimes e^{z}_{k}.
\end{equation*}
If $\pi(xy, z)\in \CC$, then $T$ is an intertwiner and the two equations above must yield the same result. Because $\pi(xy, xy) = \pi(xy, z)^{*}\pi(xy, z)\in \CC$, a similar computation shows that $u^{x}_{ik}u^{y}_{jl} = 0$ as soon as $i\neq j$ or $k\neq l$. Combining the two equalities yields the statement.
\end{proof}

\begin{lem}\label{lem:zn}
The quantum group $Z_{N}^{+}(\Lambda, n)$ is a partition quantum group. Moreover, the fusion set of $Z_{N}^{+}(\Lambda_{j}, n_{j})$ is isomorphic to $S_{j}$.
\end{lem}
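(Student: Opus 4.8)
The plan is to realise $Z_N^+(\Lambda, n)$ as the partition quantum group $\G$ associated to the category $\CC_{S_j}$ and the integer $N$, where $S_j$ is the fusion set of Proposition \ref{prop:presentationfusionsetbis}, generated by the group $\Lambda$ and elements $b_1, \dots, b_n$ subject to $b_k \ast \co{b}_k = e_\Lambda$. By Lemma \ref{lem:category} the category $\CC_{S_j}$ is a block-stable category of noncrossing partitions, so Theorem \ref{thm:freefusion} together with Theorem \ref{thm:fullclassification} already identifies the fusion set of $\G$ with $S(\CC_{S_j}) \simeq S_j$. Thus the second assertion follows at once from the first, and the whole task reduces to proving $\G \simeq Z_N^+(\Lambda, n)$, the degenerate case $n=0$ being exactly the identification of the partition quantum group of a group with $\h{\Lambda}\wr_\ast S_N^+$ from Proposition \ref{prop:firstfreeproddecomposition}.

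First I would locate the building blocks of the amalgamated free product inside $\G$. Restricting $\CC_{S_j}$ to the partitions all of whose points are coloured by elements of $\Lambda$ recovers precisely the category of partitions of $\h{\Lambda}\wr_\ast S_N^+$ (the $f$-invariance of a block becomes the condition that the product of the upper colours equals that of the lower ones), so by Theorem \ref{thm:partitionqg} the representations $(u^\lambda)_{\lambda \in \Lambda}$ generate a copy of $\h{\Lambda}\wr_\ast S_N^+$. Likewise, for each $k$ the restriction of $\CC_{S_j}$ to the colours $b_k, \co{b}_k, b_k \ast \co{b}_k = e_\Lambda$ and $\co{b}_k \ast b_k$ is the category of partitions of the fusion set $\ZZ$, so $u^{b_k}, u^{\co{b}_k}$ generate a copy of $\widetilde{H}_N^+$. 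Now $\pi(b_k\co{b}_k, e_\Lambda) \in \CC_{S_j}$ by Lemma \ref{lem:categorygenerators}, and Lemma \ref{lem:amalgamation} turns this into $u^{b_k}_{ia}u^{\co{b}_k}_{jb} = \delta_{ij}\delta_{ab}u^{e_\Lambda}_{ia}$; comparing with the preceding lemma, this shows that the copy of $C(S_N^+)$ generated inside the $k$-th $\widetilde{H}_N^+$ by the coefficients of $u^{b_k \ast \co{b}_k}$ coincides with the copy generated inside $\h{\Lambda}\wr_\ast S_N^+$ by the coefficients of $u^{e_\Lambda}$. Hence all $n+1$ subalgebras meet pairwise exactly along this common $C(S_N^+)$, and the universal property of the amalgamated free product yields a surjective morphism of compact quantum groups $C(Z_N^+(\Lambda, n)) \twoheadrightarrow C(\G)$ sending the standard generators to the $u^\lambda$ and the $u^{b_k}$.

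The main obstacle is to prove that this surjection is an isomorphism, that is, that the amalgamation over $S_N^+$ introduces no relations beyond those recorded by $\CC_{S_j}$. I would establish this by comparing representation categories through Theorem \ref{thm:tannaka}: on the side of $\G$ the spaces $\Mor(u^{\otimes w}, u^{\otimes w'})$ are spanned by the maps $T_p$ with $p \in \CC_{S_j}(w, w')$, and these are linearly independent since $N \geqslant 4$ and $\CC_{S_j}$ is noncrossing (Proposition \ref{prop:linearindependence}). It then suffices to check that the intertwiner spaces of the amalgamated free product $Z_N^+(\Lambda, n)$ are spanned by exactly the same family, equivalently that its fusion rules coincide with those of the free fusion semiring $R^+(S_j)$. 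This is where the genuine difficulty lies: one must show that the only intertwiners between tensor products of the $u^\lambda$ and $u^{b_k}$ are those forced inside a single factor together with the shared $S_N^+$, with none connecting distinct factors otherwise. I expect to obtain this from the description of the fusion rules of free products amalgamated over $S_N^+$ in the spirit of \cite{lemeux2013fusion}, matched against the free fusion semiring attached to $S_j$ by Proposition \ref{prop:presentationfusionsetbis}; alternatively, one decomposes an arbitrary block-stable noncrossing partition into the generating pieces of Lemma \ref{lem:categorygenerators} (using Lemma \ref{lem:pseudooneblock}) and reads off that every such $T_p$ already lies in the subcategory generated by the factors and the amalgam. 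Matching the two categories and invoking the uniqueness clause of Theorem \ref{thm:tannaka} gives $\G \simeq Z_N^+(\Lambda, n)$, which is therefore a partition quantum group with fusion set $S_j$.
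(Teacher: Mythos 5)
Your setup is sound and you have correctly isolated the two halves of the statement, but the proof is incomplete at exactly the point you yourself flag as ``the main obstacle'': you construct a surjection $C(Z_{N}^{+}(\Lambda,n))\twoheadrightarrow C(\G)$ and then need it to be injective, and for that you only offer expectations (``I expect to obtain this from the description of the fusion rules of free products amalgamated over $S_{N}^{+}$\dots''). Computing the representation theory of the amalgamated free product from scratch is precisely the hard problem this lemma is meant to dispose of, so deferring to it is circular in spirit. Your alternative route --- decomposing each $p\in\CC_{S_{j}}$ into the generating pieces of Lemma \ref{lem:categorygenerators} --- only bounds the intertwiner spaces of $\G$ from above by the category generated by the factors and the amalgam partitions; it does \emph{not} bound the intertwiner spaces of $Z_{N}^{+}(\Lambda,n)$ from above by $\Span\{T_{p}\}$, which is what injectivity of your surjection requires. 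So the key step is genuinely missing.

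The paper runs the argument in the opposite direction, which makes that half disappear. Start from the free product category $\CC'=\CC_{\Lambda}\ast(\ast_{i}\CC_{i})$, whose quantum group is the plain free product $\G'=(\ast_{i}\G_{i})\ast(\h{\Lambda}\wr_{\ast}S_{N}^{+})$ by Proposition \ref{prop:freeproduct}, and let $\CC$ be the category generated by $\CC'$ together with the partitions $\pi(x_{i}\co{x}_{i},e_{\Lambda})$. By Tannaka--Krein duality, enlarging a category of partitions by new generators corresponds to passing to the \emph{universal} quotient of $C(\G')$ in which the added maps become intertwiners, and the computation of Lemma \ref{lem:amalgamation} (which you do invoke, but only to build your surjection) identifies the relations imposed by $T_{\pi(x_{i}\co{x}_{i},e_{\Lambda})}$ as exactly $(u^{x_{i}\ast\co{x}_{i}}\oplus\varepsilon)_{kl}=(u^{e_{\Lambda}}\oplus\varepsilon)_{kl}$, i.e.\ the identification of the relevant copies of $C(S_{N}^{+})$. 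The universal quotient of the free product by these identifications \emph{is}, by definition, the amalgamated free product $Z_{N}^{+}(\Lambda,n)$; no knowledge of its fusion rules is needed. The second assertion then follows from Proposition \ref{prop:presentationfusionsetbis}, since the fusion set of $\CC$ is by construction generated by $\Lambda_{j}$ and the $x_{i}$ subject to $x_{i}\ast\co{x}_{i}=e_{\Lambda_{j}}$. If you want to salvage your own direction of argument, the missing ingredient is precisely this universal-quotient reading of ``adding a partition to a category'': it is what converts your one-sided surjection into an isomorphism.
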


\begin{proof}
For $1\leqslant i\leqslant n$, let $\CC_{i}$ be a copy of the category of partitions $\CC^{\circ, \bullet}_{alt}$ corresponding to the quantum group $\widetilde{H}_{N}^{+}$ and consider the category of partitions $\CC'$ obtained by taking the free product of $\CC_{\Lambda}$ and $\CC_{i}$ for $1\leqslant i\leqslant n$. The associated partition quantum group is, by Lemma \ref{lem:freeproduct},
\begin{equation*}
\G' = (\ast_{i=1}^{n}\G_{i}) \ast \left(\h{\Lambda}\wr_{\ast}S_{N}^{+}\right).
\end{equation*}
Let $\CC$ be the category of partitions generated by $\CC'$ and the partitions $\pi(x_{i}\co{x}_{i}, e_{\Lambda})$ for $1\leqslant i\leqslant n$. By Lemma \ref{lem:amalgamation}, passing from $\CC'$ to $\CC$ amounts to quotienting $C(\G')$ by the relations
\begin{equation*}
(u^{x\ast \co{x}}\oplus\varepsilon)_{ij} = (u^{e}\oplus\varepsilon)_{ij}
\end{equation*}
for all $i, j$, where $\varepsilon$ denotes the trivial representation. This is precisely the amalgamation defining $Z_{N}^{+}(\Lambda, n)$, hence the first assertion. The fusion set $S$ associated to $Z_{N}^{+}(\Lambda_{j}, n_{j})$ is by construction generated by $\Lambda_{j}$ and elements $(x_{i})_{1\leqslant i\leqslant n_{j}}$ with the relations $x_{i}\ast\co{x}_{i}\in \Lambda_{j}$. Thus, by Proposition \ref{prop:presentationfusionsetbis}, it is $S_{j}$.
\end{proof}

We are now ready to give a general structure result. For an integer $N\geqslant 4$, let $\mathcal{F}_{N}$ be the collection of partition quantum groups consisting in $O_{N}^{+}$, $U_{N}^{+}$ and $Z_{N}^{+}(\Lambda, n)$ for all groups $\Lambda$ and all integers $n$.

\begin{thm}\label{thm:freeproduct}
Let $S$ be an admissible fusion set and let $N\geqslant 4$ be an integer. Then, the partition quantum group associated to $S$ and $N$ is a free product of elements of $\mathcal{F}_{N}$.
\end{thm}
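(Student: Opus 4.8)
The plan is to assemble the structural results of this section into a single free product decomposition, reading off the elementary building blocks from the fusion triple classification of Theorem \ref{thm:classification}. The crucial mechanism is Lemma \ref{lem:freeproduct}, which turns a disjoint union of fusion sets into a free product of the corresponding partition quantum groups. It therefore suffices to decompose $S$ as an iterated disjoint union of fusion sets, each of which is realized by a member of $\mathcal{F}_N$, and then to apply Lemma \ref{lem:freeproduct} repeatedly.

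First I would apply Proposition \ref{prop:firstdecomposition} to write $S$ as the disjoint union of $\HH(S)$ together with $n_\OO$ copies of $\OO$ and $n_\UU$ copies of $\UU$. By the computation recorded in Proposition \ref{prop:firstfreeproddecomposition}, the partition quantum groups associated to the copies of $\OO$ and $\UU$ are $O_N^+$ and $U_N^+$ respectively, and these already belong to $\mathcal{F}_N$. This reduces the problem to analysing $\HH(S)$.

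Next I would decompose $\HH(S)$. By Proposition \ref{prop:gammaprime} it splits as the disjoint union $\Gamma'(S) \sqcup \HH'(S)$. The first factor $\Gamma'(S)$ is a disjoint union of groups $\sqcup_{i \notin J}\Gamma_i$, and the partition quantum group associated to a single group $\Gamma$ is the free wreath product $\h{\Gamma}\wr_* S_N^+ = Z_N^+(\Gamma, 0)$, hence a member of $\mathcal{F}_N$. For the second factor, Lemma \ref{lem:connectedcomponentsdisjoint} together with Lemma \ref{lem:connectedpresentation} gives a disjoint union $\HH'(S) = \sqcup_{k \in K} S_k$ into connected components, and by Lemma \ref{lem:zn} the partition quantum group associated to each $S_k$ (equal to some $S_j$ with group $\Lambda_j$ and parameter $n_j = \vert \mathcal{R}_j\vert - 1$) is precisely $Z_N^+(\Lambda_j, n_j) \in \mathcal{F}_N$.

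Combining these decompositions and invoking Lemma \ref{lem:freeproduct} at each stage, the partition quantum group associated to $S$ becomes the free product of copies of $O_N^+$, copies of $U_N^+$, the free wreath products $Z_N^+(\Gamma_i, 0)$ for $i \notin J$, and the quantum groups $Z_N^+(\Lambda_j, n_j)$ for $k \in K$, all of which lie in $\mathcal{F}_N$. Since every step is a direct application of an already established result, there is no serious obstacle; the only point requiring care is to check that the successive disjoint union decompositions of this subsection are mutually compatible, so that Lemma \ref{lem:freeproduct} may legitimately be iterated, and to observe that the groups in $\Gamma'(S)$ are correctly absorbed into $\mathcal{F}_N$ through the identity $Z_N^+(\Lambda, 0) = \h{\Lambda}\wr_* S_N^+$.
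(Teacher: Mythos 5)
Your proof is correct and follows exactly the route of the paper, which simply cites the combination of Proposition \ref{prop:firstfreeproddecomposition}, Lemma \ref{lem:zn} and Theorem \ref{thm:classification}; you have merely unpacked that one-line argument into its constituent steps. No gaps.
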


\begin{proof}
This is the combination of Proposition \ref{prop:firstfreeproddecomposition}, Lemma \ref{lem:zn} and Theorem \ref{thm:classification}.
\end{proof}

\begin{rem}\label{rem:haagerupbis}
Using this free product decomposition and the arguments of \cite[Sec 6]{freslon2013fusion}, Remark \ref{rem:haagerup} yields the following statement : let $S$ be a fusion set such that $\Gamma(S)$ is a disjoint union of finite groups, let $N\geqslant 4$ be an integer and let $\G$ be the associated partition quantum group, then $\G$ has the Haagerup property.
\end{rem}

\begin{rem}
It is tempting to conjecture that if $\CC$ is a block-stable category of noncrossing partitions and $N\geqslant 4$ is an integer, then the associated partition quantum group is a free product of elements of $\mathcal{F}_{N}$. However, this is false because $B_{N}^{+}\notin \mathcal{F}_{N}$. 
\end{rem}

It is useful to restate Theorem \ref{thm:freeproduct} in the language of $R^{+}$-deformations, in particular for further applications to monoidal equivalence in Subsection \ref{sec:nonunimodular}.

\begin{cor}
Let $\G$ be a compact quantum group. Then, $\G$ is free if and only if $\G$ is a $R^{+}$-deformation of a free product of elements of $\mathcal{F}_{N}$ for some $N\geqslant 4$.
\end{cor}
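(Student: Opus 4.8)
The plan is to treat the equivalence as a repackaging of the structural results already established, since an $R^{+}$-deformation is by definition just a compact quantum group with isomorphic fusion semiring. Thus the whole statement reduces to Theorem \ref{thm:freeproduct} on one side and a freeness check for free products of the building blocks of $\mathcal{F}_{N}$ on the other.

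For the forward implication I would start from a free $\G$, so that by definition $R^{+}(\G) \simeq R^{+}(S)$ for some fusion set $S$; since $\G$ itself witnesses this isomorphism, $S$ is admissible. Fix any $N \geqslant 4$ and let $\G'$ be the partition quantum group associated to $S$ and $N$. The proof of Theorem \ref{thm:fullclassification} yields $R^{+}(\G') \simeq R^{+}(S) \simeq R^{+}(\G)$, while Theorem \ref{thm:freeproduct} identifies $\G'$ with a free product of elements of $\mathcal{F}_{N}$. Hence $\G$ is an $R^{+}$-deformation of such a free product.

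For the converse, the only point to establish is that a free product of elements of $\mathcal{F}_{N}$ is itself free, for then $R^{+}(\G) \simeq R^{+}(\G'')$ with $\G''$ free immediately forces $R^{+}(\G)$ to be free. Each building block is a partition quantum group with free fusion semiring: $O_{N}^{+}$, $U_{N}^{+}$ and $Z_{N}^{+}(\Lambda, n)$ have fusion sets $\OO$, $\UU$ and $S_{j}$ respectively (the last by Lemma \ref{lem:zn}), all of which are genuine fusion sets. By Lemma \ref{lem:freeproduct} (applied inductively), the free product of the partition quantum groups attached to fusion sets $S_{1}, \dots, S_{k}$ is the partition quantum group attached to the disjoint union $S_{1} \sqcup \dots \sqcup S_{k}$; since a disjoint union of fusion sets is again a fusion set, its fusion semiring is free, and hence so is that of the free product.

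I do not expect a serious obstacle, as the argument is bookkeeping on top of the heavy machinery already in place. The one point requiring care is to check that the isomorphism of fusion semirings produced by Theorem \ref{thm:fullclassification} is indeed the one witnessing the $R^{+}$-deformation relation, and that the passage to an arbitrary $N \geqslant 4$ leaves the fusion semiring unchanged, which is precisely what Theorem \ref{thm:freefusion} together with Theorem \ref{thm:fullclassification} guarantee.
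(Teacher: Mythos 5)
Your argument is correct and is exactly the reasoning the paper leaves implicit: the corollary is stated as a mere restatement of Theorem \ref{thm:freeproduct}, with the forward direction following from Theorem \ref{thm:fullclassification} plus Theorem \ref{thm:freeproduct}, and the converse from the fact that Lemma \ref{lem:freeproduct} realizes a free product of elements of $\mathcal{F}_{N}$ as the partition quantum group of a disjoint union of fusion sets, whose block-stable category gives a free fusion semiring by Theorem \ref{thm:freefusion}. You have simply spelled out the bookkeeping the author omits, so there is nothing to add.
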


\section{Partitions and tensor categories}\label{sec:tensor}

The aim of this final section is to take a step towards generality by considering more general C*-tensor categories built from partitions. In fact, Definition \ref{de:tensorcategory} defines \emph{concrete} C*-tensor categories in the sense that there is a finite-dimensional Hilbert space attached to each object in such a way that morphisms are linear maps between these spaces. Forgetting about this concrete realization leads to the notion of \emph{abstract} C*-tensor category. This generalization has two applications. The first one is the construction of non-unimodular versions of partition quantum groups, which we define and try to classify in Subsection \ref{sec:nonunimodular}. The second one is the construction of categories of representations for arbitrary (free) wreath product, leading to a more general notion of "decorated" partitions where instead of colours we use a whole C*-tensor category to label points and blocks in the partitions. This is explained in Subsection \ref{subsec:wreath}.

\subsection{Non-unimodularity}\label{sec:nonunimodular}

In this subsection we want to investigate non-unimo\-dular versions of partitions quantum groups. A motivation for this is to refine Theorem \ref{thm:freeproduct}, trying to identify all free quantum groups up to isomorphism rather than simply classify them up to $R^{+}$-deformations. As a typical example, let us consider the free unitary quantum groups $U_{F}^{+}$ for invertible matrices $F$ defined in \cite{van1996universal}. It is known by \cite{banica1997groupe} that they all have a free fusion semiring. However, few of them are isomorphic to $U_{N}^{+}$ for some integer $N$, so that we need a broader setting to encompass them. The key observation is that we have so far always been considering \emph{unimodular} quantum groups, when $U_{F}^{+}$ seldom is.

\begin{de}
A compact quantum group $\G$ is said to be \emph{unimodular} if the contragredient of any unitary representation of $\G$ is again a unitary representation.
\end{de}

What forces partition quantum groups to be unimodular is the definition of the maps $T_{p}$. We shall therefore look for other ways of associating operators to partitions in a coherent way to build a concrete C*-tensor category with duals. Let us first define the abstract categories we will be working with.

\begin{de}
Let $\CC$ be a category of $\A$-coloured partitions and let $\theta\in \C$. The \emph{partition C*-tensor category} associated to $\CC$ with parameter $\theta$ is the category $\mathfrak{C}(\CC, \theta)$ whose objects are words on $\A$ and whose morphisms are
\begin{equation*}
\Mor(w, w') = \Span\{p, p\in \CC(w, w')\}
\end{equation*}
with the composition given by
\begin{equation*}
q\circ p = \theta^{\rl(q, p)}qp.
\end{equation*}
Moreover, $\mathfrak{C}(\CC, \theta)$ is endowed with the involution $p\mapsto p^{*}$ and the tensor product $\otimes$, turning it into a C*-tensor category with duals.
\end{de}

\begin{rem}
One may prefer to work with pseudo-abelian categories and consider the pseudo-abelian completion (or Karoubi envelope) of $\mathfrak{C}(\CC, \theta)$ to be the important object. However, since unitary fiber functors can be extended to this completion, we will restrict ourselves to $\mathfrak{C}(\CC, \theta)$ in the sequel.
\end{rem}

An important family of examples of partition C*-tensor categories was introduced and studied by G. Lehrer and R. Zhang in \cite{lehrer2012brauer} under the name of \emph{Brauer categories}. These are in fact the partition categories associated to the category $P_{2}$ of all pair partitions. Another example of this construction was studied by P. Deligne in \cite{deligne2007categorie}. He considered the category $P$ of all partitions, thus obtaining categories "interpolating" the categories of representations of the symmetric groups $S_{N}$. We will come back to generalizations of this construction in Subsection \ref{subsec:wreath}. To apply Tannaka-Krein duality to such a category and obtain a compact quantum group, we first have to make it concrete. This means that we must associate to every object of the category a Hilbert space, in a way compatible with the tensor structure. Such an association is called a \emph{unitary fiber functor}.

\begin{de}
Let $\mathfrak{C}$ be a C*-tensor category with duals. A \emph{unitary fiber functor} is given by a finite-dimensional Hilbert space $H_{\varphi(x)}$ for every object $x$ of $\mathfrak{C}$ and linear maps
\begin{equation*}
\varphi : \Mor(x_{1}\otimes\dots\otimes x_{n}, y_{1}\otimes\dots\otimes  y_{k}) \rightarrow \B(H_{\varphi(x_{1})}\otimes\dots\otimes H_{\varphi(x_{n})}, H_{\varphi(y_{1})}\otimes\dots\otimes H_{\varphi(y_{k})})
\end{equation*}
satisfying
\begin{eqnarray*}
\varphi(1) & = & 1 \\
\varphi(S^{*}) & = & \varphi(S)^{*} \\
\varphi(S\otimes T) & = & \varphi(S)\otimes \varphi(T) \\
\varphi(S\circ T) & = & \varphi(S)\circ \varphi(T)
\end{eqnarray*}
\end{de}

\begin{rem}
Fiber functors on $\mathfrak{C}(P, \theta)$ are not studied in \cite{deligne2007categorie}, whereas in the case of Brauer categories, two unitary fiber functors are built in \cite{lehrer2012brauer}. One on $\mathfrak{C}(P_{2}, N)$, yielding the orthogonal group $O_{N}$ and one on $\mathfrak{C}(P_{2}, -N)$, yielding the symplectic group $Sp_{N}$. In general, there seems to be very few unitary fiber functors on the representation category of classical groups (see the comment on $SU(2)$ at the end of the introduction of \cite{bichon2006ergodic}).
\end{rem}

If $V$ is a finite-dimensional Hilbert space and if $\CC$ is a category of partitions, then there is a unique unitary fiber functor on $\mathfrak{C}(\CC, \dim(V))$ sending a word $w$ to $V^{\otimes w}$ and a partition $p$ to the linear map $T_{p}$. Thus, partition quantum groups can be seen as unitary fiber functors on partition C*-tensor categories. This suggests the following extension of the definition :

\begin{de}\label{de:partitionquantumgroup}
A \emph{generalized partition quantum group} is the compact quantum group associated to a unitary fiber functor on a partition C*-tensor category. If the unitary fiber functor is faithful, then $\G$ is said to be a \emph{faithful} generalized partition quantum group.
\end{de}

Assume that $\G_{1}$ and $\G_{2}$ are associated to \emph{faithful} unitary fiber functors on the same category. Then, they have the same fusion rules, so that they are $R^{+}$-deformations of one another. Since we know by Proposition \ref{prop:linearindependence} that $p\mapsto T_{p}$ is faithful when $\CC$ is noncrossing, this suggests the following conjecture, for which we will give evidence in the remainder of this section :

\begin{conj}\label{conj:completeclassificationfree}
Let $\G$ be a free compact quantum group. Then, $\G$ is a faithful generalized noncrossing partition quantum group in the sense of Definition \ref{de:partitionquantumgroup}.
\end{conj}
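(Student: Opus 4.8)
The plan is to lift the free product decomposition of Theorem~\ref{thm:freeproduct} from the level of fusion semirings to the level of fiber functors, realizing each factor explicitly. Since $\G$ is free, Theorem~\ref{thm:fullclassification} provides a fusion set $S$ with $R^{+}(\G)\simeq R^{+}(S)\simeq R^{+}(\CC_{S})$, and Theorem~\ref{thm:freeproduct} shows that the unimodular partition quantum group $\G_{0}$ associated to $\CC_{S}$ and some $N\geqslant 4$ is a free product of copies of $O_{N}^{+}$, $U_{N}^{+}$ and of the groups $Z_{N}^{+}(\Lambda,n)$; by the corollary following Theorem~\ref{thm:freeproduct}, $\G$ is an $R^{+}$-deformation of $\G_{0}$. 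The task is therefore to upgrade this relation, which \emph{a priori} only records an isomorphism of fusion semirings, into the statement that $\G$ is the quantum group of a \emph{faithful} unitary fiber functor on the noncrossing category $\mathfrak{C}(\CC_{S},\theta)$ for a suitable parameter $\theta$. Note that two faithful fiber functors on the same category yield monoidally equivalent quantum groups, so the conjecture is really asserting that every free quantum group is monoidally equivalent to a free partition quantum group, and that the monoidal equivalence is witnessed by a fiber functor.

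First I would establish that faithful generalized noncrossing partition quantum groups are stable under free products, so as to reduce to the building blocks. At the level of abstract categories one has $\mathfrak{C}(\CC_{1},\theta)\ast\mathfrak{C}(\CC_{2},\theta)=\mathfrak{C}(\CC_{1}\ast\CC_{2},\theta)$, the categorical counterpart of Proposition~\ref{prop:freeproduct}. One then needs that a free product of faithful unitary fiber functors is again a faithful unitary fiber functor whose associated quantum group is the free product of the two factors; this should follow from the universal property of the free product of compact quantum groups together with the description of the morphism spaces in a free product category. Granting this, it suffices to realize each building block, with its relevant non-unimodular deformations, as a faithful fiber functor.

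For the building blocks the fiber functor is obtained by deforming the \emph{duality maps} rather than the composition rule. For the orthogonal factor one sends the pair partition $\sqcup$ to the twisted vector $\xi_{F}=\sum_{i,j}F_{ij}\,e_{i}\otimes e_{j}$ attached to an invertible matrix $F$ with $F\co{F}=\pm 1$, working over the noncrossing pair partitions; verifying the fiber functor axioms reduces to a single compatibility relating the loop scalar $\theta$ to $F$ (concretely $\theta=\pm\Tr(F^{*}F)$ after normalisation), and the resulting quantum group is $O_{F}^{+}$. The two-coloured noncrossing pair partitions produce $U_{F}^{+}$ in the same fashion, and these are exactly the non-unimodular deformations of $U_{N}^{+}$ from \cite{van1996universal}. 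Faithfulness is automatic from Proposition~\ref{prop:linearindependence}, since $\CC_{S}$ is noncrossing and $N\geqslant 4$. The factors $Z_{N}^{+}(\Lambda,n)$, built by amalgamation from $\widetilde{H}_{N}^{+}$ and the free wreath products $\h{\Lambda}\wr_{\ast}S_{N}^{+}$ as in Lemma~\ref{lem:zn}, require the analogous deformed fiber functors on their defining categories, which is the least standard ingredient.

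The main obstacle is precisely this last point together with its converse: one must show that \emph{every} $R^{+}$-deformation of a free partition quantum group which is itself a compact quantum group arises from a genuine unitary fiber functor on the same noncrossing category, i.e. that for free quantum groups the loose notion of $R^{+}$-deformation collapses onto the rigid notion of a fiber functor. This amounts to a classification of unitary fiber functors on noncrossing partition C*-tensor categories, a statement well beyond the elementary combinatorics of the previous sections; it is the content of the technical conjecture invoked in this subsection, and the reason the present statement is offered as a conjecture rather than a theorem.
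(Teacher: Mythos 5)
The statement you are asked to prove is stated in the paper as a \emph{conjecture}, not a theorem: the paper offers no proof, only evidence, and your proposal correctly ends up in the same place. Your outline reproduces the paper's intended strategy almost exactly --- reduce via the free product decomposition of Theorem \ref{thm:freeproduct} to the building blocks $O_{N}^{+}$, $U_{N}^{+}$ and $Z_{N}^{+}(\Lambda,n)$, invoke the known classification of $R^{+}$-deformations of $O_{N}^{+}$, $U_{N}^{+}$ and $S_{N}^{+}$ (as $O_{F}^{+}$, $U_{F}^{+}$ and $\Aut(B,\psi)$ respectively), and then handle the remaining factors. The one piece of genuine mathematical content you would still need, and which you correctly flag as missing, is precisely Conjecture \ref{conj:monoidalequivalence}: that every $R^{+}$-deformation of $\h{\Gamma}\wr_{\ast}S_{N}^{+}$, of $\widetilde{H}_{N}^{+}$ and of $Z_{N}^{+}(\Lambda,n)$ arises from a faithful unitary fiber functor on the corresponding noncrossing category (equivalently, is of the conjectured explicit form $\h{\Gamma}\wr_{\ast}\Aut(B,\psi)$, $\widetilde{H}^{+}(B,\psi)$ or $Z^{+}(\Lambda,B,\psi,n)$). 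This is an open classification problem for fiber functors, not something that follows from the combinatorics developed earlier in the paper, so your argument cannot be completed as written; the paper itself only derives the statement conditionally, in Corollary \ref{cor:completeclassification}.

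One further step you pass over too quickly: even granting the building blocks, you must show that a quantum group $\G$ whose fusion semiring is that of a free product actually \emph{decomposes} as a free product of quantum groups having the fusion semirings of the factors --- stability of faithful fiber functors under free products only gives the easy direction. The paper's Corollary \ref{cor:completeclassification} supplies this via the quantum subgroups $\G_{i}'$ generated by the representations of $\G$ corresponding to the fundamental representations of the factors, together with the fact that a surjective $*$-homomorphism $C(\ast_{i}\G_{i}')\rightarrow C(\G)$ intertwining the coproducts and inducing a dimension-preserving isomorphism of fusion semirings is an isomorphism (\cite[Lem 5.3]{banica1999representations}). You would need to incorporate that argument explicitly for your reduction to the building blocks to be legitimate.
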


To tackle this conjecture, one has to understand the link between quantum groups coming from faithful unitary fiber functors on the same category. This is captured by the notion of monoidal equivalence, introduced in the setting of compact quantum groups by J. Bichon, A. de Rijdt and S. Vaes in \cite{bichon2006ergodic}.

\begin{de}
Two compact quantum groups $\G_{1}$ and $\G_{2}$ are \emph{monoidally equivalent} if they come from two faithful unitary fiber functors on the same C*-tensor category with duals.
\end{de}

It is clear that being isomorphic is stronger than being monoidally equivalent, which is in turn stronger than being a $R^{+}$-deformation. Conjecture \ref{conj:completeclassificationfree} is in a sense of converse of this for free quantum groups. Here are known results in that direction. The first two assertions come from \cite[Thm 2]{banica1996theorie}, \cite[Thm 2]{banica1997groupe} and \cite[Thm 5.3 and Thm 6.2]{bichon2006ergodic} and the third one is \cite[Thm 1.1]{mrozinski2013quantum} :
\begin{itemize}
\item If $\G$ is a $R^{+}$-deformation of $O_{N}^{+}$, then it is monoidally equivalent to $SU_{q}(2)$ for some $q$ and isomorphic to $O_{F}^{+}$ for some invertible matrix $F$ (see \cite{wang1995free} and \cite{van1996universal} for the definition).
\item If $\G$ is a $R^{+}$-deformation of $U_{N}^{+}$, then it is monoidally equivalent to the free complexification of $SU_{q}(2)$ for some $q$ and isomorphic to $U_{F}^{+}$ for some invertible matrix $F$ (see \cite{wang1995free} and \cite{van1996universal} for the definition).
\item If $\G$ is a $R^{+}$-deformation of $S_{N}^{+}$, then it is monoidally equivalent to $SO_{q}(3)$ for some $q$ and isomorphic to the quantum automorphism group $\Aut(B, \psi)$ of some finite-dimensional C*-algebra $B$ with a $\delta$-form $\psi$ (see \cite{wang1998quantum} and \cite{banica1999symmetries} for the definition).
\end{itemize}

Considering the class of quantum groups $\mathcal{F}_{N}$ of Theorem \ref{thm:freeproduct}, we see that the only basic case missing is $Z_{N}^{+}(\Lambda, n)$. It is reasonable (in particular in view of \cite{pittau2014}) to make the following conjecture, which we split into three cases :

\begin{conj}\label{conj:monoidalequivalence}
Let $\G$ be a compact quantum group.
\begin{itemize}
\item If $\G$ is a $R^{+}$-deformation of $\h{\Gamma}\wr_{\ast} S_{N}^{+}$, then it is monoidally equivalent to $\h{\Gamma}\wr_{\ast}SO_{q}(3)$ for some $q$ and isomorphic to $\h{\Gamma}\wr_{\ast}\Aut(B, \psi)$ for some finite-dimensional C*-algebra $B$ with a $\delta$-form $\psi$.
\item If $\G$ is a $R^{+}$-deformation of $\widetilde{H}_{N}^{+}$, then it is monoidally equivalent to the free complexification of $\h{\Z}_{2}\wr_{\ast}SO_{q}(3)$ for some $q$ and isomorphic to the free complexification $\widetilde{H}^{+}(B, \psi)$ of $\h{\Z}_{2}\wr_{\ast}\Aut(B, \psi)$ for some finite-dimensional C*-algebra $B$ with a $\delta$-form $\psi$.
\item If $\G$ is a $R^{+}$-deformation of $Z_{N}^{+}(\Lambda, n)$, then it is isomorphic to
\begin{equation*}
Z^{+}(\Lambda, B, \psi, n) = (\ast_{\Aut(B, \psi)}^{n}\G_{i})\underset{\Aut(B, \psi)}{\ast}\left(\h{\Lambda}\wr_{\ast}\Aut(B, \psi) \right),
\end{equation*}
where $\G_{i} = \widetilde{H}^{+}(B, \psi)$.
\end{itemize}
\end{conj}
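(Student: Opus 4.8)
The plan is to complete the classification of $R^{+}$-deformations of the building blocks of $\mathcal{F}_{N}$ beyond the already-settled cases $O_{N}^{+}$ and $U_{N}^{+}$, by reducing every case to the known description of $R^{+}$-deformations of $S_{N}^{+}$: these are exactly the quantum automorphism groups $\Aut(B,\psi)$ of finite-dimensional C*-algebras with a $\delta$-form, all monoidally equivalent to some $SO_{q}(3)$ (third bullet of the list of known results, from \cite{mrozinski2013quantum} and \cite{bichon2006ergodic}). Since all three bullets concern $Z_{N}^{+}(\Lambda,n)$ — the first being $n=0$, the second being $\Lambda=\{e\}$, $n=1$, and the third being the general amalgam — the conceptual backbone is the Bichon--De Rijdt--Vaes correspondence: monoidally equivalent quantum groups are precisely those sharing an abstract C*-tensor category with duals, while the individual quantum groups inside one monoidal equivalence class correspond to unitary fiber functors on that category. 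For each block I would (i) identify its abstract tensor category, (ii) show that every $R^{+}$-deformation has a tensor category of the same shape, with the $S_{N}^{+}$-part replaced by an $R^{+}$-deformation of $S_{N}^{+}$, and (iii) read off the fiber functors to obtain the claimed isomorphism type.

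For the first bullet the crucial step is a \emph{monoidal stability of free wreath products}: if two quantum permutation groups $\G$ and $\G'$ (both $R^{+}$-deformations of $S_{N}^{+}$) are monoidally equivalent, then $\h{\Gamma}\wr_{\ast}\G$ and $\h{\Gamma}\wr_{\ast}\G'$ are monoidally equivalent. Using the description of the representation category of a free wreath product from \cite{lemeux2013fusion}, refined in \cite{pittau2014}, one would show that this category depends on $\G$ only through its tensor category and not through its fiber functor, so that the construction descends to monoidal equivalence classes. Granting this, given an $R^{+}$-deformation $\G$ of $\h{\Gamma}\wr_{\ast}S_{N}^{+}$ one must recognize the free wreath product structure, with the \emph{same} decorating group $\Gamma$, from the tensor category alone; since the fusion ring is the free fusion ring of the fusion set of $\h{\Gamma}\wr_{\ast}S_{N}^{+}$ (Theorem \ref{thm:freefusion}), the group $\Gamma$ is intrinsic and the $S_{N}^{+}$-part is forced to be an $R^{+}$-deformation of $S_{N}^{+}$, hence an $\Aut(B,\psi)$. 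Monoidal equivalence to $\h{\Gamma}\wr_{\ast}SO_{q}(3)$ and the isomorphism with $\h{\Gamma}\wr_{\ast}\Aut(B,\psi)$ then follow by transporting the relevant fiber functors.

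The second bullet reduces to the first through free complexification: one would verify that this operation is compatible with both monoidal equivalence and $R^{+}$-deformation, so that an $R^{+}$-deformation of $\widetilde{H}_{N}^{+}$ (the free complexification of $\h{\Z}_{2}\wr_{\ast}S_{N}^{+}$) is the free complexification of an $R^{+}$-deformation of $\h{\Z}_{2}\wr_{\ast}S_{N}^{+}$, to which the first bullet applies with $\Gamma=\Z_{2}$. The third bullet is the delicate one, since $Z_{N}^{+}(\Lambda,n)$ is an \emph{amalgamated} free product over $S_{N}^{+}$: here one must prove a stability statement for amalgamated free products over a quantum permutation group, namely that deforming the common $S_{N}^{+}$ into a fixed $\Aut(B,\psi)$ simultaneously in all factors transports the whole amalgam to $Z^{+}(\Lambda,B,\psi,n)$. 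It is no accident that this bullet claims only \emph{isomorphism} rather than monoidal equivalence to a canonical $q$-object: the amalgamation rigidifies the construction and removes the extra freedom present in the plain free wreath product.

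The hardest step, and the reason the statement remains conjectural, is step (ii): proving that every $R^{+}$-deformation of a free wreath product is again a free wreath product, with the same $\Gamma$ and with the $S_{N}^{+}$-part as the only degree of freedom. The fusion ring is fully controlled by Theorem \ref{thm:freefusion}, but an $R^{+}$-deformation may alter the associativity constraints of the tensor category while preserving the fusion rules, so what is needed is a rigidity theorem ruling out any deformation that mixes the $\Gamma$-part with the $S_{N}^{+}$-part or produces a genuinely new category. For the amalgamated case, the analogous control over unitary fiber functors on an amalgamated free product category is precisely what is currently out of reach, which is why the third bullet is phrased as an isomorphism conjecture and not proved outright.
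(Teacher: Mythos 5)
The statement you are trying to prove is stated in the paper as Conjecture \ref{conj:monoidalequivalence} and is \emph{not} proved there; the author only motivates it by analogy with the known results for $O_{N}^{+}$, $U_{N}^{+}$ and $S_{N}^{+}$ and by reference to \cite{pittau2014}. Your text is a strategy outline rather than a proof, and you concede this yourself in the last paragraph. The decisive gap is your step (ii). An $R^{+}$-deformation is, by definition, only an isomorphism of fusion semirings; it carries no monoidal data whatsoever. So nothing allows you to conclude that the tensor category of a deformation of $\h{\Gamma}\wr_{\ast}S_{N}^{+}$ ``has the same shape'' with only the $S_{N}^{+}$-part replaced: the whole content of the conjecture is precisely the rigidity statement that the fusion rules force the associativity constraints, i.e.\ that every $R^{+}$-deformation of these particular objects is automatically monoidally equivalent to one of them (equivalently, in the paper's reformulation, that every faithful unitary fiber functor on $\mathfrak{C}(\CC,\theta)$ for these categories factors through $\mathfrak{C}(NC,\theta)$). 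Asserting this as step (ii) and then deducing the three bullets is circular. The same objection applies to your ``monoidal stability of free wreath products'' lemma: it is a statement about monoidally equivalent inputs, whereas the hypothesis of the conjecture only gives you a fusion-ring isomorphism, so the lemma cannot even be invoked before the rigidity problem is solved.

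A secondary issue: you claim that $\Gamma$ is ``intrinsic'' because the fusion semiring is the free fusion semiring on the fusion set of $\h{\Gamma}\wr_{\ast}S_{N}^{+}$. But the paper explicitly leaves open whether $R^{+}(S_{1})\simeq R^{+}(S_{2})$ implies $S_{1}\simeq S_{2}$ (see the remark following the definition of free semirings), so recovering the fusion set, hence $\Gamma$, from $R^{+}(\G)$ alone is itself not established. Your reduction of the second bullet to the first via free complexification, and the observation that the three bullets are the cases $n=0$, $(\Lambda,n)=(\{e\},1)$ and general $(\Lambda,n)$ of $Z_{N}^{+}(\Lambda,n)$, are sensible organizational remarks consistent with the paper's Theorem \ref{thm:freeproduct}, but they do not supply the missing rigidity theorem. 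In short: what you have written is a reasonable research plan for attacking the conjecture, not a proof of it, and the paper itself offers none to compare against.
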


\begin{rem}
Note that this conjecture can be restated in the following way : for these particular categories of partitions, any faithful unitary fiber functor on $\mathfrak{C}(\CC, \theta)$ factorizes through $\mathfrak{C}(NC, \theta)$. This is false for $\mathfrak{C}(NC_{2}, \theta)$ (see \cite[Thm 5.5]{bichon2006ergodic}).
\end{rem}

If this holds, then we can give a positive answer to Conjecture \ref{conj:completeclassificationfree}.

\begin{cor}\label{cor:completeclassification}
Let $\mathcal{F}$ be the class of compact quantum groups containing $O_{F}^{+}$, $U_{F}^{+}$ and $Z^{+}(\Lambda, B, \psi, n)$. If Conjecture \ref{conj:monoidalequivalence} holds, then a compact quantum group $\G$ is free if and only if it is a free product of elements of $\mathcal{F}$.
\end{cor}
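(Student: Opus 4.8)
The plan is to prove both implications, reducing the harder one to the building blocks governed by Conjecture \ref{conj:monoidalequivalence}. I would first treat the implication that a free product of elements of $\mathcal{F}$ is free. Each generator of $\mathcal{F}$ is an $R^{+}$-deformation of an element of $\mathcal{F}_{N}$: for $O_{F}^{+}$ and $U_{F}^{+}$ this is exactly the content of the rigidity results quoted before Conjecture \ref{conj:monoidalequivalence}, and for $Z^{+}(\Lambda, B, \psi, n)$ it is the third part of Conjecture \ref{conj:monoidalequivalence}. Since the elements of $\mathcal{F}_{N}$ are free by Theorem \ref{thm:freeproduct} and the fusion semiring is invariant under $R^{+}$-deformation, every generator of $\mathcal{F}$ is free. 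It then remains to observe that freeness is stable under free products: if $\G$ and $\G'$ are free with fusion sets $S$ and $S'$, then the fusion semiring of $\G \ast \G'$ is $R^{+}(S \sqcup S')$, exactly as in Lemma \ref{lem:freeproduct}, and a disjoint union of fusion sets is again a fusion set.

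For the converse, suppose $\G$ is free, so that $R^{+}(\G) = R^{+}(S)$ for some admissible fusion set $S$. By Theorem \ref{thm:freeproduct} the partition quantum group $\G_{0}$ associated to $S$ and any $N \geqslant 4$ — which has fusion semiring $R^{+}(S)$ — is a free product $\ast_{i}\G_{i}^{0}$ of elements of $\mathcal{F}_{N}$; in particular $\G$ is an $R^{+}$-deformation of $\G_{0}$. The heart of the argument is to transport this decomposition across the $R^{+}$-deformation, realizing $\G$ itself as $\ast_{i}\G_{i}$ with each $\G_{i}$ an $R^{+}$-deformation of $\G_{i}^{0}$. Granting this, the factors are identified one at a time: by the quoted rigidity results an $R^{+}$-deformation of $O_{N}^{+}$ (resp. $U_{N}^{+}$) is isomorphic to some $O_{F}^{+}$ (resp. $U_{F}^{+}$), and by Conjecture \ref{conj:monoidalequivalence} an $R^{+}$-deformation of $Z_{N}^{+}(\Lambda, n)$ is isomorphic to some $Z^{+}(\Lambda, B, \psi, n)$. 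Hence each $\G_{i}$ lies in $\mathcal{F}$ and $\G = \ast_{i}\G_{i}$ is the sought free product.

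The main obstacle is this free product decomposition step, which I would establish categorically and independently of Conjecture \ref{conj:monoidalequivalence}. The free product structure of $\G_{0}$ is encoded in the decomposition $S = \sqcup_{k}S_{k}$ of Theorem \ref{thm:classification}: no nontrivial fusion occurs between distinct pieces, so the irreducible objects are alternating words in the $S_{k}$. Because $\G$ has the same fusion semiring, for each $k$ the representations indexed by words in the single piece $F(S_{k})$ span a full tensor subcategory of $\Rep(\G)$ closed under subobjects and conjugation; by Tannaka--Krein its matrix coefficients generate a Hopf-$*$-subalgebra of $\Pol(\G)$, which is $\Pol(\G_{k})$ for a compact quantum group $\G_{k}$ with $R^{+}(\G_{k}) = R^{+}(S_{k})$, that is, an $R^{+}$-deformation of $\G_{k}^{0}$. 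The genuinely delicate point is to upgrade the statement that $\G$ is generated by the $\G_{k}$ to the statement that $\G = \ast_{k}\G_{k}$; this is a recognition of a free product from the alternating-word fusion rules, in the spirit of S. Wang's analysis of free products of compact quantum groups \cite{wang1995free}, and since it only involves data contained in the fusion semiring it passes unchanged from $\ast_{k}\G_{k}^{0}$ to $\G$.
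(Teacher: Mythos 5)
Your overall architecture is the one the paper uses: decompose the model via Theorem \ref{thm:freeproduct}, realize each factor as a quantum subgroup of $\G$ generated by the corresponding representations, identify each subgroup as an element of $\mathcal{F}$ via the quoted rigidity results and Conjecture \ref{conj:monoidalequivalence}, and then recognize $\G$ as the free product of these subgroups. (The easy direction, which the paper leaves implicit, you handle correctly.) The problem is the last step, which you rightly single out as the delicate point but then dispose of with the claim that the recognition of a free product ``only involves data contained in the fusion semiring'' and therefore ``passes unchanged'' from $\ast_{k}\G_{k}^{0}$ to $\G$. That is not an argument: being a free product is a statement about the algebra $C(\G)$, not about $R^{+}(\G)$, and the whole premise of the corollary is that non-isomorphic quantum groups can share a fusion semiring. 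Wang's computation tells you what $\Irr(\ast_{k}\G_{k})$ and its fusion rules are; it does not tell you that a quantum group generated by copies of the $\G_{k}$ and having those fusion rules must \emph{be} the free product.

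What actually closes the gap — and what the paper does — is to use the universal property of the free product to produce a surjective $*$-homomorphism $\Phi : C(\ast_{k}\G_{k}) \rightarrow C(\G)$ intertwining the coproducts (this exists precisely because the subalgebras $C(\G_{k})$ generate $C(\G)$), observe that $\Phi$ induces a dimension-preserving isomorphism between the fusion semirings of source and target (both are $R^{+}(S)$, the source by Wang's theorem, the target by hypothesis), and then invoke \cite[Lem 5.3]{banica1999representations}: a surjective morphism of compact quantum groups inducing such an isomorphism is automatically injective, essentially by matching Peter--Weyl coefficient spaces. Your proof needs this lemma (or an equivalent dimension count showing $\ker\Phi = 0$) stated and applied explicitly; without it the chain ``generated by the $\G_{k}$'' $\Rightarrow$ ``equal to $\ast_{k}\G_{k}$'' does not follow.
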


\begin{proof}
Assume that $\G$ has free fusion semiring and let $\G_{1}, \dots, \G_{k}$ be elements of $\mathcal{F}_{N}$ such that $\G$ is a $R^{+}$-deformation of $\ast_{i} \G_{i}$. If $u_{i}$ denotes the fundamental representation of $\G_{i}$, it corresponds to a representation of $\G$ generating a compact quantum subgroup $\G'_{i}$ with the same fusion rules as $\G_{i}$. Thus, $\G'_{i}\in \mathcal{F}$. Since the subalgebra generated by all the $C(\G'_{i})$'s  is $C(\G)$, we can conclude that there is a surjective $*$-homomorphism
\begin{equation*}
\Phi : C(\ast_{i}\G'_{i}) \rightarrow C(\G)
\end{equation*}
intertwining the coproducts. This map sends the fundamental representation onto the fundamental representation and yields a dimension-preserving isomorphism at the level of the fusion rules, so that by \cite[Lem 5.3]{banica1999representations} it is a $*$-isomorphism.
\end{proof}

\begin{rem}\label{rem:uniqueness}
Such a free product decomposition, together with the results of \cite{banica1997groupe}, \cite{vaes2007boundary}, \cite{lemeux2013fusion} and \cite{pittau2014}, could give a way to extend results on simplicity and uniqueness of KMS-state on the reduced C*-algebra, or on factoriality and fullness for the von Neumann algebra. However, A. Chirvasitu proves many of these results in the Appendix without resorting to the free product decomposition.
\end{rem}

\begin{rem}
The result of Remark \ref{rem:haagerupbis} could then be extended, yielding : $\G$ has the central Haagerup property (see the end of \cite[Sec 6]{freslon2012examples} for a definition) as soon as $\Gamma(S)$ is a disjoint union of finite groups. This follows from the free product decomposition and the fact that, combining \cite{freslon2013fusion} and \cite{freslon2013ccap}, all elements of $\mathcal{F}$ have the central Haagerup property if $\Gamma(S)$ is finite.
\end{rem}

Using Proposition \ref{prop:abelianization}, Corollary \ref{cor:completeclassification} would also give a partial classification of partition groups, i.e. groups whose Schur-Weyl duality can be described by partitions. More precisely, compacts groups associated to \emph{block-stable} categories of partitions will be direct products of abelianizations of elements of $\mathcal{F}$. Noting that a classical group is always unimodular, this is the same as all direct products of elements of the class $\mathcal{F}_{ab}$ of compact groups containing $O_{N}$, $U_{N}$ and $K\wr S_{N}$ for all abelian compact groups $K$.

\subsection{Wreath products and decorated partitions}\label{subsec:wreath}

We now turn to the second generalization, which is inspired by a recent work of F. Lemeux and P. Tarrago \cite{lemeux2014free}. In this paper, their study of free wreath products of arbitrary compact quantum groups by quantum permutation groups suggests that partition quantum groups are not the most general "combinatorial" structure based on partitions to describe representations of "free" quantum groups. The idea is that instead of using colours corresponding to elements of a discrete group $\Gamma$, one can use colours corresponding to representations of a compact quantum group $\G$. But then, we have to "decorate" the blocks of the partitions with morphisms between tensor products of the corresponding representations. Let us give a formal definition for that.

\begin{de}
Let $\mathfrak{A}$ be a C*-tensor category. An $\mathfrak{A}$-\emph{decorated partition} is a partition $p$ together with :
\begin{itemize}
\item An object of $\mathfrak{A}$ attached to each point of $p$.
\item A morphism of $\mathfrak{A}$ attached to each block of $p$.
\end{itemize}
Moreover, if $b$ is a block of $p$ with upper colouring $x_{1}, \dots, x_{n}$ and lower colouring $y_{1}, \dots, y_{k}$, then the attached morphism $\varphi_{b}$ belongs to $\Mor_{\mathfrak{A}}(x_{1}\otimes \dots\otimes x_{n}, y_{1}\otimes \dots \otimes y_{k})$.
\end{de}

The horizontal concatenation of partitions still makes sense in this context, as well as the vertical concatenation if we add the rule that morphisms of blocks are composed in the process. To define the $*$-operation or the rotation, we need some extra structure.

\begin{de}
Let $\mathfrak{A}$ be a C*-tensor category with duals and let $p$ be an $\mathfrak{A}$-decorated partition. Then, $p^{*}$ is defined by applying the $*$-operation to the underlying coloured partition and taking the adjoint of the block morphisms. The rotated version of $p$ obtained by rotating the leftmost upper point to the left of the lower row is defined by applying the operation to the underlying coloured partition and applying Frobenius reciprocity to the involved block morphism.
\end{de}

\begin{de}
A \emph{category of $\mathfrak{A}$-decorated partitions} is a collection $\CC$ of $\mathfrak{A}$-decorated partitions which is stable under vertical and horizontal concatenation, under the $*$-operation, under rotation and contains the partition $\vert$ with $x$ on both end and the identity operator on the block, for every object $x$ of $\mathfrak{A}$. If $\CC$ is a category of $\mathfrak{A}$-decorated partitions and $\theta\in \C$, the associated partition C*-tensor category $\mathfrak{C}(\CC, \theta)$ is defined in the obvious way.
\end{de}

Le $\mathfrak{A}$ be a C*-tensor category with duals and let $F$ be a unitary fiber functor on $\mathfrak{A}$. Using some twisted tensor products with the maps $T_{p}$ (see \cite[Notation 3.6]{lemeux2014free}), $F$ extends to a unitary fiber functor on $\mathfrak{C}(\CC, N)$ yielding a compact quantum group. One can go back to the beginning of Section \ref{sec:general} and try to develop a theory of "decorated partition quantum groups". The results of \cite{freslon2013representation} may have a generalization to this setting, even though the presence of morphisms on the blocks make things more complicated. For example, the definition of the through-block decomposition, which is a central combinatorial tool, would have to be modified. We will not go into these considerations now, but simply give examples.

\begin{ex}
Consider a colour set $\A$ and build a C*-tensor category $\mathfrak{A}(\A)$ in the following way : objects of $\mathfrak{A}(\A)$ are all words on $\A$, the tensor product is the concatenation and between any two objects there is exactly one morphism. Then, there is an obvious bijection between categories of $\A$-coloured partitions and categories of $\mathfrak{A}(\A)$-decorated partitions. In other words, the setting of decorated partitions contains the setting of coloured partitions.
\end{ex}

\begin{ex}\label{ex:freewreath}
Let $\G$ be a compact quantum group and consider its category of finite-dimensional representations $\mathfrak{C}_{\G}$. Let $\CC_{\G}$ be the category of all $\mathfrak{C}_{\G}$-decorated noncrossing partitions. Then, for any integer $N\geqslant 4$, the compact quantum group associated to $\CC_{\G}$ is the free wreath product $\G\wr_{\ast}S_{N}^{+}$. This is one of the main results of \cite{lemeux2014free}.
\end{ex}

This example suggests an extension of the setting of fusion sets and free fusion semirings. Let us assume that the fusion operation does not take values in $S\cup\{\emptyset\}$ but in $\N[S]$. The definition of the associated fusion semiring is clear. As a typical example, consider the set $S$ of isomorphism classes of irreducible objects in a C*-tensor category with duals and set
\begin{equation*}
x\ast y = \sum_{z\subset x\otimes y}m_{z}^{x\otimes y}z,
\end{equation*}
where $m_{z}^{x\otimes y}$ denotes the multiplicity of the inclusion. If this category is the representation category of a compact quantum group, then we get the fusion semiring of $\G\wr_{\ast}S_{N}^{+}$ by \cite{lemeux2014free}.

\begin{ex}\label{ex:classicalwreath}
Let $\mathfrak{A}$ be any C*-tensor category and consider the category whose objects are finite families of objects of $\mathfrak{A}$ and morphisms are given by all $\mathfrak{A}$-decorated partitions. According to the work of M. Mori \cite{mori2012representation}, this is the wreath product of the category $\mathfrak{A}$ by $S_{N}$. In particular, when $\mathfrak{A}$ is the category of modules over an algebra $B$, we obtain the category of modules over the algebra $B\wr S_{N}$. When $\mathfrak{A}$ is the category of representations of a compact group $G$, this yields the category of representations of $G\wr S_{N}$. If $G$ is abelian, this is the same as the partition quantum group associated to the category of all $\h{G}$-coloured partitions such that in each block, the product of elements on the upper row is equal to the product of elements on the lower row (this is the only condition under which a morphism exists between these representations). This proves Proposition \ref{prop:abelianizationwreath}.
\end{ex}

The construction of M. Mori \cite{mori2012representation} was inspired by the aforementioned work of P. Deligne \cite{deligne2007categorie} and its extensions by F. Knop \cite{knop2006construction, knop2007tensor}. It is in fact even more general than the description of the example : for every $\theta\in \C$, one can build a $2$-functor $S_{\theta}$ from the $2$-category of C*-tensor categories to the $2$-category of pseudo-abelian categories which "interpolates" the "wreath-product-by-$S_{N}$" $2$-functor. The construction straightforwardly generalizes, so that to any category of uncoloured partitions $\CC$ is associated a $2$-functor $F_{\theta}^{\CC}$. Because of example \ref{ex:freewreath}, it is reasonable to say that for $\CC = NC$, this functor interpolates free wreath products by $S_{N}^{+}$. It is not known to us whether a general definition of permutational wreath product of a C*-tensor category by a quantum group exists. It would be interesting to compare such a categorical construction with this partition approach. We also believe that the decorated approach can fill the gap between our work and that of T. Banica and A. Skalski on quantum isometry groups alluded to before. In fact, a consequence of the work of M. Mori \cite{mori2012representation} is that all the examples of Subsection \ref{subsec:classical} arise from categories of decorated partitions. Since the constructions given in \cite{banica2012quantum} for quantum isometry groups are very similar, one may hope that they can also be described using decorated partitions.

\appendix
\renewcommand{\thesection}{A}
\setcounter{section}{0}

\section{Simplicity and trace uniqueness for some free quantum groups (by A. Chirvasitu)}

\subsection{Free quantum groups}

The goal of this subsection is to prove the following result :

\begin{thm}\label{th.main}
Let $\G$ be a free compact quantum group whose fusion semiring $R^{+}(\G)$ has a fusion set $S$ with at least two elements and satisfying the following property : 
\begin{equation}\label{eq.main}
\text{\emph{There is no element} }\tau\in S\text{ \emph{such that} }\forall x\in S, \\overline{x}*x = \tau. 
\end{equation}
Then, the reduced C*-algebra $C_{\text{red}}(\G)$ of $\G$ is simple and its Haar state $h$ is the only one satisfying a KMS condition of the form 
\begin{equation}\label{eq.1}
 \psi(xy) = \psi(y (f_{s}*x*f_{t})),\ \forall x,y\in C_{\text{red}}(\G)
\end{equation}
for some $s, t\in\R$, where $(f_{z})_{z}$ is the family of Woronowicz characters (see  \cite[Thm 5.6]{woronowicz1987compact}).
\end{thm}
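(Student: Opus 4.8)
The plan is to establish a quantum \emph{Powers averaging property} for $\G$ and to deduce both statements from it, following the strategy Powers used for $C^*_{\text{red}}(\F_2)$. In the GNS representation of the Haar state $h$, the space $L^2(\G)$ decomposes as $\bigoplus_{w}H_w\otimes\co{H_w}$ over the irreducible representations $u_w$, which by freeness of $R^{+}(\G)$ are indexed by the words $w$ on the fusion set $S$, and $h$ is the projection onto the empty word. It therefore suffices to produce, for every $a$ in the dense $*$-algebra of matrix coefficients with $h(a)=0$ and every $\epsilon>0$, a unital completely positive map $\Phi$ that preserves every closed two-sided ideal and satisfies $\|\Phi(a)\|<\epsilon$.

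For the conjugators I would use the \emph{adjoint-action slices} of well-chosen irreducibles: for a word $v$, set $\Phi_v(a)=(\psi_v\otimes\id)\!\left(u_v^*(1\otimes a)u_v\right)$, where $\psi_v$ is a suitable state on the matrix block of $u_v$. Each $\Phi_v$ is unital completely positive and preserves two-sided ideals, since $u_v$ is unitary and slicing sends $M_{\dim u_v}(I)$ into $I$. Then I would choose a sequence $v_1,v_2,\dots$ whose fusion behaviour $w\otimes w'=\sum_{w=az,\,w'=\co{z}b}ab+a\ast b$ makes the supports of the $\Phi_{v_i}(a)$ asymptotically disjoint among the basis words, with the empty word surviving only through the cancellation that reproduces $h(a)\,1$. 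Averaging, $\Phi=\tfrac1n\sum_{i}\Phi_{v_i}$, a Haagerup-type estimate on the free fusion combinatorics should yield $\|\Phi(a)-h(a)1\|=O(n^{-1/2})$. Hypothesis \eqref{eq.main} is exactly what makes this possible: if some $\tau\in S$ had $\co{x}\ast x=\tau$ for all $x$ (as for $\h{\Gamma}\wr_\ast S_N^+$, with $\tau=e$), every word would ``return'' through the fixed core $\tau$, the supports could never be separated, and the averaging would not decay.

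Given the averaging property both conclusions follow. For simplicity, a proper closed ideal $I$ is invariant under each $\Phi_{v_i}$, hence under $\Phi$; picking $0\neq a\in I$ positive and sending $n\to\infty$ yields $h(a)1\in I$, and $h(a)>0$ because $h$ is faithful on $C_{\text{red}}(\G)$, so $1\in I$ and $I=C_{\text{red}}(\G)$. For uniqueness of the KMS state, I would take the slice $\psi_v$ to be the one weighted by the Woronowicz characters acting on $H_v$, so that the KMS condition \eqref{eq.1} with parameters $s,t$ forces $\psi\circ\Phi_v=\psi$ for any such $\psi$; then $\psi(a)=\psi(\Phi(a))\to h(a)\psi(1)=h(a)$, whence $\psi=h$. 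The main obstacle is the non-unimodular bookkeeping: the Powers estimate must be run with respect to the non-tracial $L^2$-weighting, and the $f_z$-twist built into the slices has to be chosen so that it preserves both the asymptotic disjointness and the complete positivity of $\Phi$; it is precisely the room left by \eqref{eq.main} in $S$ that allows one conjugating family to work uniformly across all of $C_{\text{red}}(\G)$.
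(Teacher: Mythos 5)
Your high-level strategy is the right one and is in fact the same as the paper's: the proof there is a Powers-type averaging argument, obtained by modifying Banica's proof for $U_N^+$, with conjugating maps of exactly the form you describe (unital completely positive, ideal-preserving slices $z\mapsto\sum_i b_izb_i^*$ built from irreducibles, with a Woronowicz-character-weighted variant for the KMS uniqueness). The deduction of simplicity and uniqueness of the KMS state from the averaging property is also standard and fine. But the two steps you leave as assertions are precisely where all the work lies, and one of them has a genuine gap. First, the decay $\|\Phi(a)-h(a)1\|=O(n^{-1/2})$ is not obtained by choosing an infinite sequence of conjugators with ``asymptotically disjoint supports''; the actual mechanism is a fixed partition $F(S)=D\sqcup E$ together with a set $F$ of words satisfying $F\circ D\cap D=\emptyset$ and \emph{three} words $r_1,r_2,r_3$ with $r_i\circ E$ pairwise disjoint, which gives a norm contraction by a factor $<1$ (then iterated). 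This only applies to elements supported in $F$, so the argument has an essential preliminary step: push the support of $a$ into $F$ by conjugation.

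That support-pushing step is exactly where your proposal breaks down and where hypothesis \eqref{eq.main} really enters. If $\beta*\overline{\beta}=\emptyset$ one can conjugate by $\left(\beta\overline{\beta}\right)^{M}$ as in Banica's proof and land in $F$. But if $\beta*\overline{\beta}=\tau\neq\emptyset$, conjugating by $\left(\beta\overline{\beta}\right)^{M}$ leaves residual words such as $\tau$ itself, $\beta\dots\tau$, $\tau\dots\overline{\beta}$, which never enter $F$ no matter how large $M$ is --- this is the concrete form of the ``return through a fixed core'' you gesture at, and your proposal offers no mechanism to get around it. The paper's fix is a two-stage conjugation: use \eqref{eq.main} to pick $\eta$ with $\eta*\overline{\eta}\neq\beta*\overline{\beta}$, conjugate first by $\left(\eta\overline{\eta}\right)^{M}$ (landing in the list of shapes $\eta\dots\overline{\eta}$, $\eta\dots\tau$, $\tau\dots\overline{\eta}$, $\tau$ with $\tau=\eta*\overline{\eta}$), and then by $\left(\beta\overline{\beta}\right)^{P}$; the inequalities $\tau*\beta\neq\beta$ and $\overline{\beta}*\tau\neq\overline{\beta}$, which follow from $\beta*\overline{\beta}\neq\tau$, prevent the second conjugation from collapsing and force the support into $F$. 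Without this (or an equivalent device) your averaging scheme cannot be made to converge, so the proposal as written does not close.
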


\begin{rem}
According to the classification of admissible fusion sets given above, the assumption of Theorem \ref{th.main} means that we exclude the cases $S = \OO$ and $S = \Gamma$. However, the conclusion is known to hold also in most of these cases by \cite{vaes2007boundary}, \cite{brannan2012reduced} and \cite{lemeux2013haagerup}. 
\end{rem}

The first result of this form was proved by T. Banica in \cite{banica1997groupe} for the free unitary quantum groups $U_{N}^{+}$ and a close analogue of Theorem \ref{th.main} was conjectured in \cite{banica2009fusion}. Finally, \ref{th.main} answers the conjecture at the very end of \cite{freslon2013representation} for fusion sets satisfying \eqref{eq.main}. The proof is a tweak on that of \cite[Thm 3]{banica1997groupe}, so we will indicate the modifications that need to be made to that proof in order to get Theorem \ref{th.main}. Whenever possible we replicate T. Banica's notation so as to make it easier for the reader to follow the slightly modified argument. Let $\beta\ne \gamma\in S$ be two different elements. We define the following objects :
\begin{itemize}
\item $D\subset F(S)$ is the set of words on $S$ whose first letter is \emph{not} $\beta$ (excluding the empty word).
\item $E\subset F(S)$ is the set of words on $S$ that are either empty or start with $\beta$.
\item $F\subset F(S)$ is the set of words of length at least two whose first and last letters are $\beta$ and $\overline{\beta}$ respectively.
\item $r_{1}$, $r_{2}$ and $r_{3}$ are words of length at least four on $S$ starting with $\beta\gamma\beta$, $\beta\gamma^{2}\beta$ and $\beta\gamma^{3}\beta$ respectively and ending in some letter different from $\overline{\beta}$.
\end{itemize}
We make the additive semigroup $\N[F(S)]$ into a semiring via the product $\otimes$ as in the main paper above, and introduce the binary operation $\circ$ on the collection of all subsets of $F(S)$ by defining $X\circ Y$ to be the set of all $z\in F(S)$ which appear in the decomposition of  $x\otimes y$ for some $x\in X$ and $y\in Y$. With all of this in place, we leave it to the reader to show that the conclusion of \cite[Lem 12]{banica1997groupe} can be replicated verbatim :

\begin{lem}\label{lemme12}
The sets $D$ and $E$ partition $F(S)$, $F\circ D\cap D=\emptyset$, and the sets $r_{i}\circ E$ are pairwise disjoint for $i = 1, 2, 3$.
\end{lem}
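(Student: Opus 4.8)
The first assertion is immediate from the definitions: a nonempty word either begins with $\beta$, placing it in $E$, or it does not, placing it in $D$; together with the empty word (which lies in $E$) this shows that $D$ and $E$ partition $F(S)$. For the two remaining assertions the key observation concerns the tensor product. In the decomposition $w\otimes w'=\sum_{w=az,\,w'=\co{z}b}ab+a\ast b$, a nonempty cancellation $z$ is a suffix of $w$, so its last letter is the last letter of $w$; consequently the first letter of $\co{z}$ — which must coincide with the first letter of $w'$ for the decomposition to be nontrivial — is the conjugate of the last letter of $w$. The plan is to isolate this as the mechanism governing \emph{whether any cancellation can occur at all}, and to combine it with control of the first letters of the surviving terms.

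For $F\circ D\cap D=\emptyset$, I would take $x\in F$ and $y\in D$. Since $x$ ends in $\co\beta$, any nonempty suffix $z$ of $x$ ends in $\co\beta$, so $\co z$ begins with $\beta$; matching $\co z$ against a prefix of $y$ would force $y$ to begin with $\beta$, contradicting $y\in D$. Hence the only surviving contribution comes from $z$ empty, namely $xy$ and $x\ast y$. Both begin with the first letter of $x$, which is $\beta$ — here $\lvert x\rvert\geq 2$ guarantees that the leading letter is untouched by the boundary fusion occurring in $x\ast y$ — so every word appearing in $x\otimes y$ lies in $E$, and therefore none lies in $D$.

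For the pairwise disjointness of the sets $r_i\circ E$, I would run the same cancellation analysis with the roles reversed. Fix $e\in E$. If $e$ is empty, the only term is $r_i$ itself. If $e$ is nonempty it begins with $\beta$, whereas $r_i$ ends in a letter $\neq\co\beta$; thus a nonempty suffix $z$ of $r_i$ yields a $\co z$ whose first letter is $\neq\beta$, which cannot prefix $e$. So again only $z$ empty survives, leaving the terms $r_i$, $r_i e$ and $r_i\ast e$. Each begins with the prefix $\beta\gamma^i\beta$ of $r_i$: the first two obviously, and $r_i\ast e$ because the boundary fusion alters only the final letter of $r_i$, which — choosing the $r_i$ long enough, say $\lvert r_i\rvert\geq i+3$, as we are free to do — lies strictly to the right of this prefix. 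Since $\gamma\neq\beta$, a word beginning with $\beta\gamma^i\beta$ has exactly $i$ initial $\gamma$'s following its first letter, an invariant that can be read directly off the word; as this count equals $i$ for every element of $r_i\circ E$, the three sets are pairwise disjoint.

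The one genuine departure from T. Banica's original argument in \cite{banica1997groupe} — and the step I expect to require the most care — is the presence of the fusion terms $a\ast b$, which are absent in the free case of $U_N^+$ where $\ast$ is identically empty. These terms can rewrite a boundary letter, so the argument must guarantee both that no nontrivial cancellation takes place (handled by the last-letter conjugation mechanism of the first paragraph) and that the distinguishing prefix $\beta\gamma^i\beta$ is never reached by the terminal fusion (handled by taking the $r_i$ sufficiently long). Once these two points are secured, the remaining bookkeeping is routine, and the conclusion of \cite[Lem 12]{banica1997groupe} carries over verbatim.
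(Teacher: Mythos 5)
Your proof is correct and follows exactly the route the paper intends: the appendix explicitly leaves this verification to the reader as a replication of \cite[Lem 12]{banica1997groupe}, and your cancellation analysis (a nonempty $z$ forces the first letter of the second factor to equal the conjugate of the last letter of the first, which the definitions of $F$, $D$, $E$ and the $r_i$ are rigged to forbid) together with tracking the prefix $\beta\gamma^{i}\beta$ is precisely that replication. Your observation that the terminal fusion can corrupt this prefix when $r_{2}=\beta\gamma^{2}\beta$ has length exactly four (e.g.\ if $\beta\ast\beta=\gamma$, one gets $r_{2}\ast(\beta\beta v)=\beta\gamma^{3}\beta v$, which can land in $r_{3}\circ E$) is a genuine refinement of the paper's statement, and your fix --- choosing $\lvert r_{i}\rvert\geqslant i+3$, which the definition of the $r_{i}$ permits --- is the right one.
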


The proof of \cite[Cor 3]{banica1997groupe} now goes through (with $F$ substituted for $\{\beta\dots\alpha\}$ in the statement). The second modification we have to make is to \cite[Lem 13]{banica1997groupe}, which states that in the case of $U_N^+$, for any finite subset $R\subset F(S)$ the set $\left(\beta\overline{\beta}\right)^{M}\circ R\circ \left(\beta\overline{\beta}\right)^{M}$ is contained in the set $F\cup\{\text{empty word}\}$. This need not be true in general, as the case where $R = \{\text{empty word}\}$ and the fusion $\beta *\overline{\beta}$ is not $\emptyset$ shows. 

\begin{de}
The \emph{support} of an element $x\in \N[F(S)]$ is the set of all
words on $S$ that have non-zero coefficients in $x$. The element $x$ is said to be \emph{supported} in a set if its support is contained
in that set. 
\end{de}

We need the following auxiliary result, whose proof is a simple case chase.

\begin{lem}
Let $\eta\in S$ be an arbitrary element and let $\tau$ be the fusion
$\eta*\overline{\eta}$. Then, for every $x\in\N[F(S)]$ we can find
some exponent $M$ such that the element
\begin{equation*}
\left(\eta\overline{\eta}\right)^{M} \otimes x\otimes \left(\eta\overline{\eta}\right)^{M}
\end{equation*}
is supported in the set of words of one of the forms
\begin{equation}\label{eq.2}
\eta\dots \overline{\eta},\quad \eta\dots \tau,\quad
\tau\dots\overline{\eta},\quad \tau,\quad \text{empty word}. 
\end{equation}
\end{lem}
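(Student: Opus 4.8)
The plan is to reduce to a single word and then run a combinatorial analysis of the two boundary behaviours, exploiting the rigid ``zigzag'' structure of the words $(\eta\overline\eta)^{M}$. By linearity of $\otimes$ and finiteness of the support of $x$, it suffices to treat $x=w$ a single word, say of length $n$; choosing $M\geq n+1$ uniformly over the finitely many words occurring in a general $x$ then yields the statement. Let me call a word a \emph{zigzag} if it is a factor of the bi-infinite word $\cdots\eta\overline\eta\eta\overline\eta\cdots$; thus $(\eta\overline\eta)^{M}$ is a zigzag, it is self-conjugate, and the conjugate of a length-$j$ suffix of $(\eta\overline\eta)^{M}$ is exactly the length-$j$ prefix $\eta\overline\eta\eta\cdots$ of $(\eta\overline\eta)^{\infty}$. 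Throughout I compute $(\eta\overline\eta)^{M}\otimes w\otimes(\eta\overline\eta)^{M}$ by associating it as $(\eta\overline\eta)^{M}\otimes\big(w\otimes(\eta\overline\eta)^{M}\big)$, using associativity of $\otimes$.

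First I would prove the one-sided claim (L): every word $v$ in the support \emph{ends} in $\overline\eta$, or has the form $\eta\cdots\tau$, or equals $\tau$, or is empty. The input is the observation that, since $|w|=n<2M$, in every decomposition $w=ck'$, $(\eta\overline\eta)^{M}=\overline{k'}d$ defining $w\otimes(\eta\overline\eta)^{M}$ the uncancelled suffix $d$ has length $\geq 2M-n\geq 3$; hence every word $z$ in $w\otimes(\eta\overline\eta)^{M}$ ends precisely in the two letters $\eta\overline\eta$, fusion only affecting the $c$--$d$ junction. Now fix such a $z$ and a decomposition $(\eta\overline\eta)^{M}=a\widetilde k$, $z=\overline{\widetilde k}\,b$ producing $v=ab$ or $v=a\ast b$. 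If $|b|\geq 2$, or if $v=ab$, then $v$ ends in the last letter of $z$, namely $\overline\eta$; if $b$ is empty one checks, using that $z$ ends in $\overline\eta$, that full cancellation forces $|z|$ even and leaves $v$ an even-length zigzag, again ending in $\overline\eta$ (or empty).

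The one remaining case---$|b|=1$, $b=\overline\eta$, fusion term $v=a\ast\overline\eta$---is the crux, and is the only place the lemma could conceivably fail. Here $v$ ends in the fused letter $a_{p}\ast\overline\eta$, where $a_{p}$ is the last letter of $a$, and a priori $a_{p}$ could equal $\overline\eta$, producing an illegal terminal letter $\overline\eta\ast\overline\eta$. To rule this out I would argue by parity: validity of the decomposition forces $\overline{\widetilde k}$ to be simultaneously the length-$(|z|-1)$ prefix of $z$ and the prefix $\eta\overline\eta\eta\cdots$ of $(\eta\overline\eta)^{\infty}$; its last letter is therefore the second-to-last letter of $z$, which is $\eta$ by the previous paragraph. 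A zigzag beginning and ending in $\eta$ has odd length, so $|z|-1$ is odd, $|z|$ is even, and $|a|=2M-|\widetilde k|=2M-|z|+1$ is odd; hence $a_{p}=\eta$ and the terminal letter is $\eta\ast\overline\eta=\tau$. Since $a$ begins with $\eta$, this gives $v=\eta\cdots\tau$ when $|a|\geq 3$ and $v=\tau$ when $|a|=1$, completing claim (L).

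Finally I would deduce the full statement by symmetry and combination. Conjugating, and using $\overline{A\otimes B}=\overline B\otimes\overline A$, $\overline{(\eta\overline\eta)^{M}}=(\eta\overline\eta)^{M}$ and $\overline\tau=\tau$ (antisymmetry), claim (L) applied to $\overline w$ yields claim (R): every $v$ in the support \emph{starts} in $\eta$, or has the form $\tau\cdots\overline\eta$, or equals $\tau$, or is empty. Now take any nonempty $v\neq\tau$. By (L) it either ends in $\overline\eta$ or is of the form $\eta\cdots\tau$; in the latter case we are done. In the former, (R) shows $v$ starts in $\eta$ (giving $\eta\cdots\overline\eta$) or is of the form $\tau\cdots\overline\eta$; either way $v$ has one of the five listed shapes. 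Note that combining (L) and (R) also excludes the shape $\tau\cdots\tau$ of length $\geq 2$, since every word ending in $\tau$ already starts in $\eta$. The degenerate case $n=0$ is handled by a direct computation of $(\eta\overline\eta)^{M}\otimes(\eta\overline\eta)^{M}$, which produces only words of the forms $\eta\cdots\overline\eta$, $\tau$ and the empty word.
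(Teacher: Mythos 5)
Your proof is correct and is precisely the ``simple case chase'' that the paper asserts but does not write out: a boundary-letter analysis of the concatenation/fusion junctions of $(\eta\overline\eta)^{M}\otimes w\otimes(\eta\overline\eta)^{M}$, reduced to one side by conjugation, which is the intended argument. The only point worth noting is that your parity arguments (e.g.\ ``a zigzag beginning and ending in $\eta$ has odd length'', and the claim that full cancellation forces $|z|$ even) tacitly assume $\eta\neq\overline\eta$; in the degenerate case $\eta=\overline\eta$ every letter of $(\eta\overline\eta)^{M}$ equals $\eta=\overline\eta$, so the conclusions you extract from parity ($a_{p}=\eta$, hence terminal letter $\eta\ast\overline\eta=\tau$, respectively terminal letter $\overline\eta$) hold trivially and the argument goes through unchanged.
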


Now we claim that in the setting of Theorem \ref{th.main}, \cite[Cor 4]{banica1997groupe} goes through with $F$ in place of $\{\beta\dots\alpha\}$ :

\begin{lem}\label{lemma.aux}
Let $x\in C_{\text{red}}(\G)$ be a self-adjoint element annihilated by the Haar state
$h$. Then,
\begin{enumerate}
\renewcommand{\labelenumi}{(\roman{enumi})}
\item There is a linear, unital, $h$-preserving self-map $W$ of $C_{\text{red}}(\G)$ of the form $z\mapsto \sum_{i} b_{i}zb_{i}^{*}$ such that $W(x)$ is supported in $F$.
\item For fixed $s, t\in\R$ there is a positive number $L$ and a linear $h$-preserving self-map $U$ of $C_{\text{red}}(\G)$ which preserves all states $\psi$ on $C_{\text{red}}(\G)$ satisfying \eqref{eq.1} and such that $U(x)$ is supported in $F$.
\end{enumerate}
\end{lem}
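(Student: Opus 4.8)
The plan is to transcribe the support-shrinking argument of \cite[Cor 4]{banica1997groupe} into the free fusion setting, using the auxiliary lemma above in place of \cite[Lem 13]{banica1997groupe}. Since $\G$ is free, its irreducibles are indexed by the free monoid $F(S)$, and every $z\in C_{\text{red}}(\G)$ carries a Peter--Weyl grading $z=\sum_{w\in F(S)} z_{w}$, where $z_{w}$ lies in the span of the coefficients of the irreducible $u_{w}$. The hypothesis $h(x)=0$ is exactly the statement that the empty-word component $x_{\emptyset}$ vanishes. The only feature of the grading I will use is its behaviour under two-sided multiplication: if $b$ is a coefficient of $u_{g}$, then $b\,z_{w}\,b^{*}$ is supported on the words occurring in the fusion decomposition of $g\otimes w\otimes \overline{g}$. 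Thus conjugations by matrix coefficients act on supports exactly as the fusion does, which is what lets the combinatorial auxiliary lemma control everything.

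For part (i), set $\eta=\beta$ and $\tau=\beta\ast\overline{\beta}$, and let $M$ be the exponent supplied by the auxiliary lemma applied to the (finite) support of $x$. I would take $W\colon z\mapsto \sum_{i}b_{i}z b_{i}^{*}$, the $b_{i}$ being matrix coefficients of $u^{(\beta\overline{\beta})^{M}}$ normalised via the orthogonality relations so that $\sum_{i}b_{i}b_{i}^{*}=1$ and $h\circ W=h$; this makes $W$ unital, completely positive, $\ast$-preserving and $h$-preserving, of the required form. Since $\overline{(\beta\overline{\beta})^{M}}=(\beta\overline{\beta})^{M}$, the grading property above together with the auxiliary lemma shows that $W(x)$ is supported in the five families of \eqref{eq.2}, namely $\beta\dots\overline{\beta}$, $\beta\dots\tau$, $\tau\dots\overline{\beta}$, $\tau$ and the empty word. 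The words of the first type of length at least two are precisely the elements of $F$, so it remains to remove the four residual families.

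The cleanup is the heart of the matter and is the step I expect to be the real obstacle. The empty-word component disappears immediately: $W$ preserves $h$ and $h(x)=0$, so $h(W(x))=0$ and hence $(W(x))_{\emptyset}=0$. The genuinely delicate families are those built from $\tau=\beta\ast\overline{\beta}$, and this is exactly where \eqref{eq.main} must enter. In the excluded cases $S=\OO$ and $S=\Gamma$ the map $y\mapsto \overline{y}\ast y$ is constant (equal to $\emptyset$, resp. to the identity), and the single-letter family $\tau$ is precisely the obstruction to simplicity; \eqref{eq.main} guarantees instead a letter $\delta$ with $\overline{\delta}\ast\delta\neq\tau$, and the plan is to use this freedom to post-compose $W$ with a second conjugation of the same kind that converts the terminal (resp. initial) $\tau$ of a word $\beta\dots\tau$ (resp. $\tau\dots\overline{\beta}$) into $\overline{\beta}$ (resp. $\beta$) and collapses the isolated $\tau$, while by $F\circ D\cap D=\emptyset$ from Lemma \ref{lemme12} it keeps the good family $F$ inside $F$. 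A final use of $h$-preservation kills the empty-word debris produced in this second step, and renaming the composite as $W$ gives $W(x)$ supported in $F$. Making this second conjugation precise, and checking that it leaves $F$ stable while annihilating the $\tau$-families, is the main technical difficulty, and is the point where the argument genuinely diverges from \cite{banica1997groupe}, where $\beta\ast\overline{\beta}=\emptyset$ and these families are simply absent.

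Part (ii) follows the identical two-step scheme; the only change is that the sandwiching elements must be made covariant for the modular data so that the resulting map preserves \emph{every} state $\psi$ obeying \eqref{eq.1}, not merely $h$. Concretely I would weight the coefficients of $u^{(\beta\overline{\beta})^{M}}$ by the Woronowicz characters $f_{s},f_{t}$, so that the defining identity \eqref{eq.1} forces $\psi(U(z))=\psi(z)$ for all such $\psi$; the constant $L$ then arises as a uniform bound for the finitely many scalars by which $f_{s}\ast(\cdot)\ast f_{t}$ acts on the representations appearing in the support of $x$, which is what makes the normalisation independent of the particular $\psi$. The fusion-theoretic support computation is insensitive to the choice of state and is word-for-word the same as in part (i), so the same cleanup—again relying on \eqref{eq.main}—places $U(x)$ in $F$.
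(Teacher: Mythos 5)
Your setup (Peter--Weyl grading, conjugation by coefficients of $u^{(\eta\overline{\eta})^{M}}$ acting on supports via fusion, $h$-preservation killing the empty word) matches the paper's, but the cleanup step --- which you correctly identify as the heart of the matter --- is left unresolved, and the mechanism you sketch for it does not work. By taking $\eta=\beta$ in the first conjugation you force $\tau=\beta\ast\overline{\beta}$, and then the key inequality that makes any second conjugation effective, namely $\beta\ast\overline{\beta}\neq\tau$, is unavailable by construction. Concretely: since $\overline{\beta}\ast\tau=\overline{\beta}$ and $\tau\ast\beta=\beta$ when $\tau=\beta\ast\overline{\beta}$, the singleton family $\{\tau\}$ reproduces itself under conjugation by any power of $\beta\overline{\beta}$ (one checks directly that $\tau$ occurs in $(\beta\overline{\beta})^{P}\circ\{\tau\}\circ(\beta\overline{\beta})^{P}$), so that choice of second conjugation can never remove the residual families; while conjugating instead by $(\delta\overline{\delta})^{P}$ for a letter $\delta\neq\beta$ with $\delta\ast\overline{\delta}\neq\tau$ does kill them, but it relocates every word to one beginning with $\delta$ and ending with $\overline{\delta}$, so it does \emph{not} keep $F$ inside $F$ as you claim. (The identity $F\circ D\cap D=\emptyset$ from Lemma \ref{lemme12} that you invoke here plays no role in this support computation; it belongs to the Powers-type part of the argument.)

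The paper resolves this by running the two conjugations in the opposite order. Hypothesis \eqref{eq.main} supplies a letter $\eta$ with $\eta\ast\overline{\eta}\neq\beta\ast\overline{\beta}$; one first conjugates by $(\eta\overline{\eta})^{M}$, landing in the families \eqref{eq.2} with $\tau=\eta\ast\overline{\eta}$, and only \emph{then} conjugates by $(\beta\overline{\beta})^{P}$. Now $\beta\ast\overline{\beta}\neq\tau$ gives $\tau\ast\beta\neq\beta$ and $\overline{\beta}\ast\tau\neq\overline{\beta}$, together with $\beta\neq\eta$ and $\beta,\overline{\beta}\neq\tau$, and these inequalities are exactly what prevents the junctions from collapsing, so that for $P$ large the support lands in $F$ (words beginning with $\beta$ and ending with $\overline{\beta}$). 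Your patched argument, once the second conjugation is made precise, would in effect be this proof with the roles of the two letters interchanged and with $F$ redefined relative to $\delta$ --- which would then require rewriting the sets $D$, $E$ and the words $r_{i}$ accordingly. As written, however, the claim that the second conjugation ``leaves $F$ stable while annihilating the $\tau$-families'' is false for your choice $\eta=\beta$, and this is a genuine gap rather than a routine verification.
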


\begin{proof}
If $\beta*\overline{\beta} = \emptyset$ then the procedure in the
original proof of T. Banica works verbatim. Otherwise, apply that
procedure first for $r = \left(\eta\overline{\eta}\right)^{M}$, with a choice of $\eta$ such that if the fusion $\beta*\overline{\beta}$ is not $\emptyset$, then it is different from $\tau = \eta*\overline{\eta}$. Assume that $\tau\in S$ (so that it is not $\emptyset$). Then, through that process, we obtain the conclusion of the lemma for some maps $W'$ and $U'$ in all respects except that the supports of $W'(x)$ and $U'(x)$ are contained in \eqref{eq.2} rather than $F$. Moreover, by $h$-preservation and the fact that $h$ annihilates $x$, these supports do not contain the empty word ; in other words, the words in these supports are of the form
\begin{equation}\label{eq.3}
\eta\dots \overline{\eta},\quad \eta\dots \tau,\quad
\tau\dots\overline{\eta},\quad \tau.
\end{equation} 
Now apply the same procedure once more to $W'(x)$ (for part $(i)$) and
$U'(x)$ (in part $(ii)$) with $r$ being some appropriately large power of $\beta\overline{\beta}$ instead. The condition $\beta*\overline{\beta}\ne \tau$ implies that neither $\tau*\beta=\beta$ nor $\overline{\beta}*\tau=\overline{\beta}$ hold. Moreover, $\beta\ne\eta$ and $\beta,\overline{\beta}\ne\tau$. The above inequalities imply that for large enough $P$, $\left(\beta\overline{\beta}\right)^{P}\circ W'(x)\circ \left(\beta\overline{\beta}\right)^{P}$ is supported in $F$. To illustrate how this works, let us just see what happens if $W'(x)$ is supported in $\{\tau\}$ alone (the other cases from \eqref{eq.2} are even easier). Because $\beta\ne\tau$, the product $(\left(\beta\overline{\beta}\right))^{P}\otimes \tau$ is
\begin{equation*}
\beta\overline{\beta}\dots\beta \overline{\beta}\tau + \beta\overline{\beta}\dots\beta \left(\overline{\beta}*\tau\right).
\end{equation*} 
Now, because $\beta*\overline{\beta}\ne\tau$, the rightmost letter in
this expression is not $\overline{\beta}$, and hence multiplying on
the right with $\left(\left(\beta\overline{\beta}\right)\right)^{P}$ will not collapse the second summand too much. Similarly, the first summand will not collapse because $\overline{\beta}\ne\tau$. Finally, if $\eta*\overline{\eta}=\emptyset$ then the argument above
goes through even faster : this time, instead of \eqref{eq.2}, the
supports of $W'(x)$ and $U'(x)$ will be contained in
$\{\eta\dots\overline{\eta}\}$ alone.
\end{proof}

\begin{proof}[Proof of Theorem \ref{th.main}]
Replicate the proof of \cite[Thm 3]{banica1997groupe} substituting
Lemma \ref{lemma.aux} for \cite[Cor 4]{banica1997groupe}.
\end{proof}

\subsection{Free products}

The result of this subsection is :

\begin{thm}\label{th.main_bis}
If a compact quantum group $\G$ breaks up non-trivially as a free product $\G_{1}\ast \G_{2}$ and either $\G_{1}$ or $\G_{2}$ has at least two non-trivial representations then the conclusion of Theorem \ref{th.main} holds.
\end{thm}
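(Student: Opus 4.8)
The plan is to repeat, in the fusion semiring of the free product, the modification of \cite[Thm 3]{banica1997groupe} that already proves Theorem \ref{th.main}. Recall from S. Wang \cite{wang1995free} that the irreducible representations of $\G_{1}\ast\G_{2}$ are the alternating reduced words on $S:=(\Irr(\G_{1})\setminus\{\varepsilon\})\sqcup(\Irr(\G_{2})\setminus\{\varepsilon\})$, the conjugation being inherited factorwise and the tensor product given by concatenation, two adjacent letters being fused (and possibly cancelled against each other) precisely when they belong to the same factor. This is a generalized free fusion semiring, with fusion valued in $\N[S]$ rather than $S\cup\{\emptyset\}$: the only difference with a genuine fusion set is that the fusion $a\ast b$ of two same-factor letters is the non-trivial part of their tensor product inside that factor, hence a sum of irreducibles rather than a single letter. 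Since the argument of the appendix only ever tracks \emph{supports}, through the operation $\circ$, this causes no essential change. By hypothesis one factor, say $\G_{1}$, carries two distinct non-trivial irreducibles $\beta\neq\gamma$, and non-triviality of the decomposition provides a non-trivial irreducible $\delta$ of $\G_{2}$.

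With $\beta$ as the distinguished letter, I would set $D$ to be the non-empty reduced words not beginning with $\beta$, $E$ the empty word together with the reduced words beginning with $\beta$, $F$ the reduced words of length at least two beginning with $\beta$ and ending with $\overline{\beta}$, and $r_{i}=\beta\delta(\gamma\delta)^{i}$ for $i=1,2,3$. Each $r_{i}$ begins with $\beta$, ends with the $\G_{2}$-letter $\delta\neq\overline{\beta}$, and has length at least four. Then $\{D,E\}$ partitions the set of irreducibles. Since a word of $F$ ends in $\overline{\beta}\in\G_{1}$ while a word of $D$ begins with a letter $a\neq\beta$, the junction $\overline{\beta}\otimes a$ never produces the trivial representation (that would force $a=\beta$); consequently $F\circ D$ consists of words still beginning with $\beta$, so $F\circ D\subseteq E$ and $F\circ D\cap D=\emptyset$. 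Finally $r_{i}$ ends in a $\G_{2}$-letter while every non-empty word of $E$ begins with $\beta\in\G_{1}$, so these junctions are between different factors and $r_{i}\circ E$ is the set of clean concatenations $\{r_{i}e\}$; such a word contains exactly $i$ blocks $\gamma\delta$ between its initial $\beta\delta$ and the next occurrence of $\beta$, whence $r_{1}\circ E$, $r_{2}\circ E$, $r_{3}\circ E$ are pairwise disjoint. This is the analogue of Lemma \ref{lemme12}, and the hypothesis that $\G_{1}$ has two non-trivial irreducibles is used exactly here, to manufacture the three distinct rows; it is precisely what fails for $\h{\Z}_{2}\ast\h{\Z}_{2}=\h{D}_{\infty}$, whose reduced algebra is indeed not simple.

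With the analogue of Lemma \ref{lemme12} available, \cite[Cor 3]{banica1997groupe} transfers verbatim (with $F$ in place of $\{\beta\dots\alpha\}$), and the matter reduces, as in Theorem \ref{th.main}, to building the maps of Lemma \ref{lemma.aux} that push a self-adjoint element annihilated by $h$ into the linear span of $F$. The hard part will be the very point flagged in Lemma \ref{lemma.aux}: here $\beta\ast\overline{\beta}$ is essentially never $\emptyset$, since $\beta\otimes\overline{\beta}$ carries non-trivial $\G_{1}$-summands beside the trivial one. I would dispose of this exactly as there, exploiting the two-factor structure: first conjugate by a large power of $\eta\overline{\eta}$ with $\eta=\delta\in\G_{2}$, so that every letter occurring in $\delta\ast\overline{\delta}$ lies in $\G_{2}$ and is therefore distinct both from $\beta$ and from every letter of $\beta\ast\overline{\beta}\subseteq\G_{1}$. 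This confines the support of $(\delta\overline{\delta})^{M}\otimes x\otimes(\delta\overline{\delta})^{M}$ to words of the generalized shapes $\delta\dots\overline{\delta}$, $\delta\dots\tau$, $\tau\dots\overline{\delta}$, $\tau$ and the empty word, with $\tau$ now ranging over the finitely many $\G_{2}$-letters of $\delta\ast\overline{\delta}$; discarding the empty word by $h$-invariance and conjugating once more by a large power of $\beta\overline{\beta}$ collapses each remaining shape into $F$, the leading and trailing letters produced being $\neq\beta$ and $\neq\overline{\beta}$ so that no total cancellation occurs (the inequalities $\tau\ast\beta\neq\beta$ and $\overline{\beta}\ast\tau\neq\overline{\beta}$ hold here for the trivial reason that cross-factor letters never fuse). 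The only genuine departure from the single-letter bookkeeping of the appendix is that $\delta\ast\overline{\delta}$ and $\beta\ast\overline{\beta}$ are sums; the saving feature, and the reason the two conjugation steps do not interfere, is that these two sums are supported in disjoint factors. Once $W(x)$ and $U(x)$ are supported in $F$, the end of the proof of \cite[Thm 3]{banica1997groupe} yields simplicity of $C_{\text{red}}(\G)$ and uniqueness of the KMS state, completing the argument.
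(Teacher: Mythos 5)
Your analogue of Lemma \ref{lemme12} is correct and essentially the paper's: you let the factor with two non-trivial irreducibles supply $\beta\neq\gamma$ and distinguish the rows $r_{i}=\beta\delta(\gamma\delta)^{i}$ by length, whereas the paper fixes $\beta$ in one factor and two letters $\gamma\neq\xi$ in the other; both verifications of $F\circ D\cap D=\emptyset$ and of the disjointness of the $r_{i}\circ E$ rest, as you say, on the fact that cross-factor junctions never fuse.

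The second half of your argument has a genuine gap. In $R^{+}(\G_{1}\ast\G_{2})$ the elements $(\delta\overline{\delta})^{M}$ and $(\beta\overline{\beta})^{P}$ are \emph{not} long reduced words: since $\delta$ and $\overline{\delta}$ lie in the same factor, the tensor power $(\delta\otimes\overline{\delta})^{\otimes M}$ decomposes entirely inside $\Rep(\G_{2})$ and is therefore supported on words of length at most one, and likewise for $(\beta\otimes\overline{\beta})^{\otimes P}$ inside $\Rep(\G_{1})$. Consequently the first conjugation does not confine the support of $(\delta\overline{\delta})^{M}\otimes x\otimes(\delta\overline{\delta})^{M}$ to the shapes $\delta\dots\overline{\delta}$, $\delta\dots\tau$, $\tau\dots\overline{\delta}$, $\tau$: if $x$ is supported on $\{\beta\}$, the support of the conjugate contains $\beta$ itself (take the trivial component on both sides) as well as words $\sigma_{1}\beta\sigma_{2}$ for arbitrary non-trivial constituents $\sigma_{i}$ of $(\delta\otimes\overline{\delta})^{\otimes M}$, most of which are of none of the claimed forms. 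The same defect breaks the second step: $(\beta\overline{\beta})^{P}\otimes w\otimes(\beta\overline{\beta})^{P}$ is supported on words $\sigma_{1}w'\sigma_{2}$ with $\sigma_{i}$ arbitrary constituents of $(\beta\otimes\overline{\beta})^{\otimes P}$ in $\Irr(\G_{1})$, which need not begin with $\beta$ nor end with $\overline{\beta}$, so the support does not land in $F$. The two-step trick of Lemma \ref{lemma.aux} is tailored to a free fusion semiring, where $\eta\overline{\eta}\cdots\eta\overline{\eta}$ is itself an irreducible word of length $2M$; it does not transport to the free product. The remedy, which is what the paper does, is to conjugate \emph{once} by $(\beta\gamma)^{M}$ with $\beta$ and $\gamma$ taken in \emph{different} factors: this is a genuine alternating irreducible of length $2M$, its conjugate ends in $\overline{\beta}$, and for $M$ large compared with the lengths of the words in a finite set $R$ the cancellations at the two junctions cannot reach the outer letters, so $(\beta\gamma)^{M}\circ R\circ\left(\overline{\beta\gamma}\right)^{M}$ is supported in $F\cup\{\text{empty word}\}$. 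Replacing your two conjugations by this single one, the rest of the proof of \cite[Thm 3]{banica1997groupe} goes through as you describe.
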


In general, for a quantum group $\mathbb{H}$ we denote its set of irreducible representations by $\Irr(\mathbb{H})$. Recall (e.g. \cite[Thm 3.10]{wang1995free}) that the irreducible representations of $\G$ are of the form $s_{1}\otimes\dots\otimes s_{k}$, where the $s_{j}$'s are alternately non-trivial elements in $\Irr(\G_{1})$ and $\Irr(\G_{2})$ ; the empty tensor product corresponds to the trivial representation. For this reason we will suppress the tensor product symbols and refer to the elements of $\Irr(\G)$ as words, with the letters being elements of $\mathcal{B}_{1}=\Irr(\G_{1})-\{\text{trivial representation}\}$ and $\mathcal{B}_{2} = \Irr(\G_{2})-\{\text{trivial representation}\}$. We define supports for elements in $R^{+}(\G)$ as subsets of $\Irr(\G)$ in the obvious way, as in the previous section. We now follow the same plan as before ; in fact, this time the arguments will be simpler. Suppose $|\mathcal{B}_{2}|\geqslant 2$, and fix $\beta\in\mathcal{B}_{1}$ and $\gamma\neq \xi\in\mathcal{B}_{2}$. Define
\begin{itemize}
\item $D\subset \Irr(\G)$ to be the set of words whose first letter is \emph{not} in $\mathcal{B}_{1}$ (excluding the empty word).
\item $E\subset \Irr(\G)$ to be the set of words that are either empty or start with a letter in $\mathcal{B}_{1}$.
\item $F\subset \Irr(\G)$ to be the set of words of length at least two whose first and last letters are $\beta$ and $\overline{\beta}$ respectively.
\item $r_{1}$, $r_{2}$ and $r_{3}$ to be words starting with $\beta\gamma\beta\gamma\beta$, $\beta\gamma\beta\xi\beta$ and $\beta\xi\beta\xi\beta$ respectively and ending in some letter from $\mathcal{B}_{2}$.
\end{itemize}

This once more makes \cite[Lem 12]{banica1997groupe} valid, as well as \cite[Lem 13]{banica1997groupe} in the form :

\begin{lem}
For any finite subset $R\subset \Irr(\G)$ the set $(\beta\gamma)^{M}\circ R\circ\left(\overline{\beta\gamma}\right)^{M}$ is supported in $F\cup\{\text{empty word}\}$ for sufficiently large $M$.
\end{lem}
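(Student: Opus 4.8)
The plan is to transport the proof of \cite[Lem 13]{banica1997groupe} to the present situation, the only genuinely new input being the fusion rules of a free product in place of those of $U_{N}^{+}$. Recall that for two alternating words $x=x_{1}\cdots x_{p}$ and $y=y_{1}\cdots y_{q}$ in $\Irr(\G)$ the product $x\otimes y$ is computed recursively: if $x_{p}$ and $y_{1}$ belong to different factors the words are concatenated, whereas if they belong to the same factor one decomposes $x_{p}\otimes y_{1}$ inside that factor, retaining a summand $x_{1}\cdots x_{p-1}\,c\,y_{2}\cdots y_{q}$ for each nontrivial $c\subset x_{p}\otimes y_{1}$ together with, when $y_{1}=\overline{x_{p}}$, an extra contribution from $(x_{1}\cdots x_{p-1})\otimes(y_{2}\cdots y_{q})$. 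The feature I would exploit is that each recursive (``cancellation'') step removes exactly one letter from the right end of $x$ and one from the left end of $y$, so that the number of letters of $x$ altered in forming $x\otimes y$ is at most the length of $y$, and symmetrically.

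First I would prove two boundary estimates. Let $L$ bound the lengths of the words in $R$ and fix $w$ with $|w|\leqslant L$. Since $(\beta\gamma)^{M}$ is the periodic word $\beta\gamma\beta\gamma\cdots$ ending in $\gamma$, at most $L+1$ of its rightmost letters are affected when tensoring with $w$; hence, as soon as $2M>L+1$, every irreducible component of $(\beta\gamma)^{M}\otimes w$ keeps the initial block $\beta\gamma\beta\gamma\cdots$ of length at least $2M-L-1$, and in particular begins with $\beta$. Applying this to $\overline{w}$ and using $\overline{x\otimes y}=\overline{y}\otimes\overline{x}$ gives dually that every component of $w\otimes(\overline{\beta\gamma})^{M}$ ends in $\overline{\beta}$ and keeps a long terminal block $\cdots\overline{\gamma}\,\overline{\beta}$.

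I would then treat the full product $(\beta\gamma)^{M}\otimes w\otimes(\overline{\beta\gamma})^{M}$ by associativity. The bounded middle $w$ and the bounded heads produced by the left estimate are consumed first, and what then governs the shape of the surviving words is the deep interaction of the two mutually conjugate periodic powers: since a suffix $(\beta\gamma)^{k}$ of the left power is conjugate to a prefix $(\overline{\beta\gamma})^{k}$ of the right power, these periodic parts can cancel to arbitrary depth, producing a whole range of summands indexed by the cancellation depth. One must then show that in each of these summands the outer letters $\beta$ and $\overline{\beta}$ are preserved until the very last, total cancellation, so that every nontrivial component begins with $\beta$, ends with $\overline{\beta}$ and has length at least two --- hence lies in $F$ --- while the only length-$\leqslant 1$ outcome is the empty word.

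The main obstacle is exactly this innermost regime. When the factors have one-dimensional fundamental data (as for $U_{N}^{+}$, or for free products of group duals) the collision $\beta\otimes\overline{\beta}$ yields only the trivial representation and the argument of \cite[Lem 13]{banica1997groupe} applies unchanged. In general, however, $\beta\otimes\overline{\beta}$ may contain nontrivial irreducibles of the factor, and the delicate step is to control the short terms arising from the deepest cancellations; as in the first subsection, where the naive analogue was handled by first enlarging the admissible support to words of the forms in \eqref{eq.2} and then applying the procedure a second time, one should track these terms carefully and, if necessary, iterate the sandwiching once more to absorb them into $F\cup\{\text{empty word}\}$. Once this innermost bookkeeping is settled, \cite[Lem 12]{banica1997groupe} and \cite[Cor 4]{banica1997groupe} go through as stated, and I would leave the routine verifications to the reader exactly as the author does for Lemma 12.
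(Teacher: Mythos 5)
Your overall plan follows the only available route (transporting the argument of \cite[Lem 13]{banica1997groupe} through the free-product fusion rules), and the boundary estimates in your second paragraph are correct. The problem is that the step you defer --- ``one must then show that in each of these summands the outer letters $\beta$ and $\overline{\beta}$ are preserved until the very last, total cancellation'' --- is the entire content of the lemma, and as stated it cannot be carried out: the deep mutual cancellation of the two periodic powers does \emph{not} preserve the outer letters once the middle word has shifted the balance between them. Concretely, take $R=\{\overline{\gamma}\,\overline{\beta}\}=\{\overline{\beta\gamma}\}$. Since $\gamma\otimes\overline{\gamma}$ and $\beta\otimes\overline{\beta}$ each contain the trivial representation, the free-product fusion rules show that the decomposition of $(\beta\gamma)^{M}\otimes\overline{\gamma}\,\overline{\beta}$ contains the summand $(\beta\gamma)^{M-1}$, and then, iterating the cancellation term twice per period, the decomposition of $(\beta\gamma)^{M-1}\otimes\left(\overline{\gamma}\,\overline{\beta}\right)^{M}$ contains $(\beta\gamma)^{0}\otimes\left(\overline{\gamma}\,\overline{\beta}\right)^{1}=\overline{\gamma}\,\overline{\beta}$. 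Thus $\overline{\gamma}\,\overline{\beta}$ lies in $(\beta\gamma)^{M}\circ R\circ\left(\overline{\beta\gamma}\right)^{M}$ for \emph{every} $M$, and it begins with a letter of $\mathcal{B}_{2}$, so it is neither in $F$ nor empty. (In the test case $\G=\widehat{\F_{2}}$ this is just the fact that conjugation by $(\beta\gamma)^{M}$ fixes $(\beta\gamma)^{-1}$.) The self-conjugacy $\overline{\beta\alpha}=\beta\alpha$ which renders the innermost regime harmless for $U_{N}^{+}$ has no analogue here: when the two periodic powers are exhausted at different depths, the survivor is a nonempty word of the form $(\beta\gamma)^{j}$ or $\left(\overline{\gamma}\,\overline{\beta}\right)^{j}$, together with the short fusion terms these drag along, and none of these lie in $F$.

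So the gap is not bookkeeping that can be left to the reader: the innermost terms genuinely escape $F\cup\{\text{empty word}\}$, and the inclusion must be repaired before it can be proved --- for instance by enlarging the target set to contain the words $(\beta\gamma)^{j}$, $\left(\overline{\gamma}\,\overline{\beta}\right)^{j}$, $c$, $(\beta\gamma)^{j-1}\beta\, c\,\overline{\beta}\left(\overline{\gamma}\,\overline{\beta}\right)^{j'-1}$ arising from the deep collisions and re-verifying the analogue of \cite[Lem 12]{banica1997groupe} for that larger set, or by running a two-stage averaging as in Lemma \ref{lemma.aux} of the previous subsection. Your closing suggestion to ``iterate the sandwiching once more'' points in the latter direction, but note that it would establish a statement about a composite averaging map, not the purely combinatorial support inclusion asserted in this lemma. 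Your instinct that the deepest cancellations are the delicate point is exactly right; the proof cannot be completed without confronting them explicitly, and once you do you will find that the inclusion as stated fails.
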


\begin{proof}[Proof of Theorem \ref{th.main_bis}]
Substitute $F$ for $\{\beta\dots\alpha\}$ in the statements of \cite[Cor 3]{banica1997groupe} and \cite[Cor 4]{banica1997groupe} and set $r = (\beta\gamma)^{M}$ in the proof of the latter result ; the proof of \cite[Thm 3]{banica1997groupe} once more goes through. 
\end{proof}

\bibliographystyle{amsplain}
\bibliography{../../../quantum}

\end{document}